\documentclass[
12pt,a4paper]
{amsart}

\usepackage{amssymb, amsmath, amsthm, mathrsfs}
\usepackage[linktoc=page]{hyperref}
\usepackage{cleveref} 
\usepackage[alphabetic,lite]{amsrefs}
\usepackage{tikz-cd}
\usepackage{comment}
\usepackage{fullpage} 
\usepackage{time}
\usepackage[pagewise, mathlines]{lineno} 

\theoremstyle{plain}
\newtheorem{lemma}{Lemma}[section]
\newtheorem{theorem}[lemma]{Theorem}
\newtheorem{prop}[lemma]{Proposition}
\newtheorem{cor}[lemma]{Corollary}

\theoremstyle{definition}
\newtheorem{remark}[lemma]{Remark}
\newtheorem*{ack}{Acknowledgments}

\newtheorem{example}[lemma]{Example}
\newtheorem{defn}[lemma]{Definition}
\newtheorem{strategy}[lemma]{Strategy}

\newtheorem{question}[lemma]{Question}

\newcommand{\bb}[1]{{\mathbb{#1}}}
\newcommand{\mf}[1]{{\mathfrak{#1}}}
\newcommand{\mca}[1]{{\mathcal{#1}}}
\newcommand{\Z}{\bb{Z}}
\newcommand{\Q}{\bb{Q}}
\newcommand{\R}{\bb{R}}
\newcommand{\C}{\bb{C}}
\newcommand{\F}{\bb{F}} 

\newcommand{\To}{\longrightarrow}
\newcommand{\inj}{\hookrightarrow}
\newcommand{\surj}{\twoheadrightarrow}
\newcommand{\act}{\curvearrowright}
\newcommand{\congto}{\overset{\cong}{\to}}

\newcommand{\ol}{\overline}
\newcommand{\ul}{\underline}
\newcommand{\wh}[1]{{\widehat{#1}}}
\newcommand{\wt}[1]{{\widetilde{#1}}}
\renewcommand{\mod}{\ {\rm mod}\ }

\usepackage{bm} 

\usepackage{xcolor}
     
\newcommand{\ds}{\displaystyle}

\theoremstyle{plain}

\newtheorem{claim*}{Claim}

\numberwithin{equation}{section}
\numberwithin{table}{section}
\subjclass[2020]{Primary 57K32, 57K31, 20E18; Secondary 37E30, 37D20, 57K14, 57R30, 11R06}

\keywords{profinite rigidity, twisted multivariable Alexander polynomial, taut polynomial, 3-manifold, link, veering triangulation, pseudo-Anosov flow, small dilatation}

\title{{\Large Taut polynomials from finite quotients of\\ fibered hyperbolic 3-manifold groups}} 
\author{\large Tamunonye Cheetham-West, Biao Ma, Jun Ueki, Youheng Yao}
\date{\today \ \now \ (GMT = JST$-9$ = CST$+6$)} 
\address{Department of Mathematics \\ Yale University; New Haven, CT, 06511, US}
  \email{tamunonye.cheetham-west@yale.edu} \email{youheng.yao@yale.edu}

\address{School of Mathematical Sciences, Key Laboratory of Intelligent Computing and Applications (Ministry of Education), Tongji University; Shanghai 200092, China} 
\email{24024@tongji.edu.cn}

\address{Department of Mathematics, Faculty of Science, Ochanomizu University; 2-1-1 Otsuka, Bunkyo-ku, 112-8610, Tokyo, Japan}
\email{uekijun46@gmail.com}

\makeatletter
\let\origmaketitle\maketitle
\def\maketitle{
  \begingroup
  \def\uppercasenonmath##1{} 
  \let\MakeUppercase\relax   
  \origmaketitle
  \endgroup
}
\makeatother

\begin{document}
\pagestyle{plain}

\begin{abstract}
We prove that, whenever the monodromy maps are fully-punctured, the finite quotients of a fibered hyperbolic $3$-manifold group detect the taut polynomials of the fibered faces of the Thurston norm ball, as well as the Teichm\"{u}ller polynomials of suitable Dehn fillings. 
Toward this, we develop a general framework for the profinite invariance of twisted multivariable Alexander polynomials, which is of independent interest; we also prove that being fully-punctured and preserving transverse orientations are profinite properties of the monodromy maps. 
As an application, we identify specific one-cusped hyperbolic $3$-manifolds whose fundamental groups are profinitely rigid among $3$-manifold groups, by a strategy using normalized dilatations and the veering census; 
notably, the taut polynomials distinguish a pair of mapping tori sharing the fiber and the dilatation.
\end{abstract}

\maketitle 

\setcounter{tocdepth}{2} 
\tableofcontents

\section{Introduction}

\setcounter{subsection}{-1}

\subsection{Overview} 
The main goal of this paper is to establish a connection between the pseudo-Anosov dynamics on fibered hyperbolic 3-manifolds and the finite quotients of their fundamental groups.

By a foundational theorem of Thurston, a fibered 3-manifold admits a complete hyperbolic structure of finite volume if and only if its monodromy map is pseudo-Anosov and the fibered surface has a negative Euler characteristic.
In this setting, Agol--Gu\'{e}ritaud (cf.\,\cite{Agol2011Contemp}) proved that the exterior of the singular orbits of the suspension pseudo-Anosov flow admits a canonical finite combinatorial structure called the \emph{veering triangulation}.   
To encode these fibered, dynamical, and combinatorial structures into a tractable algebraic invariant, Landry--Minsky--Taylor \cite{LandryMinskyTaylor2024JEMS} introduced the \emph{taut polynomial} for veering triangulations, which specializes to McMullen's Teichm\"{u}ller polynomial in the fibered setting.
 
Motivated by this context, we will prove the following main result: 
\emph{ 
The finite quotients of $\pi_1(M)$ of a fibered hyperbolic 3-manifold $M$ determine the taut polynomial, provided that the pseudo-Anosov monodromy map is fully-punctured} 
(\Cref{thm:tautpoly}). 

Parlak \cite{Parlak2023JT} proved that the taut polynomial may be interpreted as a twisted multivariable Alexander polynomial. 
So, we first develop a general algebraic framework over the profinite group ring $\wh{\Lambda}=\wh{\Z}[\![t_1^{\wh{\Z}},\ldots,t_r^{\wh{\Z}}]\!]$,
which significantly extends previous single-variable cases \cites{Ueki5,Ueki6}, to show that:  
\emph{Twisted multivariable Alexander polynomials are profinite invariants of profinitely compatible representations} (\Cref{thm:twisted}). 
Its proof draws on subtle tools from commutative algebra and number theory, and also yields results on classical invariants such as linking numbers and Mahler measures. 

We remark that our proof of \Cref{thm:tautpoly} crucially demonstrates that the relevant edge-orientation homomorphisms are profinitely compatible, and this allows us to apply \Cref{thm:twisted}. 
The compatibility is based on 
\emph{the profinite detection of transverse orientations of the invariant foliations} (\Cref{thm:transverse}). 

To settle the prerequisite in \Cref{thm:tautpoly}, we also prove that 
\emph{the finite quotients of $\pi_1(M)$ detect whether the monodromy map is fully-punctured}  (\Cref{thm:fully-punctured}). 
Combined with Tsang's dilatation bounds \cite{TsangCC2025-arXiv} and the veering census \cite{VeeringCensus}, this result also provides a practical strategy for identifying hyperbolic 3-manifolds that are profinitely rigid (\Cref{cor:rigidity}). 

The precise structural formulations of these results and their broader context will be detailed in the remainder of this introduction. 
Throughout this paper, all $3$-manifolds are assumed to be connected and orientable, and hyperbolic $3$-manifolds are moreover assumed to be complete and of finite volume. 

\subsection{Profinite rigidity} \label{ss.profiniterigidity} 
It is a highly interesting question to ask to what extent a 3-manifold group is \emph{profinitely rigid}, 
namely, which geometric, dynamical, or topological properties of a 3-manifold $M$ are determined by the isomorphism class of the profinite completion $\wh{\pi}_1(M)$ of the fundamental group 
(cf.\,\cites{BoileauFriedl2020AMS,BridsonReid2020AMS,Reid-ICM2018,YiLiu2022ICM,BJZR2023.problems}). 
This inquiry is equivalent to asking what the set of finite quotients of $\pi_1(M)$ knows (cf.\,\Cref{ss.hatpi}). 
Thus, it stems from foundational interests in classical group theory, 
as well as practical methods for detection problems in low-dimensional topology (cf.\,\cite{Sakuma-RIMS2011}*{Problem 3}). 

Among the fundamental results in this direction, which our arguments build on, Wilton--Zalesskii showed that the profinite completion detects the geometry of a closed geometric $3$-manifold \cite[Theorems A, B, and 8.4]{WiltonZalesskii2017GT}, as well as the prime and JSJ decompositions of a closed $3$-manifold \cite[Theorems A and B]{WiltonZalesskii2019CM}; their study of pro-$p$ subgroups \cite{WiltonZalesskii2017JLMS} makes the arguments available in the cusped case. 
Wilkes extended the latter to compact $3$-manifolds with boundary \cite{Wilkes2018NZ}, and showed that the profinite completion detects which geometries are involved in the geometric decomposition, distinguishing graph manifolds from the others \cite[Theorem B]{Wilkes2018JA}. 
As a consequence, if $\wh{\pi}_1(M)\cong\wh{\pi}_1(N)$ and $M$ is a finite-volume hyperbolic
$3$-manifold, then so is $N$, and they share the number of cusps; see \cite[Proposition 2.3]{Xu2024fillings-arXiv} for this summarized statement.  We will use this fact throughout Sections \ref{sec:punctured}--\ref{sec:App}. 

In general, the question of profinite rigidity is known to be subtle and difficult. Among the notable achievements, Liu \cite[Theorem 1.1]{YiLiu2023Invent} and Xu \cite[Theorem D]{Xu2025AdvMath} established the almost profinite rigidity, namely, the finiteness of the homeomorphism classes sharing the profinite completion, for all finite-volume hyperbolic $3$-manifolds and for all compact orientable $3$-manifolds with empty or toroidal boundary, respectively.
For hyperbolic $3$-manifolds, the proof relies on the virtual fibering due to Agol and the profinite invariance of the dilatation and the Euler characteristic of fibrations; the finitely many candidates sharing these quantities remain to be distinguished. 
In this paper, we address the remaining finite ambiguity for certain mapping tori, proving \Cref{cor:rigidity} by means of the taut polynomials.

\subsection{Twisted multivariable Alexander polynomials} 
Polynomial invariants have been indispensable throughout the history of low-dimensional topology. 
Results in profinite rigidity have been established for one-variable (twisted) Alexander polynomials \cites{BoileauFriedl2020AMS,Ueki5,Ueki6,YiLiu2023Invent} and the $A$-polynomials \cites{CheethamWestYao2025-arXiv-Apoly,Xu2025regularity-arXiv} so far. 
Pursuing this direction, we first establish the following, 
where we define the completion of a representation $\rho:\pi\to {\rm GL}_k\Z$ to be the induced continuous representation $\wh{\rho}:\wh{\pi}\to {\rm GL}_k\wh{\Z}$.

\begin{theorem}\label{thm:twisted}
Let $M$ and $N$ be finite-volume hyperbolic 3-manifolds with an isomorphism $\Phi:\wh{\pi}_1(M)\congto \wh{\pi}_1(N)$ of profinite completions. 
Let $\rho:\pi_1(M)\to {\rm GL}_k\Z$ and $\varrho:\pi_1(N)\to {\rm GL}_k\Z$ be representations such that their completions satisfy $\wh{\rho}=\wh{\varrho} \circ \Phi$. 
Let $\alpha:\pi_1(M)\surj H_M$ and $\beta:\pi_1(N)\surj H_N$ denote the maximal free abelian quotient maps. 
Then, the twisted Alexander polynomials $\Delta^{\alpha}_\rho \in \Z[H_M]$ and $\Delta^{\beta}_{\varrho}\in \Z[H_N]$ coincide 
via the induced isomorphism $\Phi_\star:H_M\congto H_N$, up to multiplication by units of $\Lambda=\Z[H_M]$. 
\end{theorem}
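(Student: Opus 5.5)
The plan is to realize both twisted Alexander polynomials as Fitting-ideal invariants of twisted Alexander modules, and to compare those modules through the profinite completion over $\wh{\Lambda}=\wh{\Z}[\![H_M\otimes\wh{\Z}]\!]$.

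\textbf{Step 1: matching the abelian quotients.} The isomorphism $\Phi$ induces $\wh{H}_M\cong\wh{H}_N$ on profinite abelianizations modulo torsion. This is an isomorphism of free $\wh{\Z}$-modules of rank $r$, but it need not come from an isomorphism $H_M\cong H_N$ a priori. We first invoke goodness of $3$-manifold groups together with the known results cited above (Liu's work on the Thurston norm and fibered classes) to arrange this. After composing with an automorphism of $\wh{\pi}_1$ if necessary, the induced map is $\Phi_\star$ tensored with a unit of $\wh{\Z}^\times$. Concretely, we will accept the statement that the induced map is $\lambda\cdot\Phi_\star$ for some $\lambda\in\wh{\Z}^\times$. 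This is the ``$\Phi_\star$'' in the statement.

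\textbf{Step 2: completing the module.} Consider the twisted Alexander module $A_\rho=H_1(M,p;\Z[H_M]^k)$, where the coefficient system is $\alpha\otimes\rho$. Since $\pi_1(M)$ is good and the relevant coefficient modules are finitely generated, we have
\[
\wh{\Lambda}\otimes_\Lambda A_\rho\cong H_1(\wh{\pi}_1(M),\wh{\Lambda}^k),
\]
which is a continuous homology over the profinite group. The hypothesis $\wh{\rho}=\wh{\varrho}\circ\Phi$ then yields an isomorphism of $\wh{\Lambda}$-modules $\wh{\Lambda}\otimes A_\rho\cong\wh{\Lambda}\otimes A_\varrho$. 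This isomorphism is semilinear with respect to the automorphism $t\mapsto t^\lambda$ of $\wh{\Lambda}$. Because $\Lambda\to\wh{\Lambda}$ is flat (a completion-type extension of Noetherian rings), taking Fitting ideals commutes with the base change. We take the gcd of the first elementary ideal as the definition of $\Delta$. The two $\Delta$'s then generate the same divisorial ideal in $\wh{\Lambda}$, up to the twist $t\mapsto t^\lambda$.

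\textbf{Step 3: descent to $\Lambda$ (the main obstacle).} We must show the following: if $f,g\in\Z[t_1^{\pm},\ldots,t_r^{\pm}]$ and $f(t)$ and $g(t^\lambda)$ generate the same ideal in $\wh{\Lambda}$, then $g=\pm t^a f$ and the twist is harmless.

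The first part is a descent of units. We want $\wh{\Lambda}^\times\cap\mathrm{Frac}$-type considerations to reduce to $\pm H_M$. For this we will compare zeros at torsion points and Mahler measures. Specializing at finite quotients $H_M\to\Z/n$ and at $\ell$-adic points, we get that the values $f(\zeta)$ and $g(\zeta^\lambda)$ have equal absolute norms for all roots of unity $\zeta$. This uses multiplicativity of the map $\wh{\Lambda}\to\Z_\ell[\mu_n]$ and the unit group of $\wh{\Z}[\Z/n]$ acting by Galois.

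The second part uses a Galois-orbit argument in the spirit of the single-variable case of \cites{Ueki5,Ueki6}, combined with a Lawton/Boyd-type multivariable limit. The aim is to show that the cyclotomic resultants determine the polynomial up to $\pm$ monomials and the Galois twists $t\mapsto t^{\lambda}$. Then we use that $\lambda$ acts on $\mu_n$ as an element of $(\Z/n)^\times$, so norms to $\Q$ are invariant. To conclude, we will try to show that the prime factorizations over the UFD $\Lambda$ match. Each irreducible factor is pinned down by its vanishing locus on torsion points, via Laurent's theorem (torsion points on subvarieties of tori), together with an $\ell$-adic Weierstrass-preparation argument to exclude extraneous unit factors. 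I expect this factorization-matching step, and controlling units in $\wh{\Lambda}$, to be the hardest part.
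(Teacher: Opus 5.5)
Your Steps 1--2 follow the paper's route up to the equality of ideals in $\wh{\Lambda}$. Step 3, however, does not work as planned, and the paper's proof of this theorem does not need it.

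You stop at Liu's $\wh{\Z}^\times$-regularity. \Cref{thm:regularity} (Liu, together with Xu in the cusped case and Hanany in the closed case) gives full regularity: $\Phi_*$ is the completion of a discrete isomorphism $\Phi_\star$, so $\lambda=1$. You also may not ``compose with an automorphism of $\wh{\pi}_1$'', since $\Phi$ is fixed by the hypothesis $\wh{\rho}=\wh{\varrho}\circ\Phi$. With $\lambda=1$ we have $\Delta^\beta_\varrho\circ\Phi_\star\in\Lambda$. The equality $(\Delta^\alpha_\rho)\wh{\Lambda}=(\Delta^\beta_\varrho\circ\Phi_\star)\wh{\Lambda}$ then descends to $\Lambda$ because $\Lambda\to\wh{\Lambda}$ is \emph{faithfully} flat (\Cref{lem:hatLambdaisflat}), which gives $\mf{a}\wh{\Lambda}\cap\Lambda=\mf{a}$ for every ideal $\mf{a}$ of $\Lambda$. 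Plain flatness, which is all you invoke, does not give this. Your ``gcd'' and ``divisorial ideal'' in $\wh{\Lambda}$ also need justification, since $\wh{\Lambda}$ is neither a domain nor Noetherian; this is what the paper's \Cref{lem:oderideal} supplies. Under regularity you can avoid the issue entirely: descend the Fitting ideals themselves (\Cref{rem:Fitting}) and take gcds in the UFD $\Lambda$.

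If you insist on keeping $\lambda\in\wh{\Z}^\times$, the data you extract in Step 3 are too weak. Absolute norms of $f(\zeta)$ and $g(\zeta^\lambda)$ at torsion points cannot see the twist, since $\zeta^\lambda$ is a Galois conjugate of $\zeta$. Worse, they do not determine the polynomial, even up to units and $t\mapsto t^{\pm1}$. For example, $f=(t-2)(t-3)$ and $g=(2t-1)(t-3)$ satisfy $|N_{\Q(\zeta)/\Q}f(\zeta)|=|N_{\Q(\zeta)/\Q}g(\zeta)|=|\Phi_n(2)\Phi_n(3)|$ for every primitive $n$-th root of unity $\zeta$. Yet $g\not\doteq f$ and $g\not\doteq f(t^{-1})$ (cf.\ \cite{Hillar2005}).

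Laurent's theorem cannot repair this. On the zero set of an irreducible factor not of the form $\Phi_n(t_1^{w_1}\cdots t_r^{w_r})$, the torsion points lie in finitely many torsion cosets of dimension $<r-1$; for $t-2$ there are none. So torsion zeros cannot pin down exactly the non-cyclotomic factors that carry the information.

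Your target claim is also misstated. $(f(t))=(g(t^\lambda))$ does not imply $g\doteq f$ when $\lambda=-1$: take $f=2t-1$ and $g=t-2$. The right tool is Hanany's dichotomy (\Cref{lem:fdoteqg}), which uses the full ideal equality in $\wh{\Z}[\![t^{\wh{\Z}}]\!]$ rather than norms. The paper's \Cref{thm.hatZtimes-regular} pushes the ideal equality through the one-variable reductions $r_{\bm{m}}$, applies that lemma to each, and reassembles via Cimasoni's \Cref{lem:Cimasoni}. The outcome is that either both polynomials are generalized cyclotomic and coincide via $V$, or $\lambda=\pm1$ and the sign must be absorbed into $\Phi_\star$.
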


Note that the induced isomorphism $\Phi_\ast:\wh{H}_M\congto \wh{H}_N$ restricts to an isomorphism $\Phi_\star:H_M\congto H_N$ of discrete groups by the regularity result due to Liu, Xu, and Hanany \cites{YiLiu2023Invent,Xu2025regularity-arXiv,Hanany2026regularity-arXiv} (cf.\,\Cref{sec:regular}). 
To prove this result, we establish a general result (\Cref{thm.regular}) in the algebraic setting where the given profinite isomorphism $\Phi$ is \emph{regular} in the sense of Boileau--Friedl. 
We further establish a similar result under a weaker assumption called \emph{$\wh{\Z}^\times$-regular} 
(\Cref{thm.hatZtimes-regular}), where the argument becomes more subtle. 

In the course of the argument, in a more general algebraic setting without regularity assumptions, 
we discuss presentation matrices, 
faithfully flat ring extensions (invoking the theory of multivariable Iwasawa modules),  
and order ideals (Lemmas \ref{lem:share}, \ref{lem:hatLambdaisflat}, \ref{lem:oderideal}),
to establish 
\emph{the well-definedness and the profinite rigidity of the profinite Alexander ideal in the profinite ring $\wh{\Lambda}=\wh{\Z}[\![t_1^{\wh{\Z}},\ldots,t_r^{\wh{\Z}}]\!]$} 
up to the natural ${\rm GL}_r\wh{\Z}$-action (\Cref{prop:order}, \Cref{thm:completeA}). 
We also derive results on linking numbers (\Cref{cor:lk}),
Milnor's triple linking numbers (\Cref{cor:Milnor}), 
and Euclidean/$p$-adic Mahler measures (Corollaries \ref{cor:Mahler}, \ref{cor:pMahler}). 

\subsection{Fully-punctured pseudo-Anosov maps} 
Fibrations of a 3-manifold $M$ over $S^1$ are organized by fibered faces of the Thurston norm ball in $H^1(M;\R)$. 
Given a hyperbolic 3-manifold $M$, there is a celebrated \emph{three-way correspondence} 
among fibered faces of the Thurston norm ball, 
suspension pseudo-Anosov flows without perfect fits, 
and layered veering triangulations of the exterior of the singular fibers,  
as pioneered by Thurston, Fried, Agol, and others (cf.\,\Cref{ss.tri}).
To effectively utilize this correspondence, we employ the following notion:  
A pseudo-Anosov map on a surface is said to be \emph{fully-punctured} 
if the set of singularities of the stable/unstable leaves is contained in the set of punctures. 
This property of a fibration depends only on its fibered face. (See also Sections \ref{ss.pAmap}, \ref{ss.ff}.) 

Based on the profinite correspondence of fibered classes (\Cref{lem:Liu-correspondence}), we will prove the following. 

\begin{theorem} \label{thm:fully-punctured} 
Let $M$ and $N$ be fibered cusped hyperbolic 3-manifolds with an isomorphism $\wh{\pi}_1(M)\congto\wh{\pi}_1(N)$, and 
let $\varphi\in H^1(M;\Z)$ and $\psi\in H^1(N;\Z)$ be the corresponding fibered classes.
Let $f_\varphi$ denote the pseudo-Anosov monodromy map on the fiber $S_\varphi$ of $\varphi$, and similarly for $\psi$. 
Then $f_\varphi$ is fully-punctured if and only if $f_\psi$ is fully-punctured.
\end{theorem}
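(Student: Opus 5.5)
Here is the plan: find a profinite-invariant characterization of fully-punctured monodromy. The natural one uses periodic orbits of the suspension flow together with the cusp (peripheral) subgroups. The profinite correspondence of fibered classes (\Cref{lem:Liu-correspondence}) and Liu's work give the following. The isomorphism $\Phi$ matches the fibered classes $\varphi\leftrightarrow\psi$. Up to conjugacy it also matches the peripheral subgroups $\wh{\pi}_1(\partial_i M)\leftrightarrow\wh{\pi}_1(\partial_{\sigma(i)}N)$, by Wilton--Zalesskii and Wilkes, since $M$ and $N$ have the same number of cusps. Moreover the twisted Alexander/Lefschetz data of the monodromy correspond, so the Lefschetz numbers $L(f_\varphi^n)=L(f_\psi^n)$ agree for all $n$. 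The idea is to express fully-puncturedness as an identity between such matched quantities.

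\textbf{Key steps.} First, count fixed points of $f^n$ by Nielsen theory. For a pseudo-Anosov map $f$ on a punctured surface $S$, the Lefschetz number $L(f^n)$ is computed from the fixed-point indices of the periodic points of $f^n$. Each interior singularity with $k\ge 3$ prongs whose prongs are fixed by $f^n$ contributes index $1-k$, or $+1$ if the prongs are rotated nontrivially. Regular fixed points contribute $\pm1$, and punctures are handled by filling them in as marked points. Second, compare two counts that detect the singularities. One is the ``dynamical'' growth count, namely the number of fixed points of $f^n$, which is essentially $\lambda^n$ plus lower-order terms. The other is the Lefschetz count read off from homology, as $\mathrm{tr}(f^n_\ast)$ on $H_1$ and $H_0$. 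The discrepancy between them is a sum over singular orbits of explicit index corrections. Fully-punctured means there are no interior singularities, so the discrepancy should be accounted for entirely by the boundary, i.e.\ by how the monodromy acts on the prongs at each puncture. This peripheral prong data is read off from the cusp: it is the slope on $\partial_i M$ of the degeneracy/ladderpole curve relative to the fiber class, whose peripheral images are matched by $\Phi$.

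Third, a cleaner alternative I would try first uses Euler characteristic. Fully-punctured is equivalent to $\chi(S)=-\tfrac12\sum_{\text{punctures }p}(k_p-2)$, where $k_p$ is the number of prongs at $p$, with $1$-pronged punctures allowed. In general, with interior singularities of $k_s$ prongs, we have $-2\chi(S)=\sum_p(k_p-2)+\sum_s(k_s-2)$, and each interior term is strictly positive since $k_s\ge3$. The quantity $\chi(S_\varphi)=-\|\varphi\|_T$ is a profinite invariant because the Thurston norm is preserved by Liu's correspondence. So it suffices to show that $\sum_p(k_p-2)$ is profinitely determined. For the cusp of $M$ corresponding to a boundary component, the total number of prongs over the punctures in that cusp is the intersection number, on the boundary torus, of the fiber boundary slope with the degeneracy slope. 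The fiber slope $\partial\varphi$ restricted to the cusp is preserved under the correspondence. The delicate point is the degeneracy slope, which is the boundary slope of the stable lamination's closed leaves. This slope is characterized as the peripheral class along which the twisted Alexander/Teichmüller-type growth degenerates. Equivalently, it is the unique slope on the cusp such that nearby fibered classes in the face have monodromies with prescribed prong counts. I would aim to show that it is detected from $\lambda$-growth under Dehn filling or from the profinite action on the cusp subgroup, using the periodic-orbit correspondence in $\wh{\pi}_1$.

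The main obstacle I expect is the last step: showing that $\Phi$ sends the degeneracy slope of $M$ to that of $N$, with the matched peripheral subgroups. If that fails, the fallback is the Lefschetz approach. There I would detect the pair $\sum_p k_p$ and $\sum_s(k_s-2)$ directly from the sequence $L(f^n)$ together with the peripheral traces $\mathrm{tr}(f^n_\ast|H_1(\partial))$, all of which are profinite invariants since they are read off from the fibered twisted Alexander polynomial and its restrictions to cusps.
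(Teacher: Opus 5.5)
Your Euler-characteristic reduction is a sound framework, but the step that carries all the content is missing. You never show that the prong data at the punctures are matched by $\Phi$, and you flag this yourself as the main obstacle.

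Matching the degeneracy slope alone would not even suffice. The total prong count on a cusp is the intersection of the fiber-boundary multicurve with the whole multicurve of boundary leaves of the stable lamination, so the number of those leaves matters, not just their slope. Your proposed characterizations of that slope are either vague (``$\lambda$-growth under Dehn filling'') or circular: ``nearby fibered classes have monodromies with prescribed prong counts'' presupposes prong detection.

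The fallback fails for a structural reason. The numbers $L(f^n)$ and $\mathrm{tr}(f^n_*|H_1(\partial))$ depend only on the action on homology. They cannot separate the singular indices ($1-k$ or $+1$) from the $\pm1$ indices of regular periodic points, whose number grows like $\lambda^n$ and is invisible to homology. For example, every pseudo-Anosov element of the Torelli group of a closed genus-$g$ surface has $L(f^n)=2-2g$ for all $n$, whatever its singularity data. Likewise, you never show that your ``dynamical count'' of fixed points is a profinite invariant.

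The missing ingredient is Liu's localization of Lefschetz numbers to individual orbits.
\begin{itemize}
\item Twisted Lefschetz numbers with coefficients in characters of finite quotients isolate a single closed orbit $O$: $L_m(f;\gamma^*\chi_\omega)=\mathrm{ind}_m(f;O)$ (proof of Lemma~8.5 in \cite{YiLiu2023Invent}).
\item By conjugacy separability, $\Phi$ induces a bijection between the closed orbits of corresponding closed pseudo-Anosov mapping tori (Lemma~5.2 in \cite{YiLiu2025Peking}).
\item The prong number of an orbit is determined by its $m$-periodic indices over all $m$, so prong numbers are matched orbit by orbit.
\end{itemize}

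To reach a setting where this applies, the paper first passes to a canonical cover built compatibly on both sides. This is the two-step mod-$2$ homology characteristic cover $S'_\varphi$, together with a power $D$, chosen so that the lifted monodromy has no $1$-pronged punctures and fixes every puncture. It then fills the cusps along the fiber slopes, using \cite{Xu2024fillings-arXiv}, to obtain closed pseudo-Anosov mapping tori with matched profinite completions. Corresponding filled cores then have equal prong numbers and the $k$-prong singular orbits correspond, which gives the theorem; see also \Cref{cor:prong-correspondence}.

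Once the prong correspondence at the cusps is available, your Euler-characteristic endgame would also finish the argument, together with the profinite invariance of $\chi(S_\varphi)$. You need to correct the formula, though. For the punctured fiber, $-2\chi(S)=\sum_p k_p+\sum_s(k_s-2)$. What you wrote is the formula for the filled surface $\ol{S}$, which differs by twice the number of punctures.
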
 

We reduce the argument to the closed case, adapting the proof of \cite{YiLiu2023Invent}*{Lemma 9.5}. 
Other important ingredients are 
the profinite correspondence of the closed orbits \cite{YiLiu2025Peking}*{Lemma 5.2} 
and the fact that the twisted Lefschetz numbers detect the prong numbers of periodic orbits (cf.\,\cite{YiLiu2023Invent}*{Lemma 8.5}). 
The latter, combined with the peripheral regularity, moreover yields that each pair of corresponding cusps of $M$ and $N$ carries the same number of punctures of the fibers, with the same prong number (\Cref{cor:prong-correspondence}); 
this refinement will be used in Sections \ref{ss.profinite-transverse}, \ref{ss.filled}, and \ref{sec:App}. 

\subsection{Transverse orientations of invariant foliations} 

The invariant foliations of a pseudo-Anosov map carry a further piece of dynamical data beyond their combinatorial type. 
Let $f$ be a pseudo-Anosov map on a surface $S$ with stable foliation $W^s(f)$. 
We say that $W^s(f)$ is \emph{transversely orientable} if its normal line field admits a continuous orientation, and that $f$ \emph{preserves} the transverse orientation if this orientation can be chosen to be
$f$-invariant. 
This condition governs the transverse orientability of the stable lamination of the suspension flow, and hence the edge-orientability of the associated veering triangulation. 

We will prove that this condition is detected by the profinite completion: 

\begin{theorem} \label{thm:transverse} 
Let $M$ and $N$ be fibered cusped hyperbolic $3$-manifolds with an isomorphism
$\Phi: \wh{\pi}_1(M)\congto\wh{\pi}_1(N)$ on the profinite
completions, and let $\varphi\in H^1(M;\Z)$ and
$\psi\in H^1(N;\Z)$ be the corresponding fibered classes, with
pseudo-Anosov monodromy maps $f_\varphi$ and $f_\psi$. Then the stable
foliation $W^s(f_\varphi)$ is transversely orientable if and only if
$W^s(f_\psi)$ is transversely orientable. Moreover, in this case, $f_\varphi$ preserves the transverse
orientation of $W^s(f_\varphi)$ if and only if $f_\psi$ preserves that of
$W^s(f_\psi)$.
\end{theorem}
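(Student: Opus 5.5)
The plan is to translate transverse orientability, and the preservation of the orientation, into homological data that the profinite completion can see. The stable foliation $W^s=W^s(f_\varphi)$ has a canonical orientation double cover $\tilde S\to S$ of its transverse line field. This cover is classified by a class $\omega_\varphi\in H^1(S_\varphi;\Z/2)$, which is built from the prong data: it is odd around each singularity or puncture with an odd number of prongs. $W^s$ is transversely orientable exactly when $\omega_\varphi=0$, which is equivalent to every prong number being even. Since $\omega_\varphi$ is $f_\varphi$-invariant, it extends to the mapping torus. The more robust way to say this is through the suspension flow: the stable lamination of the flow has a transverse line field, and its orientation character is a homomorphism $\epsilon_\varphi:\pi_1(M)\to\{\pm1\}$. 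This character is determined by how the holonomy along closed orbits acts on the transverse direction of the stable leaves. For a closed orbit $\gamma$ of the flow, $\epsilon_\varphi(\gamma)=\pm1$ according to whether the first return map preserves or reverses the local transverse orientation. Equivalently, $\epsilon_\varphi(\gamma)$ is the sign of the eigenvalue of the first return map on the stable direction, with rotations of the prongs taken into account.

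With this reformulation, the first statement (transverse orientability of $W^s(f_\varphi)$) becomes the triviality of $\epsilon_\varphi$ on $\pi_1(S_\varphi)$. The second statement (that $f_\varphi$ preserves the orientation) becomes the triviality of $\epsilon_\varphi$ on all of $\pi_1(M)$, given the first. So it suffices to show two things. First, $\epsilon_\psi\circ\Phi=\epsilon_\varphi$, where $\epsilon$ extends continuously to $\wh\pi_1$ because its image is finite. Second, $\Phi$ carries $\wh{\pi_1(S_\varphi)}$ onto $\wh{\pi_1(S_\psi)}$. The second point follows from the profinite correspondence of fibered classes (\Cref{lem:Liu-correspondence}). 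Under it $\Phi$ sends $\ker\wh\varphi$ to $\ker\wh\psi$, up to the $\wh\Z^\times$-twist, and this twist does not change kernels.

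To prove $\epsilon_\psi\circ\Phi=\epsilon_\varphi$, I will use a Chebotarev-type density argument. The conjugacy classes of closed flow orbits are dense enough in $\pi_1(M)$ that any homomorphism to a finite group is determined by its values on them; this is a standard consequence of the fact that closed orbits represent every class in a suitable sense for pseudo-Anosov flows. The profinite correspondence of closed orbits (\cite{YiLiu2025Peking}*{Lemma 5.2}) then matches the closed orbits of $M$ with those of $N$ up to $\Phi$ and conjugacy. I will compute the sign $\epsilon(\gamma)$ through twisted Lefschetz numbers. The Lefschetz index of a periodic orbit of $f^n$ is $1-p$ or $1$ according to the prong number $p$ and whether the prongs are rotated. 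By \cite{YiLiu2023Invent}*{Lemma 8.5}, the profinite completion detects these indices. Combined with \Cref{cor:prong-correspondence}, this shows that corresponding orbits have the same prong number and the same rotation behaviour, and hence the same sign on the transverse orientation. At the punctures (the cusps), \Cref{cor:prong-correspondence} directly gives that corresponding cusps carry the same prong numbers. Therefore the local parity obstructions, which live at the punctures and singular orbits, match.

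I expect the main obstacle to be the step of determining $\epsilon$ from orbit data. Prong counts and Lefschetz indices alone detect only whether the first return map flips the orientation locally up to the prong rotation. To pin down the sign on the nose, I plan to use the orientation double cover $\tilde M\to M$ associated to $\epsilon_\varphi$, to which the flow lifts. The lifted flow has orientable stable lamination. On it, the twisted Lefschetz numbers with coefficients in the sign representation compute $\sum\pm\,\mathrm{ind}$ exactly as in Fried's twisted zeta function. I would therefore argue that $\epsilon$ is characterized as the unique character $\pi_1(M)\to\{\pm1\}$ satisfying one condition: the twisted Lefschetz zeta function of the flow, with coefficients twisted by that character, equals the determinant formula $\det(1-t\,f^*)$ on $H^1$ twisted by it, where $f^*$ denotes the induced action. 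This characterization holds for the stable direction, as in the Teichmüller polynomial setting. Both sides are profinite invariants, by Liu's results together with the fact that the twisted homology with $\Z/2$-type coefficients is preserved by $\Phi$. If this characterization is too delicate, the fallback is a direct analysis via Liu's closed-orbit correspondence, checking that the sign of the stable eigenvalue of corresponding periodic points agrees, since it is read off from the corresponding Lefschetz indices for odd and even iterates.
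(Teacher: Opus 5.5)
There is a genuine gap. Your reduction to comparing orientation characters is reasonable in outline, but the step that carries all the content, $\wh{\epsilon}_\psi\circ\Phi=\wh{\epsilon}_\varphi$, is never established, and one supporting claim is false.

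Transverse orientability is \emph{not} equivalent to all prong numbers being even. Once all prongs are even, the orientation double cover is unbranched, but it can still be nontrivial. For example, lift a pseudo-Anosov map of $S^2$ with four $1$-pronged points to the genus-$2$ double cover branched over those four points and two further fixed points of a suitable power. The result has two $4$-pronged singularities and a non-orientable foliation. So the prong-parity matching from \Cref{cor:prong-correspondence} and the inner-singularity data is only bookkeeping. It is needed anyway, since $\epsilon_\varphi$ lives on $\pi_1(M-{\rm sing})$ and descends to $\pi_1(M)$ only when the singular orbits are even-pronged. But it says nothing about the global obstruction.

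Your diagnosis of the obstacle in the key step is also off. At a non-singular periodic point, the index of the first-return map is exactly $-\epsilon(\gamma)$: it is $-1$ if the unstable eigenvalue is $\lambda^n$ and $+1$ if it is $-\lambda^n$. The $m$-periodic indices also separate regular from singular orbits. So restricting to regular closed orbits, which are all but finitely many, removes the prong-rotation ambiguity entirely.

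Completing your route would still require two further things:
\begin{itemize}
\item Liu's orbit and index correspondence in the cusped setting. The paper uses it only after filling, which again needs the odd-prong case split.
\item An actual proof that a $\{\pm1\}$-character trivial on all regular closed orbits is trivial, e.g.\ via simplicity of the Perron eigenvalue of the lifted Markov matrix on the connected double cover. Calling this ``standard'' leaves it open.
\end{itemize}

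Your fallback is unsound as stated. The twisted Lefschetz formula holds for \emph{every} character, so it cannot characterize $\epsilon$. Moreover, asserting that the $\epsilon$-twisted homology is preserved by $\Phi$ presupposes the very compatibility you want to prove. What singles out $\epsilon$ is a spectral property: its twisted Lefschetz numbers grow like $\lambda^n$ without cancellation.

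The paper exploits exactly this spectral property, in untwisted form, and bypasses closed orbits altogether:
\begin{itemize}
\item Since $\wh\varphi=\wh\psi\circ\Phi$, the map $\Phi$ sends $\ker\wh\varphi$ onto $\ker\wh\psi$, and $\Phi(t_M)t_N^{-1}\in\ker\wh\psi$.
\item Hence $(f_\varphi)_*$ and $(f_\psi)_*$ on $H_1$ of the fibers are conjugate in ${\rm GL}_n\wh\Z$, so they share a characteristic polynomial in $\Z[x]$.
\item Together with $\lambda(f_\varphi)=\lambda(f_\psi)$, the punctured Rykken criterion (\Cref{thm:Rykken-punctured}) reads off both assertions. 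Transverse orientability holds iff there is an eigenvalue of modulus $\lambda$, and $\varepsilon$ is determined because the unique such eigenvalue is $\varepsilon\lambda$.
\end{itemize}
No orbit correspondence, filling, or Chebotarev-type density is needed.
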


For closed fibers, the transverse orientability of the invariant foliations is known to be a profinite invariant by Liu \cite[Corollary 8.2]{YiLiu2023Invent} (see also \cite[Theorem 11.1]{YiLiu2023Duke}). 
Our theorem covers cusped fibers and moreover detects the preservation of the orientation; 
it is a key ingredient in the proofs of Theorems \ref{thm:tautpoly} and \ref{thm:filled}. 
Its proof combines a punctured-surface version of Rykken's homological criterion (\Cref{thm:Rykken-punctured}; cf.\  \cite[Theorem 3.3]{Rykken1999Michigan}) 
with the coincidence of the dilatations of the corresponding monodromy maps (\Cref{lem:Liu-correspondence}). 

\subsection{The taut polynomials of veering triangulations}

Let $M$ be a hyperbolic 3-manifold. 
McMullen \cite{McMullen2000ASENS} introduced \emph{the Teichm\"uller polynomial} $\Theta_{\mathcal{F}}$ in $\Z[H_1(M;\Z)_{\rm free}]$ associated to a fibered face $\mathcal{F}$ of the Thurston norm ball in $H^1(M;\R)$, 
which encodes dynamical information of the corresponding suspension pseudo-Anosov flow. 
In particular, the largest absolute value of the roots of the specialization of $\Theta_{\mathcal{F}}$ at 
an integral class $\alpha\in \R_+\mathcal{F}$ is \emph{the dilatation} (\emph{stretch factor}) of the corresponding fibration in the open cone $\R_+\mathcal{F}$. 

More recently, Landry--Minsky--Taylor \cite{LandryMinskyTaylor2024JEMS} introduced the \emph{taut polynomial} $\Theta_\mathcal{V}$ of a veering triangulation $\mathcal{V}$, an invariant built from the combinatorics of the triangulation. 
In a fibered setting, $\Theta_\mathcal{V}$ specializes to $\Theta_\mathcal{F}$ of the corresponding fibered face $\mca{F}$, 
and they coincide if the monodromy maps are fully-punctured (\Cref{lem:tautTeich}). 
Significantly, Parlak \cite{Parlak2023JT} proved that $\Theta_{\mathcal{V}}$ may be regarded as a twisted Alexander polynomial of the edge-orientation homomorphism $\omega:\pi_1(M)\to {\rm GL}_1\Z=\Z^\times$. 

Liu \cite[Theorem 5.1, Corollary 5.3]{YiLiu2023Invent} proved that a profinite isomorphism of hyperbolic $3$-manifold groups induces a bijective correspondence between fibered faces of the Thurston norm balls. 
Based on this correspondence and \Cref{thm:fully-punctured}, we state the following. 

\begin{theorem}\label{thm:tautpoly} 
Let $M$ and $N$ be fibered cusped hyperbolic 3-manifolds with an isomorphism $\Phi:\wh{\pi}_1(M)\congto \wh{\pi}_1(N)$ on the profinite completions. 
Let $\mathcal{F}$ and $\mathcal{G}$ be a corresponding pair of fibered faces of the Thurston norm balls of $M$ and $N$, respectively, and suppose that the pseudo-Anosov monodromy maps of fibrations in $\mathcal{F}$ are fully-punctured. 
Then, so are those in $\mathcal{G}$, and 

{\rm (1)} The corresponding veering triangulations are both edge-orientable or both not edge-orientable, and 

{\rm (2)} Their taut {\rm(}$=$Teichm\"{u}ller{\rm)} 
polynomials $\Theta_\mathcal{F}$ and $\Theta_\mathcal{G}$ 
coincide via the induced isomorphism $\Phi_\star:H_M\congto H_N$ up to multiplication by units of $\Z[H_M]$, 
where $H_M=H_1(M;\Z)_{\rm free}$ and $H_N=H_1(N;\Z)_{\rm free}$. 
\end{theorem}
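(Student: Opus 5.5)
The plan is to combine the three earlier results in order. First, choose an integral fibered class $\varphi\in\R_+\mathcal{F}$ and let $\psi\in\R_+\mathcal{G}$ be the class corresponding to it under Liu's correspondence of fibered faces (\Cref{lem:Liu-correspondence}). \Cref{thm:fully-punctured} then says $f_\psi$ is fully-punctured, because $f_\varphi$ is. Being fully-punctured depends only on the fibered face, so every monodromy in $\mathcal{G}$ is fully-punctured. In the fully-punctured case the punctures are exactly the singular orbits, so the veering triangulations $\mathcal{V}_M,\mathcal{V}_N$ from the three-way correspondence live on $M$ and $N$ themselves; no drilling is needed. By \Cref{lem:tautTeich}, moreover, $\Theta_{\mathcal{V}_M}=\Theta_\mathcal{F}$ and $\Theta_{\mathcal{V}_N}=\Theta_\mathcal{G}$.

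For (1), I would use the standard fact that the veering triangulation is edge-orientable if and only if the stable lamination of the suspension flow is transversely orientable. That in turn holds if and only if $W^s(f_\varphi)$ is transversely orientable and $f_\varphi$ preserves that orientation. Applying \Cref{thm:transverse} to the pair $\varphi,\psi$ then gives (1) directly.

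For (2), Parlak's theorem writes $\Theta_{\mathcal{V}_M}=\Delta^{\alpha}_{\omega_M}$ and $\Theta_{\mathcal{V}_N}=\Delta^{\beta}_{\omega_N}$, up to units, where $\omega_M:\pi_1(M)\to\{\pm1\}$ and $\omega_N:\pi_1(N)\to\{\pm1\}$ are the edge-orientation homomorphisms. The goal is to apply \Cref{thm:twisted} with $k=1$, which requires checking $\wh{\omega}_M=\wh{\omega}_N\circ\Phi$. If both triangulations are edge-orientable, both characters are trivial and there is nothing to prove. Otherwise $\omega_M$ factors through $\pi_1(M)\to\Z/2$, and it can be described as the orientation character of the transverse orientation of the stable lamination. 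Concretely, $\omega_M(\gamma)=-1$ exactly when the transverse orientation flips along $\gamma$.

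Verifying this compatibility is the step I expect to be the main obstacle. To do it, I would identify $\ker\omega_M$ with $\pi_1$ of the canonical double cover $\wt M\to M$ on which the lamination becomes transversely oriented. That cover is also fibered, and its fiber is the orientation double cover of $W^s$ on $S_\varphi$, or $S_\varphi$ itself when $W^s$ is orientable but $f_\varphi$ reverses the orientation. Then I would characterize $\wt M$ intrinsically. It should be the unique index-$2$ subgroup (or one of finitely many) whose associated cover has lifted monodromy with transversely orientable, orientation-preserving stable foliation. The intended invariant to separate it from other index-$2$ covers is Rykken's homological criterion (\Cref{thm:Rykken-punctured}): the dilatation $\lambda$ becomes an eigenvalue of the action on $H_1$ of the fiber, with the correct sign. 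Index-$2$ subgroups correspond under $\Phi$, corresponding finite covers inherit profinite isomorphisms, and \Cref{thm:transverse} together with the equality of dilatations applies to the covers. This shows $\Phi$ carries $\ker\wh\omega_M$ onto $\ker\wh\omega_N$.

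Uniqueness is the delicate point. If several index-$2$ covers have oriented lifted laminations, I would single out the minimal one via the sign data. Alternatively, I would argue that $\omega$ is determined by which prongs are odd and by the sign of the $\lambda$-eigenvalue, and match those using \Cref{cor:prong-correspondence}. Once compatibility holds, \Cref{thm:twisted} gives $\Theta_\mathcal{F}\doteq\Phi_\star(\Theta_\mathcal{G})$ up to units of $\Z[H_M]$.
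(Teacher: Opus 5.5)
Your handling of the fully-punctured transfer and of (1) is exactly the paper's. You use \Cref{thm:fully-punctured}, then \Cref{lem:eo-to} and \Cref{lem:suspension} to reduce edge-orientability to "transversely orientable and preserved", then \Cref{thm:transverse}. Your plan for (2) is also the paper's route: transport the edge-orientation double cover via $\Phi$, apply orientation detection to the resulting pair of double covers, and feed the compatibility $\wh{\omega}_M=\wh{\omega}_N\circ\Phi$ into \Cref{thm:twisted} with $k=1$.

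\textbf{The gap.} You never close the last step. You do not show that the index-$2$ subgroup of $\pi_1(N)$ obtained by transporting $\ker\omega_M$ actually equals $\ker\omega_N$; you call this uniqueness ``delicate'', and neither fallback works.
\begin{itemize}
\item ``Minimal one via the sign data'' has no content among subgroups of the same index.
\item Prong parities together with the sign of the $\lambda$-eigenvalue are numerical data and cannot single out a homomorphism $\pi_1(N)\to\Z^\times$. Already on the fiber group, $\omega_N$ restricts to a class in $H^1(S_\psi;\Z/2)$ whose values on puncture loops are the prong parities. Its values on the classes coming from $H^1(\ol{S_\psi};\Z/2)$ are not determined by such data. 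Knowing that these invariants agree on both sides does not produce the identity $\wh{\omega}_M=\wh{\omega}_N\circ\Phi$.
\end{itemize}

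\textbf{The missing idea.} It is the universality of the edge-orientation homomorphism (\Cref{lem:eo-universality}): the lift of $\mathcal{V}$ to the cover with group $\ker q$ is edge-orientable if and only if $\omega$ factors through $q$. With it, there is no uniqueness issue at all.
\begin{enumerate}
\item Put $\upsilon=\wh{\omega}_M\circ\Phi^{-1}\circ\iota_N$. It is surjective onto $\Z^\times$ by \Cref{lem:NS}.
\item Your argument shows that the lift of $\mathcal{V}_N$ to the cover with group $\ker\upsilon$ is edge-orientable. Equivalently, apply (1) to the pair of double covers; their pulled-back fibered faces correspond, remain fully-punctured, and carry the lifted triangulations.
\item Hence $\omega_N$ factors through $\upsilon$. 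Since $\omega_N$ is nontrivial by (1), $\omega_N=\upsilon$.
\item Density of $\iota_N(\pi_1(N))$ then gives $\wh{\omega}_M=\wh{\omega}_N\circ\Phi$.
\end{enumerate}

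Your own description of $\omega$ as the orientation character of the normal bundle of the stable lamination already implies this. The lifted lamination on the cover with group $\Gamma$ is transversely orientable if and only if $\Gamma\subseteq\ker\omega$, and for an index-$2$ subgroup $\Gamma$ this forces $\Gamma=\ker\omega$ whenever $\omega$ is nontrivial. With this step supplied, your proposal becomes the paper's proof.
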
 

The assertion (1) follows from \Cref{thm:transverse}: the edge-orientability is translated into the transverse orientability of the stable laminations of the suspension flows (\Cref{lem:eo-to}), and further into a condition on the monodromy maps (\Cref{lem:suspension}).

For the assertion (2), we verify that the edge-orientation homomorphisms $\omega_{\mathcal{F}}$ and $\omega_{\mathcal{G}}$ correspond via $\Phi$ (\Cref{lem:eo-compatible}), which allows us to apply \Cref{thm:twisted}. 
The proof of \Cref{lem:eo-compatible} is based on the celebrated theorem of Nikolov--Segal for finitely generated groups \cite{NikolovSegal2007AnnMath1}*{Theorem 1.1} (\Cref{lem:NS}) and the universality of the edge-orientation homomorphisms due to  Parlak \cite{Parlak2023JT}*{Lemma 4.9} (\Cref{lem:eo-universality}); 
a key step is to lift the three-way correspondence to a pair of double covers of $M$ and $N$, where the assertion (1) applies. 

Xu \cite{Xu2024fillings-arXiv}*{Theorem A} proved that a profinite isomorphism $\Phi\colon\wh{\pi}_1(M)\congto\wh{\pi}_1(N)$ of cusped hyperbolic $3$-manifold groups induces a bijective correspondence between Dehn fillings of $M$ and $N$, together with a profinite isomorphism on the results of each corresponding pair. 
Combining with our \Cref{thm:tautpoly}, we further obtain the following.

\begin{theorem}\label{thm:filled} 
Let the setting be as in {\rm \Cref{thm:tautpoly}}. Let $M'$ and $N'$ be hyperbolic $3$-manifolds obtained by a corresponding pair of Dehn fillings on $M$ and $N$, and suppose that 
$M'$ and $N'$ admit fibrations restricting to fibrations in $\R_+\mathcal{F}$ and in $\R_+\mathcal{G}$, respectively. 
Let $\mathcal{F}'$ and $\mathcal{G}'$ denote the fibered faces of these fibrations.  
Then the Teichm\"{u}ller polynomials $\Theta_{\mathcal{F}'}$ and $\Theta_{\mathcal{G}'}$ coincide via the induced isomorphism $\Phi'_\star\colon H_{M'}\congto H_{N'}$, up to multiplication by units of $\Z[H_{M'}]$.
\end{theorem}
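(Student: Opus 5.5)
The plan is to deduce Theorem~\ref{thm:filled} from Theorem~\ref{thm:tautpoly} (2) by specialization. The key input is the known behaviour of the Teichm\"uller polynomial under Dehn filling along cusps that meet the fibers in punctures. Since the monodromies in $\mathcal{F}$ are fully-punctured, the suspension flow on $M'$ is obtained from that on $M$ by filling. The filling slopes must not be degenerate, meaning not prong-parallel, so that $M'$ is still fibered and the filled map is pseudo-Anosov. Then $\Theta_{\mathcal{F}'}$ is the image of $\Theta_{\mathcal{F}}$ under the map
\[
\iota_*\colon \Z[H_M]\to\Z[H_{M'}]
\]
induced by inclusion $M\hookrightarrow M'$. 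This image is taken up to units, and possibly up to a correction factor attached to the filled cusps. That factor is a product of cyclotomic-type terms $(1\pm t^{\,\cdot})$ in the class of each new singular orbit, whose form depends only on the prong number, the number of punctures on the cusp, and the sign by which the monodromy acts on the prongs.

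\textbf{Step 1.} Fix the Dehn filling data. By Xu's theorem, $\Phi$ induces a profinite isomorphism $\Phi'\colon\wh{\pi}_1(M')\congto\wh{\pi}_1(N')$ compatible with $\Phi$ through the quotient maps $\wh{\pi}_1(M)\surj\wh{\pi}_1(M')$. Passing to maximal free abelian quotients, with the regularity of Section~\ref{sec:regular} applied to both $\Phi$ and $\Phi'$, we obtain a commutative square
\[
\Phi'_\star\circ\iota_* \;=\; \iota_*\circ\Phi_\star .
\]
Corresponding fibered classes in $M'$ restrict to corresponding fibered classes in $\R_+\mathcal{F}$ and $\R_+\mathcal{G}$. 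This follows from Lemma~\ref{lem:Liu-correspondence} applied to $\Phi'$ together with this compatibility, and it shows that $\mathcal{F}'$ and $\mathcal{G}'$ correspond.

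\textbf{Step 2.} Establish the filling formula for $\Theta_{\mathcal{F}'}$ in terms of $\iota_*\Theta_{\mathcal{F}}$. I would try McMullen's module-theoretic definition. The stable lamination of $M'$ restricted to $M$ is the suspended lamination of $M$, and its lift to the $H_{M'}$-cover differs from the lift of the $M$-lamination only near the filled orbits. This should give an exact sequence of $\Z[H_{M'}]$-modules whose extra terms are the cyclic modules $\Z[H_{M'}]/(\text{local factor})$, one for each filled cusp. Alternatively, one could use Parlak's twisted Alexander description together with the Fox calculus under adding relators. Taking Fitting ideals and gcds then gives
\[
\Theta_{\mathcal{F}'}\cdot\prod(\text{local factors})\doteq\iota_*\Theta_{\mathcal{F}},
\]
or a variant of this. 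I expect this step to be the main obstacle. The delicate points are controlling the gcd after specialization and handling the torsion in $H_1(M')$.

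\textbf{Step 3.} Match the local factors. Corollary~\ref{cor:prong-correspondence} shows that corresponding cusps carry the same number of punctures with the same prong number. Theorem~\ref{thm:transverse}, applied to a fibration in the face and to the relevant double covers as in Lemma~\ref{lem:eo-compatible}, shows that the sign of the rotation or flip at each puncture corresponds. The slope data correspond under $\Phi_\star$ by peripheral regularity, so the local factors are carried to each other by $\Phi'_\star$.

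\textbf{Step 4.} Conclude. Apply $\iota_*$ to the identity $\Phi_\star\Theta_{\mathcal{F}}\doteq\Theta_{\mathcal{G}}$ from Theorem~\ref{thm:tautpoly}. Using the commuting square of Step~1, this gives $\Phi'_\star(\iota_*\Theta_{\mathcal{F}})\doteq\iota_*\Theta_{\mathcal{G}}$. Then divide by the matched local factors, which is possible because $\Z[H_{M'}]$ is a UFD, to obtain $\Phi'_\star\Theta_{\mathcal{F}'}\doteq\Theta_{\mathcal{G}'}$ up to units.
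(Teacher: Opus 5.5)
There is a genuine gap, and it is the one you anticipated: Step~2 is never established, and as formulated it is not the right statement. For a filled cusp whose punctures have prong number $k_T\geq 3$, there is no correction factor at all. Such a filling produces exactly a singular orbit of the flow of $\mathcal{F}'$. Lemma~\ref{lem:tautTeich} (Landry--Minsky--Taylor, Parlak) then gives the exact specialization $\Theta_{\mathcal{F}'}\doteq\iota_*(\Theta_{\mathcal{V}})$ from the veering triangulation $\mathcal{V}$ of $M'-{\rm sing}(\mathcal{F}')$.

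The only cusps that escape this are those with $k_T=2$. There the filled core is a regular closed orbit and the puncture becomes a regular point of the fiber. Nothing in the paper relates $\iota_*\Theta_{\mathcal{F}}$ to $\Theta_{\mathcal{F}'}$ across such a filling; Lemma~\ref{lem:tautTeich} only removes singular orbits. Deriving such a formula from transversal modules or Fox calculus, with control of gcds after specialization, would be a new result.

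Your Step~3 inherits the problem. Theorem~\ref{thm:transverse} detects global transverse orientability of $W^s$ and its preservation. It does not detect whether the return map flips the prongs along each individual closed orbit, so the matching of the hypothetical local factors is not justified either.

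The missing idea is to perform the filling in two stages so that no correction factor ever arises.
\begin{enumerate}
\item By Lemma~\ref{lem:filling-analysis}, $M'^{\circ}=M'-{\rm sing}(\mathcal{F}')$ is the filling of $M$ along the slopes at the cusps with $k_T=2$ only. It is a fibered cusped hyperbolic manifold whose face $\mathcal{F}'^{\circ}$ is again fully-punctured, and its veering triangulation is the one attached to $\mathcal{F}'$.
\item By Corollary~\ref{cor:prong-correspondence}, the cusps with $k_T=2$ correspond under $\Phi$, so $N'^{\circ}$ is the corresponding filling. Xu's Theorem~A then yields isomorphisms $\Phi'^{\circ}$ and $\Phi'$ compatible with the filling quotients.
\item Theorem~\ref{thm:tautpoly} applies directly to $(M'^{\circ},N'^{\circ})$.
\item Lemma~\ref{lem:tautTeich} specializes both sides exactly to $\Theta_{\mathcal{F}'}$ and $\Theta_{\mathcal{G}'}$.
\item Your commuting-square argument from Steps~1 and~4, with $\Phi'^{\circ}_\star$ in place of $\Phi_\star$, finishes the proof with no division needed.
\end{enumerate}

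A minor point: the correspondence of fibered classes under $\Phi'$ comes from Liu's Theorem~5.1 and Corollary~5.3 together with regularity, not from Lemma~\ref{lem:Liu-correspondence}.
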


\subsection{Application: Small normalized dilatations}  

When $M$ is the mapping torus of a fully-punctured pseudo-Anosov map, 
the number of tetrahedra in the veering triangulation is bounded by the dilatation 
(Tsang \cite{TsangCC2025-arXiv}, see \Cref{thm:Tsang}). 
Hence, our \Cref{thm:tautpoly} on the taut polynomials, combined with \Cref{thm:fully-punctured}, 
provides a practical strategy for detecting profinite rigidity in concrete cases (\Cref{strategy}), 
by using \emph{A census of veering structures} \cite{VeeringCensus}. 
For instance, we may derive the profinite rigidity of 17 examples of the mapping tori of fully-punctured pseudo-Anosov maps on once-punctured surfaces, 
including the following crisper new examples. 
Let $S_{g,n}$ denote the $n$-punctured orientable connected surface with genus $g$.

\begin{cor} \label{cor:rigidity}
The fundamental groups of the fully-punctured mapping tori presented by the following layered veering triangulations, listed with their fibers and taut polynomials, are profinitely rigid among $3$-manifold groups:
{\rm \begin{enumerate}
\item (dLQacccjsnk\_200, $S_{5,1}$, $a^{10} - a^9 + a^7 - a^6 + a^5 - a^4 + a^3 - a + 1$)
\item (eLPkaccddjnkaj\_2002, $S_{2,1}$, $a^4 + a^3 - a^2 + a + 1$)
\item (eLPkbcdddhrrcv\_1200, $S_{2,1}$, $a^4 - a^3 - a^2 - a + 1$)
\end{enumerate}} 
\end{cor}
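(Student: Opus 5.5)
Let $M$ be one of the three listed mapping tori and let $N$ be a compact $3$-manifold with $\wh{\pi}_1(N)\cong\wh{\pi}_1(M)$. I would follow the strategy sketched around \Cref{strategy}: first trap $N$ in a finite list read off from the veering census, then separate $M$ from the other members of that list by the profinite invariants developed above.

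\textbf{Step 1: $N$ is a fibered hyperbolic mapping torus with the same fiber type and dilatation as $M$.}
By the results of Wilton--Zalesskii and Wilkes summarized in \S\ref{ss.profiniterigidity}, $N$ is a one-cusped finite-volume hyperbolic $3$-manifold. Liu's correspondence of fibered classes (\Cref{lem:Liu-correspondence}) supplies a fibered class $\psi$ of $N$ corresponding to the fibered class $\varphi$ of $M$. Since $b_1=1$ in these examples, and also directly from the same lemma, the fibers have equal Euler characteristic and the monodromies have equal dilatation $\lambda$. Because $f_\varphi$ is fully-punctured, \Cref{thm:fully-punctured} shows that $f_\psi$ is fully-punctured as well. \Cref{cor:prong-correspondence} then shows that the single cusp carries the same number of punctures with the same prong number. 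Hence the fiber of $N$ is again $S_{g,1}$ with the same $g$, and the puncture has the same prong number.

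\textbf{Step 2: reduce to a finite search in the census.}
By Tsang's bound (\Cref{thm:Tsang}), the veering triangulation of $N$ has at most $C(\lambda,|\chi|)$ tetrahedra, where $C$ depends only on $\lambda$ and $|\chi|$. Because $f_\psi$ is fully-punctured, $N$ itself is the exterior of the singular orbits, so this triangulation lives on $N$ and appears in \cite{VeeringCensus}. I would list all census entries up to that size that are layered, have one cusp and $b_1=1$, and have fiber $S_{g,1}$. Among these I keep those whose taut polynomial has the same largest root modulus $\lambda$, or equivalently the same normalized dilatation.

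\textbf{Step 3: distinguish the survivors.}
\Cref{thm:tautpoly}(1) requires the same edge-orientability, and \Cref{thm:tautpoly}(2) requires the taut polynomials to coincide up to units and the automorphism $a\mapsto a^{\pm1}$. If the taut polynomial alone leaves several candidates, I would separate them further by homology torsion or by $H_1$ of small finite covers, both of which are profinite invariants. Where several candidates share the fiber and the dilatation, the taut polynomial is what does the separating; this is the phenomenon highlighted in the abstract. Once only $M$ remains, $N\cong M$, and hence $\pi_1(N)\cong\pi_1(M)$.

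The main obstacle is Step 2. Tsang's bound for genus $5$ with the listed $\lambda$ may exceed the range of the census, and the search itself requires a careful computer enumeration. I would first check whether the explicit bound falls within the census range. If it does not, I would use the prong data from \Cref{cor:prong-correspondence} to sharpen the count of tetrahedra.
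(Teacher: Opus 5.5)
Your outline is the paper's own route (\Cref{strategy} via \Cref{cor:small_dilatation}), but it stops before the step that actually proves the corollary. The obstacle you single out is also not a real one.

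\textbf{The bound is not an obstacle.} The bound in \Cref{thm:Tsang} depends only on the normalized dilatation $\lambda^{-\chi(S)}$, not on the genus.
\begin{itemize}
\item For (1), $\lambda^{-\chi}=(\text{Lehmer's number})^9\approx 4.311$, so $\frac12(\lambda^{-\chi})^2\approx 9.3$.
\item For (2) and (3), $\lambda^{-\chi}=|LT_{1,2}|^3\approx 5.107$, giving about $13.0$.
\end{itemize}
In any case, the once-punctured refinement gives at most $16$ tetrahedra whenever $\lambda^{-\chi}\le\mu^4\approx 6.854$, which is exactly the range of the census. So no sharpening by prong data is needed.

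\textbf{What is missing is the outcome of the search, which is the whole content of the corollary.} The paper does not redo the enumeration. It cites Theorems 1.4, 1.7, 1.8 and Tables 1, 2 of \cite{TsangCC2025-arXiv}. These show that the census contains exactly $17$ layered veering triangulations of fully-punctured mapping tori over $S_{g,1}$ with $\lambda^{-\chi}\le\mu^4$. Only three of them have $g>1$: one with fiber $S_{5,1}$ and two with fiber $S_{2,1}$.

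Combined with your Step 1 (same fiber, same dilatation, fully-punctured), this forces (1) immediately. For (2) and (3), you still have to check explicitly that the two taut polynomials are not related by $a\mapsto a^{\pm1}$ and multiplication by $\pm a^k$, which is the ambiguity in \Cref{thm:tautpoly}(2) when $b_1=1$.
\begin{itemize}
\item Both polynomials are palindromic, so inversion changes nothing.
\item Their degree spans agree, forcing $k=0$.
\item Their constant terms agree, forcing the sign $+$.
\item Their $a^3$-coefficients are $+1$ and $-1$, which gives a contradiction.
\end{itemize}

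Without this concrete input, your Step 3 (``if the taut polynomial alone leaves several candidates, I would separate them further by homology torsion or finite covers'') is a hope rather than an argument. With it, no auxiliary invariants are needed.
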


These manifolds are one-cusped hyperbolic $3$-manifolds, 
with the normalized dilatations $(\text{Lehmer's number})^{9}\approx 4.311$ for (1) 
and $|LT_{1,2}|^{3}\approx 5.107$ for (2) and (3). 
Notably, (2) and (3) share the fiber and the dilatation, while their taut polynomials differ. 
See \Cref{sec:App} for the proof and background. 

\subsection*{Organization} 
The rest of this paper is organized as follows. 
In \Cref{sec:regular}, we recall profinite completions, goodness, and the regularity results (regular, $\wh{\Z}^\times$-regular, and peripheral regular) for hyperbolic 3-manifold groups. 
\Cref{sec:twisted} deals with twisted multivariable Alexander polynomials. 
We first recollect classical cases and discuss natural ambiguities. 
Next, we employ 1-goodness and faithful flatness to establish the 
result on complete ideals (\Cref{prop:order}, \Cref{thm:completeA}), 
and also derive results on classical invariants.
Finally, we prove the results under the regularity assumptions (\Cref{thm:twisted}, \Cref{cor:twist_alex.cusped}). 

\Cref{sec:veering} reviews the three-way correspondence among fibered faces of the Thurston norm ball, suspension pseudo-Anosov flows, and layered veering triangulations, and explains that the taut polynomial of a veering triangulation coincides with a twisted Alexander polynomial of the edge-orientation homomorphism. 
In \Cref{sec:punctured}, we prove that the property of a pseudo-Anosov map being fully-punctured is a profinite property of fibered faces (\Cref{thm:fully-punctured}). 
In \Cref{sec:transverse}, we extend a homological criterion of Rykken to punctured surfaces (\Cref{thm:Rykken-punctured}) and prove that the preservation of the transverse orientation of the invariant foliations by the monodromy map is a profinite invariant (\Cref{thm:transverse}). 
\Cref{sec:taut} combines the preceding results to establish that the taut and Teichm\"uller polynomials are profinite invariants (Theorems \ref{thm:tautpoly}, \ref{thm:filled}). 
Finally, \Cref{sec:App} applies these rigidity theorems together with small normalized dilatations and the veering census to produce explicit new examples of profinitely rigid hyperbolic one-cusped 3-manifolds (\Cref{cor:rigidity}).

\begin{remark} \label{rem:Xu-fullrigidity}
After this work was completed, 
Xu \cite{Xu2026-fullrigidity-arXiv} announced that every lattice in ${\rm PSL}_2\C$ is profinitely rigid among such lattices. Combined with the profinite detection of hyperbolicity (\Cref{ss.profiniterigidity}), this in particular implies \Cref{cor:rigidity}; see also \Cref{rem:Xu-q}. 
Our results were obtained independently, by a direct method.  
\end{remark}

\begin{ack}
The authors are grateful to 
Spencer Dowdall, Teruhisa Kadokami, Hisatoshi Kodani, Chris Leininger, Yi Liu, Jiming Ma, 
Tomoki Mihara, Yair Minsky, 
Anna Parlak,  
Alan Reid, Sohei Tateno, Sam Taylor, Lorenzo Traldi, 
and Xiaoyu Xu 
for useful comments and helpful discussions. 
In particular, they thank Anna Parlak for her correction to an earlier version of this paper. 
A part of this work was carried out while J.\,U. and B.\,M. were visiting the Agol Lab at ICMAT, Madrid, in June 2023. They would like to thank Andrei Jaikin-Zapirain for his warm hospitality.
Part of this work was also done when Y.\,Y. and T.\,C. visited the Simons Laufer Mathematical Sciences Institute in March 2026. They thank the organizers of the {\it Topological and Geometric Structures in Low Dimensions} program for their hospitality and welcome.
J.\,U. has been partially supported by JSPS KAKENHI Grant Number JP23K12969. 
B.\,M. is partially supported by the National Natural Science Foundation of China (Grant No. 12401086). 
\end{ack}

\section{Profinite completions and regularities} \label{sec:regular}

\subsection{The profinite completion} \label{ss.hatpi}

Let $\pi$ be a group. The set of finite quotients $\{\pi/\Gamma\mid \Gamma\lhd \pi,\ [\pi:\Gamma]<\infty\}$ forms a directed inverse system with respect to the quotient maps. 
\emph{The profinite completion} $\wh{\pi}$ of $\pi$ is the inverse limit $\varprojlim \pi/\Gamma$ 
endowed with the relative topology of the direct product topology of discrete finite groups, 
namely, the weakest topology of a topological group such that every projection $\wh{\pi}\surj \pi/\Gamma$ is continuous. 
A topological group is said to be a \emph{profinite group} if it is the profinite completion of some group, 
or equivalently, if it is totally disconnected and compact Hausdorff. 
It is known that the system of finite quotients may be reconstructed from the set of isomorphism classes of finite quotients (\cite{DixonFormanekPolandRibes1982JPAA}, 
\cite{RibesZalesskii2010}*{Corollary 3.2.8}).

Let $\iota:\pi\to \wh{\pi}$ denote the natural map. 
For any group homomorphism $v:\pi\to G$ to a finite group,  
there is a unique group homomorphism $\wh{v}:\wh{\pi}\to G$ satisfying $v=\wh{v}\circ \iota$, 
namely, $v$ uniquely factors through $\iota$. 
Moreover, for any group homomorphism $v:\pi\to G$ to a profinite group, 
there is a unique homomorphism $\wh{v}:\wh{\pi}\to G$ of topological groups satisfying $v=\wh{v}\circ \iota$. 
This property is called \emph{the universality of the profinite completion}. 
Conversely, 
\begin{lemma} \label{lem:NS}
Suppose that $\pi$ is finitely generated. 
Then, for any surjective group homomorphism $\wh{v}:\wh{\pi}\surj G$ to a finite group, the composition $v=\wh{v}\circ \iota:\pi\xrightarrow{\iota} \wh{\pi}\to G$ is again surjective. 
\end{lemma}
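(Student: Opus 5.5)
The plan is to reduce the statement to Nikolov--Segal, which says that in a topologically finitely generated profinite group every finite-index subgroup is open (\cite{NikolovSegal2007AnnMath1}*{Theorem 1.1}). Once we know $\ker \wh{v}$ is open, a density argument finishes the proof.

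\textbf{Step 1: $\wh{\pi}$ is topologically finitely generated.} Since $\pi$ is finitely generated and $\iota(\pi)$ is dense in $\wh{\pi}$, the images under $\iota$ of a finite generating set of $\pi$ topologically generate $\wh{\pi}$.

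\textbf{Step 2: $\ker\wh{v}$ is open.} The homomorphism $\wh{v}$ is only assumed to be an abstract group homomorphism, not a continuous one. Its kernel $K=\ker\wh{v}$ is nonetheless a normal subgroup of finite index $|G|$ in $\wh{\pi}$. By Nikolov--Segal and Step 1, $K$ is open, so $\wh{v}$ is continuous. This is the only deep input and the main point of the argument. If $\wh{v}$ were assumed continuous from the start, this step would be vacuous.

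\textbf{Step 3: density gives surjectivity.} Since $K$ is open, each coset $gK$ is an open subset of $\wh{\pi}$. Because $\iota(\pi)$ is dense, every coset $gK$ meets $\iota(\pi)$. Hence $\iota(\pi)K=\wh{\pi}$, and therefore
\[
v(\pi)=\wh{v}(\iota(\pi))=\wh{v}(\iota(\pi)K)=\wh{v}(\wh{\pi})=G,
\]
where the last equality uses the surjectivity of $\wh{v}$. So $v$ is surjective.

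I expect no obstacle beyond citing Nikolov--Segal correctly, which is exactly where the hypothesis that $\pi$ is finitely generated is used.
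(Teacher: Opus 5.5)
Your proof is correct and follows the paper's argument: Nikolov--Segal makes $\wh{v}$ continuous (you correctly derive this from openness of the finite-index kernel), and density of $\iota(\pi)$ then forces $v$ to be surjective. Your coset formulation $\iota(\pi)K=\wh{\pi}$ is just a more explicit version of the paper's remark that a dense subset of a finite discrete group is the whole group.
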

\begin{proof} 
The Nikolov--Segal theorem \cite{NikolovSegal2007AnnMath1}*{Theorem 1.1} asserts that every surjective group homomorphism from a topologically finitely generated profinite group to a profinite group is continuous, so the given $\wh{v}$ is continuous.  
The image of a dense subset $\iota(\pi)$ of $\wh{\pi}$ via the continuous map $\wh{v}$ is again dense, and a dense subset of a finite group $G$ is $G$ itself, so $v=\wh{v}\circ \iota$ is surjective. 
\end{proof}

If $\pi$ is not finitely generated, then there may exist a finite quotient of $\wh{\pi}$ which is not a finite quotient of $\pi$. 
A typical non-finitely generated example is the product $\pi$ of infinitely many copies of $\Z/2\Z$.

\subsection{Residual finiteness and goodness} \label{ss.good} 

A discrete group $\pi$ is said to be \emph{residually finite} if the natural map $\iota$ is injective. This is a natural assumption for the question of profinite rigidity. 
For any 3-manifold $M$, the group $\pi=\pi_1(M)$ is residually finite (Hempel \cite{Hempel1987} + Perelman's solution of the geometrization conjecture). 

A discrete group $\pi$ is said to be \emph{$n$-good}  
if the natural map $\iota^\ast:H^\ast(\wh{\pi};V)\to H^\ast(\pi;V)$ 
is an isomorphism for every finite $\pi$-module $V$ and $\ast\leq n$.  
A discrete group $\pi$ is said to be \emph{good} (in the sense of Serre, \cite{Serre-CohGal}*{Chap.\,I, \S 2.6, Exercise 2)}) if it is $n$-good for every $n\in \Z_{\geq 0}$.

\begin{lemma}[Serre, \cite{Serre-CohGal}*{Chap.\,I, \S 2.6, Exercise 1)-(b)}] \label{lem:1-good}
Any group is 1-good. 
\end{lemma}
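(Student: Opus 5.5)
Since $H^0$ of any group with coefficients in $V$ is the invariants $V^{\pi}$, we check degrees $0$ and $1$ separately.

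In degree $0$, $H^0(\wh{\pi};V)=V^{\wh{\pi}}$ and $H^0(\pi;V)=V^{\pi}$. Here $V$ is a finite $\pi$-module, so the action factors through the finite group $\mathrm{Aut}(V)$. It therefore extends uniquely to a continuous action of $\wh{\pi}$ by the universality of the profinite completion, and this extension is what makes $H^\ast(\wh{\pi};V)$ meaningful in the first place. Because $\iota(\pi)$ is dense in $\wh{\pi}$ and the stabilizer of each $v\in V$ is open, hence closed, a vector is fixed by $\pi$ if and only if it is fixed by $\wh{\pi}$. So $\iota^\ast$ is the identity map $V^{\wh{\pi}}=V^{\pi}$ in degree $0$.

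In degree $1$ we use crossed homomorphisms. A continuous crossed homomorphism $d:\wh{\pi}\to V$ restricts along $\iota$ to a crossed homomorphism on $\pi$, and principal ones restrict to principal ones. We then show bijectivity through the following dictionary: crossed homomorphisms $\pi\to V$ correspond exactly to group homomorphisms $\pi\to V\rtimes G$, with $G=\mathrm{im}(\pi\to\mathrm{Aut}(V))$, that lift the fixed map $\pi\to G$. The same holds for continuous maps on $\wh{\pi}$.

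Surjectivity of $\iota^\ast$: given a crossed homomorphism $d$ on $\pi$, the associated homomorphism $\pi\to V\rtimes G$ lands in a finite group. By universality it extends uniquely to a continuous homomorphism $\wh{\pi}\to V\rtimes G$. This extension still lifts the map to $G$, since both compositions to $G$ agree on the dense subgroup $\iota(\pi)$. It therefore defines a continuous crossed homomorphism $\wh{d}$ with $\wh{d}\circ\iota=d$.

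Injectivity of $\iota^\ast$: suppose $\wh{d}\circ\iota$ is principal, say $g\mapsto gv-v$. Then the two continuous maps $\wh{d}$ and $x\mapsto xv-v$ agree on the dense subgroup $\iota(\pi)$, and the target $V$ is discrete, so they agree everywhere. Hence $\wh{d}$ is principal and $\iota^\ast$ is injective in degree $1$.

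No step is a genuine obstacle. The only point needing care is that continuity and density do the work, with finiteness of $V$ entering through the fact that $V\rtimes G$ is a finite group. Note that injectivity of $\iota$ is not needed, so the lemma holds for arbitrary groups.
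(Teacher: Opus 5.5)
Your proof is correct. The paper gives no argument of its own beyond the citation of Serre's exercise, and your argument is the standard solution to that exercise: density of $\iota(\pi)$ together with open stabilizers in degree $0$, and in degree $1$ the identification of crossed homomorphisms with lifts to the finite group $V\rtimes G$, combined with universality of the profinite completion and density.
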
 

The fundamental group of any compact 3-manifold is known to be good, 
after deep results of Agol and Wise (cf.\,\cite{GJZPZ2014}*{Remark 5.14}). 

If $\pi$ is $n$-good, the profinite completion of a resolution $\mathscr{F}_\ast$ of $\Z$ over $\Z[\pi]$ is a resolution $\wh{\mathscr{F}}_\ast$ of $\wh{\Z}$ over $\wh{\Z}[\![\wh{\pi}]\!]$ up to $\ast\leq n$ 
(cf.\,\cite{JaikinZapirain2020GT}*{Proposition 3.2}, \cite{HughesKielak2025RMI}*{Proposition 4.6 + Remark 4.3}). 

A group $\pi$ is said to be FP$_n$ (resp.\,FP$_\infty$) if it admits a resolution $\mathscr{F}_\ast$ whose terms with $\ast\leq n$ (resp.\,all $n$) are finitely generated. 
A group $\pi$ is FP$_1$ if and only if it is finitely generated. A finitely presented group is FP$_2$, and a group of finite type is FP$_\infty$; the converse implications do not hold in general.

\subsection{Abelianization and regularities} 

The profinite completion is compatible with the abelianization, 
namely, there is a natural isomorphism $(\wh{\pi})^{\rm ab}\cong (\pi^{\rm ab})^{\wedge}$, 
so we may simply write $\wh{\pi}^{\rm ab}$ for both (cf.\,\cite{Neukirch}*{Chapter IV, Section 1, Exercise 5}). 
The following notions were initially introduced by Boileau--Friedl \cite{BoileauFriedl2020AMS} (preprint 2015) and Liu \cite{YiLiu2023Invent} 
to state the profinite rigidity of the Alexander polynomials under ``a mild condition'' and to discuss a milder one, respectively. 

\begin{defn} 
Let $\pi$ and $\varpi$ be groups with an isomorphism $\Phi:\wh{\pi}\congto \wh{\varpi}$ on their profinite completions.
Note that the induced isomorphism $\Phi^{\rm ab}:\wh{\pi}^{\rm ab}\congto \wh{\varpi}^{\rm ab}$ on the abelianizations may be regarded as that of $\wh{\Z}$-modules. 

If $\Phi^{\rm ab}$ is the profinite completion of an isomorphism $\pi^{\rm ab}\congto \varpi^{\rm ab}$ on the abelianizations of the original groups, then $\Phi$ is said to be \emph{regular}. 

If $\Phi^{\rm ab}$ is the composition of the profinite completion of some isomorphism $\pi^{\rm ab}\congto \varpi^{\rm ab}$ and the multiplication by some unit $u\in \wh{\Z}^\times$, then $\Phi$ is said to be \emph{$\wh{\Z}^\times$-regular}. 
\end{defn}

In addition, Xu \cite{Xu2025regularity-arXiv} introduced the following to refine the notion. 
\begin{defn} 
If $M$ and $N$ are compact manifolds with incompressible toroidal boundary, $\Phi:\wh{\pi}_1(M)\congto \wh{\pi}_1(N)$ is said to be \emph{peripheral regular} if it respects the peripheral structures; 
namely, the profinite isomorphism $\wh{\pi}_1(\partial_i M)\congto \wh{\pi}_1(\partial_j N)$ on the boundaries is the completion of some $\pi_1(\partial_i M)\congto \pi_1(\partial_j N)$ up to conjugation.
\end{defn}

We will make use of the following fundamental results in the study of profinite completions of hyperbolic 3-manifold groups. 

\begin{theorem}[{Liu \cite{YiLiu2023Invent}*{Theorem 1.2}, Xu \cite{Xu2025regularity-arXiv}*{Theorem 1.4}, Hanany \cite{Hanany2026regularity-arXiv}*{Theorem 1.1}}] \label{thm:regularity}
If $M$ and $N$ are finite-volume hyperbolic $3$-manifolds, then any profinite isomorphism $\Phi:\wh{\pi}_1(M)\congto \wh{\pi}_1(N)$ is regular. 
Moreover, if in addition both $M$ and $N$ are cusped, then $\Phi$ is peripheral regular. 
\end{theorem}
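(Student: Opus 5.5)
This statement is a citation: it collects results of Liu, Xu, and Hanany, and the paper invokes it rather than reproving it. My plan is therefore to explain how the cited results combine into the stated form, not to rebuild their proofs.

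\textbf{Regularity.} For the first assertion I will invoke Liu \cite{YiLiu2023Invent}*{Theorem 1.2}. It gives, for finite-volume hyperbolic $M$ and $N$, that $\Phi^{\rm ab}$ restricted to the free part is the completion of an isomorphism $H_M\congto H_N$ composed with multiplication by some $u\in\wh{\Z}^\times$. In other words, it yields $\wh{\Z}^\times$-regularity. Liu's argument runs through the profinite correspondence of fibered classes and the invariance of twisted Reidemeister torsions and Thurston norms.

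To upgrade this to regularity I must show that $u$ can be taken to be $\pm1$. This is exactly the content of Xu \cite{Xu2025regularity-arXiv}*{Theorem 1.4} and Hanany \cite{Hanany2026regularity-arXiv}*{Theorem 1.1}. The cusped case uses the peripheral subgroups: a cusp torus subgroup $\Z^2$ completes to $\wh{\Z}^2$, and Dehn filling or relative cohomology pins down $u$. The closed case uses other methods.

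I also need to handle the torsion part of the abelianization. That part is finite, so it is canonically identified with its own completion. Hence $\Phi^{\rm ab}$ restricts there to an honest isomorphism of finite groups.

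The free and torsion parts must then be combined into a single isomorphism $\pi^{\rm ab}\congto\varpi^{\rm ab}$ whose completion is $\Phi^{\rm ab}$. I will do this by writing $\pi^{\rm ab}=H\oplus T$. The off-diagonal component is a homomorphism $\wh{H}\to T'$; it is continuous and so factors through a finite quotient. Its restriction to $H$ is therefore a genuine homomorphism $H\to T'$, so the completed block matrix is the completion of a discrete block matrix.

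\textbf{Peripheral regularity.} For the second assertion I will cite Xu's peripheral regularity theorem together with Wilton--Zalesskii's detection of peripheral subgroups \cite{WiltonZalesskii2017JLMS}. Each completed cusp subgroup $\wh{\pi}_1(\partial_iM)$ is mapped to a conjugate of some $\wh{\pi}_1(\partial_jN)$. The induced isomorphism $\wh{\Z}^2\to\wh{\Z}^2$ is a matrix in ${\rm GL}_2\wh{\Z}$, and Xu shows it lies in ${\rm GL}_2\Z$.

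The main obstacle is eliminating the scalar $u\in\wh{\Z}^\times$. This requires deep input from the cited works, so I will simply quote them.
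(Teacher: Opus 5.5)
Your proposal is correct and matches the paper, which gives no independent proof: it attributes $\wh{\Z}^\times$-regularity (closed and cusped) to Liu, regularity and peripheral regularity in the cusped case to Xu, and the elimination of the scalar in the closed case to Hanany. Your additional block-matrix argument, which assembles the free and finite torsion parts of the abelianization into a single discrete isomorphism, is valid and fills in a detail the paper leaves implicit.
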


To specify the contributions: Liu established the $\wh{\Z}^\times$-regularity in both the closed and the cusped cases, 
Xu obtained regularity and peripheral regularity in the cusped case, 
and Hanany deduced the regularity in the closed case from Liu's result by purely algebraic and number-theoretic arguments.  

\section{Twisted multivariable Alexander Polynomials}\label{sec:twisted} 

\subsection{Twisted Alexander modules} \label{ss.twisted}

\subsubsection{Definition} \label{sss.def.twisted}

Let $H$ be a free abelian multiplicative group of rank $r\in \Z_{>0}$ with a basis $(t_1,\ldots,t_r)$ and put $\Lambda=\Z[H]=\Z[t_1^\Z,\ldots,t_r^\Z]$. 
Let $\pi$ be a group with a surjective homomorphism $\alpha:\pi\surj H$ and a representation $\rho:\pi\to {\rm GL}_k\Z$. Put $\Gamma={\rm Ker}\,\alpha$. 

Let $V_\rho$ denote the module $\Z^k$ regarded as a right $\pi$-module via the transpose of $\rho$. 
Then $\Gamma$ acts on $V_\rho$ via the restriction of $\rho$. 
Let $\mathscr{P}_\ast$ be a projective resolution of $\Z$ over $\Z[\Gamma]$. Then the homology $H_\ast(\Gamma,V_\rho)$ with local coefficients is defined to be that of the complex $\mathscr{C}_\ast(\Gamma,\rho)=\mathscr{P}_\ast\otimes_{\Z[\Gamma]} V_\rho$. We write $H_\ast(\Gamma,\rho)=H_\ast(\Gamma, V_\rho)$ for simplicity. 
The conjugate action of $\pi$ on $\Gamma$ induces a natural action of $H$ on $H_\ast(\Gamma,\rho)$, and $H_\ast(\Gamma,\rho)$ becomes a $\Lambda$-module (see \cite{Brown1982GTM}*{Chapter III, Corollary 8.2}). 

Let $M$ be a connected CW complex with $\pi_1(M)=\pi$ and let $M_\alpha\to M$ denote the $H$-cover corresponding to $\Gamma$. Then, for $\ast=0,1$, the Hurewicz theorem ensures that there is a natural isomorphism $H_\ast(\Gamma,\rho)\cong H_\ast(M_\alpha,\rho)$ of finitely generated $\Lambda$-modules. 
Note that Shapiro's lemma \cite{Brown1982GTM}*{Chapter III, Proposition 6.2} yields that $H_\ast(\Gamma,\rho)\cong H_\ast(\pi,\rho\otimes \alpha)$ and $H_\ast(M_\alpha,\rho)\cong H_\ast(M,\rho\otimes \alpha)=H_\ast(\mathscr{C}_\ast(\widetilde{M})\otimes_{\Z[\pi]}\Lambda^k)$ for the tensor representation $\rho\otimes \alpha:\pi\to {\rm GL}_k\Lambda$, where  $\mathscr{C}_\ast(\widetilde{M})$ denotes the $\pi$-module, which is a lifted chain complex of the universal cover $\widetilde{M}$ of $M$. 
These four isomorphic $\Lambda$-modules are all called \emph{the $\ast$-th twisted Alexander module} of $(\rho,\alpha)$. 

\begin{defn}
If the 1st twisted Alexander module $\mathcal{A}_\rho^\alpha = H_1(\Gamma,\rho)$ of $(\rho,\alpha)$ is a torsion $\Lambda$-module, 
then its \emph{order element}, namely, a generator of the \emph{smallest} principal ideal containing 
the 0-th Fitting ideal, is called \emph{the multivariable twisted Alexander polynomial} $\Delta_\rho^\alpha(t_1,\ldots,t_r)$. 
\end{defn} 
So, $\Delta_\rho^\alpha$ is well-defined up to multiplication by units of $\Lambda$. The equality up to this ambiguity will be denoted by $\doteq$. 
A basic reference is Friedl--Vidussi's survey \cite{FriedlVidussi2011survey}. 

\begin{example} Let $\bm{1}$ denote the trivial ${\rm GL}_1\Z$-representation. 
For an $r$-component link $L=K_1\sqcup\cdots \sqcup K_r$ in $S^3$ with $\pi=\pi_L=\pi_1(S^3-L)$, the classical multivariable Alexander polynomial $\Delta_L$ satisfies $\Delta_L(t_1,\ldots,t_r)\doteq \Delta_{\bm{1}}^{\rm ab}(t_1,\ldots,t_r)$, where  
${\rm ab}$ denotes the maximal abelianization map of $\pi_L$, and $t_i$'s denote the images of the meridians of $K_i$'s in $\pi_L^{\rm ab} \cong \Z^r$. 
Hence the polynomials coincide up to order changing and orientation reversing. 
\end{example} 

\subsubsection{Comparison} 
A straightforward argument shows the following. 

\begin{prop} \label{prop:GLkZ}
Let $\rho:\pi\to {\rm GL}_k \Z$ and $\alpha:\pi\surj H$ with $(t_1,\ldots, t_r)$ be as above 
in {\rm \Cref{sss.def.twisted}}. 
Let $\beta:\varpi\surj K$ be another surjective group homomorphism, where $K$ is a free abelian multiplicative group of rank $r$ with a fixed basis $(s_1,\ldots,s_r)$. 
Given a pair $(\Phi,\Phi_\ast)$ of group isomorphisms $\Phi:\pi\congto \varpi$ and $\Phi_\ast:H\congto K$ such that $\beta\circ \Phi=\Phi_\ast\circ \alpha$ holds, 
define $\varrho=\rho\circ \Phi^{-1}:\varpi\to {\rm GL}_k\Z$. 
Write $\Phi_\ast(t_i)=t_i$ by abuse of notation. Then there is an isomorphism 
\[H_\ast(\pi,\rho\otimes \alpha)\cong H_\ast(\varpi,\varrho\otimes \beta)\]
of $\Lambda$-modules. 

Write $s_i=t_1^{v_{i1}}\cdots t_r^{v_{ir}}$ for each $1\leq i\leq r$, so that $V=(v_{ij})\in {\rm GL}_r\Z$ presents an isomorphism $\Z^r\congto \Z^r$ determined by $\Phi_\ast$ and the fixed bases of $K$ and $H$. 
Then, we have 
\[\Delta_\rho^\alpha(t_1,\ldots, t_r)\doteq \Delta_\varrho^\beta(t_1^{v_{11}}\cdots t_r^{v_{1r}},\ldots, t_1^{v_{r1}}\cdots t_r^{v_{rr}})\]
in $\Lambda$. 
Namely, $\Delta_\rho^\alpha(t_1,\ldots,t_r)$ and $\Delta_\varrho^\beta(s_1,\ldots,s_r)$ coincide up to the ${\rm GL}_r\Z$-action on $\Z^r$ and multiplication by units of $\Lambda$. 
\end{prop}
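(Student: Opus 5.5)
The argument is a transport of structure along the pair $(\Phi,\Phi_\ast)$, followed by a bookkeeping step for the change of variables. Since $\beta\circ\Phi=\Phi_\ast\circ\alpha$, the isomorphism $\Phi$ carries $\Gamma=\mathrm{Ker}\,\alpha$ onto $\mathrm{Ker}\,\beta$. Since $\varrho\circ\Phi=\rho$, the identity map of $\Z^k$ is a $\Phi$-equivariant isomorphism $V_\rho\cong V_\varrho$.

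First I fix a free resolution $\mathscr{P}_\ast$ of $\Z$ over $\Z[\pi]$. Its pushforward along $\Phi$ is a free resolution of $\Z$ over $\Z[\varpi]$. I then form the complexes $\mathscr{P}_\ast\otimes_{\Z[\pi]}(V_\rho\otimes\Lambda)$ for $\rho\otimes\alpha$, and the transported complex for $\varrho\otimes\beta$. Here $\Lambda$ is viewed as $\Z[K]$ via the identification $\Phi_\ast$, which is exactly the abuse of notation $\Phi_\ast(t_i)=t_i$. The map $x\otimes v\otimes\lambda\mapsto \Phi(x)\otimes v\otimes\Phi_\ast(\lambda)$ is well defined precisely because of the two compatibilities. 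It is a chain isomorphism that is $\Lambda$-linear with respect to this identification. Taking homology gives $H_\ast(\pi,\rho\otimes\alpha)\cong H_\ast(\varpi,\varrho\otimes\beta)$ as $\Lambda$-modules. Equivalently, one can argue with $H_\ast(\Gamma,\rho)$, using that $\Phi$ intertwines the conjugation actions of $\pi$ and $\varpi$ and hence the $H$- and $K$-actions.

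Second, the isomorphism is $\Lambda$-linear, so the two modules have the same $0$-th Fitting ideal and hence the same order ideal. One module is torsion if and only if the other is. Therefore the order elements agree up to units, once both are written in the variables $t_i$. The polynomial $\Delta_\varrho^\beta$, however, is expressed as an element of $\Z[K]$ in the basis $(s_1,\ldots,s_r)$. Under $\Phi_\ast$ each $s_i$ corresponds to $t_1^{v_{i1}}\cdots t_r^{v_{ir}}$, and this is a ring isomorphism $\Z[K]\cong\Lambda$. Substituting these monomials into $\Delta_\varrho^\beta(s_1,\ldots,s_r)$ produces the element of $\Lambda$ that generates the transported order ideal. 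This gives the displayed formula up to $\doteq$. Since $V\in\mathrm{GL}_r\Z$, the substitution is the $\mathrm{GL}_r\Z$-action, and it preserves the units of $\Lambda$, namely $\pm$ monomials.

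The only step needing care is the bookkeeping of the identification $\Z[K]\cong\Lambda$. One must keep track of the direction of $V$ (rows versus columns) and make sure the Fitting and order ideals are compared inside the same ring. Everything else is formal functoriality of group homology with twisted coefficients.
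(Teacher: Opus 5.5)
Your proposal is correct and takes the same route as the paper. The paper gives no written proof, only the remark that ``a straightforward argument'' suffices. That argument is exactly your transport of structure along $(\Phi,\Phi_\ast)$, which identifies the twisted homology modules as $\Lambda$-modules, followed by the substitution $s_i\mapsto t_1^{v_{i1}}\cdots t_r^{v_{ir}}$ on the order elements.
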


\begin{example}
(1) When $\rho$ is the trivial ${\rm GL}_1\Z$-representation $\bm{1}$, then we always have $\varrho=\rho\circ \Phi^{-1}=\bm{1}$. 

(2) In the proof of \Cref{thm:tautpoly}, we will discuss the coincidence of nontrivial ${\rm GL}_1\Z$-representations called the edge-orientation homomorphisms by using a certain universality. 
\end{example}

\Cref{prop:GLkZ} implies that the isomorphism class of the link group $\pi_L$ of an $r$-component link determines $\Delta_L$ only up to the ${\rm GL}_r\Z$-action. 

\begin{example} \label{eg:Alex} 
Let $J=4^2_1=L4a1=T(2,4)=J_1\sqcup J_2$ be Solomon's link in $S^3$ with an orientation so that  
$\Delta_J(t_1,t_2)=t_1t_2+1$. 

(1) Let $L$ be the same link with the orientation of $J_2$ reversed. Then $\pi_J\cong \pi_L$ and $\Delta_L(t_1,t_2)\doteq t_1+t_2$. 

(2) Let $n \in \Z$. 
Performing $1/n$-surgery along $J_2$, we obtain a link $L_n$ in $S^3$ 
consisting of the unknot and the torus knot $T(2,1-2n)$. 
By ${\rm lk}(J_1,J_2)=2$, we obtain 
$\Delta_{L_n}(t_1,t_2)\doteq \Delta_{J}(t_1,t_1^{-2n}t_2)=t_1^{1-2n}t_2+1$, 
whereas $\pi_J\cong \pi_{L_n}$. 
No combination of variable inversions or swaps can transform $\Delta_J$ into $\Delta_{L_n}$ if $n\neq 0,1$. 
\end{example}

\subsubsection{Hyperbolic link exteriors} 
\label{sss.hyplinks} 
In \Cref{prop:GLkZ}, when we start only with the existence of an isomorphism $\Phi:\pi\congto \varpi$, 
one might find the ambiguity substantial. Here, we try to reduce this ambiguity.

For cusped hyperbolic 3-manifolds, the Mostow--Prasad rigidity theorem
\cites{Mostow1973AnnMS,Prasad1973Invent,Marden1974} 
ensures the correspondence of the peripheral systems of the cusps up to conjugacy (cf.\,\cite{AschenbrennerFriedlWilton2015}*{Theorem 1.7.2}). 
For instance, let $J=J_1\sqcup\cdots\sqcup J_r$ and $L=L_1\sqcup \cdots \sqcup L_r$ be hyperbolic links in $S^3$ and put $M=S^3-J$ and $N=S^3-L$. 
For each $i$, let $T_i$ denote the peripheral torus of $J_i$, and recall that the preferred longitude of $J_i$ has the image $\sum_{j\neq i}{\rm lk}(J_i,J_j)\,t_j$ in $H_1(M;\Z)=\Z^r$, where $t_j$ denotes the class of the meridian of $J_j$. 
Hence every 2-component sublink of $J$ has linking number zero if and only if the image of $\pi_1(T_i)$ in $H_1(M;\Z)$ is of rank one for every $i$, that is, if and only if this image is generated by the class of the meridian for every $i$. 
The latter condition depends only on the group together with the peripheral structure, 
and the peripheral structures of $M$ and $N$ correspond under $\Phi$ by the Mostow--Prasad rigidity theorem. 
Thus, if $J$ satisfies this condition, then so does $L$, and each meridian of a component of $J$ corresponds to a meridian of a component of $L$ up to sign. 
We remark that, by the Torres formula, this condition implies $\prod_i(t_i-1)\mid \Delta_J$. 
Consequently, the induced map $\Phi_*:\pi_1(M)^{\rm ab}_{\rm free}\congto \pi_1(N)^{\rm ab}_{\rm free}$ is presented by a signed permutation matrix. Namely, we have 

\begin{prop} \label{prop.hyplinks} 
Let the setting be as in {\rm \Cref{prop:GLkZ}}, and suppose that $\pi=\pi_1(M)$ and $\varpi=\pi_1(N)$ for the exteriors $M$ and $N$ of hyperbolic links $J$ and $L$ in $S^3$, 
and that $\alpha$ and $\beta$ are the abelianization maps with the bases given by the meridians. 
If every 2-component sublink of $J$ has linking number zero, then $L$ satisfies the same condition, and
\[\Delta_\rho^\alpha(t_1,\ldots, t_r)\doteq 
\Delta_\varrho^\beta(t_{\sigma(1)}^{\varepsilon_1},\ldots, t_{\sigma(r)}^{\varepsilon_r})\]
holds for some $\sigma\in \mca{S}_r$ and $\varepsilon_i\in \{\pm 1\}$.
\end{prop}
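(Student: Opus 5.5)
We reduce the statement to \Cref{prop:GLkZ} by showing that the matrix $V\in{\rm GL}_r\Z$ presenting $\Phi_\ast$ with respect to the meridian bases is a signed permutation matrix.

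First, by the Mostow--Prasad rigidity theorem, the isomorphism $\Phi:\pi\congto\varpi$ carries each peripheral subgroup $\pi_1(T_i)\subset\pi$ to a conjugate of some peripheral subgroup $\pi_1(T'_{\sigma(i)})\subset\varpi$, for a bijection $\sigma$ between the cusps. Since conjugation acts trivially on abelianizations, $\Phi_\ast$ maps the image $I_i$ of $\pi_1(T_i)$ in $H_1(M;\Z)=\Z^r$ onto the image $I'_{\sigma(i)}$ of $\pi_1(T'_{\sigma(i)})$ in $H_1(N;\Z)$.

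Next, we compute these images. By the formula for the preferred longitude, $I_i$ is generated by $t_i$ and $\sum_{j\neq i}{\rm lk}(J_i,J_j)\,t_j$. Thus ${\rm rank}\, I_i=1$ if and only if ${\rm lk}(J_i,J_j)=0$ for all $j\neq i$, and in that case $I_i=\Z t_i$, since the meridian $t_i$ is primitive. By hypothesis every $I_i$ has rank one, so every $I'_{\sigma(i)}$ has rank one as well. Because $\sigma$ is a bijection, the same criterion applied to $L$ shows that all pairwise linking numbers of $L$ vanish and that $I'_j=\Z s_j$ for every $j$. It follows that $\Phi_\ast(\Z t_i)=\Z s_{\sigma(i)}$, so $\Phi_\ast(t_i)=s_{\sigma(i)}^{\pm1}$. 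Hence $V$ is a signed permutation matrix.

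Finally, inserting this $V$ into the formula of \Cref{prop:GLkZ} gives $\Delta_\rho^\alpha(t_1,\ldots,t_r)\doteq\Delta_\varrho^\beta$ evaluated at signed permuted variables, after relabeling by $\sigma^{-1}$. Note that $\varrho=\rho\circ\Phi^{-1}$ here, as in the setting of \Cref{prop:GLkZ}.

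The step needing the most care is the first one: justifying that peripheral subgroups correspond setwise up to conjugacy, including when $\Phi$ is not induced by a homeomorphism. For this we invoke Mostow--Prasad in the form of \cite{AschenbrennerFriedlWilton2015}*{Theorem 1.7.2}. Since $\Phi$ is realized by an isometry, peripheral subgroups go to peripheral subgroups, as they are exactly the maximal parabolic $\Z^2$ subgroups. The remaining steps are bookkeeping.
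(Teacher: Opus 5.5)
Your proposal is correct and follows essentially the same route as the paper. Both use Mostow--Prasad to match peripheral subgroups up to conjugacy, characterize the vanishing of linking numbers by the peripheral images in $H_1$ having rank one (via the longitude class $\sum_{j\neq i}{\rm lk}(J_i,J_j)\,t_j$), conclude that $\Phi_\ast$ is a signed permutation matrix, and then apply \Cref{prop:GLkZ}.
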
 

\begin{example} \label{eg.Borromean}
Let $0\neq k,l\in \Z$. 
The Borromean rings $B=K_1\sqcup K_2\sqcup K_3$ form a 3-component hyperbolic link with 
${\rm lk}(K_i,K_j)=0$ for every $i\neq j$. 
The $(-1/k)$-filling along a component yields the odd twisted Whitehead link $W_{2k-1}$, 
which is a 2-component hyperbolic link with linking number zero. 
The $(-1/l)$-filling along a component of $W_{2k-1}$ yields the double twist knot $J(2k,2l)$ in $S^3$ (cf.\,\cite{BenardTangeTranUeki-Whitehead}). 
We have $\Delta_B(t_1,t_2,t_3)=(t_1-1)(t_2-1)(t_3-1)$, 
$\Delta_{W_1}(t_1,t_2)\doteq (t_1-1)(t_2-1)$, 
$J(2,2)=3_1$, $J(2,-2)=4_1$. 
Although $\Delta_{3_1}(t)=t^2-t+1$ and $\Delta_{4_1}(t)=t^2-3t+1$, 
the polynomials $\Delta_B$ and $\Delta_{W_1}$ persist under 
the $(-1/k,-1/l)$-surgery on $B$ and the $(-1/l)$-surgery on $W_1$. 
Namely, the $3$-component link obtained from $B$ by the $(-1/k,-1/l)$-surgery on two components, which contains $J(2k,2l)$ as a component, has the exterior homeomorphic to that of $B$ and the Alexander polynomial $\Delta_B$, in accordance with \Cref{prop.hyplinks}; the same holds for $W_1$. 
\end{example}

\subsection{Complete twisted Alexander modules} \label{ss.hattwisted} 
\subsubsection{Complete modules}
Let the setting be as before in \Cref{sss.def.twisted}. 
For the arbitrarily given group representation $\rho:\pi\to {\rm GL}_k\Z$, 
there is an induced continuous representation 
\[\wh{\rho}:\wh{\pi}\to {\rm GL}_k\wh{\Z}.\] 
Indeed, as $n$ runs through $\Z_{>0}$, the family $(\rho\mod n:\pi\to {\rm GL}_k\Z/n\Z)_n$ defines a representation 
$\rho:\pi\to {\rm GL}_k\wh{\Z}\cong \varprojlim_n {\rm GL}_k \Z/n\Z$.
By the universality of the profinite completion \cite{RibesZalesskii2010}*{Lemma 3.2.1}, this uniquely extends to a continuous representation $\wh{\rho}$. 
If $\rho$ has a finite image, then ${\rm Im}\,\rho={\rm Im}\,\wh{\rho}$. 

Let $\wh{\Lambda}=\wh{\Z}[\![\wh{H}]\!]=\wh{\Z}[\![t_1^{\wh{\Z}},\ldots,t_r^{\wh{\Z}}]\!]$ denote the complete group ring, that is, the profinite completion of $\Lambda$. 
Then the continuous group homology $H_\ast(\wh{\Gamma}, \wh{\rho})$ is defined by the complex  $\mathscr{C}_\ast(\wh{\Gamma},\wh{\rho})
=\wh{\mathscr{P}}_\ast \wh{\otimes}_{\wh{\Z}[\![\wh{\Gamma}]\!]} V_{\wh{\rho}}$, 
where $\wh{\mathscr{P}}_\ast$ is a projective resolution of $\wh{\Z}$ over $\wh{\Z}[\![\wh{\Gamma}]\!]$.   
Since ${\rm Ker}\,(\wh{{\rm GL}}_k \Z\to {\rm GL}_k \wh{\Z})$ acts trivially on $\wh{\Z}^k$, 
this definition is a natural one. 

By \cite{RibesZalesskii2010}*{Proposition 6.5.7}, there is an isomorphism 
\begin{align*}
H_\ast(\wh{\Gamma}, \wh{\rho}) \cong \varprojlim_{(G,n)} H_\ast(\Gamma/G, \rho\mod n)
\tag{$\star$}
\end{align*} 
of $\wh{\Lambda}$-modules, where $(G,n)$ runs through the pairs of a finite-index normal subgroup $G$ of $\Gamma$ and $n\in \Z_{>0}$ with $G\subset {\rm Ker}(\rho\mod n)$. 
We define \emph{the completed 1st twisted Alexander module} to be $\wh{\mathcal{A}}_\rho^\alpha=H_1(\wh{\Gamma}, \wh{\rho})$. 

Now let $\ast \leq 1$. 
The five-term exact sequence yields a natural surjective homomorphism $H_\ast(\Gamma,\rho)\to H_\ast(\Gamma/G, \rho\mod n)$ for every such pair $(G,n)$.
Hence ($\star$), \Cref{lem:1-good} on 1-goodness, and the right-exactness of the profinite completion imply that  
\begin{lemma} \label{lem:share} For $\ast \leq 1$, 
$H_\ast(\wh{\Gamma},\wh{\rho})$ is the profinite completion of $H_\ast(\Gamma, \rho)$, 
and they share the same presentation matrix. 
\end{lemma}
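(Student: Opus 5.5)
We must show that for $\ast\le 1$ the continuous homology $H_\ast(\wh{\Gamma},\wh{\rho})$ is the profinite completion of $H_\ast(\Gamma,\rho)$, and that a presentation matrix over $\Lambda$ of the latter also presents the former over $\wh{\Lambda}$. The plan is to identify both sides with the same inverse limit of finite modules via ($\star$), and then use right-exactness of completion to transport presentations.

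\textbf{Step 1 (the finite layers).} Fix a pair $(G,n)$ with $G\lhd\Gamma$ of finite index and $G\subset{\rm Ker}(\rho\mod n)$. The five-term exact sequence of $1\to G\to\Gamma\to\Gamma/G\to1$ with coefficients $V_\rho/n$ gives
\[H_1(\Gamma,\rho\mod n)\surj H_1(\Gamma/G,\rho\mod n).\]
Since $G$ acts trivially on $V_\rho/n$, its kernel is the image of $H_1(G,\Z/n)^{k}_{\Gamma/G}$, i.e.\ of $G^{\rm ab}\otimes V_\rho/n$ coinvariants. We also have $H_1(\Gamma,\rho)\surj H_1(\Gamma,\rho\mod n)$ with cokernel inside $H_0(\Gamma,\rho)[n]$. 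For $\ast=1$ that torsion term needs care; I would handle it by working at the chain level, as in Step 3. Each $H_\ast(\Gamma/G,\rho\mod n)$ is a finite $\Lambda$-module, because $H$ acts through a finite quotient once $G$ is also normal in $\pi$. We may take $G$ normal in $\pi$, since such $G$ are cofinal among finite-index normal subgroups of $\Gamma$: $\Gamma$ is finitely generated if $\pi$ is, and otherwise one intersects conjugates.

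\textbf{Step 2 (cofinality via 1-goodness).} Every finite quotient of the $\Lambda$-module $A=H_\ast(\Gamma,\rho)$ should factor through some $H_\ast(\Gamma/G,\rho\mod n)$. For $\ast=0$ this is immediate, since $A=(V_\rho)_\Gamma$. For $\ast=1$, a finite quotient of $A$ killed by $n$ is a quotient of $H_1(\Gamma,\rho\mod n)$. By \Cref{lem:1-good}, classes in $H^1(\Gamma,V^\vee)$ with finite coefficients come from $H^1(\wh{\Gamma},V^\vee)$, hence are inflated from some $\Gamma/G$. Dually, by Pontryagin duality on finite modules, the corresponding quotient of $H_1$ factors through $H_1(\Gamma/G,\rho\mod n)$. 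Combined with Step 1, this gives
\[\wh{A}=\varprojlim_{(G,n)}H_\ast(\Gamma/G,\rho\mod n)\cong H_\ast(\wh{\Gamma},\wh{\rho})\]
by ($\star$).

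\textbf{Step 3 (presentation matrix).} Take a presentation $\Lambda^m\xrightarrow{P}\Lambda^l\to A\to0$; for instance, with $\pi$ finitely presented, use Fox calculus on $\mathscr{C}_\ast(\wt M)\otimes\Lambda^k$. Profinite completion $-\otimes_\Lambda\wh{\Lambda}$ on finitely generated modules is right exact, and agrees with the inverse-limit completion because $\Lambda$ is Noetherian and $\wh{\Lambda}$ is its completion at finite-index ideals. Hence $\wh{\Lambda}^m\xrightarrow{P}\wh{\Lambda}^l\to\wh{A}\to0$ is exact, and $P$ presents $H_\ast(\wh{\Gamma},\wh{\rho})$.

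The main obstacle is Step 2 for $\ast=1$: making the dualization of 1-goodness rigorous with twisted coefficients, and controlling the $n$-torsion discrepancy between $H_1(\Gamma,\rho)/n$ and $H_1(\Gamma,\rho\mod n)$. I would first try to bypass this by completing the chain complex $\mathscr{C}_\ast$ in degrees $\le2$ and invoking the 1-good resolution statement (\cite{JaikinZapirain2020GT}*{Proposition 3.2}).
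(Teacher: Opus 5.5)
Your outline follows the paper's own sketch (the five-term sequence to the finite layers, ($\star$), \Cref{lem:1-good}, and right exactness). However, Step~2 for $\ast=1$, which carries all the content, is left open, and the arguments you sketch for it do not work as stated.

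\textbf{What fails in Step~2.} The claim that $\Gamma$ is finitely generated whenever $\pi$ is, is false. The kernel of $\pi\surj\Z^r$ is in general infinitely generated (e.g.\ $[F_2,F_2]\subset F_2$, or any $r\geq 2$ in the $3$-manifold setting). So a finite-index $G\lhd\Gamma$ may have infinitely many $\pi$-conjugates. The inflation supplied by 1-goodness of the abstract group $\Gamma$ then lands in some $\Gamma/G$ with $G$ possibly not normal in $\pi$. That is outside the system in ($\star$): for the layers to be $\Lambda$-modules, $G$ must be normal in $\pi$, i.e.\ $\wh{\Gamma}$ is the closure ${\rm Ker}\,\wh{\alpha}$. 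This can be salvaged using that kernels of finite $\Lambda$-quotients are $H$-stable, but you do not argue this. Also, $A/nA$ is a \emph{submodule} of $H_1(\Gamma,\rho\mod n)$, with cokernel $H_0(\Gamma,\rho)[n]$, not a quotient of it. So a finite quotient of $A$ killed by $n$ does not automatically factor through $H_1(\Gamma,\rho\mod n)$.

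Your caution in Step~1 is justified, and the paper's one-line argument overlooks the same point. It asserts that $H_\ast(\Gamma,\rho)\to H_\ast(\Gamma/G,\rho\mod n)$ is surjective, which fails for $\ast=1$. Take $\pi=\langle\gamma\rangle\times\langle t\rangle$, $\alpha$ the projection to $\langle t\rangle$, $\rho(\gamma)=-1$, $\rho(t)=1$, $n=2$, $G=\langle\gamma^2\rangle$. Then $H_1(\Gamma,\rho)=0$ but $H_1(\Gamma/G,\rho\mod 2)=H_1(\Z/2,\Z/2)=\Z/2$. Only pro-surjectivity holds, and it is cheap. $H_0(\Gamma,\rho)=(V_\rho)_\Gamma$ is a finitely generated abelian group; if $e$ is the exponent of its torsion, the image of $H_1(\Gamma,\rho\mod ne)$ in $H_1(\Gamma,\rho\mod n)$ lies in the image of $A$.

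\textbf{The missing ingredient.} You need exactness, not just right exactness: $H_1=\ker d_1/{\rm im}\,d_2$, and completion has to commute with the kernel. This is the flatness of $\Lambda\to\wh{\Lambda}$ (\Cref{lem:hatLambdaisflat}, whose proof does not use the present lemma), or equivalently Artin--Rees for finitely generated modules over the Noetherian ring $\Lambda$. A repaired Step~2 needs it too, to compare finite quotients of $A/nA$ with those of the finitely generated $\Lambda$-module $H_1(\Gamma,\rho\mod n)$.

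With flatness, your chain-level bypass closes at once, provided you work over $\pi$ rather than $\Gamma$. By Shapiro's lemma on both sides, $A=H_1(\pi,\rho\otimes\alpha)$ and $H_1(\wh{\Gamma},\wh{\rho})=H_1(\wh{\pi},\wh{\rho}\otimes\wh{\alpha})$. A finite presentation of $\pi$ gives $\Lambda^{kc_2}\xrightarrow{d_2}\Lambda^{kc_1}\xrightarrow{d_1}\Lambda^{kc_0}$ computing $A$. The completed complex is still exact in degrees $\leq 1$, so it computes the profinite side with the same matrices over $\wh{\Lambda}$. Flatness then yields $H_1(\wh{\Gamma},\wh{\rho})\cong A\otimes_\Lambda\wh{\Lambda}=\wh{A}$. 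The presentation matrix transfers by right exactness, as in your Step~3. Note that the Fox matrix $d_2$ presents ${\rm coker}\,d_2$, not $A$; for $A$, take any presentation, which exists since $\Lambda$ is Noetherian.
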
 

\subsubsection{Faithfully flat ring extensions}

Let $R$ be a ring. An $R$-module $V$ is said to be \emph{flat} if $\otimes_R V$ is exact, 
namely, it preserves injective homomorphisms. 
In addition, $V$ is said to be \emph{faithfully flat} if 
$W\otimes_R V=0$ implies $W=0$ for an $R$-module $W$, or equivalently, 
if $\mf{m}V\subsetneq V$ holds for every maximal ideal $\mf{m}$ of $R$ 
(cf.\,\cite{Bourbaki1961AlgCommChap1-2}*{Chap.\,I, \S 2, Prop.\,1, \S 3, Prop.\,1}).
 
Similar terminology applies to ring extensions. 
If a ring extension $R\to R'$ is flat, 
then every ideal $\mf{a}$ of $R$ satisfies $\mf{a}\otimes_R R'\cong \mf{a}R'$. 
If it is faithfully flat, then 
an inclusion $\mf{a}\subsetneq \mf{b}$ of ideals of $R$ implies $\mf{a}R'\subsetneq \mf{b}R'$. 
In other words, for any ideal $\mf{a}$ of $R$, $\mf{a}R'\cap R=\mf{a}$ holds. 

Non-zero free $R$-modules are faithfully flat 
(\cite{Bourbaki1961AlgCommChap1-2}*{Chap.\,I, \S 3, Exem.\,2, below D\'ef.\,1}). 
In particular, the maps of group rings induced by group inclusions are faithfully flat (see also \cite{Bourbaki1961AlgCommChap1-2}*{Chap.\,I, \S 3, Prop.\,9 b}).

\begin{lemma} \label{lem:hatLambdaisflat} 
The ring extension $\Lambda \to \wh{\Lambda}=\wh{\Z}[\![t_1^{\wh{\Z}},\ldots, t_r^{\wh{\Z}}]\!]$ is faithfully flat. 
\end{lemma}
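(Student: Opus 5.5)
We prove flatness and faithfulness separately, passing through intermediate rings. The natural route factors $\Lambda\to\wh{\Lambda}$ through noetherian completions, whose flatness is standard, and then realizes $\wh{\Lambda}$ as a product of completions of $\Lambda$ at maximal ideals.

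\emph{Structure of $\wh{\Lambda}$.} Every finite quotient of $\Lambda=\Z[H]$ is a finite ring, hence a product of finite local rings. Therefore
\[\wh{\Lambda}=\varprojlim_{\mf{a}}\Lambda/\mf{a}\cong \prod_{\mf{m}}\Lambda_{\mf{m}}^\wedge,\]
where $\mf{a}$ runs over the cofinite ideals of $\Lambda$, $\mf{m}$ runs over the maximal ideals of $\Lambda$ (all of which have finite residue field, since $\Lambda$ is a finitely generated $\Z$-algebra), and $\Lambda_{\mf{m}}^\wedge$ is the $\mf{m}$-adic completion of the localization $\Lambda_{\mf{m}}$.

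Concretely, write $\wh{\Z}=\prod_p\Z_p$ and $\wh{H}=\prod_p \Z_p^r\times(\text{prime-to-}p)$. Then $\wh{\Lambda}=\prod_p \Z_p[\![\wh{H}]\!]$, and each factor decomposes over the finite-order characters of the prime-to-$p$ part into Iwasawa algebras of the form $\mca{O}[\![T_1,\ldots,T_r]\!]$. This matches the multivariable Iwasawa-theoretic viewpoint the paper alludes to.

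\emph{Flatness.} Since $\Lambda$ is noetherian, each completion $\Lambda\to\Lambda_{\mf{m}}^\wedge$ is flat, being a composite of a localization and an adic completion of a noetherian ring. The key point is that an infinite product of flat modules over a noetherian ring is flat: by Chase's theorem, direct products of flat modules over a coherent ring are flat. Hence $\wh{\Lambda}=\prod_{\mf{m}}\Lambda^\wedge_{\mf{m}}$ is flat over $\Lambda$.

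\emph{Faithfulness.} We use the criterion $\mf{m}\wh{\Lambda}\subsetneq\wh{\Lambda}$ for every maximal ideal $\mf{m}$. The projection $\wh{\Lambda}\surj\Lambda^\wedge_{\mf{m}}\surj\Lambda/\mf{m}$ is a surjective ring map that kills $\mf{m}\wh{\Lambda}$ and has nonzero target, so $\mf{m}\wh{\Lambda}\neq\wh{\Lambda}$.

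The main obstacle is the flatness of the infinite product. To handle it rigorously, I would cite Chase's theorem for the coherent (indeed noetherian) ring $\Lambda$. Alternatively, I would check directly that for a finitely generated ideal $\mf{a}$ the map $\mf{a}\otimes_\Lambda\prod_{\mf{m}}\Lambda^\wedge_{\mf{m}}\to\prod_{\mf{m}}\Lambda^\wedge_{\mf{m}}$ is injective: tensoring a finitely presented module commutes with direct products, which reduces the claim to the flatness of each factor. The identification of $\wh{\Lambda}$ with the product also needs care. I would verify it via the Chinese remainder theorem on each finite quotient $\Lambda/\mf{a}$ and then pass to the limit, using that only finitely many $\mf{m}$ contain a given cofinite $\mf{a}$.
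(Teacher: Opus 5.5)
Your proof is correct and follows the same route as the paper. You write $\wh{\Lambda}$ as a product of complete Noetherian local factors, each flat over $\Lambda$ as an adic completion; you then use that a product of flat modules over a Noetherian ring is flat, and get faithfulness from the surjection $\wh{\Lambda}\surj\Lambda/\mf{m}$. The only difference is presentational: you identify the factors intrinsically as $\Lambda^\wedge_{\mf{m}}$ via the Chinese remainder theorem, whereas the paper reaches the same factors $\Z_p[{\rm Im}\,\chi][\![T_1,\ldots,T_r]\!]$ through the character decomposition and the Iwasawa--Serre isomorphism.
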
 
\begin{proof} \ul{Flatness.} 
For each prime number $p$, let $\C_p=\wh{\ol{\Q}}_p$ denote the completion of an algebraic closure of $\Q_p$ and fix an embedding $\ol{\Q}\inj \C_p$ of an algebraic closure of $\Q$. 
Then we have 
\begin{align*}
\wh{\Lambda}&=\prod_p \Z_p[\![t_1^\wh{\Z},\ldots,t_r^\wh{\Z}]\!] 
\cong \prod_p \Z_p[\![(\prod_{l\neq p}{\Z_l})^r]\!][\![t_1^{\Z_p},\ldots,t_r^{\Z_p}]\!]\\
&\cong \prod_p \prod_{\chi} \Z_p[{\rm Im}\,\chi][\![T_1,\ldots,T_r]\!]
\end{align*}
where $\chi:(\prod_{l\neq p}\Z_l)^r\to \C_p^\times$ runs through all continuous characters up to ${\rm Gal}(\ol{\Q}_p/\Q_p)$-conjugacy.
The last isomorphism combines 
the character decomposition and the multivariable Iwasawa--Serre isomorphism 
\[\Z_p[{\rm Im}\,\chi][\![t_1^{\Z_p},\ldots,t_r^{\Z_p}]\!]\congto \Z_p[{\rm Im}\,\chi][\![T_1,\ldots,T_r]\!];\ t_i\mapsto 1+T_i\] 
to the ring of $p$-adic formal power series (cf.\,\cites{Greenberg1973AJM,TatenoUeki2025JLMS}). 

Let $\bm{\mu}'$ denote the set of roots of unity of order prime to $p$ in $\C_p$. 
Then ${\rm Im}\,\chi \subset \bm{\mu}'$. 
Since $\zeta,\xi \in \bm{\mu}'$ with $\zeta\neq \xi$ satisfy $|\zeta-\xi|_p=1$, 
where $|\cdot|_p$ denotes the $p$-adic absolute value, the subset $\bm{\mu}'$ is discrete in $\C_p$. 
Since $(\prod_{l\neq p}\Z_l)^r$ is compact, $\chi$ is continuous, and $\bm{\mu}'$ is discrete, we see that the image ${\rm Im}\,\chi$ must be finite. 
Thus $\Z_p[{\rm Im}\,\chi]$ is a finite extension of $\Z_p$ by adjoining some elements of $\bm{\mu}'$. 

In general, the product of flat modules over a Noetherian ring is flat 
(cf.\,\cite{Bourbaki1961AlgCommChap1-2}*{Chap.\,I, \S 2, exerc.\,12 b)}, 
\cite{CartanEilenbergbook}*{Exer.\,VI.4}). 
Each $\Z_p[{\rm Im}\,\chi][\![T_1,\ldots,T_r]\!]$ is the $(\mathfrak{p},t_1-\zeta_1,\ldots,t_r-\zeta_r)$-adic completion of the free $\Lambda$-module $\Z[{\rm Im}\,\chi][t_1^\Z,\ldots,t_r^\Z]$, where $\mathfrak{p}$ is the prime of $\Z[{\rm Im}\,\chi]$ over $p$ given by the embedding $\ol{\Q}\inj \C_p$ and $\zeta_i$ is the image under $\chi$ of the $i$-th standard generator of $(\prod_{l\neq p}\Z_l)^r$, hence a flat $\Lambda$-module (\cite{AtiyahMacdonald}*{Proposition 10.14}). Since $\Lambda$ is a Noetherian ring, we conclude that $\wh{\Lambda}$ is flat.

\ul{Faithful flatness.} Let $\mf{m}$ be a maximal ideal of $\Lambda$. 
Since $\Lambda$ is a finitely generated $\Z$-algebra, the general Nullstellensatz ensures that the residue field $\Lambda/\mf{m}$ is a finite field $\F_q$. 
The induced maps $\Z\to \F_q$ and $H\to \F_q^\times$ have finite images, so they extend to continuous maps $\wh{\Z}\to \F_q$ and $\wh{H}\to \F_q^\times$ by the universality of the profinite completion, 
and hence the projection $\Lambda\surj \F_q$ extends to a continuous ring homomorphism $\wh{\Lambda}\surj \F_q$. 
Therefore $\mf{m}\wh{\Lambda}\subsetneq \wh{\Lambda}$ holds for every maximal ideal $\mf{m}$ of $\Lambda$. 
\end{proof} 

\begin{lemma} \label{lem:oderideal} 
If an ideal $\mathfrak{a}$ of $\wh{\Lambda}$ is 
finitely generated by elements of $\Lambda'=\Z[t_1^\wh{\Z},\ldots,t_r^\wh{\Z}]$, then 
$\mathfrak{a}$ has a well-defined order ideal in $\wh{\Lambda}$,
namely, the smallest principal ideal of $\wh{\Lambda}$ that contains $\mathfrak{a}$ and is generated by some element $\Delta\in \Lambda'$.
\end{lemma}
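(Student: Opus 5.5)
The plan is to reduce everything to a discrete Laurent polynomial ring over $\Z$, which is a UFD, and then to transport the answer to $\wh{\Lambda}$ by faithful flatness, as in \Cref{lem:hatLambdaisflat}.

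\emph{Reduction to a finite-rank lattice.} Let $\mathfrak{a}=(f_1,\ldots,f_m)\wh{\Lambda}$ with $f_j\in \Lambda'$. Each $f_j$ involves only finitely many exponents in $\wh{\Z}^r$. Hence there is a finitely generated subgroup $H'\subset \wh{H}$, free of some rank $s$, containing all these monomials. Then $\Lambda_{H'}=\Z[H']$ is a Laurent polynomial ring, and so a Noetherian UFD. The ideal $\mathfrak{a}_0=(f_1,\ldots,f_m)\Lambda_{H'}$ has a well-defined order ideal $(\Delta_0)$: take $\Delta_0=\gcd(f_1,\ldots,f_m)$. Equivalently, $(\Delta_0)$ is the smallest principal ideal of $\Lambda_{H'}$ containing $\mathfrak{a}_0$, since a principal ideal $(g)$ contains $\mathfrak{a}_0$ if and only if $g\mid f_j$ for all $j$. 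I would set $\Delta=\Delta_0\in\Lambda'$.

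\emph{Independence of $H'$.} If $H'\subset H''$ are two such lattices, then $\Z[H']\to\Z[H'']$ is a free extension, hence faithfully flat. I expect the gcd to be preserved: if $g\in\Z[H'']$ divides every $f_j$, then $g$ divides $\Delta_0$ up to a unit. One way to see this is to decompose $\Z[H'']$ as a free $\Z[H']$-module over coset representatives. A cleaner way is to note that $H''/H'$ is finite modulo torsion issues and use saturation. Alternatively, I would simply work inside the directed union $\Lambda'=\bigcup_{H'}\Z[H']$ and check that the gcd is compatible along inclusions. So the candidate $\Delta$ is canonical up to units of $\Lambda'$.

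\emph{Minimality in $\wh{\Lambda}$.} This is the main obstacle. Clearly $\mathfrak{a}\subset \Delta\wh{\Lambda}$. Suppose $\mathfrak{a}\subset \Delta'\wh{\Lambda}$ with $\Delta'\in\Lambda'$. Enlarge $H'$ so that it also contains the support of $\Delta'$. We then want to show that $\Delta'\mid \Delta$ in $\Lambda_{H'}$, using only the divisibilities $\Delta'\mid f_j$ in $\wh{\Lambda}$. The key claim is therefore
\[
\text{for } g,f\in \Lambda_{H'},\quad g\mid f \text{ in } \wh{\Lambda}\ \Longrightarrow\ g\mid f \text{ in } \Lambda_{H'}.
\]
Granting the claim, $\Delta'$ divides each $f_j$ in $\Lambda_{H'}$, so $\Delta'\mid\Delta_0$ there, and hence $\Delta\wh{\Lambda}\subset\Delta'\wh{\Lambda}$.

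The claim amounts to $f\Lambda_{H'}$ equalling $f\wh{\Lambda}\cap\Lambda_{H'}$ in a suitable sense, namely
\[
(f)\wh{\Lambda}\cap \Lambda_{H'}=(f)\Lambda_{H'},
\]
together with the non-zero-divisor property of $g$. This is exactly what faithful flatness of $\Lambda_{H'}\to\wh{\Lambda}$ gives. To prove that faithful flatness, choose a basis of $H'$ adapted to the closure of $H'$ in $\wh{H}$. Then rerun the proof of \Cref{lem:hatLambdaisflat} with $\Lambda_{H'}$ in place of $\Lambda$, via the product decomposition into $\Z_p[{\rm Im}\,\chi][\![T_1,\ldots,T_r]\!]$ pieces; note that a rank-$s$ lattice $H'$ mapping densely into part of $\wh{\Z}^r$ still has each such piece a completion of a flat extension. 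Given faithful flatness, suppose $f=gh$ with $h\in\wh{\Lambda}$. Then $f\in g\wh{\Lambda}\cap\Lambda_{H'}=g\Lambda_{H'}$, which proves the claim.

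The delicate point I anticipate is precisely this flatness when the rank $s$ exceeds $r$. In that case the map $\Z[H']\to\wh{\Lambda}$ need not be injective. If it fails, I would instead pick $H'$ of rank at most $r$ when possible: the relevant exponents generate a finitely generated subgroup of $\wh{\Z}^r$ whose $\Z$-span is free, but possibly of higher rank. In that case I would argue with the images under each component ring $\Z_p[{\rm Im}\,\chi][\![T]\!]$, which are UFDs, and check divisibility componentwise.
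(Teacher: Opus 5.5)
\textbf{Verdict: genuine gap.} Your plan follows the paper's proof step for step: take a gcd in a Laurent ring $\Z[W]$, then transport it to $\wh{\Lambda}$ by faithful flatness. The paper asserts that $\Lambda'\to\wh{\Lambda}$ is faithfully flat ``by base change''. The gap is exactly where you suspected. Your key claim, that $g\mid f$ in $\wh{\Lambda}$ implies $g\mid f$ in $\Lambda_{H'}$, is false in general once ${\rm rank}\,H'>r$, and rerunning \Cref{lem:hatLambdaisflat} cannot fix it.

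Take $r=1$ and $u\in\wh{\Z}^\times\setminus\{\pm1\}$. Then $u\notin\Z$, and $H'=\Z+\Z u$ has rank $2$.
\begin{itemize}
\item In each factor $\Z_p[{\rm Im}\,\chi][\![T]\!]$ of $\wh{\Lambda}$, both $t-1$ and $t^u-1$ become $T$ times a unit if $\chi$ is trivial, and both are units otherwise (because $u$ is a unit). So they are associates in $\wh{\Lambda}$.
\item But $t-1\nmid t^u-1$ in $\Z[W]$ for every subgroup $W\subset\wh{\Z}$ containing $1$ and $u$. Under $\Z[W]\to\Z[W/\Z]$, $t-1\mapsto 0$ while $t^u-1\mapsto[u]-1\neq0$.
\item Hence $(t-1)\wh{\Lambda}\cap\Z[H']\neq(t-1)\Z[H']$, so $\Z[H']\to\wh{\Lambda}$ is not faithfully flat. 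It is not even flat: the factor $\Z_p[\![T]\!]$ would be flat over the $3$-dimensional local ring $\Z[H']_{(p,\,t-1,\,t^u-1)}$, whereas a flat local homomorphism cannot lower Krull dimension.
\item With $W=\wh{\Z}$, the same example shows that $\Lambda'\to\wh{\Lambda}$ is not faithfully flat. So the paper's written argument has the same hole: base change only gives faithful flatness of $\Lambda'\to\Lambda'\otimes_\Lambda\wh{\Lambda}$, which is not $\wh{\Lambda}$.
\item Injectivity, which you worried about, is not the issue: $\Z[\wh{H}]\to\wh{\Lambda}$ is injective.
\end{itemize}

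Worse, the candidate $\Delta=\gcd_{\Z[H']}(f_1,\ldots,f_m)$ is itself wrong, so no componentwise check can vindicate it. Passing to rank $\le r$ is also impossible here, since $1$ and $u$ are $\Z$-independent. For $\mathfrak{a}=(t-1,t^u-1)\wh{\Lambda}$, the gcd in $\Z[H']\cong\Z[x^{\pm1},y^{\pm1}]$ is $1$. It stays $1$ in every larger $\Z[W]$, by going-down for the free extension. Yet $\mathfrak{a}=(t-1)\wh{\Lambda}\subsetneq\wh{\Lambda}$.

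Your componentwise fallback does prove the case actually used in \Cref{prop:order} and \Cref{thm:completeA}. There the generators lie in $\Lambda$, or in its image under a ${\rm GL}_r\wh{\Z}$-substitution, which is a continuous automorphism of $\wh{\Lambda}$. The argument runs as follows.
\begin{itemize}
\item Put $\Delta=\gcd_\Lambda(f_j)$ and write $f_j=\Delta h_j$.
\item Each factor $R=\Z_p[{\rm Im}\,\chi][\![T_1,\ldots,T_r]\!]$ is a UFD and is flat over $\Lambda$, so going-down keeps the $h_j$ coprime in $R$.
\item Hence any $g$ with $\mathfrak{a}\subseteq g\wh{\Lambda}$ divides $\Delta$ in every $R$, and therefore in $\wh{\Lambda}=\prod R$.
\end{itemize}
This needs only $g\mid\Delta$ in $\wh{\Lambda}$. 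The divisibility in $\Lambda_{H'}$ that you aimed for already fails for $g=t^u-1$ and $\Delta=t-1$.

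For generators in $\Lambda'$ whose exponents span rank $>r$, the smallest principal ideal containing $\mathfrak{a}$ is governed by the gcds in the factors $R$, not by any discrete gcd. It is not clear that this ideal is generated by an element of $\Lambda'$.
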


\begin{proof} 
Consider the ring extensions $\Lambda \to \Lambda' \to \wh{\Lambda}$. 
The extension $\Lambda \to \Lambda'$ is faithfully flat, since it is a group ring inclusion. 
Combined with \Cref{lem:hatLambdaisflat}, 
the base change property of faithful flatness for commutative algebras
(cf.\,\cite{Bourbaki1961AlgCommChap1-2}*{Chap.\,I, \S 3, Prop.\,6}) 
shows that \emph{$\Lambda' \to \wh{\Lambda}$ is also faithfully flat}. 

Now, let $\{f_1, \ldots, f_m\} \subset \Lambda'$ be a generating set of $\mathfrak{a}$ 
and let $A=\Z[W]$ denote the group ring of the subgroup $W\subset \wh{\Z}^r$ generated by $\Z^r$ together with the exponents of all the monomials appearing in $f_1,\ldots,f_m$. 
Since $W$ is a finitely generated torsion-free abelian group, hence free of finite rank, the ring $A$ is a Laurent polynomial ring over $\Z$ and in particular a UFD. 
Therefore the order ideal of $(f_1, \ldots, f_m)$ in $A$ is well-defined and generated by the greatest common divisor $\Delta \in A$. 
Since the group ring inclusion $A \to \Lambda'$ is faithfully flat, 
it preserves the divisibility relations among elements of $A$. 
Moreover, if $g\in \Lambda'$ divides every $f_i$, then $g\in A'=\Z[W']$ for some finitely generated $W'\supset W$, and the greatest common divisor of $f_1,\ldots,f_m$ in $A'$ is again $\Delta$, since coprimality in $\Z[W]$ is preserved under $\Z[W]\to \Z[W']$; here the torsion part of $W'/W$ is handled by a norm argument. Hence $g\mid \Delta$ in $\Lambda'$. 
Consequently, the ideal $(\Delta)\Lambda'$ serves as the well-defined order ideal of $\mathfrak{a} \cap \Lambda'$ in $\Lambda'$.

Finally, since $\mathfrak{a} = (\mathfrak{a} \cap \Lambda')\wh{\Lambda}$, 
the faithful flatness of $\Lambda'\to \wh{\Lambda}$ guarantees that the divisibility and inclusion relations in $\Lambda'$ are precisely reflected in $\wh{\Lambda}$.
Therefore, the principal ideal $(\Delta)\wh{\Lambda}$ is the unique well-defined order ideal of $\mathfrak{a}$ in $\wh{\Lambda}$, completing the proof.
\end{proof} 

\subsubsection{Complete Alexander ideals}

By Lemmas \ref{lem:share}, \ref{lem:oderideal}, we may conclude: 

\begin{prop} \label{prop:order}
$\Delta_\rho^\alpha\in \Lambda$ is also an order element of $H_1(\wh{\Gamma},\wh{\rho})$ over $\wh{\Lambda}$. 
\end{prop}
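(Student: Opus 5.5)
We combine \Cref{lem:share} with \Cref{lem:oderideal}. Since $\pi$ is finitely generated, so is $\Gamma$ as a group acted on by $H$, and $\mathcal{A}_\rho^\alpha=H_1(\Gamma,\rho)$ is a finitely generated $\Lambda$-module; since $\Lambda$ is Noetherian it admits a finite presentation $\Lambda^m\xrightarrow{P}\Lambda^n\to\mathcal{A}_\rho^\alpha\to 0$ with $P$ a matrix over $\Lambda$. By \Cref{lem:share}, $H_1(\wh{\Gamma},\wh{\rho})$ is the profinite completion of $\mathcal{A}_\rho^\alpha$ and is presented by the same matrix $P$, now viewed over $\wh{\Lambda}$. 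Equivalently, $H_1(\wh{\Gamma},\wh{\rho})\cong \mathcal{A}_\rho^\alpha\otimes_\Lambda\wh{\Lambda}$. This follows from right-exactness of $\otimes_\Lambda \wh{\Lambda}$, and for finitely generated modules over the Noetherian ring $\Lambda$ it agrees with completion.

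Fitting ideals commute with base change, so the $0$-th Fitting ideal of $H_1(\wh{\Gamma},\wh{\rho})$ is $\mathfrak{a}=E_0(P)\wh{\Lambda}$. It is generated by the $n\times n$ minors of $P$, which lie in $\Lambda\subset\Lambda'=\Z[t_1^{\wh{\Z}},\ldots,t_r^{\wh{\Z}}]$. Thus \Cref{lem:oderideal} applies, and $\mathfrak{a}$ has a well-defined order ideal $(\Delta)\wh{\Lambda}$ with $\Delta\in\Lambda'$ the gcd of the minors.

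It remains to identify $\Delta$ with $\Delta_\rho^\alpha$. The minors lie in $\Lambda=\Z[\Z^r]$, so in the proof of \Cref{lem:oderideal} we may take $W=\Z^r$ and $A=\Lambda$. The gcd computed there is then exactly the gcd of the minors in the UFD $\Lambda$, namely the generator of the smallest principal ideal of $\Lambda$ containing $E_0(P)$, which is $\Delta_\rho^\alpha$. That proof also shows this gcd is unchanged when passing to larger $A'=\Z[W']$ and hence to $\Lambda'$. Faithful flatness of $\Lambda'\to\wh{\Lambda}$ then transfers minimality to $\wh{\Lambda}$. Hence $\Delta_\rho^\alpha$ generates the order ideal of $H_1(\wh{\Gamma},\wh{\rho})$, well defined up to units.

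The main obstacle is justifying that the order ideal is intrinsic to the $\wh{\Lambda}$-module, independent of the chosen presentation. We handle this by noting that the Fitting ideal is presentation-independent and that \Cref{lem:oderideal} defines the order ideal purely from $\mathfrak{a}$. One should also check that "smallest principal ideal generated by an element of $\Lambda'$" is the right notion, since $\wh{\Lambda}$ is not a UFD. The torsion hypothesis on $\mathcal{A}_\rho^\alpha$ ensures $\Delta_\rho^\alpha\neq0$. Faithful flatness then gives $(\Delta_\rho^\alpha)\wh{\Lambda}\cap\Lambda=(\Delta_\rho^\alpha)$, so nothing collapses.
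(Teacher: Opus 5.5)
Your proposal is correct and follows the same route as the paper, which deduces the proposition directly from \Cref{lem:share} (the completed module has the same presentation matrix) and \Cref{lem:oderideal} (the order ideal in $\wh{\Lambda}$ is the gcd taken in a Laurent subring and transferred by faithful flatness). What you add is to make the implicit steps explicit: the base change of the $0$-th Fitting ideal, and the observation that for minors lying in $\Lambda$ one may take $W=\Z^r$ and $A=\Lambda$ in the proof of that lemma, so the gcd is exactly $\Delta_\rho^\alpha$.
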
 

The following is an analogue of \Cref{prop:GLkZ} for completed modules. 

\begin{theorem} \label{thm:completeA} 
Let $\rho:\pi\to {\rm GL}_k \Z$ and $\alpha:\pi\surj H$ with $(t_1,\ldots, t_r)$ be as in {\rm \Cref{sss.def.twisted}}. 
Let $\beta:\varpi\surj K$ be another surjective group homomorphism, 
where $K$ is a free abelian multiplicative group of rank $r$ with a fixed basis $(s_1,\ldots,s_r)$. 
Given a pair $(\Phi,\Phi_\ast)$ of isomorphisms $\Phi: \wh{\pi}\congto \wh{\varpi}$ and $\Phi_\ast:\wh{H}\congto \wh{K}$ such that $\wh{\beta}\circ \Phi=\Phi_\ast\circ \wh{\alpha}$ holds, 
let $\varrho:\varpi\to {\rm GL}_k\Z$ be another representation with $\wh{\varrho}=\wh{\rho}\circ \Phi^{-1}$. 
Write $\Phi_\ast(t_i)=t_i$ by abuse of notation. Then there is an isomorphism 
\[H_\ast(\wh{\pi},\wh{\rho}\otimes \wh{\alpha})\cong H_\ast(\wh{\varpi},\wh{\varrho}\otimes \wh{\beta})\]
of $\wh{\Lambda}$-modules. 

Write $s_i=t_1^{v_{i1}}\cdots t_r^{v_{ir}}$ for each $1\leq i\leq r$, so that $V=(v_{ij})\in {\rm GL}_r\wh{\Z}$ presents an isomorphism $\wh{\Z}^r\congto \wh{\Z}^r$ determined by $\Phi_\ast$ and the fixed bases of $\wh{K}$ and $\wh{H}$. Then the equality
\[(\Delta_\rho^\alpha(t_1,\ldots, t_r))=(\Delta_\varrho^\beta(t_1^{v_{11}}\cdots t_r^{v_{1r}},\ldots, t_1^{v_{r1}}\cdots t_r^{v_{rr}}))\]
of ideals of $\wh{\Z}[\![t_1^{\wh{\Z}},\ldots,t_r^{\wh{\Z}}]\!]$ holds. 
Namely, $\Delta_\rho^\alpha(t_1,\ldots,t_r)$ and $\Delta_\varrho^\beta(s_1,\ldots,s_r)$ coincide up to the natural ${\rm GL}_r\wh{\Z}$-action on $\wh{\Z}^r$ induced by $\Phi$ and multiplication by units of $\wh{\Lambda}$. 
\end{theorem}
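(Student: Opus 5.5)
The plan is to mirror the proof of \Cref{prop:GLkZ} in the completed setting, and then to pass to order ideals using \Cref{prop:order}. First, apply continuous Shapiro's lemma: $H_\ast(\wh{\pi},\wh{\rho}\otimes\wh{\alpha})\cong H_\ast(\wh{\Gamma},\wh{\rho})$, where $\wh{\Gamma}={\rm Ker}\,\wh{\alpha}$ is the closure of $\Gamma$ (the kernel of a surjection onto a free abelian group, whose completion is exact here). Similarly $H_\ast(\wh{\varpi},\wh{\varrho}\otimes\wh{\beta})\cong H_\ast(\wh{\Delta},\wh{\varrho})$ with $\wh{\Delta}={\rm Ker}\,\wh{\beta}$. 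The relation $\wh{\beta}\circ\Phi=\Phi_\ast\circ\wh{\alpha}$ shows that $\Phi$ restricts to an isomorphism $\wh{\Gamma}\congto\wh{\Delta}$. The relation $\wh{\varrho}\circ\Phi=\wh{\rho}$ shows that the module $V_{\wh{\rho}}=\wh{\Z}^k$ is carried to $V_{\wh{\varrho}}$ equivariantly.

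Concretely, I would use the description $(\star)$. The pairs $(G,n)$ for $\Gamma$ correspond bijectively to the open normal subgroups of $\wh{\Gamma}$ together with levels $n$. Via $\Phi$, these match the pairs for the other group, and the finite-level modules $H_\ast(\wh{\Gamma}/\ol{G},\rho\bmod n)$ are identified compatibly. The conjugation action of $\wh{\pi}/\wh{\Gamma}=\wh{H}$ is intertwined with that of $\wh{K}$ through $\Phi_\ast$. Passing to the inverse limit then gives the claimed $\wh{\Lambda}$-module isomorphism, where $\wh{\Lambda}$ acts on the right-hand side through $\Phi_\ast$, i.e.\ through the matrix $V\in{\rm GL}_r\wh{\Z}$.

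For the polynomial statement, I would apply \Cref{lem:share} and \Cref{prop:order} on both sides. By these, $(\Delta_\rho^\alpha)\wh{\Lambda}$ is the order ideal of $H_1(\wh{\Gamma},\wh{\rho})$ over $\wh{\Lambda}$, and $\Delta_\varrho^\beta(s_1,\ldots,s_r)$ generates the order ideal of $H_1(\wh{\Delta},\wh{\varrho})$ over $\wh{\Z}[\![\wh{K}]\!]$. Transporting the latter along the ring isomorphism $\wh{\Z}[\![\wh{K}]\!]\cong\wh{\Lambda}$ induced by $\Phi_\ast$ substitutes $s_i=t_1^{v_{i1}}\cdots t_r^{v_{ir}}$. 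This gives an element of $\Lambda'=\Z[t_1^{\wh{\Z}},\ldots,t_r^{\wh{\Z}}]$.

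Isomorphic modules have equal Fitting ideals, so the $0$-th Fitting ideals coincide in $\wh{\Lambda}$. Both Fitting ideals are finitely generated by elements of $\Lambda'$, because the presentation matrices are the discrete ones of \Cref{lem:share}, with the second one transported by $V$. \Cref{lem:oderideal} then says that each has a well-defined smallest principal ideal generated by an element of $\Lambda'$. Hence the two order ideals agree, which is the asserted equality of ideals of $\wh{\Lambda}$.

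The main obstacle is this last step. One must check that the order ideal computed in $\Lambda$ and then extended to $\wh{\Lambda}$ agrees with the one computed after transport by $V$. Since $V$ is only in ${\rm GL}_r\wh{\Z}$, the transported generators lie in $\Lambda'$ and not in $\Lambda$. The intended tool is the faithful flatness of $\Lambda'\to\wh{\Lambda}$ from the proof of \Cref{lem:oderideal}. I would first try to handle this by working in $\Lambda'$, where the Fitting ideal and the gcd are compatible with both the inclusion of $\Lambda$ and the automorphism of $\Lambda'$ induced by $V$. A secondary, more routine check is the exactness of completion for $\Gamma\subset\pi$ in Shapiro's lemma, which follows from $H$ being free abelian and $\pi$ finitely generated.
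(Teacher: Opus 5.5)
Your proposal is correct and is essentially the paper's proof. The paper calls the module isomorphism induced by $\Phi$ obvious, and gets the equality of ideals because the order ideal of \Cref{lem:oderideal} is intrinsic to an ideal of $\wh{\Lambda}$; the ring isomorphism induced by $\Phi_\ast$ carries $\Z[\wh{K}]$ onto $\Lambda'$ and hence preserves it, so the step you flag as the main obstacle is immediate, and \Cref{prop:order} then identifies the generators.

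One small caution: read $\wh{\Gamma}$ as the closure $\mathrm{Ker}\,\wh{\alpha}$, not as the full profinite completion of $\Gamma$, since the latter need not embed in $\wh{\pi}$ when $\Gamma$ is infinitely generated. With that reading, Shapiro's lemma for $\mathrm{Ker}\,\wh{\alpha}\subset\wh{\pi}$ needs no further exactness check.
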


\begin{proof} 
The former half is obvious. 
By our definition of the order ideals in \Cref{lem:oderideal}, 
the isomorphism of $\wh{\Lambda}$-modules implies the coincidence of the order ideals. 
By \Cref{prop:order}, they are generated by the Alexander polynomials. 
\end{proof} 

\begin{remark} \label{rem.good} 
(1) In the above proof, we could also work over a resolution for $\pi$, instead of that for $\Gamma$. 

(2) A similar argument proves the following: If both $\pi$ and $\varpi$ are FP$_n$ and $n$-good, then 
their $\ast$-th Alexander ideals coincide in the sense of \Cref{thm:completeA} for every $\ast\leq n$. 
\end{remark} 


\subsection{Linking numbers and Mahler measures} 
Immediate consequences of \Cref{thm:completeA} are the following. 
\begin{cor} \label{cor:lk}  
Let $J=J_1\sqcup J_2$ and $L=L_1\sqcup L_2$ be two-component links in $S^3$ with 
$\wh{\pi}_1(S^3-J)\cong \wh{\pi}_1(S^3-L)$. 
Then their linking numbers satisfy $|{\rm lk}(J_1,J_2)|=|{\rm lk}(L_1,L_2)|$. 
\end{cor}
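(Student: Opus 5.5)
Apply \Cref{thm:completeA} with the trivial representations $\rho=\varrho=\bm{1}$ and $\alpha,\beta$ the maximal free abelian quotients, and read off the linking number from the untwisted two-variable Alexander polynomial. Put $\pi=\pi_1(S^3-J)$ and $\varpi=\pi_1(S^3-L)$. Then $H=\Z^2$ with meridian basis $(t_1,t_2)$, and likewise $K$ with basis $(s_1,s_2)$. The given $\Phi:\wh{\pi}\congto\wh{\varpi}$ induces $\Phi_\ast:\wh{H}\congto\wh{K}$ on the abelianizations, which are torsion free, with $\wh{\beta}\circ\Phi=\Phi_\ast\circ\wh{\alpha}$. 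Since $\wh{\bm{1}}=\wh{\bm{1}}\circ\Phi^{-1}$ trivially, \Cref{thm:completeA} gives the equality of ideals
\[(\Delta_J(t_1,t_2))=(\Delta_L(t_1^{v_{11}}t_2^{v_{12}},t_1^{v_{21}}t_2^{v_{22}}))\]
in $\wh{\Lambda}$ for some $V=(v_{ij})\in{\rm GL}_2\wh{\Z}$. Here $\Delta$ denotes the order of the first Alexander module of the $H$-cover, which for two-component links is the classical $\Delta_J$ up to a possible factor $(t_1-1)(t_2-1)$ convention. I will not need more than this.

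The plan is to specialize along a continuous ring homomorphism that kills all ambiguity except $|{\rm lk}|$. Evaluate at $t_1=t_2=t$ along the diagonal map $\wh{\Lambda}\to\wh{\Z}[\![t^{\wh{\Z}}]\!]$. This is not invariant under $V$, so instead I would use the Torres formula: $\Delta_J(t,1)\doteq\frac{t^{\ell}-1}{t-1}\Delta_{J_1}(t)$ with $\ell=|{\rm lk}(J_1,J_2)|$. The better invariant is the \emph{set of specializations}. For every continuous character $\chi:\wh{H}\to\ol{\F}_p^\times$ (or $\C_p^\times$) of finite order, vanishing of $\Delta_J(\chi)$ is a profinite invariant, since it is the condition $\chi(\mathfrak{a})=0$ for the ideal, and $\Phi_\ast$ permutes such characters. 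When the link is not split, the number $\ell$ is encoded by the specialization $\Delta_J(1,1)=\pm\ell$ (Torres) at the trivial character, which is fixed by every $V$. Thus the key step is to evaluate the ideal equality at the augmentation $\wh{\Lambda}\to\wh{\Z}$, $t_i\mapsto1$, which commutes with $V$. This gives $(\ell_J)=(\ell_L)$ as ideals of $\wh{\Z}$. Since $\wh{\Z}\cap\Q=\Z$ with units $\pm1$, faithful flatness (the $n=1$ analogue of \Cref{lem:hatLambdaisflat}, i.e.\ $\Z\to\wh{\Z}$) gives $(\ell_J)=(\ell_L)$ in $\Z$, hence $|\ell_J|=|\ell_L|$. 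The augmentation sends units of $\wh{\Lambda}$ to units of $\wh{\Z}$, so the unit ambiguity is harmless.

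The main obstacle is the degenerate case $\ell=0$ (and split links), where the convention $\Delta_J(1,1)=\pm\ell$ must be checked. If $\ell_J=0$ then $\Delta_J(1,1)=0$, so $\Delta_L(1,1)=0$ forces $\ell_L=0$, and the symmetric argument works. I would still double-check the Torres identity $\Delta_J(1,1)=\pm{\rm lk}$ for the order of $H_1$ of the $H$-cover versus the relative module, since the two differ by the factor coming from $H_0$. If needed, I would instead compare $H_1(\Gamma)$ via its $0$-th Fitting ideal after augmentation, which equals the Fitting ideal of $H_1$ of the infinite cyclic diagonal cover. Its order at $1$ is again $|\ell|$ by a direct Mayer–Vietoris computation.
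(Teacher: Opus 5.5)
Your argument is correct and is essentially the paper's proof: apply \Cref{thm:completeA} with trivial coefficients, note that the augmentation $t_i\mapsto 1$ is unaffected by the ${\rm GL}_2\wh{\Z}$-ambiguity and sends units to units, and conclude with the Torres formula $|\Delta_J(1,1)|=|{\rm lk}(J_1,J_2)|$; your descent from $(a)=(b)$ in $\wh{\Z}$ to $a=\pm b$ in $\Z$ spells out a step the paper leaves implicit. The detours through diagonal specializations and characters are unnecessary, and the worry about an extra factor $(t_1-1)(t_2-1)$ is unfounded, since for $r\geq 2$ the order of $H_1$ of the universal abelian cover is $\Delta_J$ itself, as the paper's example following the definition of $\Delta_\rho^\alpha$ records.
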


\begin{proof} By the Torres formula, we have $|{\rm lk}(J_1,J_2)|=|\Delta_J(1,1)|$ and the same for $L$. 
Since the ${\rm GL}_2\wh{\Z}$-action on $(t_1,t_2)$ does not affect the value at $(1,1)$, we obtain the result. 
\end{proof}

\begin{remark} The value $\Delta_L(1,1)$ may be interpreted as a Casson--Lin type invariant \cite{HarperSaveliev2010PJM}, that is, the number of certain ${\rm SU}(2)$-representations, or that of sections of a certain principal bundle. Even if $r>2$ with $\Delta_J(1,\ldots,1)=0$, the Casson--Lin type invariant is defined using the values at roots of unity \cite{BenardConway2020Fourier}. 
From this viewpoint, our \Cref{cor:lk} fits into the program of extracting gauge-theoretic representation counts from the profinite completion, in the spirit of arithmetic gauge theory \cite{MKim2020}; we expect the multivariable Casson--Lin invariants of \cite{BenardConway2020Fourier} to admit a similar profinite control, which would be worth pursuing.
\end{remark} 

\begin{cor} \label{cor:Milnor} 
Let $J=\sqcup_i J_i$ and $L=\sqcup_i L_i$ be 3-component links in $S^3$ with $\wh{\pi}_1(S^3-J)\cong \wh{\pi}_1(S^3-L)$. 
If ${\rm lk}(J_i,J_j)=0$ for every $i\neq j$, then ${\rm lk}(L_i,L_j)=0$ for every $i\neq j$, and 
Milnor's triple linking numbers satisfy $|\mu_J(1,2,3)|=|\mu_L(1,2,3)|$. 
\end{cor}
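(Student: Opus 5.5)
We combine \Cref{thm:completeA} with two classical facts about 3-component links with vanishing pairwise linking numbers. The plan has three steps: transport the vanishing of linking numbers, identify $|\mu|$ with a leading coefficient of the Alexander polynomial, and show that this coefficient is preserved by the ambiguity left in \Cref{thm:completeA}.

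\textbf{Step 1 (linking numbers).} Apply \Cref{cor:lk}-type reasoning to each $2$-component sublink. This cannot be done directly, since a profinite isomorphism of the $3$-component link groups need not restrict to the sublink groups. Instead, use the Torres formula for the $3$-variable polynomial. Setting $t_3=1$ gives
\[\Delta_J(t_1,t_2,1)\doteq (t_1^{{\rm lk}(J_1,J_3)}t_2^{{\rm lk}(J_2,J_3)}-1)\,\Delta_{J_1\sqcup J_2}(t_1,t_2).\]
Vanishing of all pairwise linking numbers is equivalent to the first elementary ideal lying deep in the augmentation filtration. Concretely, $\Delta_J$ lies in $I^{2}$, where $I=(t_1-1,t_2-1,t_3-1)$ is the augmentation ideal; in fact $\Delta_J\in(t_1-1)(t_2-1)(t_3-1)\Lambda$ by the Torres conditions. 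The cleaner route is to work with $\Delta_J \bmod I^{k}$ for the lowest $k$ at which $\Delta_J\notin I^{k}$. The completion $\wh{I}$ of $I$ is the augmentation ideal of $\wh{\Lambda}$, and it is preserved by the ${\rm GL}_3\wh{\Z}$-action. Moreover, $\wh{I}^k/\wh{I}^{k+1}\cong (I^k/I^{k+1})\otimes\wh{\Z}$, which is ${\rm Sym}^k(\wh{\Z}^3)$, and ${\rm GL}_3\wh{\Z}$ acts on it through the $k$-th symmetric power. By \Cref{thm:completeA}, the ideals $(\Delta_J)$ and $(\Delta_L)$ correspond in $\wh{\Lambda}$, so the $I$-adic orders of $\Delta_J$ and $\Delta_L$ agree. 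Faithful flatness (\Cref{lem:hatLambdaisflat}) gives $\wh{I}^k\cap\Lambda=I^k$, which justifies the comparison. Comparing the $I$-adic order and the leading forms with the classical expansion of Traldi and Murasugi then yields the vanishing of the linking numbers of $L$. This is because nonzero linking numbers produce a nonzero term of lower degree. Alternatively, use $H_1$ of the $\Z/2$-homology covers, which is profinitely visible.

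\textbf{Step 2 (Milnor invariant).} By Traldi's theorem (or Levine's), when all pairwise linking numbers vanish,
\[\Delta_J\equiv \pm\,\mu_J(1,2,3)\,(t_1-1)(t_2-1)(t_3-1)\mod I^{4}.\]
The leading form of $\Delta_J$ in $I^3/I^4\cong{\rm Sym}^3\Z^3$ is therefore $\mu_J(1,2,3)\,x_1x_2x_3$, where $x_i$ denotes the class of $t_i-1$. The same holds for $L$.

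\textbf{Step 3 (the ambiguity).} By \Cref{thm:completeA}, $\Delta_L(s)=u\cdot\Delta_J(t)$ in $\wh{\Lambda}$ for some $V\in{\rm GL}_3\wh{\Z}$ and some unit $u\in\wh{\Lambda}^\times$. A unit reduces modulo $\wh{I}$ to an element of $\wh{\Z}^\times$, so the leading forms in ${\rm Sym}^3\wh{\Z}^3$ satisfy
\[\mu_L\, y_1y_2y_3=\epsilon\,\mu_J\, x_1x_2x_3,\qquad y=Vx,\quad \epsilon\in\wh{\Z}^\times.\]
The main obstacle is concluding $|\mu_J|=|\mu_L|$ from this relation. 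A monomial $x_1x_2x_3$ transformed by $V$ becomes $\prod_i(\sum_j v_{ij}x_j)$. Comparing the content of the two sides, that is, the ideal of $\wh{\Z}$ generated by the coefficients, is the key point. The content of $\prod_i \ell_i$ for linear forms $\ell_i$ with unimodular coefficient vectors is a unit: reduce modulo each prime $p$, where a product of nonzero linear forms over $\F_p[x_1,x_2,x_3]$ is nonzero. Hence $\mu_J\wh{\Z}=\mu_L\wh{\Z}$. If $\mu_J=0$, this forces $\mu_L=0$. Otherwise, integers generating the same ideal of $\wh{\Z}=\prod_p\Z_p$ have equal $p$-adic valuations for every $p$, hence equal absolute values. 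This content argument also handles Step 1 uniformly, since a nonzero leading form maps to a nonzero form of the same degree.
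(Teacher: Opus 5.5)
\textbf{Steps 2 and 3 match the paper.} The paper also reads off the lowest-order term of $\Delta(1+T_1,1+T_2,1+T_3)$ and observes that it survives the ambiguity of \Cref{thm:completeA}. It does this by projecting to $\Z_p[\![T_1,T_2,T_3]\!]$ prime by prime, which is your content argument stated differently. One correction: Murasugi's formula gives the coefficient $\mu_J(1,2,3)^2$, not $\pm\mu_J(1,2,3)$. This is harmless, since $\mu_J^2\wh{\Z}=\mu_L^2\wh{\Z}$ still forces $|\mu_J|=|\mu_L|$.

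\textbf{Step 1 has a genuine gap.} The $I$-adic order and leading form of $\Delta$ cannot detect whether the linking numbers vanish. Your claim that nonzero linking numbers produce a nonzero term of lower degree is false, for two reasons.

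First, $\Delta$ may be zero. The $3$-component unlink has all linking numbers zero, and the split union of a Hopf link with an unknot has a nonzero linking number, yet both have $\Delta=0$. So nothing derived only from the equality of ideals in \Cref{thm:completeA} can transport the vanishing of linking numbers.

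Second, the problem persists even when $\Delta_L\neq0$. Suppose ${\rm lk}(L_1,L_2)\neq0$ but ${\rm lk}(L_1,L_3)={\rm lk}(L_2,L_3)=0$. The Torres formula then gives $(t_3-1)\mid\Delta_L$, together with $\Delta_L(1,t_2,t_3)\in(t_2-1)^2(t_3-1)$ and $\Delta_L(t_1,1,t_3)\in(t_1-1)^2(t_3-1)$. These force $\Delta_L\in I^3$, exactly as in the all-zero case. The cubic part then has the form $T_3(aT_1^2+bT_2^2+cT_1T_2)$. Nothing in your argument excludes $a=b=0$, and then the cubic part $cT_1T_2T_3$ has exactly the shape arising from vanishing linking numbers.

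Your $\Z/2$-cover alternative is not worked out, so it does not fill the gap.

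\textbf{How to repair Step 1.} You need a group-theoretic invariant here rather than the Alexander polynomial. The paper uses Milnor's presentation of $\pi/\gamma_3\pi$ to obtain
\[\gamma_2\pi/\gamma_3\pi\cong\bigwedge^2\Z^3/\langle x_i\wedge\textstyle\sum_{j\neq i}{\rm lk}(J_i,J_j)x_j\rangle.\]
This group has rank $3$ if and only if all linking numbers vanish. The rank is determined by $\wh{\pi}$, because profinite completion of a finitely generated group is compatible with the lower central series. With Step 1 replaced by this argument, your Steps 2 and 3 complete the proof.
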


\begin{proof} 
By Milnor's presentation $\pi/\gamma_3\pi\cong \langle x_1,x_2,x_3\mid [x_i,\lambda_i],\ \gamma_3F\rangle$ of the link group \cite{Milnor1957Isotopy}, where $\gamma_k$ denotes the $k$-th term of the lower central series, $F$ the free group on $x_1,x_2,x_3$, and $\lambda_i$ a word representing the longitude of $J_i$, we have $\gamma_2\pi/\gamma_3\pi\cong \bigwedge^2\Z^3/\langle x_i\wedge \sum_{j\neq i}{\rm lk}(J_i,J_j)x_j\rangle$, whose rank is $3$ if and only if all the linking numbers vanish. 
Since the profinite completion of a finitely generated group is compatible with the lower central series, this rank is determined by $\wh{\pi}$, and hence $L$ satisfies the same condition. 
Now, by Murasugi's formula \cite{Murasugi1966TransAMS}*{Theorem 4.3} (see also Traldi \cite{Traldi1984}*{Theorem 5.3}), 
the condition on $J$ implies that $\mu_J(1,2,3) \in \Z$ is defined and 
\[\Delta_J(1+T_1,1+T_2,1+T_3)\doteq
\mu_J(1,2,3)^2T_1T_2T_3+(\text{higher terms})\] holds, and the same for $L$. 
For each prime number $p$, consider the image of $\Delta_J$ via 
\[\wh{\Lambda}=\wh{\Z}[\![t_1^{\wh{\Z}},t_2^{\wh{\Z}},t_3^{\wh{\Z}}]\!]
\surj \Z_p[\![t_1^{\Z_p},t_2^{\Z_p},t_3^{\Z_p}]\!]
\congto \Z_p[\![T_1,T_2,T_3]\!];\ t_i\mapsto 1+T_i.\] 
Then the property clearly persists under the ambiguity in \Cref{thm:completeA}.
\end{proof}

\begin{remark} This \Cref{cor:Milnor} partially answers Traldi's question asked via an email to the third author in February 2017. 
A further possible approach to this invariant is to view it as arising from the lower central series, which is compatible with the profinite completion; indeed, $|\mu_J(1,2,3)|$ is read off from the torsion subgroup $(\Z/\mu_J(1,2,3))^2$ of $\gamma_3\pi/\gamma_4\pi$. 
\end{remark}

\begin{defn}
The \emph{Mahler measure} of $0\neq f\in \Lambda=\Z[t_1^\Z,\ldots,t_r^\Z]$ is defined by 
\[\textsc{m}(f):=\,{\rm exp}\,\int_{\mathbb{T}^r}\log|f(z)|\frac{dz_1}{z_1}\cdots \frac{dz_r}{z_r}, 
\] 
where $\mathbb{T}^r=\{z=(z_i)_i\in \C^r \mid |z_i|=1\}$ is the $r$-torus.
\end{defn}

\begin{example}
The 3-chain link $L=6^3_1$ with 
$\Delta_L(x,y,z)=xy+xz+yz-x-y-z=xy(1-1/x-1/y)+(x+y-1)z$
has 
\[\log \textsc{m}(\Delta_L) =\log \textsc{m}(x+y-1)=\frac{3\sqrt{3}}{4\pi}L(2,\chi_{-3})\approx 1.381\]
 (cf.\,\cite{Smyth1981}*{Example 5}).
\end{example}

We use the following results of L\^e. Note that $H\cong \Z^r$ via the basis $(t_i)_i$. 

\begin{lemma}[L\^e \cite{Le2014CMH}] \label{lem:Le}
Let $\mathcal{B}$ be a finitely generated torsion $\Lambda$-module with the order element $\Delta\neq 0$. 
For a finite-index subgroup $\Gamma$ of $H$, let $I(\Gamma)\subset \Lambda$ denote the ideal generated by $h-1$ $(h\in \Gamma)$, so that $\mathcal{B}/I(\Gamma)\mathcal{B}\cong \mathcal{B}\otimes_\Lambda \Z[H/\Gamma]$, and let $\ell(\Gamma)$ denote the minimal Euclidean norm $|\bm{x}|$ of nonzero elements $\bm{x}\in \Gamma$. For $\bm{k}\in \Z^r$, put $\bm{k}^\perp=\{\bm{x}\in \Z^r\mid \bm{x}\cdot\bm{k}=0\}$, where $\cdot$ denotes the standard inner product, and call $\bm{k}$ primitive if its entries are coprime. For a finitely generated abelian group $A$, let ${\rm tor}_\Z(A)$ denote its torsion subgroup. 

{\rm (1)} {\rm (\cite{Le2014CMH}*{Theorem 3})} 
$\ds \limsup_{\ell(\Gamma)\to\infty}\frac{\log|{\rm tor}_\Z(\mathcal{B}/I(\Gamma)\mathcal{B})|}{[H:\Gamma]}=\log \textsc{m}(\Delta)$ holds.
In particular, for any sequence $(\Gamma_s)_s$ with $\ell(\Gamma_s)\to\infty$, 
we have $\ds \limsup_s \frac{\log|{\rm tor}_\Z(\mathcal{B}/I(\Gamma_s)\mathcal{B})|}{[H:\Gamma_s]}\leq \log \textsc{m}(\Delta)$. 

{\rm (2)} {\rm (\cite{Le2014CMH}*{Lemma 5.6})} Let $(\bm{k}_s)_{s\in \Z_{>0}}$ be a sequence of primitive vectors in $\Z^r$ with $\ell(\bm{k}_s^\perp)\to\infty$. 
Then there exist $j_s\in \Z_{>0}$, which may be taken to be multiples of any prescribed integers,
such that the subgroups $\Gamma_s=\{\bm{x}\in \Z^r\mid \bm{x}\cdot\bm{k}_s\in d_s\Z\}$ with $d_s=j_s|\bm{k}_s|^2$ satisfy 
$\ds \lim_{s\to\infty}\frac{\log|{\rm tor}_\Z(\mathcal{B}/I(\Gamma_s)\mathcal{B})|}{d_s}=\log\textsc{m}(\Delta)$. 
\end{lemma}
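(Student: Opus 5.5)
Since this lemma is quoted from L\^e \cite{Le2014CMH}, the plan is to recall the structure of his argument rather than derive anything new from the present paper; what follows is how I would reconstruct it. The first step is to reduce to cyclic modules. Take a finite filtration of the torsion module $\mathcal{B}$ whose successive quotients are of the form $\Lambda/\mathfrak{p}$ with $\mathfrak{p}$ prime; the order element $\Delta$ is then the product of the generators of the height-one primes that occur. One checks that the logarithm of $|{\rm tor}_\Z(-/I(\Gamma)-)|$ is additive up to an error of order $o([H:\Gamma])$ along such a filtration. The pseudo-null pieces, where the height is at least two, contribute only $o([H:\Gamma])$, and the Mahler measure is multiplicative. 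This reduces both (1) and (2) to the case $\mathcal{B}=\Lambda/(f)$ with $f=\Delta$ irreducible.

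For $\mathcal{B}=\Lambda/(f)$ and a finite-index subgroup $\Gamma$, the quotient $\mathcal{B}\otimes_\Lambda\Z[H/\Gamma]$ is $\Z[H/\Gamma]/(f)$. Its torsion can be computed through the character decomposition over $\C$. If $f$ vanishes at none of the characters of $H/\Gamma$ (torsion points of $\mathbb{T}^r$), then $|\Z[H/\Gamma]/(f)|=\prod_{\chi}|f(\chi)|$, a resultant. Otherwise the torsion equals the product of the nonzero values of $f$, corrected by a bounded-denominator factor coming from the Galois orbits of the vanishing characters. Taking logarithms gives a Riemann sum of $\log|f|$ over the finite subgroup $\Gamma^\perp\subset\mathbb{T}^r$. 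As $\ell(\Gamma)\to\infty$ these subgroups equidistribute in $\mathbb{T}^r$, so the upper bound in (1) follows once the singularity of $\log|f|$ along $Z(f)\cap\mathbb{T}^r$ is controlled from above. From above this is easy, since $\log|f|$ is bounded above, and an upper-semicontinuity argument gives $\limsup\le\log\textsc{m}(f)$. This is the ``in particular'' clause.

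The main obstacle is the matching lower bound. The difficulty is showing that torsion points lying very close to, but not on, the zero set $Z(f)$ cannot make $\log|f(\chi)|$ too negative too often. I would first try a Diophantine estimate of Baker/Gelfond type for values of $f$ at roots of unity, combined with a dimension count that uses Laurent's theorem on torsion points of subvarieties to isolate the contribution of torsion cosets contained in $Z(f)$. This yields the limsup equality in (1).

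For (2), I would choose $\Gamma_s$ as in the statement. Then $\Gamma_s^\perp$ lies on the one-parameter circle $\{z^{\bm{k}_s}\}$ inside $\mathbb{T}^r$, sampled at $d_s$-th roots of unity. The Riemann sum becomes $\frac{1}{d_s}\sum_{\zeta^{d_s}=1}\log|f_{\bm{k}_s}(\zeta)|$ for a one-variable specialization $f_{\bm{k}_s}$. Boyd--Lawton gives $\textsc{m}(f_{\bm{k}_s})\to\textsc{m}(f)$ as $\ell(\bm{k}_s^\perp)\to\infty$. For fixed $s$ the one-variable Riemann sum converges to $\log\textsc{m}(f_{\bm{k}_s})$ as $j_s\to\infty$, by the classical one-variable estimate: roots of unity near roots of $f_{\bm{k}_s}$ on the circle are controlled by Baker-type bounds. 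A diagonal choice of $j_s$, taken large and divisible by any prescribed integers, then produces the honest limit in (2).
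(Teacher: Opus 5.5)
The paper gives no proof of this lemma. It is quoted from L\^e (Theorem~3 for (1), Lemma~5.6 for (2)), so the question is whether your reconstruction stands on its own. It has two genuine gaps.

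\textbf{The lower bound in (1).} You propose to show, for \emph{arbitrary} $\Gamma$ with $\ell(\Gamma)\to\infty$, that characters of $H/\Gamma$ near $Z(f)\cap\mathbb{T}^r$ cannot make $\log|f(\chi)|$ too negative, using a Baker/Gelfond-type estimate plus Laurent's theorem. No such estimate is available for $r\ge2$. There $f(\chi)$ is a sum of boundedly many roots of unity of large order, and lower bounds for small nonzero sums of this kind are exactly the hard problem.

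Gelfond works in one variable only because $\prod_{\zeta^d=1}(\alpha-\zeta)=\alpha^d-1$ collapses the whole Riemann sum of $\log|\zeta-\alpha|$ into the single quantity $|\alpha^d-1|$. Moreover, a lower bound valid for all $\Gamma$ would upgrade (1) to a genuine limit, which is more than (1) asserts.

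The lower bound is instead a formal consequence of (2):
\begin{itemize}
\item $[H:\Gamma_s]=d_s$, since $\bm k_s$ is primitive.
\item A nonzero $\bm x\in\Gamma_s$ either lies in $\bm k_s^\perp$, so $|\bm x|\ge\ell(\bm k_s^\perp)$, or satisfies $|\bm x\cdot\bm k_s|\ge d_s$, so $|\bm x|\ge j_s|\bm k_s|\ge\ell(\bm k_s^\perp)$.
\item The last inequality holds because for $r\ge2$ the lattice $\bm k_s^\perp$ contains a nonzero vector of norm at most $|\bm k_s|$; for $r=1$, take $j_s\to\infty$.
\end{itemize}
Hence $\ell(\Gamma_s)\to\infty$ and $\limsup\ge\log\textsc{m}(\Delta)$. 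You should prove (2) first and get (1) as the upper bound plus (2). Your mechanism for (2) is the right one: one-variable specialization, Gelfond in one variable, Lawton's theorem, and a diagonal choice of $j_s$.

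\textbf{The additivity claim.} The sentence ``one checks that $\log|{\rm tor}_\Z|$ is additive up to $o([H:\Gamma])$ along a prime filtration'' is not a routine check. After $\otimes_\Lambda\Z[H/\Gamma]$ only right exactness survives, and free parts can turn into torsion. For example, take $r=1$, $\Gamma=d\Z$ and
$0\to\Lambda/(t-1)\xrightarrow{t-1}\Lambda/((t-1)^2)\to\Lambda/(t-1)\to0$: both outer terms give $\Z$, while the middle gives $\Z\oplus\Z/d$.

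Several things therefore need real arguments:
\begin{itemize}
\item Bounding such defects by $e^{o([H:\Gamma])}$ uniformly in $\Gamma$.
\item Showing that pseudo-null pieces such as $\Lambda/(p,g)$ contribute $o([H:\Gamma])$. Here every point of $(\overline{\F}_p^{\times})^r$ is torsion, so Laurent's theorem gives nothing and a counting argument is needed.
\item The reverse inequality, which you need to pass (2) from $\Lambda/(f)$ to $\mathcal B$. Equivalently, you must compare the order of $\mathcal B\otimes_{r_{\bm k}}\Z[t^{\Z}]$ with $r_{\bm k}(\Delta)$; these can differ by factors coming from the height-$\ge2$ part of the Fitting ideal.
\end{itemize}

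As written, your upper bound is justified only for $\mathcal B=\Lambda/(f)$. Even there, the torsion is not the product of the nonzero values $f(\chi)$ up to a bounded factor: for $r=1$, $f=t-1$ it is $1$ versus $d$. What you need is three ingredients:
\begin{itemize}
\item The divisibility $|{\rm tor}_\Z(\Z[H/\Gamma]/(f))|\le\prod_{f(\chi)\neq0}|f(\chi)|$. This holds because the gcd of the $k\times k$ minors ($k$ the rank) divides the sum of the principal $k\times k$ minors, which equals $\pm\prod_{f(\chi)\neq0}f(\chi)$ since multiplication by $f$ is diagonalized by the characters.
\item The truncation $\max(\log|f|,-C)$.
\item The fact that the proportion of characters lying on the closed null set $Z(f)\cap\mathbb{T}^r$ tends to $0$.
\end{itemize}
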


\begin{cor} \label{cor:Mahler}
Given $\alpha:\pi\surj H$ and $\rho:\pi\to {\rm GL}_k\Z$, 
the Mahler measure of the twisted multivariable Alexander polynomial $\Delta_\rho^\alpha$ is a profinite invariant of $(\rho,\alpha)$. Namely, under the setting of {\rm \Cref{thm:completeA}}, 
$\textsc{m}(\Delta_\rho^\alpha)=\textsc{m}(\Delta_\varrho^\beta)$ holds. 
\end{cor}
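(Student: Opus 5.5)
The plan is to use Lê's lemma. The idea is to express $\log\textsc{m}(\Delta)$ through the orders of torsion subgroups of finite quotients of the Alexander module, and to show that these orders can be read off from the completed module $\wh{\mathcal{A}}_\rho^\alpha$ together with the completed $\wh{H}$-action.

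Put $\mathcal{B}=\mathcal{A}_\rho^\alpha$ and $\mathcal{B}'=\mathcal{A}_\varrho^\beta$. If $\mathcal{B}$ is not torsion, then $\Delta_\rho^\alpha=0$. By \Cref{thm:completeA} the order ideals coincide, so $\Delta_\varrho^\beta=0$ as well, and both Mahler measures are zero by convention. So assume both are torsion.

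\textbf{Step 1: reduction to finite-index data.} For a finite-index subgroup $\Gamma\subset H$, the group $\mathcal{B}/I(\Gamma)\mathcal{B}$ is a finitely generated abelian group. Its profinite completion is
\[
\wh{\mathcal{B}}\,\wh{\otimes}_{\wh{\Lambda}}\,\wh{\Z}[H/\Gamma]\cong \wh{\mathcal{B}}/I(\wh{\Gamma})\wh{\mathcal{B}},
\]
by \Cref{lem:share} and the right exactness of completion. Write $A=\mathcal{B}/I(\Gamma)\mathcal{B}$. Then $\wh{A}\cong \wh{\Z}^{b}\oplus {\rm tor}_\Z(A)$, and the finite group ${\rm tor}_\Z(A)$ is recovered intrinsically as the torsion subgroup of $\wh{A}$, since $\wh{\Z}$ is torsion-free. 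Hence $|{\rm tor}_\Z(\mathcal{B}/I(\Gamma)\mathcal{B})|$ depends only on the $\wh{\Lambda}$-module $\wh{\mathcal{B}}$ and on the open subgroup $\wh{\Gamma}\subset\wh{H}$.

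\textbf{Step 2: matching subgroups.} \Cref{thm:completeA} gives $\wh{\mathcal{B}}\cong\wh{\mathcal{B}}'$ compatibly with $\Phi_\ast:\wh{H}\congto\wh{K}$. Finite-index subgroups of $H$ correspond bijectively to open subgroups of $\wh{H}$, and hence to finite-index subgroups of $K$ via $\Phi_\ast$. This correspondence preserves the index. Combined with Step 1, for each $\Gamma\subset H$ with corresponding $\Gamma'\subset K$ we get
\[
|{\rm tor}(\mathcal{B}/I(\Gamma)\mathcal{B})|=|{\rm tor}(\mathcal{B}'/I(\Gamma')\mathcal{B}')|.
\]

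\textbf{Step 3: the obstacle, and how to handle it.} The correspondence $\Gamma\mapsto\Gamma'$ does not control the Euclidean quantity $\ell$, because $\Phi_\ast$ is only in ${\rm GL}_r\wh{\Z}$. So \Cref{lem:Le}(1) cannot simply be transported. Instead I will argue by symmetric inequalities.

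By \Cref{lem:Le}(2) applied to $\mathcal{B}$, choose a sequence $\Gamma_s$ of the form given there, with
\[
\frac{\log|{\rm tor}(\mathcal{B}/I(\Gamma_s)\mathcal{B})|}{[H:\Gamma_s]}\to\log\textsc{m}(\Delta_\rho^\alpha).
\]
Here $[H:\Gamma_s]=d_s$. The aim is to arrange that $\ell(\Gamma'_s)\to\infty$. Then the second statement of \Cref{lem:Le}(1), applied to $\mathcal{B}'$, gives $\log\textsc{m}(\Delta_\rho^\alpha)\le\log\textsc{m}(\Delta_\varrho^\beta)$. Symmetry gives the reverse inequality, and hence equality.

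To force $\ell(\Gamma'_s)\to\infty$, I use the freedom in \Cref{lem:Le}(2) to take each $j_s$ divisible by $s!$. Then every $\Gamma_s$ contains $n\Z^r$ only for $n$ divisible by $d_s$, and, more to the point, $\Gamma_s$ lies inside the subgroup $\{\bm{x}:\bm{x}\cdot\bm{k}_s\in s!\,\Z\}$. The key claim is that $\ell(\Gamma'_s)\to\infty$ whenever the $\Gamma_s$ eventually avoid every fixed nonzero vector, i.e.\ for each fixed $\bm{y}\neq0$ in $K$, $\bm{y}\notin\Gamma'_s$ for all large $s$. This reduces the matter to a pointwise statement, because only finitely many $\bm{y}$ have bounded norm.

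To verify the pointwise statement, consider $\bm{x}=\Phi_\ast^{-1}(\bm{y})\in\wh{\Z}^r$, which is a fixed nonzero profinite vector. I need $\bm{x}\notin\wh{\Gamma}_s$ for large $s$, that is, $\bm{x}\cdot\bm{k}_s\notin s!\,\wh{\Z}$. This is where some care is needed. I would try, and expect it to be the main obstacle, to refine the choice of $(\bm{k}_s)$ so that for every fixed nonzero $\bm{x}\in\wh{\Z}^r$ this holds eventually. Alternatively, I could replace $\Gamma_s$ by $\Gamma_s\cap m_s\Z^r$ for slowly growing $m_s$. This intersection forces $\ell(\Gamma'_s)\ge m_s\to\infty$, since $\Phi_\ast$ preserves $m\wh{\Z}^r$. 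The cost is checking that the normalized torsion growth still tends to $\log\textsc{m}$, which should follow from the proof of \Cref{lem:Le}(2): there the torsion is computed through the one-variable specializations along $\bm{k}_s$, and the modification changes it only by a factor of subexponential size relative to the index.
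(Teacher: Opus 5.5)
Your Steps 1 and 2 are sound and match the paper's first step. The paper works with $\Lambda/(f)$ and the ideal equality of \Cref{thm:completeA} rather than with $\mathcal{A}_\rho^\alpha$ and the module isomorphism, but either route gives $|{\rm tor}_\Z(\mathcal{B}/I(\Gamma)\mathcal{B})|=|{\rm tor}_\Z(\mathcal{B}'/I(\Gamma')\mathcal{B}')|$. The strategy of Step 3 is also exactly the paper's: \Cref{lem:Le}~(2) for $\mathcal{B}$, the limsup bound of \Cref{lem:Le}~(1) for $\mathcal{B}'$, then symmetry.

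The gap is that the one step needing a real argument, $\ell(\Gamma'_s)\to\infty$, is never established.
\begin{itemize}
\item Your first option fixes the modulus at $s!$ independently of the vectors to be avoided, and then asks for $\bm{k}_s$ that works for every nonzero $\bm{x}\in\wh{\Z}^r$. No construction is given, and this discards the freedom that makes the step easy.
\item Your second option does not work as stated. For $m_s>1$ and $r\geq 2$, the quotient $\Z^r/(\Gamma_s\cap m_s\Z^r)$ is no longer cyclic. So \Cref{lem:Le}~(2), whose proof runs through one-variable specializations along $\bm{k}_s$, no longer applies. For an arbitrary sequence, \Cref{lem:Le}~(1) gives only an upper bound on the limsup. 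Your claimed ``subexponential'' control of the torsion is therefore an unproved convergence statement, of exactly the kind that is not available in general.
\end{itemize}

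The missing idea is to let both $\bm{k}_s$ and $j_s$ depend on the finitely many vectors $\bm{u}_{\bm{y}}=\Phi_\ast^{-1}(\bm{y})$ with $\bm{y}\in K$ and $0<|\bm{y}|\le s$.
\begin{itemize}
\item First take $\bm{k}_s=(1,m_s,\ldots,m_s^{r-1})$ with $m_s\ge s+1$. Each $\bm{u}_{\bm{y}}\neq 0$ has a nonzero $p$-component for some prime $p$. Hence $m\mapsto \bm{u}_{\bm{y}}\cdot(1,m,\ldots,m^{r-1})$ has nonzero $p$-component given by a nonzero polynomial over $\Z_p$, which has finitely many roots. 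Taking $m_s$ large therefore gives $\bm{u}_{\bm{y}}\cdot\bm{k}_s\neq 0$ for all these $\bm{y}$. Uniqueness of base-$m_s$ expansions also gives $\ell(\bm{k}_s^\perp)>s$.
\item Next, these finitely many elements of $\wh{\Z}$ are nonzero and $\bigcap_n n\wh{\Z}=0$. Use the clause of \Cref{lem:Le}~(2) allowing $j_s$ to be a multiple of any prescribed integer, and make $d_s$ divisible enough that none of them lies in $d_s\wh{\Z}$.
\item Since $\overline{\Gamma_s}=\{\bm{u}\in\wh{H}\mid \bm{u}\cdot\bm{k}_s\in d_s\wh{\Z}\}$, this gives $\bm{y}\notin\Gamma'_s$ for $0<|\bm{y}|\le s$, that is, $\ell(\Gamma'_s)>s$.
\end{itemize}
With this, the rest of your argument goes through.
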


\begin{proof}
Put $f=\Delta_\rho^\alpha\in \Lambda$, $g=\Delta_\varrho^\beta\in \Z[K]$, $\mathcal{B}=\Lambda/(f)$, and $\mathcal{B}'=\Z[K]/(g)$. 
Since $f$ and $g$ are the order elements of torsion modules, we have $f\neq 0$ and $g\neq 0$, so that $\mathcal{B}$ and $\mathcal{B}'$ are torsion modules with the order elements $f$ and $g$. 
For a finite-index subgroup $\Gamma$ of $H$, the isomorphism $\Phi_\ast:\wh{H}\congto \wh{K}$ maps the closure $\overline{\Gamma}\subset \wh{H}$ onto the closure $\overline{\Gamma'}\subset \wh{K}$ of a finite-index subgroup $\Gamma'$ of $K$ with $[K:\Gamma']=[H:\Gamma]$, and \Cref{thm:completeA} yields isomorphisms 
$\wh{\Z}\otimes_\Z \mathcal{B}/I(\Gamma)\mathcal{B}\cong \wh{\Lambda}/((f)+I(\Gamma)\wh{\Lambda})\cong \wh{\Z}\otimes_\Z \mathcal{B}'/I(\Gamma')\mathcal{B}'$, hence $|{\rm tor}_\Z(\mathcal{B}/I(\Gamma)\mathcal{B})|=|{\rm tor}_\Z(\mathcal{B}'/I(\Gamma')\mathcal{B}')|$. 
For $\bm{y}\in K$, put $\bm{u}_{\bm{y}}=\Phi_\ast^{-1}(\bm{y})\in \wh{H}=\wh{\Z}^r$, and extend the inner product to $\wh{\Z}^r\times \Z^r\to \wh{\Z}$. 
Take the sequence $(\bm{k}_s)_{s\in \Z_{>0}}$ of vectors $\bm{k}_s=(1,m_s,\ldots,m_s^{r-1})\in \Z^r$ with $m_s\geq s+1$ large enough 
so that $\bm{u}_{\bm{y}}\cdot \bm{k}_s\neq 0$ for every $\bm{y}\in K$ with $0<|\bm{y}|\leq s$; 
this is possible, since $\bm{u}_{\bm{y}}\neq 0$ has a nonzero $p$-component for some $p$ and a nonzero polynomial over $\Z_p$ has only finitely many roots. 
Then $\ell(\bm{k}_s^\perp)>s$ holds. 
Since $\bigcap_{n} n\wh{\Z}=0$, by \Cref{lem:Le} (2) we may take $j_s$ so that $\bm{u}_{\bm{y}}\cdot \bm{k}_s\notin d_s\wh{\Z}$ for these $\bm{y}$. 
Let $\Gamma'_s\subset K$ correspond to $\Gamma_s$ as above. Since $\overline{\Gamma_s}=\{\bm{u}\in \wh{H}\mid \bm{u}\cdot \bm{k}_s\in d_s\wh{\Z}\}$, we obtain $\bm{y}\notin \Gamma'_s$ for $0<|\bm{y}|\leq s$, that is, $\ell(\Gamma'_s)>s$. 
Hence \Cref{lem:Le} (1) for $\mathcal{B}'$ and (2) for $\mathcal{B}$ yield 
$\log \textsc{m}(g)\geq \limsup_s \frac{\log|{\rm tor}_\Z(\mathcal{B}'/I(\Gamma'_s)\mathcal{B}')|}{d_s}=\log \textsc{m}(f)$. 
Applying the same argument to $\Phi_\ast^{-1}$, we obtain $\textsc{m}(f)=\textsc{m}(g)$. 
\end{proof}

\begin{remark}[Spectral radius and entropy] 
For the mapping torus of a pseudo-Anosov automorphism, 
the multivariable Alexander polynomial coincides with a multivariable Lefschetz zeta function, 
and hence its Mahler measure carries dynamical information about the suspension flow \cite{YiLiu2020JAMS}*{Theorem 2.2}. 
Liu \cite{YiLiu2020JAMS}*{Theorem 1.2} confirmed McMullen's conjecture \cite[Conjecture 1.1]{YiLiu2020JAMS}: 
an automorphism of a compact orientable surface admits a virtual homological spectral radius greater than 
$1$ if and only if its mapping class entropy is positive, 
the key step being that suitable finite covers have multivariable Alexander polynomials of Mahler measure greater than $1$ \cite{YiLiu2020JAMS}*{Theorem 4.3}. 
Combined with \Cref{cor:Mahler}, this shows that \emph{positivity of the mapping-class entropy is a profinite property}. 
Moreover, the proof of \Cref{thm:fully-punctured} shows that $f_\varphi$ admits an inner singularity of odd order if and only if $f_\psi$ does, so the absence of odd-order inner singularities is a profinite property. 
By McMullen \cite{McMullen2013CMH}*{Theorem 1.1}, this absence is equivalent to \emph{the entropy being detected homologically}, namely, to the entropy of the map being the supremum of the logarithms of the spectral radii of the actions on the first homology of the finite covers to which the map lifts. 
Hence the latter property is a profinite property as well.
\end{remark} 

\begin{remark}[Hyperbolic volume] 
For a hyperbolic 3-manifold, the relationship among the Mahler measure of the twisted Alexander polynomial of the holonomy representation, the torsion growth in a sequence of coverings, and the hyperbolic volume has been conjectured and (partially) established by Silver--Williams \cite{SilverWilliams2002Topology}, Le \cite{Le2014CMH,Le2018AIF}, Bergeron--Venkatesh \cite{BergeronVenkatesh2013}, and Bergeron--\c{S}eng\"un--Venkatesh \cite{BergeronSengunVenkatesh2016}. 
Our result (\Cref{cor:Mahler}) would yield a lower bound for the volume of a hyperbolic link, 
provided the holonomy representation can be recovered from the profinite completion, which is currently open. 
\end{remark}

Now let $p$ be a prime number and 
let $\log_p$ denote the $p$-adic logarithm regularized by $\log_p p=0$. 
Then 
\begin{defn} Besser--Deninger's \emph{$p$-adic log Mahler measure} of $f\in \Z[t_1^\Z,\ldots,t_r^\Z]$ with $f(z)\neq 0$ on $\mathbb{T}_p^{\,r}=\{z\in \C_p^{\,r}\mid |z_i|=1\}$ is defined by 
\[m_p(f)=\int_{\mathbb{T}_p^{\,r}}\log_p f(z)\frac{dz_1}{z_1}\cdots \frac{dz_r}{z_r}
:=\lim_{\substack{n\to \infty\\ p\,\nmid\,n}} \frac{1}{n^r} \sum_{\zeta\in \bm{\mu}_n^{\,r}}\log_p f(\zeta),\]
where $\bm{\mu}_n^{\,r}=\{\zeta=(\zeta_i)_i \in \C_p^{\,r}\mid \zeta_i^n=1\}$ is the set of $r$-tuples of $n$-th roots of unity. 
\end{defn} 
Its relation to the syntomic regulator, 
and hence to $p$-adic heights and $p$-adic $L$-functions, 
is studied in \cite{BesserDeninger1999}. 
The following profinite invariance is of arithmetic interest.

\begin{cor} \label{cor:pMahler}
Under the same setting as in \Cref{cor:Mahler}, 
$m_p(\Delta_\rho^\alpha)$ is defined if and only if $m_p(\Delta_\varrho^\beta)$ is defined.
If they are defined, then 
$m_p(\Delta_\rho^\alpha)=m_p(\Delta_\varrho^\beta)$ holds. 
\end{cor}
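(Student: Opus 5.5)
The plan is to express $m_p$ through data that survive the identification of \Cref{thm:completeA}: the finite quotients $\Lambda/((f)+I(\Gamma))$ for $\Gamma=(n\Z)^r$, equivalently the values of $f$ at $n$-th roots of unity. Put $f=\Delta_\rho^\alpha$ and $g=\Delta_\varrho^\beta$. By \Cref{thm:completeA}, $(f)=(g\circ V)$ in $\wh{\Lambda}$, where $V\in{\rm GL}_r\wh{\Z}$ is induced by $\Phi_\ast$. Projecting $\wh{\Lambda}\surj\Z_p[\![\wh{H}]\!]$ and then, for $p\nmid n$, onto $\Z_p[H/H^n]\otimes\Z_p[\bm{\mu}_n]$ via characters $\chi:H/H^n\to\bm{\mu}_n$, each evaluation $f(\zeta)$ with $\zeta\in\bm{\mu}_n^{\,r}$ equals $g(\zeta^{V})$ up to a unit of $\Z_p[\bm{\mu}_n]$ coming from a unit of $\wh{\Lambda}$. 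Here $\zeta\mapsto\zeta^V$ is the bijection of $\bm{\mu}_n^{\,r}$ induced by $V\bmod n\in{\rm GL}_r(\Z/n)$.

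\textbf{Step 1 (definedness).} $m_p(f)$ is defined exactly when $f$ has no zeros on $\mathbb{T}_p^{\,r}$. I would reduce this to the statement that the reduction $\bar f\in\F_p[H]$ does not vanish at any point of $\ol{\F}_p^{\times r}$ (the Teichm\"uller lifts of $\bm\mu'$ are dense modulo the maximal ideal), perhaps up to a finite ambiguity to be checked. This condition is the same as saying that the ideal $(f)+\mf{m}$ is the unit ideal for every maximal ideal $\mf{m}$ of $\Lambda$ of residue characteristic $p$. Those maximal ideals correspond to continuous maps $\wh{\Lambda}\surj\F_q$, as in the proof of \Cref{lem:hatLambdaisflat}. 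Since $(f)\wh{\Lambda}=(g\circ V)\wh{\Lambda}$, and $\Phi_\ast$ permutes these maps, the condition transfers from $f$ to $g$.

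\textbf{Step 2 (equality).} Since $\log_p$ vanishes on roots of unity and the regularization gives $\log_p p=0$, the value $\log_p$ of a unit of $\Z_p[\bm\mu_n]$ is not automatically zero. So I would instead compare the full products $\prod_{\zeta\in\bm\mu_n^{\,r}}f(\zeta)$, which are $\Z_p$-norms. Concretely, I would write
\[\prod_{\zeta\in\bm\mu_n^{\,r}}f(\zeta)=\det\bigl(f\cdot\,:\Z_p[H/H^n]\to\Z_p[H/H^n]\bigr).\]
Step 1 shows this determinant is a $p$-adic unit. Up to a unit of $\Z_p$ it is determined by the finite module $\Z_p[H/H^n]/(f)$, and that module is isomorphic to the corresponding one for $g$. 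The main obstacle is that an isomorphism of such modules only matches the determinants up to $\Z_p^\times$, and $\log_p$ of a unit in $\Z_p^\times$ is not zero. So the unit of $\wh{\Lambda}$ must be tracked. Since $f$ and $g\circ V$ both lie in $\Lambda'$, that unit lies in $\Lambda'\cap\wh{\Lambda}^\times$, which consists of $\pm$ monomials by \Cref{lem:oderideal}. Its norm to $\Z_p$ is then $\pm\prod\zeta^{a}$, a root of unity, so it is killed by $\log_p$.

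\textbf{Step 3 (limit).} It follows that
\[\sum_\zeta\log_p f(\zeta)=\sum_\zeta\log_p g(\zeta^V)=\sum_\zeta\log_p g(\zeta)\]
for each $n$ prime to $p$, because $\zeta\mapsto\zeta^V$ is a bijection of $\bm{\mu}_n^{\,r}$. Dividing by $n^r$ and letting $n\to\infty$ gives $m_p(f)=m_p(g)$.
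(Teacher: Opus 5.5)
The gap is in Step~2. From $f,\,g\circ V\in\Lambda'$ and $g\circ V=uf$ in $\wh{\Lambda}$ you cannot conclude $u\in\Lambda'$. Without regularity ($V\notin{\rm GL}_r\Z$, which \Cref{cor:Mahler} allows) that conclusion is actually false.

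Take $r=1$ and $v\in\wh{\Z}^\times\setminus\{\pm1\}$. Both $t$ and $t^{v}$ are topological generators of $t^{\wh{\Z}}$, and principal ideals of the compact ring $\wh{\Lambda}$ are closed, so $(t-1)=(t^{v}-1)$ in $\wh{\Lambda}$. Hence $t^{v}-1=u(t-1)$ with $u\in\wh{\Lambda}^\times$. But $1$ and $v$ are $\Z$-linearly independent in $\wh{\Z}$. So in any Laurent ring $\Z[W]\subset\Lambda'$ containing $t$ and $t^{v}$, these two monomials have non-proportional exponent vectors, and $t-1\nmid t^{v}-1$ in $\Lambda'$.

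This pair actually arises in the setting of \Cref{thm:completeA}: take $\pi=\varpi=\Z^2$, $\alpha=\beta$ the first projection, $\rho=\varrho=\bm{1}$, and $\Phi={\rm diag}(v,1)$. So divisibility does not descend along $\Lambda'\to\wh{\Lambda}$; that map is not faithfully flat, and \Cref{lem:oderideal} cannot be invoked for this step. Descent does hold along $\Lambda\to\wh{\Lambda}$ (\Cref{lem:hatLambdaisflat}), so your argument is fine when $V\in{\rm GL}_r\Z$. In general, however, showing that $u$ is a monomial whenever $m_p(f)$ is defined would require a multivariable analogue of \Cref{lem:fdoteqg}, which you do not have.

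The missing idea is that $u$ never needs to be identified. You lose information by passing to $\Z_p$, where $\log_p$ of a unit is indeed uncontrolled. Instead, keep $\wh{\Z}$-coefficients and use that the relevant quantities are rational integers.

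For $p\nmid n$, $\prod_{\zeta\in\bm{\mu}_n^{\,r}}f(\zeta)$ is the determinant of multiplication by $f$ on $\Z[H/H^n]$. Likewise $\prod_{\eta\in\bm{\mu}_n^{\,r}}g(\eta)=\prod_{\zeta}g(\zeta^{V})$ is the determinant of multiplication by the image of $g\circ V$, and that image already lies in $\Z[H/H^n]$. Both determinants are integers, and the first is nonzero because $\bm{\mu}_n^{\,r}\subset\mathbb{T}_p^{\,r}$. Their ratio is the determinant of multiplication by the image of $u$ on $\wh{\Z}[H/H^n]$, which lies in $\wh{\Z}^\times$. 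Two nonzero integers differing by a unit of $\wh{\Z}$ agree up to sign, and $\log_p(-1)=0$, so your Step~3 then goes through. This is the paper's argument, carried out there orbit by orbit via the norms ${\rm Nr}:\wh{\Z}\otimes\Z[\xi_n]\to\wh{\Z}$ of $u(\zeta)$, $f(\zeta)$, $g(s(\zeta))$.

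For Step~1, the paper simply evaluates $g(s(z))=u(z)f(z)$ at $z\in\mathbb{T}_p^{\,r}$ via the continuous extension $\wh{\Lambda}\to\C_p$. Your mod-$p$ route also works, with two fixes. First, remove the $p$-part of the content: $f=p$ has $m_p(f)=0$ defined while $\ol{f}=0$. Second, the implication ``$\ol{f}(\ol{z})=0\Rightarrow f$ vanishes on $\mathbb{T}_p^{\,r}$'' needs a Newton polygon argument along a monomial curve through a Teichm\"uller lift of $\ol{z}$; density of Teichm\"uller lifts does not give it.
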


\begin{proof}
Let $f=\Delta_\rho^\alpha(t_1,\ldots,t_r)$ and $g=\Delta_\varrho^\beta(s_1,\ldots,s_r)$ be as in \Cref{thm:completeA}. 
Since $f$ is a nonzerodivisor of $\wh{\Lambda}$, we have $g=uf$ for some $u\in \wh{\Lambda}^\times$. 
For $z\in \mathbb{T}_p^{\,r}$, the powers $z_i^{v}$ $(v\in \wh{\Z})$ are defined through the decomposition of $z_i$ into the Teichm\"{u}ller part and the $1$-unit part, and the evaluation $t_i\mapsto z_i$ extends to a continuous ring homomorphism $\wh{\Lambda}\to \C_p$. 
Hence $g(s(z))=u(z)f(z)$ with $u(z)\in \C_p^\times$, where $s(z)=(s_1(z),\ldots,s_r(z))$, and $z\mapsto s(z)$ is a bijection on $\mathbb{T}_p^{\,r}$. 
Thus $f$ has no zero on $\mathbb{T}_p^{\,r}$ if and only if $g$ has no zero on $\mathbb{T}_p^{\,r}$. 
Next, for $\zeta\in \bm{\mu}_n^{\,r}$ with $p\nmid n$, the evaluation $t_i\mapsto \zeta_i$ yields $g(s(\zeta))=u(\zeta)f(\zeta)$ with $u(\zeta)\in (\wh{\Z}\otimes \Z[\xi_n])^\times$, where $\xi_n\in \ol{\Q}$ is a primitive $n$-th root of unity, and let ${\rm Nr}:\wh{\Z}\otimes \Z[\xi_n]\to \wh{\Z}$ denote the map induced by the norm map of $\Q(\xi_n)/\Q$. 
Taking the norms to $\wh{\Z}$, we obtain ${\rm Nr}(g(s(\zeta)))={\rm Nr}(u(\zeta)){\rm Nr}(f(\zeta))$ with ${\rm Nr}(f(\zeta)),{\rm Nr}(g(s(\zeta)))\in \Z\setminus\{0\}$ and ${\rm Nr}(u(\zeta))\in \wh{\Z}^\times$, hence ${\rm Nr}(g(s(\zeta)))=\pm {\rm Nr}(f(\zeta))$. 
Since $\log_p$ is a homomorphism with $\log_p(-1)=0$ and $\sum_{a\in (\Z/n)^\times}\log_p f(\zeta^a)=\log_p {\rm Nr}(f(\zeta))$ for $\zeta^a=(\zeta_i^a)_i$, we obtain $\sum_{\zeta\in \bm{\mu}_n^{\,r}}\log_p f(\zeta)=\sum_{\eta\in \bm{\mu}_n^{\,r}}\log_p g(\eta)$ for every $n$ with $p\nmid n$, and the assertion follows. 
\end{proof}

\subsection{Under the regularity assumption} \label{ss.prf.reg} 

Here, we prove the following theorem to deduce \Cref{thm:twisted}, and further discuss the ambiguity. 
\begin{theorem} \label{thm.regular} 
Let the setting be as in {\rm \Cref{thm:completeA}}. 
Suppose that the induced isomorphism $\Phi_\ast:\wh{H}\congto \wh{K}$ is the completion of an isomorphism $\Phi_\star:H\congto K$ of discrete groups. 
Then, the twisted Alexander polynomials $\Delta^{\alpha}_\rho \in \Z[H]$ and $\Delta^{\beta}_{\varrho}\in \Z[K]$ coincide via $\Phi_\star:H\congto K$, up to multiplication by units of $\Lambda=\Z[H]$.  
\end{theorem}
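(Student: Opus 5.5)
Under the regularity hypothesis, the plan is to reduce to \Cref{thm:completeA} and then pass from an equality of ideals in $\wh{\Lambda}$ back to an equality in $\Lambda$ by faithful flatness.

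First, transport $\Delta^\beta_\varrho$ from $\Z[K]$ to $\Lambda=\Z[H]$ via the ring isomorphism $\Z[K]\congto\Z[H]$ induced by $\Phi_\star^{-1}$. Call the result $g\in\Lambda$, and put $f=\Delta^\alpha_\rho\in\Lambda$. Since $\Phi_\ast=\wh{\Phi_\star}$, the matrix $V=(v_{ij})$ of \Cref{thm:completeA} lies in ${\rm GL}_r\Z$. Hence the element $\Delta_\varrho^\beta(t_1^{v_{11}}\cdots t_r^{v_{1r}},\ldots)$ appearing there is exactly $g$, which lies in $\Lambda$ rather than merely in $\Lambda'$. \Cref{thm:completeA} then gives the equality of ideals
\[
f\wh{\Lambda}=g\wh{\Lambda}.
\]

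Second, descend this equality to $\Lambda$. By \Cref{lem:hatLambdaisflat}, the extension $\Lambda\to\wh{\Lambda}$ is faithfully flat, so $\mf{a}\wh{\Lambda}\cap\Lambda=\mf{a}$ for every ideal $\mf{a}\subset\Lambda$. Applying this to $\mf{a}=f\Lambda$ gives $g\in f\wh{\Lambda}\cap\Lambda=f\Lambda$, and symmetrically $f\in g\Lambda$. Thus $f\mid g$ and $g\mid f$ in $\Lambda$. Both are nonzero, since they are order elements of torsion modules, and $\Lambda$ is a domain. Hence $g=uf$ with $u\in\Lambda^\times$, i.e.\ $u=\pm h$ for some $h\in H$, which is the claim.

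The only point requiring care is the identification of the representations and of the matrix $V$. We must check that \Cref{thm:completeA} applies with $\wh{\beta}\circ\Phi=\Phi_\ast\circ\wh{\alpha}$, and that when $\Phi_\ast$ comes from $\Phi_\star$ the substituted polynomial is the honest image of $\Delta^\beta_\varrho$ under $\Phi_\star^{-1}$, with no $\wh{\Z}$-exponents. This is immediate from the basis description $s_i=\prod_j t_j^{v_{ij}}$ once $V$ is integral. The descent step itself is essentially formal given faithful flatness.

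For \Cref{thm:twisted}, I would then invoke \Cref{thm:regularity}: any profinite isomorphism of finite-volume hyperbolic $3$-manifold groups is regular. Its restriction to the maximal free abelian quotients is therefore the completion of some $\Phi_\star:H_M\congto H_N$. This uses that the free part of $\pi^{\rm ab}$ completes to the free part of $\wh{\pi}^{\rm ab}$ modulo torsion, and that the compatibility $\wh{\beta}\circ\Phi=\Phi_\ast\circ\wh{\alpha}$ holds by naturality of the abelianization. With this, the argument above applies directly.
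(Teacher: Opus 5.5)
Your proposal is correct and follows the paper's proof essentially verbatim: integrality of $V$ places the substituted polynomial in $\Lambda$, \Cref{thm:completeA} gives the equality of principal ideals in $\wh{\Lambda}$, and faithful flatness of $\Lambda\to\wh{\Lambda}$ (\Cref{lem:hatLambdaisflat}) descends this equality to $\Lambda$. Your deduction of \Cref{thm:twisted} via the regularity result \Cref{thm:regularity} likewise matches the paper's argument.
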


\begin{proof}[{Proof of {\rm \Cref{thm.regular}}}] 
Since $\Phi_\ast$ is the completion of $\Phi_\star:H\congto K$, 
the matrix $V=(v_{ij})$ in \Cref{thm:completeA} lies in ${\rm GL}_r\Z$, 
so that $s_i=t_1^{v_{i1}}\cdots t_r^{v_{ir}}\in H$ for each $i$ and $\Delta_\varrho^\beta(s_1,\ldots,s_r)\in \Lambda$. 
By \Cref{thm:completeA}, the equality 
\[(\Delta_\rho^\alpha(t_1,\ldots,t_r))\wh{\Lambda}=(\Delta_\varrho^\beta(s_1,\ldots,s_r))\wh{\Lambda}\] 
of ideals of $\wh{\Lambda}$ holds. 
Since the ring extension $\Lambda\to \wh{\Lambda}$ is faithfully flat (\Cref{lem:hatLambdaisflat}), 
every ideal $\mf{a}$ of $\Lambda$ satisfies $\mf{a}\wh{\Lambda}\cap \Lambda=\mf{a}$. 
Applying this to the principal ideals of $\Lambda$ generated by $\Delta_\rho^\alpha(t_1,\ldots,t_r)$ and by $\Delta_\varrho^\beta(s_1,\ldots,s_r)$, 
we obtain the equality 
\[(\Delta_\rho^\alpha(t_1,\ldots,t_r))=(\Delta_\varrho^\beta(s_1,\ldots,s_r))\] 
of ideals of $\Lambda$, namely, $\Delta_\rho^\alpha(t_1,\ldots,t_r)\doteq \Delta_\varrho^\beta(s_1,\ldots,s_r)$ in $\Lambda$. 
This completes the proof. 
\end{proof}

\begin{remark} \label{rem:Fitting}
The proof of \Cref{thm.regular} applies to any ideal of $\wh{\Lambda}$ generated by elements of $\Lambda$, so it yields a refinement. 
For $i\in \Z_{\geq 0}$, let ${\rm Fitt}_i(\mathcal{A}_\rho^\alpha)\subset \Lambda$ denote the $i$-th Fitting ideal of the twisted Alexander module $\mathcal{A}_\rho^\alpha$, and similarly for $\mathcal{A}_\varrho^\beta$. 
Since Fitting ideals are compatible with base change (\cite{EisenbudGTM}*{Corollary 20.5}) and $\wh{\mathcal{A}}_\rho^\alpha$ shares a presentation matrix with $\mathcal{A}_\rho^\alpha$ (\Cref{lem:share}), we have ${\rm Fitt}_i(\wh{\mathcal{A}}_\rho^\alpha)={\rm Fitt}_i(\mathcal{A}_\rho^\alpha)\wh{\Lambda}$. 
Hence, under the assumption of \Cref{thm.regular}, the isomorphism of $\wh{\Lambda}$-modules in \Cref{thm:completeA} and the faithful flatness of $\Lambda\to \wh{\Lambda}$ (\Cref{lem:hatLambdaisflat}) imply that ${\rm Fitt}_i(\mathcal{A}_\rho^\alpha)$ and ${\rm Fitt}_i(\mathcal{A}_\varrho^\beta)$ coincide via $\Phi_\star$ for every $i\in \Z_{\geq 0}$. 
If in addition $\pi$ and $\varpi$ are FP$_n$ and $n$-good, then the same holds for the $\ast$-th twisted Alexander modules with $\ast\leq n$, in the setting of \Cref{rem.good} (2). 
\end{remark}

\begin{proof}[{Proof of {\rm \Cref{thm:twisted}}}] 
By \Cref{thm:regularity}, $\Phi$ is regular. Hence \Cref{thm.regular} ensures the assertion. 
\end{proof}

It is a natural question to ask whether we can reduce the ambiguity arising from ``via $\Phi_\star:H_M\congto H_N$'' in \Cref{thm:twisted}. 
As the Mostow--Prasad rigidity ensures, given an isomorphism $\pi_1(M)\cong \pi_1(N)$, the peripheral systems of cusps correspond (cf.\,\cite{AschenbrennerFriedlWilton2015}*{Theorem 1.7.2}). 
Xu 
proved that this also holds true for profinite isomorphism. 
Hence, we may further prove the following. 

\begin{cor} \label{cor:twist_alex.cusped} 
Let the setting be as in {\rm \Cref{thm:twisted}} and suppose that 
$M$ and $N$ are the exteriors of hyperbolic links $J$ and $L$ in $S^3$. 
If the linking numbers of all two-component sublinks of $J$ are zero, then 
$L$ satisfies the same condition, and 
$\Delta^{\alpha}_\rho$ and $\Delta^{\beta}_{\varrho}$ coincide 
only up to orientation-reversing and order-changing of the meridians, 
and multiplication by units.  
\end{cor}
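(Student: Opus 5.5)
The argument combines \Cref{thm:twisted} with the profinite version of the reasoning behind \Cref{prop.hyplinks}. First, \Cref{thm:twisted} gives $\Delta^\alpha_\rho \doteq \Delta^\beta_\varrho$ via the discrete isomorphism $\Phi_\star\colon H_M\congto H_N$ induced by the regular isomorphism $\Phi$ (\Cref{thm:regularity}). Here $H_M=H_1(M;\Z)\cong\Z^r$ with basis the meridians $t_1,\ldots,t_r$ of $J$, and similarly $H_N$ has basis the meridians $s_1,\ldots,s_r$ of $L$. So it remains to show two things: $L$ also has vanishing pairwise linking numbers, and the matrix of $\Phi_\star$ in these meridian bases is a signed permutation matrix.

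For the first point, \Cref{thm:regularity} also says that $\Phi$ is peripheral regular, and that $M$ and $N$ have the same number of cusps. So, after reindexing by a permutation $\sigma$, each $\wh{\pi}_1(\partial_i M)$ is carried, up to conjugation, onto $\wh{\pi}_1(\partial_{\sigma(i)}N)$ by the completion of an isomorphism $\pi_1(\partial_iM)\congto\pi_1(\partial_{\sigma(i)}N)$. Pass to homology and abelianize. Conjugation becomes trivial, and we get a commutative square relating $\pi_1(\partial_iM)\to H_M$, $\pi_1(\partial_{\sigma(i)}N)\to H_N$, and $\Phi_\star$. This square holds after completion, hence also on the discrete level, because $\Phi_\star$ is the restriction of $\Phi_\ast$ and $H_M\to\wh{H}_M$ is injective. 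As recalled before \Cref{prop.hyplinks}, the image of $\pi_1(T_i)$ in $H_M$ is generated by $t_i$ and the class $\sum_{j\neq i}{\rm lk}(J_i,J_j)t_j$ of the preferred longitude. Therefore the image $I_i$ has rank $1$ for every $i$ exactly when all pairwise linking numbers of $J$ vanish. The square shows that $\Phi_\star(I_i)$ is the peripheral image $I'_{\sigma(i)}$ for $N$, so the ranks agree and $L$ satisfies the same condition.

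For the second point: under this condition $I_i=\Z t_i$ and $I'_{\sigma(i)}=\Z s_{\sigma(i)}$, so $\Phi_\star(t_i)=s_{\sigma(i)}^{\varepsilon_i}$ for some $\varepsilon_i\in\{\pm1\}$, since both sides generate the same infinite cyclic group. Hence $V$ is a signed permutation matrix. Substituting into the formula of \Cref{prop:GLkZ}, equivalently the conclusion of \Cref{thm.regular}, gives $\Delta_\rho^\alpha(t_1,\ldots,t_r)\doteq\Delta_\varrho^\beta(t_{\sigma^{-1}(1)}^{\varepsilon},\ldots)$, which is the asserted form: coincidence up to reordering and inverting meridians and multiplication by units of $\Lambda$.

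The main obstacle is the compatibility step in the first point. Peripheral regularity is stated up to conjugation in $\wh{\pi}_1$, and the square has to be checked on the discrete peripheral subgroups, not just their closures. Abelianization kills the conjugation, and the remaining compatibility follows because $\Phi_\star$ is by definition the restriction of $\Phi^{\rm ab}$ to $H_M$, together with the residual finiteness of $\Z^r$. I would spell this out carefully.
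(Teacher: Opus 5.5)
Your proposal is correct and follows essentially the same route as the paper. The paper's proof likewise combines the argument of \Cref{prop.hyplinks} with Xu's peripheral regularity to show that $\Phi_\star$ sends each meridian $t_i$ to some $s_j^{\pm1}$, and then invokes \Cref{thm:twisted}. You additionally make explicit two steps the paper leaves implicit: abelianization removes the conjugation ambiguity in peripheral regularity, and injectivity of $H_N\to\wh{H}_N$ transfers the compatibility to the discrete peripheral images. One minor attribution point: the equality of the numbers of cusps comes from the profinite detection results recalled in the introduction, not from \Cref{thm:regularity} itself.
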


\begin{proof}  
The argument for \Cref{prop.hyplinks} combined with Xu's result on the peripheral regularity \cite{Xu2025regularity-arXiv}*{Theorem 1.4} ensures that $\Phi_\star:H_M\congto H_N$ maps 
each $t_i$ to some $s_j$ or $s_j^{-1}$. Thus the result immediately follows from \Cref{thm:twisted}.  
\end{proof}

Typical examples that satisfy this assumption are in \Cref{eg.Borromean}.  

\subsection{Under the \texorpdfstring{$\wh{\Z}^\times$}{Zhat-times}-regularity assumption}  
Here, we prove the following slightly strengthened version of \Cref{thm.regular}. 

\begin{theorem} \label{thm.hatZtimes-regular} 
Let the setting be as in {\rm \Cref{thm:completeA}}. 
Suppose that the induced isomorphism $\Phi_\ast:\wh{H}\congto \wh{K}$ and the fixed bases define an isomorphism $\wh{\Z}^r\congto \wh{\Z}^r$ presented by some $V\in {\rm GL}_r\Z$ and $v\in \wh{\Z}^\times$ as $\bm{x}\mapsto vV\bm{x}$. 
Then, 

{\rm (i)} $\Delta^{\alpha}_\rho \in \Z[H]$ and $\Delta^{\beta}_{\varrho}\in \Z[K]$ are both generalized cyclotomic polynomials and they coincide via $V$ up to multiplication by units of $\Z[H]$, or 

{\rm (ii)} $v\in \{\pm 1\}$, $\Phi_\ast$ restricts to an isomorphism $\Phi_\star:H\congto K$, 
and {\rm \Cref{thm.regular}} applies. 
\end{theorem}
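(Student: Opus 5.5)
First reduce to the case $V=I$. Precompose with the discrete isomorphism of $K$ given by $V^{-1}$ in the fixed bases; by \Cref{prop:GLkZ} this changes $\Delta_\varrho^\beta$ exactly by the substitution through $V$. After this reduction $\Phi_\ast$ is $\bm{x}\mapsto v\bm{x}$, i.e.\ $t_i\mapsto t_i^{v}$, and \Cref{thm:completeA} gives an equality of ideals of $\wh{\Lambda}$:
\[(f(t_1,\ldots,t_r))\wh{\Lambda}=(g(t_1^{v},\ldots,t_r^{v}))\wh{\Lambda}.\]
Here $f=\Delta_\rho^\alpha$ and $g=\Delta_\varrho^\beta$ are viewed in $\Lambda$ after the identification. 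If $v=\pm1$, then $\Phi_\ast$ is the completion of a discrete isomorphism and we are in case (ii). So assume $v\notin\{\pm1\}$; the goal is to show that $f$ and $g$ are generalized cyclotomic and coincide.

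The plan is to compare zero loci on torsion points. Evaluate at $t_i\mapsto\zeta_i$ with $\zeta\in\bm{\mu}_n^{\,r}$. As in the proof of \Cref{cor:pMahler}, this extends to a continuous homomorphism $\wh{\Lambda}\to\wh{\Z}\otimes\Z[\xi_n]$ sending $t_i^{v}$ to $\zeta_i^{v\bmod n}$. From the equality of ideals, $f(\zeta)=0$ if and only if $g(\zeta^{v})=0$, and this holds for every $n$, including $p\mid n$ (use $\wh{\Z}$ directly rather than $\C_p$). Nonvanishing of algebraic integers in $\wh{\Z}\otimes\Z[\xi_n]$ is detected by the norm, which lies in $\Z$. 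Hence the torsion zero sets satisfy $Z_{\rm tor}(f)=v\cdot Z_{\rm tor}(g)$, where $v$ acts on $\bm{\mu}_n^{\,r}$ through $v\bmod n\in(\Z/n)^\times$. The Galois action by $a\in(\Z/n)^\times$ preserves $Z_{\rm tor}(g)$ because $g$ has integer coefficients. Therefore $Z_{\rm tor}(f)=Z_{\rm tor}(g)$ as sets, and the same argument applied to the orders of vanishing, via the completed local rings, matches multiplicities.

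The main obstacle is to show that when $v\neq\pm1$, every irreducible factor of $f$ is generalized cyclotomic, i.e.\ of the form $\Phi_m(t^{\bm{a}})$ up to units. I would argue as follows. Since $v\notin\{\pm1\}$, there is a prime $\ell$, or a residue modulo some $N$, at which $v$ is not congruent to $\pm1$. Compare $f$ with $f_v:=g(t^{v})$ modulo the augmentation-type ideals $I(\Gamma)$ for $\Gamma=N H$. As in the proof of \Cref{cor:Mahler}, the isomorphism $\wh{\Lambda}/(f)\cong\wh{\Lambda}/(g(t^v))$ gives
\[\wh{\Z}\otimes\Lambda/(f,I(\Gamma))\cong\wh{\Z}\otimes\Lambda/(g,I(\Gamma)),\]
with the isomorphism twisted by the automorphism $\zeta\mapsto\zeta^{v}$ of the character group. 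Taking Lê's limit along $\Gamma$ twisted by powers of $v$ only forces equality of Mahler measures, which is not enough. Instead I would use the Zariski closure of the torsion zero set. By Laurent's theorem (the Manin--Mumford statement for tori), the closure of $Z_{\rm tor}(f)$ is a finite union of torsion cosets of subtori. The non-torsion part of the hypersurface $\{f=0\}$ must also be controlled, and I expect to do this with the $p$-adic evaluations from \Cref{cor:pMahler}. For the $1$-unit part, $z\mapsto z^{v}$ is analytic on $\mathbb{T}_p^{\,r}$, and $f(z)=0\iff g(z^{v})=0$ holds on the whole of $\mathbb{T}_p^{\,r}$. Iterating, the hypersurface $\{f=0\}\cap\mathbb{T}_p^{\,r}$ is stable under $z\mapsto z^{v^{2}}$ up to the symmetry with $g$. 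Choose $p$ so that $v_p\in\Z_p^\times$ is not a root of unity in $\Z_p^\times$ (possible since $v\neq\pm1$, possibly after replacing $v$ by a power). A classical algebraic-subvariety argument then shows that an irreducible hypersurface stable under a non-finite-order power map is a torsion translate of a subtorus. Hence every factor is $\Phi_m(t^{\bm{a}})$.

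Finally, once $f$ and $g$ are products of generalized cyclotomic factors, the equality of torsion zero sets with multiplicity yields $f\doteq g$ in $\Z[H]$, which is conclusion (i) after undoing the $V$-reduction. The weakest step is making the $p$-adic non-finite-order argument rigorous, in particular treating the Teichmüller part and verifying that the $v$-stability passes from $\wh{\Lambda}$ to $\mathbb{T}_p^{\,r}$.
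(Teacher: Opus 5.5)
Your reduction to $V=I$ and the equality $Z_{\rm tor}(f)=Z_{\rm tor}(g)$ are correct, but they do not reach the real content of the theorem. Equal torsion zero sets say nothing about irreducible factors that are not generalized cyclotomic: by Laurent's theorem their torsion zeros are not Zariski dense, and in one variable $t^2-3t+1$ has none. So everything rests on your ``main obstacle'', and the sketch for it fails at two concrete points.

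\emph{The iteration.} The ideal equality only gives $Z(f)=[v]^{-1}Z(g)$ on $\mathbb{T}_p^{\,r}$. The symmetric statement for $\Phi^{-1}$ is just the inverse relation $Z(g)=[v^{-1}]^{-1}Z(f)$. Nothing produces a self-map of $\{f=0\}$. Galois invariance of $Z(f)$ and $Z(g)$ acts by power maps only on torsion points, not on the non-algebraic points of a hypersurface.

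\emph{The choice of $p$.} It is false that $v\neq\pm1$ yields a prime $p$ with $v_p$ of infinite order, even after passing to a power. For example, take $v\in\prod_p\Z_p^\times$ with $v_p=1$ for $p=2$ and $p\equiv 1 \bmod 4$, and $v_p=-1$ for $p\equiv 3\bmod 4$. Then $v^2=1$ and $v\neq\pm1$. The $1$-unit parts see only $z\mapsto z^{\pm1}$, and all the information lies in how $v$ permutes Teichm\"uller parts at infinitely many primes at once, which is a genuinely arithmetic phenomenon.

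\emph{The final appeal.} Even if you had invariance, a hypersurface germ invariant under a $p$-adic power map with exponent in $\Z_p^\times\setminus\Z$ is not covered by the classical argument for $[n]$ with $n\geq 2$; that would need transcendence input.

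The missing idea is to reduce to one variable. The paper applies the continuous specializations $r_{\bm m}:\wh{\Lambda}\to\wh{\Z}[\![t^{\wh{\Z}}]\!]$, $t_i\mapsto t^{m_i}$, to the ideal equality of \Cref{thm:completeA}. This gives $\bigl(r_{\bm m}(\Delta_\rho^\alpha)(t)\bigr)=\bigl(r_{V\bm m}(\Delta_\varrho^\beta)(t^{v})\bigr)$ for every $\bm m\in\Z^r$. It then invokes Hanany's one-variable theorem, \Cref{lem:fdoteqg}, and splits into two cases.
\begin{enumerate}
\item If some nonzero specialization has a root that is not a root of unity, then $v=\pm1$ and (ii) holds.
\item Otherwise every specialization is a product of cyclotomic polynomials and a constant. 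Both polynomials are then generalized cyclotomic by \Cref{lem:gen-cyclotomic}, which uses Lawton and Boyd. The specializations coincide up to units by \Cref{lem:fdoteqg}~(i), and \Cref{lem:Cimasoni} recovers the coincidence via $V$.
\end{enumerate}
This route also settles the multiplicities and the integer content, which your zero-set comparison cannot see. The hard number theory is isolated in \Cref{lem:fdoteqg}. Your direct multivariable route would have to reprove a multivariable version of that result, and the sketch as written does not.
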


\subsubsection{Reduced polynomials} 
Let $\bm{m}=(m_1,\ldots,m_r)\in \Z^r$ and let $r_{\bm{m}}:\Z[H]\to \Z[t^\Z]$ denote the ring homomorphism defined by $t_i\mapsto t^{m_i}$. 
We call $r_{\bm{m}}(\Delta_\rho^\alpha)=\Delta_\rho^\alpha(t^{m_1},\ldots,t^{m_r})$ \emph{the $\bm{m}$-reduced Alexander polynomial}. 
The homomorphism $r_{\bm{m}}$ extends to a continuous ring homomorphism $\wh{\Lambda}\to \wh{\Z}[\![t^{\wh{\Z}}]\!]$, which we denote by the same symbol. 
The following elementary lemma recovers $\Delta_\rho^\alpha(t_1,\ldots,t_r)$ from the family $(r_{\bm{m}}(\Delta_{\rho}^{\alpha}))_{\bm{m}}$.

\begin{lemma}[Cimasoni, {\cite[Lemma 2.2]{Cimasoni2004BSMM}}] \label{lem:Cimasoni}
Let $f,g\in \Z[t_1^\Z,\ldots,t_r^\Z]$. If
\[
f(t^{m_1},\dots,t^{m_r})\doteq g(t^{m_1},\dots,t^{m_r})
\]
holds for all but finitely many $(m_i)_i\in \Z^r$, then $f\doteq g$.
\end{lemma}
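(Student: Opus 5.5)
The plan is a genericity argument: for a suitably generic exponent vector $\bm{m}$, the specialization $r_{\bm{m}}$ becomes injective on the finitely many monomials involved. The work is to arrange that the unit ambiguity $\pm t^{k}$ in one variable is induced by a single unit $\pm t_1^{c_1}\cdots t_r^{c_r}$ of $\Z[t_1^\Z,\ldots,t_r^\Z]$. Throughout, write $f=\sum_{\bm{a}\in A} c_{\bm{a}}\bm{t}^{\bm{a}}$ and $g=\sum_{\bm{b}\in B} d_{\bm{b}}\bm{t}^{\bm{b}}$, where $A,B\subset\Z^r$ are the finite supports, so that $r_{\bm{m}}(\bm{t}^{\bm{a}})=t^{\bm{m}\cdot\bm{a}}$. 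Recall that the units of $\Z[t^\Z]$ are exactly $\pm t^k$.

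First I dispose of the degenerate case. If $f=0$, choose $\bm{m}$ off the finitely many hyperplanes $(\bm{b}-\bm{b}')^\perp$ with $\bm{b}\neq\bm{b}'\in B$, and outside the finite exceptional set. Then the exponents $\bm{m}\cdot\bm{b}$ are pairwise distinct, so $r_{\bm{m}}(g)=0$ forces every $d_{\bm{b}}=0$, that is, $g=0$. The case $g=0$ is symmetric. So assume $f,g\neq 0$.

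Next I fix the monomial shift using Newton polytopes. Consider the normal fans of the convex hulls of $A$ and of $B$, and take a full-dimensional open cone $C\subset\R^r$ in their common refinement. For every $\bm{m}\in C$, the linear functional $\bm{m}\cdot(-)$ attains its maximum on $A$ at a single vertex $\bm{a}_0$ and on $B$ at a single vertex $\bm{b}_0$, and these vertices do not depend on $\bm{m}\in C$. Set $\bm{c}=\bm{a}_0-\bm{b}_0$ and $\varepsilon=\mathrm{sign}(c_{\bm{a}_0}/d_{\bm{b}_0})$. Suppose $\bm{m}\in C\cap\Z^r$ is not exceptional, so that $r_{\bm{m}}(f)=\pm t^{k}r_{\bm{m}}(g)$. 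Comparing top degrees gives $k=\bm{m}\cdot\bm{c}$. Comparing leading coefficients gives $c_{\bm{a}_0}=\pm d_{\bm{b}_0}$ with sign $\varepsilon$, and in particular the sign in front of $t^k$ is $\varepsilon$. Hence, setting $g'=\varepsilon\,\bm{t}^{\bm{c}}g$, we get the exact equality $r_{\bm{m}}(f)=r_{\bm{m}}(g')$ for every non-exceptional integral $\bm{m}\in C$.

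Finally, I pick one $\bm{m}\in C\cap\Z^r$ that avoids the finite exceptional set and the finitely many hyperplanes $(\bm{e}-\bm{e}')^\perp$ with $\bm{e}\neq\bm{e}'$ in $A\cup(B+\bm{c})$. Such an $\bm{m}$ exists because an open cone of full dimension contains infinitely many lattice points outside any finite union of hyperplanes. For this $\bm{m}$, $r_{\bm{m}}$ maps the monomials of $f-g'$ to pairwise distinct powers of $t$. Hence $r_{\bm{m}}(f-g')=0$ forces $f=g'=\varepsilon\bm{t}^{\bm{c}}g$, that is, $f\doteq g$.

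The main obstacle is the second step: different $\bm{m}$ could a priori produce different shifts $k$, and a shift $k$ alone does not determine a vector $\bm{c}\in\Z^r$. Restricting to a single chamber of the common normal fan is what makes the extreme vertices, and hence $\bm{c}$ and $\varepsilon$, independent of $\bm{m}$. The remaining steps are routine avoidance of finitely many hyperplanes.
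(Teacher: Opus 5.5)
Your proof is correct, and it is essentially the argument the paper relies on: Cimasoni's proof uses a Kronecker-type specialization governed by a single large integer $N$, and the remark following the lemma notes that the sign $\varepsilon$ and the shift $\nu$ cannot be assumed trivial but must be kept unknown and read off from the extremal exponents, which is exactly your Step 2. Your chamber $C$ of the common normal fan is the coordinate-free version of taking $N$ large, since the vectors $\bm{m}=(1,N,\ldots,N^{r-1})$ for all large $N$ lie in a single such chamber, that of a fixed monomial order; it correctly handles the circularity that $\bm{c}$ must be fixed before choosing the separating $\bm{m}$.
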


\begin{remark} At the beginning of the proof of \cite[Lemma 2.2]{Cimasoni2004BSMM}, the author assumes $a_{0,\ldots,0}>0$ and $b_{0,\ldots,0}>0$ to deduce $\varepsilon=1$ and $\nu=0$, according to the original terminology. 
In fact, we cannot assume this without loss of generality, since we have $f(t_1,t_2)=t_1+t_2$ for instance. 
Nevertheless, by keeping $\varepsilon\in \{\pm 1\}$ and $\nu\in \Z$ as unknown constants, the original argument essentially persists, provided that $N$ is taken greater than twice the degrees so that $\nu$ is determined by the minimal exponents. 
\end{remark}

\subsubsection{Generalized cyclotomic polynomials}

\begin{defn} We say that $f\in \Z[t_1^\Z,\ldots,t_r^\Z]$ is a \emph{generalized cyclotomic polynomial} if every reduction $r_{\bm{m}}(f)=f(t^{m_1},\ldots,t^{m_r})\in \Z[t^\Z]$ with $\bm{m}\in \Z^r$ is the product of cyclotomic polynomials and a constant.
\end{defn}

\begin{lemma}\label{lem:gen-cyclotomic}
Let $0\neq \Delta\in\Z[t_1^{\Z},\ldots,t_r^{\Z}]$ and let $\Delta_0$
denote its primitive part. Then the following are equivalent.
\begin{enumerate}
\item[{\rm (i)}] $\Delta$ is a generalized cyclotomic polynomial.
\item[{\rm (ii)}] Every nonzero reduction $r_{\bm{m}}(\Delta)$ has no root other
than roots of unity.
\item[{\rm (iii)}] $\Delta_0\doteq \prod_i \Phi_{n_i}(t_1^{w_{i1}}\cdots t_r^{w_{ir}})$
holds for some $n_i\in\Z_{>0}$ and $(w_{i1},\ldots,w_{ir})\in\Z^r$.
\item[{\rm (iv)}] The Mahler measure satisfies $\textsc{m}(\Delta_0)=1$.
\end{enumerate}
\end{lemma}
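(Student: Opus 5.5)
The plan is to prove the cycle (iii)$\Rightarrow$(i)$\Rightarrow$(ii)$\Rightarrow$(iv)$\Rightarrow$(iii), taking Mahler-measure multiplicativity and Kronecker's theorem as the basic tools.

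\textbf{Reductions.} First note that $\Delta=c\,\Delta_0$ with $c\in\Z\setminus\{0\}$ the content. Every reduction satisfies $r_{\bm{m}}(\Delta)=c\,r_{\bm{m}}(\Delta_0)$, so all four conditions concern only $\Delta_0$. The constant allowed in (i) absorbs both $c$ and the unit ambiguity of $\doteq$.

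\textbf{(iii)$\Rightarrow$(i)$\Rightarrow$(ii).} For (iii)$\Rightarrow$(i), each factor reduces to $\Phi_{n_i}(t^{\bm{w}_i\cdot\bm{m}})$. A cyclotomic polynomial evaluated at $t^{e}$ has only roots of unity as roots, and so is a product of cyclotomic polynomials up to a monomial unit. The degenerate case $e=0$ yields only a nonzero constant. The implication (i)$\Rightarrow$(ii) is immediate.

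\textbf{(ii)$\Rightarrow$(iv).} This is the step I expect to be the main obstacle, and I would handle it via Boyd--Lawton. Boyd's limit theorem states that $\textsc{m}(\Delta_0)=\lim \textsc{m}(\Delta_0(t^{m_1},\ldots,t^{m_r}))$ as $\bm{m}\to\infty$ in the sense $\min\{|\bm{x}|:0\neq\bm{x}\in\Z^r,\ \bm{x}\cdot\bm{m}=0\}\to\infty$. Along such a sequence the reductions are nonzero for large $\bm{m}$, since distinct monomials of $\Delta_0$ then stay distinct. Each such reduction is, up to sign, a monic-leading primitive polynomial whose roots are all roots of unity. Its leading coefficient is the content-type factor: it is an integer polynomial with all roots on the unit circle, and by Gauss's lemma the reduction of a primitive polynomial need not stay primitive, so care is needed here. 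I would handle this by writing its Mahler measure as $|a_{\rm lead}|$ times $\prod\max(1,|\text{root}|)=|a_{\rm lead}|$. I would then argue that $|a_{\rm lead}|$ equals a coefficient of $\Delta_0$ at an extreme vertex of its Newton polytope, chosen generically by $\bm{m}$. Since the limit is an integer-valued limit that also equals $\textsc{m}(\Delta_0)$, we get $\textsc{m}(\Delta_0)=|a_v|\geq1$.

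Showing $\textsc{m}(\Delta_0)=1$ also needs that vertex coefficients are $\pm1$. For this I would use the lower bound $\textsc{m}(\Delta_0)\ge$ the largest vertex coefficient together with the fact that reductions with cyclotomic roots have Mahler measure equal to the gcd-type content. If this fails, the alternative is the following. By (ii), every root of $r_{\bm{m}}(\Delta_0)$ is a root of unity, so $r_{\bm{m}}(\Delta_0)=a\prod\Phi$. Comparing with the vertex coefficients at both ends forces $|a|$ to divide all coefficients of $r_{\bm{m}}(\Delta_0)$. For generic $\bm{m}$ the map $r_{\bm{m}}$ is injective on monomials, so these coefficients are exactly those of $\Delta_0$. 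Primitivity of $\Delta_0$ then gives $|a|=1$, hence Mahler measure $1$, and the limit yields (iv). This genericity argument is the cleaner route and the one I would commit to.

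\textbf{(iv)$\Rightarrow$(iii).} This is Boyd's (and Smyth's) extension of Kronecker's theorem: a primitive Laurent polynomial of Mahler measure $1$ is, up to a monomial, a product of cyclotomic polynomials in monomials $t^{\bm{w}}$. I would cite this directly. Alternatively, one can derive it by factoring $\Delta_0$ into irreducibles, each of measure $1$ by multiplicativity. For an irreducible factor $F$, pass to generic reductions, which are cyclotomic products by the one-variable Kronecker theorem. One then shows the Newton polytope of $F$ is a segment, using Boyd's criterion. This reduces $F$ to a one-variable polynomial in a monomial, where Kronecker applies.
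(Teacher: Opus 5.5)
Your proof is correct and takes essentially the paper's route. The substantive inputs are the same two theorems: Lawton's convergence theorem (what you call Boyd's limit theorem), used to pass from the one-variable reductions to $\textsc{m}(\Delta_0)=1$, and Boyd's multivariable Kronecker theorem for (iv)$\Rightarrow$(iii). Your genericity/content argument, that generic reductions of $\Delta_0$ have content $1$ and hence Mahler measure exactly $1$, correctly supplies a detail the paper leaves implicit.

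There are two harmless slips. First, $\Phi_1(t^{0})=0$, so the degenerate case $e=0$ can produce the zero reduction rather than a nonzero constant; (i) and (ii) both tolerate this. Second, your optional self-contained route to (iv)$\Rightarrow$(iii) would not work as sketched. From $\textsc{m}(F)=1$, Lawton's theorem only forces the reductions' Mahler measures to tend to $1$, not to equal $1$, so the one-variable Kronecker theorem cannot be applied to them. Citing Boyd directly, as you intend, is the right choice.
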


\begin{proof}
The implications (i) $\Leftrightarrow$ (ii) and (iii) $\Rightarrow$ (i) are
elementary. 
(i) $\Rightarrow$ (iv) follows from Lawton's
theorem \cite[Theorem 2]{Lawton1983JNT} on the convergence of the Mahler measures of
one-variable specializations. (iv) $\Rightarrow$ (iii) is the
multivariable Kronecker theorem of Boyd \cite[Theorem 1]{Boyd1981JNT}.
\end{proof}

\begin{example} The following are generalized cyclotomic polynomials. 

(i) The Alexander polynomial $\Delta_{4^2_1}(t_1,t_2)=t_1t_2+1=\Phi_2(t_1t_2)$ of Solomon's link  (\Cref{eg:Alex}), 
which is non-hyperbolic. 

(ii) The Alexander polynomial 
$\Delta_B(t_1,t_2,t_3)=(t_1-1)(t_2-1)(t_3-1)=\Phi_1(t_1)\Phi_1(t_2)\Phi_1(t_3)$ 
of the Borromean rings (\Cref{eg.Borromean}). 
\end{example}

\subsubsection{$\wh{\Z}^\times$-regularity} 

We utilize the following key assertion. 
\begin{lemma}[{\cite[Lemma 3.6]{Ueki5}, Hanany \cite[Corollary 1.5]{Hanany2026regularity-arXiv}}] \label{lem:fdoteqg}
Let $f,g\in \Z[t^\Z]$. 
If $(f(t)) = (g(t^v))$ as ideals in $\wh{\Z}[\![t^{\wh{\Z}}]\!]$ for some $v\in \wh{\Z}^\times$, then at least one of the following holds:  

{\rm (i)} $f(t)$ and $g(t)$ are products of cyclotomic polynomials and constants with $f(t)\doteq g(t)$. 

{\rm (ii)} $v\in \{\pm1\}$ and $f(t)\doteq g(t^v)$. 
\end{lemma}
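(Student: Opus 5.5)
The plan is to compare zeros of $f$ and $g$ through $p$-adic evaluations. Write $\wh{\Z}[\![t^{\wh{\Z}}]\!]=\prod_p \Z_p[\![t^{\wh{\Z}}]\!]$ and let $v_p\in\Z_p^\times$ be the components of $v$. For $z\in\C_p$ with $|z|_p=1$, write $z=\omega\cdot w$ with $\omega$ a root of unity of order prime to $p$ and $w$ a $1$-unit. Then $z^{u}$ is defined for every $u\in\wh{\Z}$, and $t\mapsto z$ extends to a continuous ring homomorphism $\mathrm{ev}_z:\wh{\Z}[\![t^{\wh{\Z}}]\!]\to\C_p$, exactly as in the proof of \Cref{cor:pMahler}. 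The hypothesis $(f(t))=(g(t^v))$ gives $f=u\cdot g(t^v)$ for a unit $u$, so $f(z)=u(z)\,g(z^{v})$ with $u(z)\in\C_p^\times$.

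\emph{Step 1: the power map matches roots.} Fix a prime $p$ such that every root of $f$ and of $g$ in $\ol{\Q}\subset\C_p$ is a $p$-adic unit; all but finitely many $p$ qualify, namely those not dividing the leading or constant coefficients. The map $z\mapsto z^{v_p}$ is a bijection of the unit sphere of $\C_p$. By Step 1's identity it sends the unit roots of $f$ bijectively onto the unit roots of $g$. Multiplicities are preserved as well, for instance by applying $\mathrm{ev}_z$ to derivatives, or by localizing at the maximal ideal $\ker(\mathrm{ev}_z)$.

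\emph{Step 2: cyclotomic roots.} If all roots of $f$ are roots of unity, then $\zeta\mapsto\zeta^{v_p}$ acts on roots of unity of each fixed order $n$ as a Galois automorphism. Hence it permutes the primitive $n$-th roots of unity, and $f$ and $g$ have identical cyclotomic factorizations with multiplicity. Comparing contents, for example via $f(1)$ or via primitive parts together with the units of $\wh{\Z}$, then yields case (i) with $f\doteq g$.

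\emph{Step 3: a non-torsion root forces $v_p=\pm1$.} Suppose some root $\alpha$ of $f$ is not a root of unity. Let $A\subset\ol{\Q}^\times$ be the finitely generated group generated by all roots of $f$ and $g$, so $A/\mathrm{tors}\cong\Z^n$. Write $\bar\alpha$ for the image of $\alpha$ in $A/\mathrm{tors}$, and similarly for other roots. The relation $\alpha^{v_p}=\beta$ with $\beta$ a root of $g$ means $v_p\bar\alpha=\bar\beta$ inside the $p$-adic closure $\Z_p^n$. Since $\bar\alpha\neq0$, this forces $v_p=q\in\Q$, and the same rational $q$ scales every non-torsion root class of $f$ onto a root class of $g$. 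Apply the same reasoning to $v^{-1}$, with $f$ and $g$ exchanged. Then choose a non-torsion root class of $f$ of maximal Euclidean norm among root classes of $f$ and $g$ together; comparing its norm with that of its image and preimage gives $|q|\geq1$ and $|q^{-1}|\geq1$, hence $q=\pm1$. Thus $v_p=\varepsilon\in\{\pm1\}$ for almost all $p$.

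\emph{Step 4: globalizing, the main obstacle.} For the finitely many exceptional primes, where some root of $f$ or $g$ is not a $p$-adic unit, I expect the difficulty to lie in proving that $v_p$ has the same sign $\varepsilon$ and in excluding a mixture of signs across primes. My first attempt would be as follows. For almost all $p$, $f$ and $g(t^{\varepsilon})$ generate the same ideal of $\Z_p[\![t^{\wh{\Z}}]\!]$. Faithful flatness over $\Z[1/N][t^{\pm1}]$, in the spirit of \Cref{lem:hatLambdaisflat}, then gives $f\doteq g(t^{\varepsilon})$ up to a constant supported at finitely many primes. Consequently $(g(t^{\varepsilon}))=(g(t^{v}))$ in the full ring. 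Applying Steps 1 and 3 to $g$ alone, the multiplier $\varepsilon v_p$ must preserve the root set of $g$. At an exceptional prime $p$, I would replace a root that is not a $p$-adic unit by a suitable Galois conjugate, or by a root of a reduction $g(t^m)$, to obtain a non-torsion unit root. This would force $\varepsilon v_p=1$, so $v=\varepsilon$, and the remaining content ambiguity is absorbed into $\doteq$, giving case (ii).
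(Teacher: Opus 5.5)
Your Steps 1 and 2 are essentially fine and give alternative (i) by matching roots, in place of the NAK-lemma argument the paper cites from Ueki. One caveat: the evaluation of $t^{v}$ at $z=\omega w$ is $\omega^{v}w^{v_p}$, and $\omega^{v}$ depends on $v$ modulo the order of $\omega$, i.e.\ on components of $v$ away from $p$; it is not $z^{v_p}$.

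The genuine gap is Step 3. This is the real content of the lemma, and it is exactly what the paper imports from Hanany's Corollary 1.5. You identify the $p$-adic closure of $A/{\rm tors}$ with $\Z_p^n$, i.e.\ you assume that $\Z_p\otimes_{\Z}(A/{\rm tors})\to\mathcal{O}_K^\times/\mu(K)$ is injective, where $K/\Q_p$ is finite and contains the roots. This is false in general. The target has $\Z_p$-rank $[K:\Q_p]$, so injectivity fails whenever $n>[K:\Q_p]$, and even otherwise it is a Leopoldt-type statement.

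Here is a concrete counterexample in $\Q_5$. Both $\log_5 2$ and $\log_5 3$ have valuation $1$, and the Teichm\"{u}ller lifts satisfy $\omega(2)^3=\omega(3)$. So for $v\in\wh{\Z}^\times$ with $v\equiv 3 \bmod 4$ and $v_5=\log_5 3/\log_5 2\in\Z_5^\times$, one gets $2^{v}=3$ in $\Q_5$. Yet $v_5\notin\Q$, because $2$ and $3$ are multiplicatively independent. Thus $f=t-2$, $g=t-3$ pass every test of your Steps 1 and 3 at $p=5$. They are excluded only by other primes: at $p=7$ the Teichm\"{u}ller parts of $2$ and $3$ have orders $3$ and $6$.

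So no single-prime argument can give $v_p\in\Q$. You need an input that uses all primes simultaneously, for instance Chevalley's theorem that $A\otimes\wh{\Z}$ embeds into $\prod_{\mathfrak{p}}\mathcal{O}_{\mathfrak{p}}^\times$ (the profinite and congruence topologies agree on $S$-units). You would also need to control the fact that the matching of roots may vary with $p$.

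Step 4, which you flag yourself, does not repair this and contains two further errors.

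\emph{First error.} In the factor $\Z_p[\![t^{\wh{\Z}}]\!]$ the element $t^{v}$ still depends on every component of $v$, through the prime-to-$p$ part of $\wh{\Z}$ that governs the Teichm\"{u}ller parts. So even $v_p=\varepsilon$ for almost all $p$ would not give $(f)=(g(t^{\varepsilon}))$ in those factors.

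\emph{Second error.} The components of $v$ at exceptional primes cannot be read off at those primes. For $g=t-2$ and $p=2$, no Galois conjugate and no root of any $g(t^m)$ is a $2$-adic unit. In fact $t-2$ and $t^{v}-2$ are both units of $\Z_2[\![t^{\wh{\Z}}]\!]$ for every $v$. The constraint on $v_2$ comes from the Teichm\"{u}ller parts at odd primes $q$, through $v$ modulo their orders.

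As it stands, your proposal establishes alternative (i). It does not establish that a root which is not a root of unity forces $v\in\{\pm1\}$ and $f\doteq g(t^{v})$; that requires a genuinely global number-theoretic argument such as Hanany's.
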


To specify the contribution: Fried \cite{Fried1988} initially proved that if $f\in \Z[t]$ is reciprocal and does not vanish on roots of unity, then it is recovered from the cyclic resultants by using Artin--Mazur dynamical zeta functions. His result was extended to non-reciprocal cases by Hillar \cite{Hillar2005}. 
Afterward, the third author proved that if $f,g\in \Z[t^\Z]$ are reciprocal, then $(f(t))=(g(t^v))$ in $\wh{\Z}[\![t^{\wh{\Z}}]\!]$ with $v\in \wh{\Z}^\times$ implies $f(t)\doteq g(t)$, by an elementary argument using the NAK lemma. 
Hanany further proved that if $f,g\in \Z[t^\Z]$ with $(f(t))=(g(t^v))$ and $f$ or $g$ has a root that is not a root of unity, then $v\in \{\pm1\}$ and $f(t)\doteq g(t^v)$, by purely algebraic and number-theoretic arguments. 

\begin{proof}[Proof of {\rm \Cref{thm.hatZtimes-regular}}] 
For each $\bm{m}\in\Z^r$, the continuous ring homomorphism
$r_{\bm{m}}:\wh{\Lambda}\to \wh{\Z}[\![t^{\wh{\Z}}]\!]$
sends the equality of ideals in \Cref{thm:completeA} to
$\bigl(r_{\bm{m}}(\Delta_\rho)(t)\bigr)=\bigl(r_{V\bm{m}}(\Delta_\varrho)(t^{v})\bigr)$.

If there exists some reduction $r_{\bm{m}}$ such that $r_{\bm{m}}(\Delta_\rho)$ or $r_{V\bm{m}}(\Delta_\varrho)$ 
is nonzero and has a root which is not a root of unity, 
then Hanany's result \Cref{lem:fdoteqg} (ii) yields $v\in \{\pm1\}$. 
Hence $vV\in{\rm GL}_r\Z$ and the assertion (ii) holds.

If such a reduction does not exist, then, since $\bm{m}\mapsto V\bm{m}$ is a bijection on $\Z^r$, both $\Delta_\rho$ and $\Delta_\varrho$ are generalized cyclotomic by \Cref{lem:gen-cyclotomic}; here $\Delta_\rho=0$ if and only if $\Delta_\varrho=0$, and in this case the assertion (i) trivially holds. In addition, putting $u_i=t_1^{v_{i1}}\cdots t_r^{v_{ir}}\in H$, 
we have $r_{\bm{m}}(\Delta_\rho)\doteq r_{V\bm{m}}(\Delta_\varrho)=r_{\bm{m}}(\Delta_\varrho(u_1,\ldots,u_r))$ for every $\bm{m}$ by \Cref{lem:fdoteqg} (i), 
and hence \Cref{lem:Cimasoni} applied to $\Delta_\rho$ and $\Delta_\varrho(u_1,\ldots,u_r)$ ensures that they coincide via $V$. Thus the assertion (i) holds.
\end{proof}

\begin{remark} 
By using JSJ-decompositions and considering splices, one can make another attempt at the profinite rigidity of the classical multivariable Alexander polynomial of a general link $L$ in $S^3$, which we hope to pursue elsewhere \cite{Ueki-F2byZ-arXiv}.
\end{remark}

\section{Polynomial invariants of veering triangulations} \label{sec:veering} 
\subsection{Fibered faces, pseudo-Anosov flows, and triangulations} \label{ss.tri} 
\subsubsection{Fibrations and the Thurston norm ball} \label{sss.TNB}
Let $M$ be a 3-manifold. 
A 3-manifold is said to be fibered if it admits a fibration over $S^1$. 
An integral class $\varphi\in H^1(M;\Z)={\rm Hom}(\pi_1(M),\Z)$ is said to be \emph{fibered} if $M$ admits a fibration over $S^1$ that induces $\varphi$. 

W.\,P.\,Thurston introduced in \cite{Thurston1986norm} a certain norm on $H^1(M;\R)$ and proved that 
the unit norm ball is a finite-sided polyhedron, which organizes fibrations of $M$ over $S^1$ in the following sense. 
If $\varphi\in H^1(M;\Z)$ is a fibered class, then it lies in the open cone $\R_{+} \mathcal{F}$ on some top-dimensional face $\mathcal{F}$ of the norm ball.  
Conversely, every integral point on this cone is again a fibered class. 
Hence, such $\mathcal{F}$ is called a \emph{fibered face}, and the polyhedron is called \emph{the Thurston norm ball}. 
(A thorough survey on the Thurston norm is \cite{Kitayama2022survey}.) 

Profinite rigidity of the Thurston norm (and fibered classes, fibered faces, fibered cones, etc.) was initially proved for 3-manifolds with empty or toroidal boundary under the regularity assumption by Boileau--Friedl \cite{BoileauFriedl2020AMS},  
and for orientable finite-volume hyperbolic 3-manifolds by Liu \cite{YiLiu2023Invent}. 

\subsubsection{Fully-punctured pseudo-Anosov maps} \label{ss.pAmap} 
Here, let $S$ be a surface that may have punctures but has no boundary, namely, 
$S$ is the exterior of a finite subset of a closed surface. 

An automorphism $f$ on a hyperbolic surface $S$ is said to be \emph{pseudo-Anosov} if 
it is neither periodic nor reducible in the Nielsen--Thurston classification; 
see \cite{FLP1979Asterisque66-67,FarbMargalit2012book} for the general theory, 
including the case with boundaries. 
On the torus $S_{1,0}$, we regard Anosov maps as pseudo-Anosov; 
the other non-hyperbolic surfaces $S^2=S_{0,0}$, $S_{0,1}$, and $S_{0,2}$ carry no such dynamics. 
A pseudo-Anosov map admits stable/unstable measured foliations that intersect transversely. 
The punctures are $k\geq 1$-pronged,
that is, each of them intersects with $k$ stable/unstable leaves, and is not a spiral locus. 
Away from the punctures, there may exist finitely many $k\geq 3$-prong points, 
which we call \emph{inner singularities}. 
We say that $f$ is \emph{fully-punctured} if it admits no inner singularities; 
2-pronged punctures are allowed (see also \Cref{rem:prf.thm1.2} (1) for a narrower variant). 
Any pseudo-Anosov map $f$ on $S$ restricts to a fully-punctured pseudo-Anosov map on 
the exterior $S-{\rm Sing}(f)$ of the inner singularities. 

\subsubsection{Fibered faces and suspension pseudo-Anosov flows without perfect fits} \label{ss.ff}
As Thurston proved \cite{WPThurston1986HS2}*{Theorem 0.1}, 
a fibered 3-manifold is hyperbolic if and only if its monodromy map is pseudo-Anosov and the fiber has a negative Euler characteristic. 
In such a case, the suspension flow belongs to the class of so-called pseudo-Anosov flows; namely, 
it admits stable and unstable foliations that transversely intersect, and there are finitely many $k\geq 3$-pronged singularities. 
Furthermore, such a suspension pseudo-Anosov flow is without perfect fits (cf.\,Fenley \cite{Fenley2012GT}*{Introduction}). 

Conversely, Fried \cite{Fried1979Ast} showed the uniqueness of pseudo-Anosov flows transverse to every fibration in $\R_{+}\mathcal{F}$. 
In particular, being fully-punctured is a property of the fibered face.

\subsubsection{Veering triangulations and pseudo-Anosov flows without perfect fits}

The notions of taut and \emph{veering triangulations} were initially introduced by Lackenby \cite{Lackenby2000GT} and Agol \cite{Agol2011Contemp} in terms of taut foliations and mapping tori of pseudo-Anosov mappings, respectively. 

There is a correspondence between veering triangulations and pseudo-Anosov flows without perfect fits on a 3-manifold in the following sense. 
In unpublished work, Agol--Gu\'{e}ritaud constructed, from a given pseudo-Anosov flow without perfect fits, a veering triangulation on the complement of the singular orbits of the flow. 
A detailed construction is presented in a published paper by Landry--Minsky--Taylor \cite{LandryMinskyTaylor2023ETDS}*{Section 4}. 
Conversely, Frankel--Schleimer--Segerman \cite{FrankelSchleimerSegerman2019-arXiv-v5} and Agol--Tsang \cite{AgolTsang2024AGT} showed that a veering triangulation of a cusped hyperbolic $3$-manifold determines
a pseudo-Anosov flow without perfect fits on the result of the Dehn filling along a suitable slope.

In particular, each fibered hyperbolic 3-manifold endowed with a suspension flow of a fully-punctured monodromy map bijectively corresponds to a layered veering triangulation of a cusped hyperbolic 3-manifold.

\subsection{Veering triangulations}

In this subsection, we recollect the notion of veering triangulations and the associated edge-orientation homomorphism, 
which will be used to describe the taut polynomial. Our convention is mainly due to \cite{LandryMinskyTaylor2024JEMS}.

\subsubsection{Veering triangulations}

Let $M$ be an irreducible compact connected orientable $3$-manifold with toroidal boundary. An \emph{ideal} triangulation of $M$ is  a decomposition of $M-\partial M$ into a finite collection of ideal tetrahedra (i.e., tetrahedra minus their vertices) 
with faces identified by orientation-reversing simplicial homeomorphisms. 

A \emph{taut ideal tetrahedron} is an ideal tetrahedron along with a transverse orientation on each face such that two of its faces point into the tetrahedron
and two of its faces point out of the tetrahedron.  Each of its edges is then assigned angle $\pi$ or $0$ depending on whether the transverse orientations on the adjacent faces agree or disagree,
respectively. An edge in a taut ideal tetrahedron $\mathfrak t$ is called the \emph{top} (resp.\,\emph{bottom}) \emph{diagonal} of $\mathfrak t$ if it has angle $\pi$ and the coorientations on 
its adjacent faces 
point out of (resp.\,point into) $\mathfrak t$. The faces of $\mathfrak t$ adjacent to the top (resp.\,bottom) diagonal are called the \emph{top} (resp.\,\emph{bottom}) \emph{faces}. We refer to $\mathfrak t$ as the tetrahedron \emph{immediately below} (resp.\,\emph{immediately above}) its top (resp.\,bottom) faces.

An ideal triangulation of $M$ is \emph{taut} if each of its faces has been cooriented so that each ideal tetrahedron is taut and the angle sum around each edge is $2\pi$.  
Note that every face in a taut ideal triangulation is a top face of a tetrahedron and a bottom face of another tetrahedron. 
\begin{defn}
A \emph{veering triangulation} $\mathcal V$ of $M$ is a taut ideal triangulation of $M$ in which each edge
has a consistent veer; that is, each edge is labeled to be either right or left veering, such that each tetrahedron of $\mathcal V$ admits an orientation-preserving isomorphism to the following model veering tetrahedron (\Cref{fig:veer}), in which the veers of the 0-edges are specified. 
\end{defn}

\begin{figure}[h]
\centering
\begin{tikzpicture}[scale=1.6, line cap=round, line join=round]

\coordinate (T) at (0,1.4);
\coordinate (L) at (-1.4,0);
\coordinate (R) at (1.4,0);
\coordinate (B) at (0,-1.4);

\draw[red, very thick]  (L) -- (T);
\draw[blue, very thick] (T) -- (R);
\draw[blue, very thick] (L) -- (B);
\draw[red, very thick]  (B) -- (R);

\draw[dashed, very thick] (L) -- (R);
\draw[very thick] (T) -- (B);

\node at (0.15,0.62) {$\pi$};
\node at (-0.35,-0.18) {$\pi$};

\node at (-0.95,0.78) {$0$};
\node at (0.95,0.78) {$0$};
\node at (-0.95,-0.78) {$0$};
\node at (0.95,-0.78) {$0$};
\end{tikzpicture}
\caption{A model veering tetrahedron: right-veering edges are red, left-veering edges
are blue}
\label{fig:veer}
\end{figure}

We say that a veering (or taut) triangulation is \emph{layered} 
if it can be built by stacking tetrahedra onto a triangulated surface and quotienting by a homeomorphism of the surface.

\subsubsection{The horizontal branched surface}

Every veering triangulation $\mathcal V$ of $M$ defines a canonical branched surface \(B=B(\mathcal V)\), called the \emph{horizontal branched surface} of $\mathcal V$, 
obtained by smoothing the \(2\)-skeleton in the manner prescribed by the transverse orientations. 

Let $B$ be the horizontal branched surface of a taut triangulation $\mathcal V$ of $M$. A \emph{dual train track} $\tau$ of $B$ is a trivalent train track $\tau \subset B$ such that, for each face $\mathfrak f \subset B$, $\tau_{\mathfrak f} = \tau \cap \mathfrak f$ has 
exactly one switch that lies in the interior of $\mathfrak f$. The edges of $\tau_{\mathfrak f}$ are called \emph{half-branches} of $\tau_{\mathfrak f}$. Since switches of $\tau$ are trivalent, the half-branches of $\tau_{\mathfrak f}$ admit a natural partition into two types: two are called \emph{small}, and the remaining one is called \emph{large}, as shown in \Cref{fig:dual_train_track}.

\begin{figure}[h]
    \centering
    \begin{tikzpicture}[scale=1, line cap=round, line join=round]

\coordinate (L) at (-1.6,0);
\coordinate (R) at (1.6,0);
\coordinate (T) at (0,2.4);

\coordinate (S) at (0,0);
\coordinate (M) at (0,0.55);
\coordinate (SL) at (-0.78,1.20);
\coordinate (SR) at (0.78,1.20);

\draw[black, line width=0.9pt] (L) -- (T) -- (R) -- cycle;

\draw[blue!80!black, line width=1.2pt]
  (S) -- (M)
  (M) .. controls (-0.08,0.95) and (-0.38,1.15) .. (SL)
  (M) .. controls ( 0.08,0.95) and ( 0.38,1.15) .. (SR);

\node at (-1.35,1.45) {small};
\node at ( 1.35,1.45) {small};
\node at (0,-0.42) {large};

\end{tikzpicture}
    \caption{a dual train track}
    \label{fig:dual_train_track}
\end{figure}

A dual train track $\tau$ in a horizontal branched surface $B$ is called the \emph{upper track} (resp.\,\emph{lower track}) if $\tau$ has the property that the large half-branch of $\tau_{\mathfrak f} = \tau \cap \mathfrak f$ meets the bottom (resp.\,top) diagonal of the tetrahedron immediately above (resp.\,below) $\mathfrak f$ for every face $\mathfrak f \subset B$.

\subsubsection{The edge-orientation homomorphism} 

We say that a veering triangulation $\mathcal V$ is \emph{edge-orientable} if the upper track $\tau=\tau_U$ of $\mathcal V$ is transversely orientable.
Following \cite{Parlak2023JT}, define the \emph{edge-orientation double cover}
$p:\mathcal V^{\mathrm{or}}\to \mathcal V$ as follows.
For each tetrahedron $\mathfrak t$ of $\mathcal V$, there are exactly two possible \emph{local edge-orientations}
on $\mathfrak t$; call them
$\mathfrak o$ and $-\mathfrak o$.  Let $X$ be the set of pairs $(\mathfrak t,\mathfrak o)$ where $\mathfrak t$ is a
tetrahedron of $\mathcal V$ and $\mathfrak o$ is a local edge-orientation on $\mathfrak t$.
Whenever two tetrahedra $\mathfrak t,\mathfrak t'$ are glued along a face $\mathfrak f$, the face-pairing identifies a local
choice $\mathfrak o$ on $\mathfrak t$ with a unique local choice $\mathfrak o'$ on $\mathfrak t'$ (namely, the one
whose restriction to $\mathfrak f$ matches the restriction of $\mathfrak o$ under the face identification).
Use these identifications to glue the disjoint union of the two copies of each tetrahedron indexed by
$\mathfrak o$ and $-\mathfrak o$.  The resulting space 
$p:\mathcal V^{\mathrm{or}}\to \mathcal V$ is called the edge-orientation double cover.

\begin{defn} \label{def:eohom}
The \emph{edge-orientation homomorphism} of $\mathcal V$ is the monodromy character of this cover:
\[
\omega:\pi_1(M)\to {\rm GL}_1\Z=\Z^\times,
\qquad
\omega(\gamma)=
\begin{cases}
1 & \text{if $\gamma$ lifts to a loop in }\mathcal V^{\mathrm{or}},\\
-1 & \text{otherwise.}
\end{cases}
\]
\end{defn}
In particular, the following are equivalent: 
(i) $\omega$ is trivial, (ii) $\mathcal V^{\mathrm{or}}$ is disconnected, (iii) $\mathcal V$ is edge-orientable.

\subsection{The taut and Teichm\"uller polynomials}
\label{ss.taut} 

In this subsection, we recall several results of Parlak \cite{Parlak2023JT}, 
which will play key roles in the proofs of our main results. 

\begin{theorem}[{\cite{Parlak2023JT}*{Theorem 5.7}}] \label{thm:Parlak} 
Let $\mathcal{V}$ be a veering triangulation of a $3$-manifold $M$ 
and let $\omega$ denote the edge-orientation homomorphism of $\mathcal{V}$ and $\alpha:\pi_1(M)\surj H_M = H_1(M;\Z)_{\rm free}$ the maximal free abelian quotient. 
Then, the taut polynomial and the twisted Alexander polynomial coincide, namely,
    \[\Theta_\mathcal{V} \doteq \Delta_{\omega}^{ \alpha}\]
holds in $\Z[H_M]$. 
\end{theorem}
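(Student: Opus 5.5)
The plan is to follow Parlak's approach: realize the taut polynomial as an order element of a module over $\Lambda=\Z[H_M]$, and identify that module with the first twisted Alexander module $\mathcal{A}^\alpha_\omega=H_1(M_\alpha,\omega)$. First I would recall the definition of $\Theta_\mathcal{V}$ from Landry--Minsky--Taylor. Let $\wt{M}^{\rm ab}$ be the $H_M$-cover of $M$ with the lifted triangulation. The edge module $E$ is the free $\Lambda$-module on the lifts of edges, and the face module $F$ is the free $\Lambda$-module on the lifts of faces. For each face, the lower-track switch condition gives a relation among its three edges. This yields a square presentation $L:\Lambda^F\to\Lambda^E$, and the taut module is $\operatorname{coker}L$. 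The taut polynomial is the gcd of the maximal minors, i.e.\ the order of this module. The target is therefore to show that $\operatorname{coker}L$ and $H_1(M;\omega\otimes\alpha)$ have the same order, perhaps up to a factor that is a unit.

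Second, I would compute $H_1(M;\omega\otimes\alpha)$ cellularly. I would use the dual ideal cell structure of $\mathcal{V}$, with one vertex per tetrahedron, one edge per face, and one 2-cell per edge of $\mathcal{V}$; this cell structure is homotopy equivalent to $M$. The twisted chain complex then becomes
\[
\Lambda^{E}\xrightarrow{\partial_2}\Lambda^{F}\xrightarrow{\partial_1}\Lambda^{T},
\]
with the signs twisted by $\omega$. The key local computation is the following. Pass to the edge-orientation double cover, and in each tetrahedron choose the local edge-orientation $\mathfrak o$. Then each lower-track relation reads as a signed sum: the large branch equals the sum of the two small branches, with the signs determined by $\mathfrak o$. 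This should match, face by face, the transpose of the boundary map $\partial_2$ twisted by $\omega$. The match holds because crossing a face changes the local edge-orientation exactly according to $\omega$; this is where Definition~\ref{def:eohom} enters. The same check shows that the chosen sign conventions are consistent around each edge cycle, because the angle sum around each edge is $2\pi$.

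Third, I would convert this chain-level identification into an equality of orders. The cokernel of $L$ is (the dual of) $H$ built from $\partial_2$, while $H_1$ is $\ker\partial_1/\operatorname{im}\partial_2$. I would then apply a Fox-calculus or Turaev-torsion style lemma. Removing a maximal tree of the dual graph (one column of $\partial_1$) identifies the order of $H_1(M;\omega\otimes\alpha)$ with a gcd of minors of $\partial_2$ that matches the gcd of maximal minors of $L$, up to factors of the form $h-1$ or $h+1$ coming from $H_0$. To rule out such extra factors I would use that $\omega\otimes\alpha$ is nontrivial on $H_0$, or otherwise absorb them via the standard relation $\Delta_1=\Delta_0\cdot(\text{torsion})$. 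This bookkeeping of the extra $H_0$ and $H_2$ factors, and of the duality/transposition between "edge relations at faces" and "face boundaries," is where I expect the main obstacle to lie. I would handle it by computing $H_0(M;\omega\otimes\alpha)$ explicitly as $\Lambda/(\omega(\gamma)\alpha(\gamma)-1)_\gamma$, which is pseudo-null when $\operatorname{rank}H_M\ge 1$ and hence contributes no order factor.
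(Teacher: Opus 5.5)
Your overall plan is the right one (the paper itself only quotes Parlak here): read the Landry--Minsky--Taylor face relations $e_0-e_1-e_2$ as the $\omega$-twisted cellular (co)boundary of the dual spine, then convert a gcd of maximal minors into $\operatorname{ord}H_1$. The genuine gap is in your third step, which is where all the remaining content lies.

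First, $L$ is not square. A cusped manifold triangulated by $n$ ideal tetrahedra has $n$ edges and $2n$ faces, so $L$ is $n\times 2n$, and the dual complex is $0\to\Lambda^{E}\xrightarrow{\partial_2}\Lambda^{F}\xrightarrow{\partial_1}\Lambda^{T}\to0$ with ranks $n,2n,n$. Since transposition preserves the set of maximal minors, what you must prove is $\gcd_S\det(\partial_2)_S\doteq\operatorname{ord}H_1(M;\omega\otimes\alpha)$. Here $S$ runs over the $n$-element sets of faces, and $(\partial_2)_S$ is the square submatrix with rows in $S$.

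Second, your way of disposing of the factors $h\pm1$ rests on a false claim. The module $H_0=\Lambda/(\omega(\gamma)\alpha(\gamma)-1)_\gamma$ is pseudo-null only when ${\rm rank}\,H_M\ge 2$. In rank one it is $\Lambda/(t-1)\cong\Z$ whenever $\mathcal V$ is edge-orientable, and $\Lambda/(t+1)$ when $\omega(\gamma)=(-1)^{\alpha(\gamma)}$; both have nontrivial order. Rank one is exactly the case of the one-variable polynomials used in \Cref{cor:rigidity}, so there your argument, as planned, has no valid way to remove the discrepancy you anticipate.

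In fact no $H_0$ factor survives, and this is what you should prove. Suppose the complex is acyclic over $Q={\rm Frac}(\Lambda)$. Then for each such $S$ the matrices $(\partial_2)_S$ and $(\partial_1)_{F\setminus S}$ (columns in $F\setminus S$) are singular simultaneously. Otherwise $\det(\partial_2)_S\doteq\tau\cdot\det(\partial_1)_{F\setminus S}$, where $\tau$ is the torsion, computed with the splitting of $\partial_1$ supported on $F\setminus S$. Taking gcds gives $\gcd_S\det(\partial_2)_S\doteq\tau\cdot\operatorname{ord}({\rm coker}\,\partial_1)=\tau\cdot\operatorname{ord}H_0\doteq\operatorname{ord}H_1$, by Turaev's formula $\tau\doteq\operatorname{ord}H_1/\operatorname{ord}H_0$ (note $H_2=\ker\partial_2=0$). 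If the complex is not $Q$-acyclic, then $H_0$ being torsion and $\chi=0$ force ${\rm rank}\,H_1={\rm rank}\,H_2>0$, and both sides vanish.

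No tree collapse is needed. If you do collapse a maximal tree (it has $n-1$ edges, not one), you must check that the deleted columns of $L$ lie in the $\Lambda$-span of the others. This follows from the tetrahedron relations dual to $\partial_1\partial_2=0$. The individual minors then carry factors $\omega(x_j)\alpha(x_j)-1$ whose gcd is exactly $\operatorname{ord}H_0$, cancelling the denominator of $\tau$.

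Two smaller points:
\begin{itemize}
\item Dual incidences in the $H_M$-cover invert deck transformations, so $L$ is the \emph{conjugate} transpose of $\partial_2$. You therefore obtain $\Theta_{\mathcal V}\doteq\overline{\Delta^\alpha_\omega}$, which must be combined with the duality $\overline{\Delta^\alpha_\omega}\doteq\Delta^\alpha_\omega$ for the $\pm1$-valued $\omega$, or with matching conventions.
\item The local fact to verify in your second step is this: each local edge-orientation of a tetrahedron orients the edges of each of its faces so that $e_1,e_2$ form a path from the initial to the terminal point of the large-branch edge $e_0$, making the twisted boundary of the face $\pm(e_1+e_2-e_0)$. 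Consistency around edges is automatic because $\omega$ is defined on $\pi_1(M)$; it does not come from the angle sum.
\end{itemize}
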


\begin{lemma}[{\cite[Lemma 3.10]{Parlak2023JT}}] \label{lem:eo-to} 
Suppose that $\mathcal{V}$ is constructed from a pseudo-Anosov flow $\Psi$. 
Then $\mathcal{V}$ is edge-orientable if and only if 
the stable lamination $L$ of $\Psi$ is transversely orientable. 
\end{lemma}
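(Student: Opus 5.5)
This lemma is attributed to Parlak \cite[Lemma 3.10]{Parlak2023JT}. To prove it, the plan is to compare two transverse structures locally, tetrahedron by tetrahedron, using the explicit Agol--Gu\'eritaud construction of $\mathcal{V}$ from $\Psi$ (as in \cite{LandryMinskyTaylor2023ETDS}*{Section 4}). In that construction, each edge of $\mathcal{V}$ is realized as a flow-transverse arc spanning a maximal rectangle in the orbit space $\mathcal{O}$ of the lifted flow. Each tetrahedron comes from a rectangle with one singular or cusp point of the stable/unstable foliations on each of its four sides.

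The first step is to identify the upper track $\tau_U$ with the stable branched surface, or equivalently with the stable lamination $L$ intersected with the $2$-skeleton. Concretely, for each face $\mathfrak f$, the switch of $\tau_U\cap\mathfrak f$ and its large half-branch are dictated by the stable leaf through the face rectangle. Landry--Minsky--Taylor show that the stable branched surface of $\mathcal{V}$ carries $L$, and that its intersection with $B(\mathcal{V})$ is $\tau_U$ up to isotopy. The content of this step is therefore that $\tau_U$ is the trace of $L$ on the horizontal branched surface $B$.

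The second step is to match the two kinds of transverse orientation. A transverse orientation of $\tau_U$ on each face is, by the veering structure, the same as an orientation of the edges of $\mathcal{V}$. This is the local edge-orientation $\mathfrak o$ in the definition of $\mathcal{V}^{\rm or}$: on $\mathfrak t$, the transverse direction of $\tau_U$ across a half-branch picks out a direction along the adjacent edges, and the two choices correspond to $\pm\mathfrak o$. Likewise, a transverse orientation of $L$ near $\mathfrak t$ orients the unstable direction in the rectangle of $\mathfrak t$, and hence orients each edge, since edges are diagonals of rectangles crossing stable leaves. One checks that these two assignments agree on every tetrahedron and are compatible with the face gluings. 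As a result, the double cover of transverse orientations of $L$, restricted to $M$ minus the singular orbits, is isomorphic to $\mathcal{V}^{\rm or}$. Consequently its monodromy character equals $\omega$. Then $\mathcal V$ is edge-orientable iff $\omega$ is trivial iff $L$ is transversely orientable over the cusped manifold.

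The main obstacle is the last point. Transverse orientability of $L$ should be understood on the complement of the singular orbits, where $\mathcal V$ lives. Near a singular orbit with $k$ prongs the orientation double cover may or may not extend across it, but $\omega$ only sees loops in $M$. Orientability on the whole manifold with the singular orbits filled back in imposes an extra parity condition on prongs, namely even prong numbers, compatible with the monodromy around the core. I would therefore state and check the lemma with $L$ restricted to $M$, the exterior of the singular orbits, which is the setting in which Parlak's statement is made.
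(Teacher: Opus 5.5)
Your outline is correct. The paper gives no argument of its own here (it only cites Parlak), and yours is the natural one. With $\mathcal V$ realized in the complement of the singular orbits via the Agol--Gu\'eritaud rectangles, the stable leaves trace $\tau_U$ on each face, because the bottom diagonal of the tetrahedron above a face joins the two vertices of that face that are extreme transversely to the stable leaves. Hence, over each contractible tetrahedron, the two transverse orientations of $L$ are the two local edge-orientations $\pm\mathfrak o$, compatibly with the face gluings, and the orientation double cover of $L$ is $\mathcal V^{\rm or}$ with character $\omega$.

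The only misstep is your last paragraph. The normal bundle of $L$ is defined only off the singular orbits, and a meridian of a $k$-pronged singular orbit already lies in that complement with orientation monodromy $(-1)^k$. So transverse orientability there forces even prongs by itself, and there is no ``extra'' parity condition or second reading to choose between (cf.\ \Cref{rem:coorientation}~(2)). In the paper's fully-punctured application there are no singular orbits at all.
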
 

\begin{lemma}[{\cite[Lemma 4.6]{Parlak2023JT}}] \label{lem:eo-cover} 
Suppose that $\mathcal{V}$ is not edge-orientable.
Let $M^{\rm or}\to M$ denote the double cover corresponding to ${\rm Ker}\,\omega$ and let $\mathcal{V}^{\rm or}$ denote the lift of $\mathcal{V}$ to $M^{\rm or}$. 
Then $\mathcal{V}^{\rm or}$ is edge-orientable.
Namely, the edge-orientation double cover is edge-orientable. 
\end{lemma}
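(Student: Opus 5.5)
Since $\mathcal V$ is not edge-orientable, $\omega$ is nontrivial and $M^{\rm or}\to M$ is a connected double cover. The plan is to show that the upper track $\tau^{\rm or}$ of the lifted triangulation $\mathcal{V}^{\rm or}$ is transversely orientable by writing down an explicit transverse orientation, built tautologically from the construction of the edge-orientation double cover. The first step is to identify $\mathcal V^{\rm or}$, as a triangulated space, with the edge-orientation double cover $p:\mathcal V^{\rm or}\to\mathcal V$ constructed from the pairs $(\mathfrak t,\mathfrak o)$. The monodromy character of that cover is $\omega$ by \Cref{def:eohom}. A connected double cover is determined by the kernel of its monodromy, so it coincides with the cover corresponding to ${\rm Ker}\,\omega$. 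The lifted veering structure agrees with the one obtained by gluing the copies $(\mathfrak t,\pm\mathfrak o)$: coorientations of faces and veers of edges are pulled back along $p$, hence both structures restrict to the same data on each tetrahedron.

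The second step equips $\mathcal V^{\rm or}$ with a global edge-orientation. Each tetrahedron of $\mathcal V^{\rm or}$ is a pair $(\mathfrak t,\mathfrak o)$, and it carries the local edge-orientation $\mathfrak o$. By the definition of the gluing, the face identifications match these local choices on the common faces. Hence the local choices $\mathfrak o$ assemble into a globally consistent choice. It remains to see that this is an orientation of the edges. Around an edge of $\mathcal V^{\rm or}$, the consecutive tetrahedra share faces containing that edge, and the local choices agree across each shared face. Therefore the orientation assigned to the edge is the same from every adjacent tetrahedron.

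The third step, which I expect to be the main obstacle, is to convert the edge-orientation into a transverse orientation of the upper track $\tau^{\rm or}$. One must check Parlak's convention: a local edge-orientation on a veering tetrahedron is meant to be precisely a local transverse orientation of the branches of $\tau_U$ in its faces. On each face $\mathfrak f$, the track $\tau_{\mathfrak f}$ has one switch, and each of its three half-branches is dual to an edge of $\mathfrak f$. Orienting that edge orients the half-branch transversely. Compatibility at the switch (the two small half-branches inducing the orientation of the large one) is exactly the local consistency built into the definition of the two admissible local edge-orientations $\pm\mathfrak o$. I would verify this directly on the model veering tetrahedron of \Cref{fig:veer}, using that the large half-branch of the upper track meets the bottom diagonal of the tetrahedron immediately above $\mathfrak f$.

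With this dictionary, the global edge-orientation from the second step gives a transverse orientation of $\tau^{\rm or}$, so $\mathcal V^{\rm or}$ is edge-orientable. Equivalently, its own edge-orientation homomorphism $\omega^{\rm or}$ is trivial. As a consistency check, $\omega^{\rm or}=\omega|_{{\rm Ker}\,\omega}$, which is trivial by construction. This holds because a loop lifts to the edge-orientation cover of $\mathcal V^{\rm or}$ exactly when its image lifts to that of $\mathcal V$, the local data of the two triangulations being identified by $p$.
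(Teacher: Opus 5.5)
Your argument is correct, and it is essentially the argument behind the cited Lemma 4.6 of Parlak; the paper gives no proof beyond that citation. The tetrahedra $(\mathfrak t,\mathfrak o)$ of the edge-orientation double cover carry the tautological local edge-orientations $\mathfrak o$, and these agree across faces, so they assemble into a global edge-orientation compatible with the upper track. Your closing observation is the same fact seen through the equivalences stated after \Cref{def:eohom}: the lifted edge-orientation homomorphism is $\omega\circ p_*$, which is trivial on $\pi_1(M^{\rm or})={\rm Ker}\,\omega$.
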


\begin{lemma}[The universality of the edge-orientation homomorphism, {\cite{Parlak2023JT}*{Lemma 4.9}}]
\label{lem:eo-universality} 
Let $q:\pi_1(M)\surj G$ be a surjective group homomorphism and let $M^q\to M$ denote the regular cover corresponding to ${\rm Ker}\,q$.
Let $\mathcal{V}^q$ denote the veering triangulation on $M^q$ which is a lift of that on $M$.
Then $\mathcal{V}^q$ is edge-orientable if and only if the edge-orientation homomorphism $\omega:\pi_1(M)\to\Z^\times$ factors through $q$.
\end{lemma}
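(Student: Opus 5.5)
The plan is to compare the edge-orientation homomorphism of the lifted triangulation $\mathcal{V}^q$ with the restriction of $\omega$ to ${\rm Ker}\,q=\pi_1(M^q)$. The claim to establish first is the naturality statement
\[
\omega^q=\omega|_{{\rm Ker}\,q}:\pi_1(M^q)\to \Z^\times ,
\]
where $\omega^q$ denotes the edge-orientation homomorphism of $\mathcal{V}^q$. Once this holds, the lemma is formal. By the equivalence recorded after \Cref{def:eohom}, $\mathcal{V}^q$ is edge-orientable if and only if $\omega^q$ is trivial. That happens if and only if ${\rm Ker}\,q\subset {\rm Ker}\,\omega$. Since $q$ is surjective, this is equivalent to $\omega$ factoring through $q$, via $G\cong \pi_1(M)/{\rm Ker}\,q\to \Z^\times$.

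To prove the naturality statement, I would show that the edge-orientation double cover of $\mathcal{V}^q$ is the pullback of $p:\mathcal{V}^{\rm or}\to\mathcal{V}$ along the covering map $c:M^q\to M$. Each tetrahedron $\tilde{\mathfrak t}$ of $\mathcal{V}^q$ maps by $c$ isomorphically onto a tetrahedron $\mathfrak t$ of $\mathcal{V}$. This isomorphism preserves the taut and veering structure, so it gives a bijection between the two local edge-orientations on $\tilde{\mathfrak t}$ and those on $\mathfrak t$. The face-pairings of $\mathcal{V}^q$ are lifts of those of $\mathcal{V}$. Hence the rule ``the restrictions to the common face match'' is respected by this bijection. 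The set $X^q$ of pairs $(\tilde{\mathfrak t},\tilde{\mathfrak o})$, with its gluings, is therefore canonically identified with the fiber product $M^q\times_M \mathcal{V}^{\rm or}$, compatibly with the projections.

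It follows that a loop $\gamma$ in $M^q$ lifts to a closed loop in $(\mathcal{V}^q)^{\rm or}$ exactly when its image $c\circ\gamma$ lifts to a closed loop in $\mathcal{V}^{\rm or}$. This is precisely $\omega^q(\gamma)=\omega(c_\ast\gamma)$, which is the naturality statement.

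The only point needing care, and the step I expect to be the main obstacle, is checking that local edge-orientations are determined purely by the combinatorial structure of a single tetrahedron together with face-matching. This is what guarantees that the pullback description is correct. It follows from Parlak's definition recalled above, since the construction is entirely local and covering maps are local isomorphisms of the veering structure.
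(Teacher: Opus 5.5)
Your argument is correct. Identifying the edge-orientation double cover of $\mathcal{V}^q$ with the pullback of $p:\mathcal{V}^{\rm or}\to\mathcal{V}$ along $M^q\to M$ gives $\omega^q=\omega|_{{\rm Ker}\,q}$, and then edge-orientability of $\mathcal{V}^q$ is exactly ${\rm Ker}\,q\subset{\rm Ker}\,\omega$, which is the factorization through $q$. The paper does not reprove this lemma but quotes it from Parlak, and your naturality-plus-kernel-containment argument is the natural proof of that statement; since every step is local, it also works verbatim when $G$ is infinite.
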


\begin{lemma}[{\cite{LandryMinskyTaylor2024JEMS}*{Proposition 7.2}, \cite{Parlak2023JT}*{Lemma 7.1}}] 
\label{lem:tautTeich} 
Let $M$ be a fibered hyperbolic 3-manifold.  
Let $\mathcal{F}$ denote the fibered face corresponding to the suspension flow 
and $\mathcal{V}$ the layered veering triangulation of the exterior $M^\circ = M-{\rm sing}(\mathcal{F})$ of the singular fibers. 
Then, the taut polynomial $\Theta_\mathcal{V}$ specializes to the 
Teichm\"uller  polynomial $\Theta_\mathcal{F}$ as $\Theta_\mathcal{F}\doteq \iota_\ast(\Theta_\mathcal{V})$, 
where $\iota_\ast:H_{M^\circ}\surj H_M$ is the natural surjective homomorphism 
induced by the inclusion map $\iota:M^\circ\inj M$. 
In particular, if $\mathcal{F}$ is fully-punctured, then $M^\circ=M$ and
$\Theta_{\mathcal{F}} \doteq \Theta_{\mathcal V}$ holds in $\Z[H_M]$. 
\end{lemma}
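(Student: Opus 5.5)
The plan is to deduce the statement by comparing two presentations of one module. For the first, recall that $\Theta_\mathcal{V}$ is the $0$-th Fitting-ideal gcd of the \emph{taut module} $\mathcal{E}^\alpha(\mathcal{V})$, presented over $\Z[H_{M^\circ}]$ as follows.
\begin{itemize}
\item The generators are the lifts of the edges of $\mathcal{V}$ to the maximal free abelian cover of $M^\circ$.
\item The relations are one per lifted tetrahedron: the bottom diagonal equals the sum of the two adjacent equatorial edges of the appropriate veer, as read off from the upper track $\tau_U$.
\end{itemize}
For the second, recall that McMullen's $\Theta_\mathcal{F}$ is the gcd of the Fitting ideal of the module $T(\wt{\mathcal{L}})$ of transversals to the lift of the stable lamination $\mathcal{L}$ of the suspension flow to the maximal free abelian cover of $M$. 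McMullen computes it from an invariant train track $\tau$ for the monodromy $f$ on a fiber $S$ as
\[
\Theta_\mathcal{F}\doteq \det(uI-P_E)/\det(uI-P_V),
\]
where $P_E$ and $P_V$ are the lifted transition matrices on edges and vertices (switches) of $\tau$.

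First I will use layeredness. Because $\mathcal{V}$ is built by stacking tetrahedra on a triangulated fiber $S^\circ=S-{\rm sing}(f)$, each tetrahedron corresponds to a diagonal flip. The upper track $\tau_U$ restricted to a layer is then a train track on $S^\circ$ that carries the stable lamination restricted to $S^\circ$, and the flips realize $f$ as a splitting sequence. Reading the tetrahedron relations layer by layer, I get a presentation of $\mathcal{E}^\alpha(\mathcal{V})$ whose relation matrix is, up to elementary operations over $\Z[H_{M^\circ}]$, of the form $uI-P$ for the edge transition matrix $P$ of this splitting sequence, with the switch contributions cancelling. This is essentially the computation of \cite{LandryMinskyTaylor2024JEMS}*{Proposition 7.2}.

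Next I will push forward along $\iota_\ast:\Z[H_{M^\circ}]\surj\Z[H_M]$. Filling the singular orbits turns the track on $S^\circ$ into an invariant track on $S$, with the punctured complementary regions becoming the singular prongs. I will check that the base change $\mathcal{E}^\alpha(\mathcal{V})\otimes_{\iota_\ast}\Z[H_M]$ is presented by McMullen's matrices for $\tau$, identifying it with $T(\wt{\mathcal{L}})$ up to the vertex factor. Since Fitting ideals commute with base change, $\iota_\ast(\Theta_\mathcal{V})$ then agrees with $\Theta_\mathcal{F}$ up to units.

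I expect the main obstacle to be the order (gcd) of the Fitting ideal rather than the Fitting ideal itself, since gcds need not commute with a non-injective specialization. I would handle this as Parlak does in \cite{Parlak2023JT}*{Lemma 7.1}: show that both modules have square presentations of the same size, so the zeroth Fitting ideals are principal and specialization is literally determinant evaluation. Along the way I must check that the denominator $\det(uI-P_V)$ matches the vertex-cycle factor on the taut side.

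The last sentence of the statement is then immediate. If $\mathcal{F}$ is fully-punctured, ${\rm sing}(\mathcal{F})$ consists only of punctures, so no orbits are removed. Hence $M^\circ=M$, $\iota_\ast$ is the identity, and $\Theta_\mathcal{F}\doteq\Theta_\mathcal{V}$ in $\Z[H_M]$.
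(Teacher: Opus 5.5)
The paper gives no proof of its own here; it quotes the lemma from Landry--Minsky--Taylor and Parlak. So your outline has to stand on its own, and two of its steps fail as written.

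\textbf{First gap: the presentation of the taut module.} The taut module is not presented with one relation per tetrahedron. It has one generator per edge and one relation per face, $e_0=e_1+e_2$, where $e_0$ is the edge met by the large half-branch of the upper track. With the paper's convention, the two bottom faces of a tetrahedron with bottom diagonal $b$ give two relations, $b=e_1+e_2$ and $b=e_3+e_4$, so the presentation matrix is $n\times 2n$. Keeping only one relation per tetrahedron changes the module. Compare Landry--Minsky--Taylor's veering polynomial: it comes from a square tetrahedron matrix and is in general only a multiple of $\Theta_{\mathcal V}$.

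You can see directly that your reduction cannot be right as stated. If the relations really collapsed to $uI-P_E$ ``with the switch contributions cancelling'', the order would be $\det(uI-P_E)$. By McMullen's formula, $\det(uI-P_E)\doteq\Theta_{\mathcal F}\cdot\det(uI-P_V)$, and $\det(uI-P_V)$ is monic of positive degree in $u$, hence not a unit. A correct layer-by-layer reduction of the face relations keeps the switch conditions of one layer. It lands on McMullen's non-square presentation (monodromy relations $uI-P_E$ together with the switch map), which is what identifies the taut module with $T(\widetilde{\mathcal L})$ on $M^\circ$. An identification of modules ``up to the vertex factor'' is not meaningful.

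\textbf{Second gap: the gcd problem.} Your proposed cure is unavailable. You give no argument that square presentations exist, and in general they need not: $\mathrm{Fitt}_0$ need not be principal, which is why both $\Theta_{\mathcal V}$ and $\Theta_{\mathcal F}$ are defined as gcds. Moreover $\Theta_{\mathcal F}$ is a ratio of determinants, not a determinant. The gcd of $\iota_\ast$ of an ideal can be strictly larger than $\iota_\ast$ of its gcd, so ``Fitting ideals commute with base change'' does not finish the proof.

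What works is to avoid specializing ideals altogether:
\begin{enumerate}
\item Take one invariant train track on $S^\circ$ (e.g.\ the dual track of a layer); it is also invariant for the filled monodromy on $S$.
\item Apply McMullen's theorem on both sides with this same track. This gives $\Theta_{\mathcal F^\circ}\doteq\det(uI-P_E)/\det(uI-P_V)$ over $\Z[H_{M^\circ}]$, where $\mathcal F^\circ$ is the fibered face of $M^\circ$ of the restricted fibrations. It also gives $\Theta_{\mathcal F}\doteq\det(uI-\iota_\ast P_E)/\det(uI-\iota_\ast P_V)$ over $\Z[H_M]$.
\item Both determinants are monic in $u$, so they stay nonzero under $\iota_\ast$, and the identity $\det(uI-P_E)\doteq\Theta_{\mathcal F^\circ}\det(uI-P_V)$ can be pushed forward. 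Hence $\Theta_{\mathcal F}\doteq\iota_\ast(\Theta_{\mathcal F^\circ})$.
\item Together with $\Theta_{\mathcal V}\doteq\Theta_{\mathcal F^\circ}$ (the fully-punctured case on $M^\circ$), this yields $\Theta_{\mathcal F}\doteq\iota_\ast(\Theta_{\mathcal V})$.
\end{enumerate}
Your last paragraph is fine, with one wording correction. For a fully-punctured face ${\rm sing}(\mathcal F)$ is empty (it does not ``consist of punctures''), so $M^\circ=M$ and $\iota_\ast$ is the identity.
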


\section{Being fully-punctured is a profinite property}  
\label{sec:punctured} 

In this section, we prove \Cref{thm:fully-punctured}, which is crucial for our Theorems \ref{thm:tautpoly} and \ref{thm:filled} and also plays a key role in our application. 
For the proof of the following result, we use lemmas from \cite{YiLiu2023Invent} and \cite{YiLiu2025Peking}.  

\begin{proof}[Proof of {\rm \Cref{thm:fully-punctured}}] 
In order to reduce the argument to the closed case, we first adapt Liu's argument in the proof of \cite[Lemma 9.5]{YiLiu2023Invent} to construct a finite cover $M^\ast_\varphi$ of $M$, which is the mapping torus of a pseudo-Anosov map $f^\ast_\varphi$ on a finite cover $S'_\varphi$ of $S_\varphi$ such that $f^\ast_\varphi$ has no $1$-pronged puncture and fixes every puncture of $S'_\varphi$, and such that the subgroup $\pi_1(M^\ast_\varphi)$ of $\pi_1(M)$ does not depend on the choices involved. 
Let $\wt{S}_\varphi\to S_\varphi$ denote the cover associated to the kernel of the natural surjection $\pi_1(S_\varphi)\surj H_1(S_\varphi;\Z/2\Z)$, let $K_\varphi$ denote the kernel of $\pi_1(\wt{S}_\varphi)\surj H_1(\wt{S}_\varphi;\Z/2\Z)$, regarded as a subgroup of $\pi_1(S_\varphi)$, and let $S'_\varphi\to S_\varphi$ denote the corresponding cover. 
Since a characteristic subgroup of a characteristic subgroup is again characteristic, $K_{\varphi}$ is a characteristic subgroup of finite index of $\pi_1(S_\varphi)$. 
Here, $K_{\varphi}$ being characteristic means that $K_{\varphi}$ is stable under the action of ${\rm Aut}(\pi_1(S_\varphi))$, so the monodromy map lifts to $S'_{\varphi}$.
Then the loop around each puncture of $S_\varphi$ does not lift to a closed curve in $S'_\varphi$, or equivalently, the covering degree at each puncture is at least two. 
Indeed, for a surface of finite type with at least two punctures, the class of the loop around each puncture is non-trivial in the mod $2$ homology. 
If $S_\varphi$ has at least two punctures, then the loop around each puncture does not lie in the kernel of $\pi_1(S_\varphi)\surj H_1(S_\varphi;\Z/2\Z)$, and hence not in $K_\varphi$. 
If $S_\varphi$ has exactly one puncture, then the loop around it is null-homologous, so it lifts to closed curves in $\wt{S}_\varphi$. 
Hence $\wt{S}_\varphi$ has at least two punctures, and the same argument for $\wt{S}_\varphi$ shows that the loop does not lie in $K_\varphi$. 
Since the covering degree at each puncture is at least two, the invariant foliations of $f_\varphi$ pull back to $S'_\varphi$ with no $1$-pronged puncture. 

We next put $Q=\pi_1(S_\varphi)/K_\varphi$, fix an element $t\in \pi_1(M)$ with $\varphi(t)=1$, let $\theta\in {\rm Aut}(Q)$ denote the automorphism induced by the conjugation by $t$, and let $r$ denote the order of $\theta$. 
We take $D$ to be a common multiple of $|Q|\cdot|{\rm Aut}(Q)|$ and $(2-\chi(S'_\varphi))!$, and let $f^\ast_\varphi$ denote the lift of the iteration $f_\varphi^{D}$ to $S'_\varphi$ induced by the conjugation by $t^{D}$. 
For each $\gamma\in \pi_1(S_\varphi)$, the identity $(\gamma t)^{D}=\bigl(\prod_{i=0}^{D-1}t^i\gamma t^{-i}\bigr)t^{D}$ holds, and the image of the product in $Q$ is the $(D/r)$-th power of $\ol{\gamma}\,\theta(\ol{\gamma})\cdots\theta^{r-1}(\ol{\gamma})$, where $\ol{\gamma}$ denotes the image of $\gamma$ in $Q$. 
Since $r$ divides $|{\rm Aut}(Q)|$ and $|Q|$ divides $D/r$, this image is trivial, and hence the subgroup $K_\varphi\langle t^{D}\rangle$ of $\pi_1(M)$ and the mapping class of $f^\ast_\varphi$ do not depend on the choice of $t$. 
Moreover, the number of punctures of $S'_\varphi$ is at most $2-\chi(S'_\varphi)$, so $f^\ast_\varphi$ fixes every puncture of $S'_\varphi$, and hence the punctures of $S'_\varphi$ correspond bijectively to the cusps of $M^\ast_\varphi$. 
\emph{The mapping torus $M^\ast_\varphi$ of $f^\ast_\varphi$ acting on $S'_\varphi$ is thus a finite cover of $M$ with $\pi_1(M^\ast_\varphi)=K_\varphi\langle t^{D}\rangle$}, with respect to the natural map $M_\varphi^\ast\to M$, which completes the construction. 

Define $f_\psi^\ast$ and $N_\psi^\ast$ similarly, using $K_\psi$ that corresponds to $K_\varphi$ via $\Phi$ and the same $D$. 
This is legitimate: $\Phi$ maps the closure of the kernel of the mod $2$ abelianization to the corresponding kernel in each of the two steps, so $K_\psi$ is the subgroup of $\pi_1(S_\psi)$ given by the same construction, and $Q$ and $\chi(S'_\varphi)$ correspond via $\Phi$. 
Then, $\Phi$ restricts to an isomorphism $\wh{\pi}_1 (M^\ast_\varphi) \congto \wh{\pi}_1 (N^\ast_\psi)$, which we again denote by $\Phi$, and $f_\varphi^\ast$ on $S'_\varphi$ and $f_\psi^\ast$ on $S'_\psi$ correspond. 

Since $f^\ast_\varphi$ has no 1-prong, it extends to a pseudo-Anosov map $f^\ast_\varphi$ on the filled closed surface $\overline{S'_\varphi}$, and similarly for $\psi$. 
The mapping torus of $f^\ast_\varphi$ on $\overline{S'_\varphi}$ is the result 
$\overline{M^\ast_\varphi}$ of Dehn filling on $M^\ast_\varphi$ along the boundary slopes of the fiber surface. 
Define $\overline{N^\ast_\psi}$ for $\psi$ similarly. 
The boundary slopes of the fiber surfaces are characterized as the kernels of the fibered classes on the peripheral subgroups, so they correspond to each other under the slope correspondence of \cite[Theorem A]{Xu2024fillings-arXiv}. 
Hence, by \cite[Theorem A]{Xu2024fillings-arXiv} (or by \cite[Corollary C]{WiltonZalesskii2019CM} together with \cite[Proof of Lemma 9.5]{YiLiu2023Invent}), 
$\Phi$ induces an isomorphism $\wh{\pi}_1(\overline{M^\ast_\varphi}) \congto \wh{\pi}_1(\overline{N^\ast_\psi})$, which we again denote by $\Phi$. 

In this setting, \cite[Lemma 5.2]{YiLiu2025Peking} ensures a bijective correspondence between the set of periodic orbits of $\ol{M_\varphi^\ast}$ and $\ol{N_\psi^\ast}$. 
Moreover, as the equation $L_m(f;\gamma^\ast \chi_\omega)={\rm ind}_m(f;O)$ in the proof of \cite[Lemma 8.5]{YiLiu2023Invent} ensures, 
the $m$-periodic index of each closed orbit may be read off from a finite quotient via twisted Lefschetz numbers. 
Since the prong number of a closed orbit is determined by the $m$-periodic indices over all $m\geq 1$, 
we may conclude that 
\emph{a puncture of $S'_\varphi$ is a singular point of $f_\varphi^\ast$ if and only if the corresponding puncture of $S'_\psi$ is a singular point of $f_\psi^\ast$}. 

Note that the interior stable/unstable foliations persist under taking a power (iteration) of the monodromy map, and locally lift to those on a finite cover. 
If $f_\varphi$ is fully-punctured, then $f_\varphi^\ast$ on $S_\varphi'$ is also fully-punctured. 
Since $f_\varphi^\ast$ on $\overline{S_\varphi'}$ and $f_\psi^\ast$ on $\overline{S_\psi'}$ have the same number of $k$-prong singular points for each $k\geq 3$ and they correspond via $\Phi$, 
$f_\psi^\ast$ on $S_\psi'$ is also fully-punctured.
Therefore, $f_\psi$ is fully-punctured.  
This completes the proof. 
\end{proof} 

\begin{cor} \label{cor:prong-correspondence} 
Let $M$ and $N$ be fibered cusped hyperbolic $3$-manifolds with an isomorphism $\Phi : \wh{\pi}_1(M) \congto \wh{\pi}_1(N)$, and let $\varphi$ and $\psi$ be the corresponding fibered classes. 
Under the correspondence of cusps induced by $\Phi$, each cusp of $M$ and the corresponding cusp of $N$ carry the same number of punctures of the fibers, with the same prong number. 
\end{cor}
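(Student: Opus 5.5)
The plan is to reuse the construction in the proof of \Cref{thm:fully-punctured}, and to add peripheral regularity from \Cref{thm:regularity} to get cusp-by-cusp information. First fix the cusp correspondence. Since $M$ and $N$ are cusped hyperbolic, $\Phi$ is peripheral regular. So for each cusp torus $T_i$ of $M$ there is a cusp torus $T'_{j}$ of $N$ such that $\Phi$ maps $\wh{\pi}_1(T_i)$ onto a conjugate of $\wh{\pi}_1(T'_j)$, and this map is the completion of an isomorphism $\pi_1(T_i)\cong\pi_1(T'_j)$. By regularity and the correspondence of fibered classes, $\psi\circ\Phi_\star=\varphi$ on homology. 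Hence the restrictions $\varphi|_{\pi_1(T_i)}$ and $\psi|_{\pi_1(T'_j)}$ correspond.

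\textbf{Counting punctures.} The fiber $S_\varphi$ meets $T_i$ in parallel curves. Their number is the index of $\varphi(\pi_1(T_i))$ in $\Z$, that is, the divisibility of $\varphi|_{\pi_1(T_i)}$. This quantity is preserved under the discrete isomorphism above, so $T_i$ and $T'_j$ carry the same number of punctures. The punctures of $S_\varphi$ on $T_i$ form one $f_\varphi$-orbit, so they all share a prong number $k_i$. It therefore remains to show $k_i=k'_j$.

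\textbf{Prong numbers.} Pass to the covers $M^\ast_\varphi$ and $N^\ast_\psi$ from the proof of \Cref{thm:fully-punctured}. There each puncture of $S'_\varphi$ is fixed and corresponds to a cusp of $M^\ast_\varphi$. That cusp lies over a unique cusp of $M$, and its prong number is $d\cdot k_i$ for the local covering degree $d$ at that puncture, where $d\ge 2$. The degree $d$ equals the index of $\pi_1$ of the lifted cusp in $\pi_1(T_i)$, intersected with the fiber direction. This index is visible in the closures of subgroups, so $\Phi$ preserves it, and corresponding cusps of the covers have the same $d$. In the proof of \Cref{thm:fully-punctured}, the closed orbits of $\ol{M^\ast_\varphi}$ and $\ol{N^\ast_\psi}$ obtained as cores of the filling solid tori correspond via \cite[Lemma 5.2]{YiLiu2025Peking}. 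These cores are exactly the filled punctures, and their prong numbers agree because the twisted Lefschetz numbers recover all periodic indices. Dividing by $d$ gives $k_i=k'_j$.

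The main obstacle is the compatibility of correspondences. We must check that the orbit correspondence of \cite[Lemma 5.2]{YiLiu2025Peking} on the filled covers sends the core curve of a filled cusp to the core curve of the cusp matched by peripheral regularity. The first thing to try is to identify each core's conjugacy class as generated by the filled peripheral subgroup image. Then use that Xu's filling isomorphism \cite[Theorem A]{Xu2024fillings-arXiv} is induced from the peripheral-regular $\Phi$, so it carries these peripheral images, and hence the core orbits, to each other compatibly.
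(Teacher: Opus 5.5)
Your proposal is correct and follows essentially the same route as the paper. Both count the punctures on a cusp $T$ as $[\Z:\varphi(\pi_1(T))]$, which peripheral regularity preserves. Both then match the prong numbers through Liu's closed-orbit correspondence and the twisted Lefschetz numbers on the filled covers $\ol{M^\ast_\varphi}$, $\ol{N^\ast_\psi}$ from the proof of \Cref{thm:fully-punctured}.

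You also spell out two points that the paper's one-line appeal to that proof leaves implicit. First, the lifted prong number is $d\cdot k_i$, where the local degree $d$ is preserved by $\Phi$, so you can divide by $d$. Second, the orbit correspondence must send each filled core to the core of the peripherally matched cusp. Your conjugacy-class argument, which uses that Xu's filling isomorphism is the descent of $\Phi$, addresses this second point.
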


\begin{proof}
For each cusp $T$ of $M$, the suspension flow restricted to $T$ is a linear flow with closed orbits, and the fiber $S_\varphi$ meets each closed orbit in the punctures lying on $T$; 
hence these punctures form a single orbit of the monodromy map, and their number equals the index $[\Z : \varphi(\pi_1(T))]$.
By the peripheral regularity (\Cref{thm:regularity}), the correspondence of cusps is induced by an
isomorphism $g: \pi_1(T)\congto\pi_1(T')$ with $\wh{\varphi}|_{\wh{\pi}_1(T)} = \wh{\psi}\circ\wh{g}$, so that $\varphi(\pi_1(T)) = \psi(\pi_1(T'))$ holds in $\Z$ and the numbers of punctures coincide. The prong numbers coincide by the third paragraph of the above proof. 
\end{proof}

\begin{remark} \label{rem:prf.thm1.2} 
(1) By \Cref{cor:prong-correspondence}, \Cref{thm:fully-punctured} holds verbatim for the narrower variant of the fully-punctured condition, additionally requiring that there are no $2$-pronged punctures (cf.\ \Cref{ss.pAmap}). 

(2) The correspondence of the closed orbits \cite[Lemma 5.2]{YiLiu2025Peking} is based on the fact that 
the conjugacy classes of the closed orbits are separated in the profinite completion, 
which follows from the conjugacy separability of compact orientable $3$-manifold groups \cite{HamiltonWiltonZalesskii2013JLMS}*{Theorem 1.3} (cf.\ \cite[Lemma 8.6]{YiLiu2023Invent}). 
The argument of the twisted Lefschetz numbers also works for punctured surfaces, 
where a puncture is $1$-pronged if and only if the fixed point index of the corresponding filled point vanishes \cite[Lemma 2.1]{JiangGuo1993Pacific}; 
hence, by carefully following Liu's argument, one may work directly on the base spaces to obtain an alternative proof of \Cref{thm:fully-punctured}, without passing to the closed case.

(3) The construction of $M^\ast_\varphi$ in the above proof is a variant of the one in \cite{YiLiu2023Invent}*{proof of Lemma 9.5}. We specify the cover $S'_\varphi$ by an explicit tower of characteristic subgroups, use only the property that the covering degree at each puncture is at least two, and choose the iteration power $D$ so that the independence of the choices is verified directly. The construction is thus self-contained.
\end{remark} 

\section{Transverse orientations are profinitely detected} 
\label{sec:transverse} 

In this section, we prove \Cref{thm:transverse}. 
In \Cref{ss:transverse-orientations}, we work with classical surface dynamics: we extend Rykken's homological criterion to punctured surfaces and strengthen it to a characterization of the transverse orientability (\Cref{thm:Rykken-punctured}). 
In \Cref{ss.profinite-transverse}, we combine this criterion with profinite arguments.  

\subsection{Surface dynamics: a homological criterion} 
\label{ss:transverse-orientations}

Let $S$ be a connected surface of finite type without boundary, 
either closed or punctured as in \Cref{ss.pAmap}, 
and let $f: S\to S$ be a pseudo-Anosov map with 
stable/unstable singular foliations $W^s=W^s(f)$ and $W^u=W^u(f)$ 
and with dilatation $\lambda=\lambda(f)>1$. 
Let ${\rm Sing}(f)\subset S$ denote the set of inner singularities, so that 
$W^s$ and $W^u$ are transverse $1$-dimensional foliations on $S-{\rm Sing}(f)$. 

\begin{defn}\label{def:transverse-orientation}
A \emph{transverse orientation} (or \emph{coorientation}) of $W^s$ is a continuous orientation of the quotient line bundle $N(W^s)=T(S-{\rm Sing}(f))/TW^s$ over $S-{\rm Sing}(f)$, 
where $TW^s\subset T(S-{\rm Sing}(f))$ denotes the line subbundle tangent to the leaves.
We say that $W^s$ is \emph{transversely orientable} if it admits a transverse orientation. 
The same terminology applies to $W^u$.

Suppose that $W^s$ is transversely orientable and fix a transverse orientation $\nu$. 
Since $f(W^s)=W^s$, the push-forward $f_*\nu$ is again a transverse orientation of $W^s$. 
Since $S-{\rm Sing}(f)$ is connected, 
we have $f_*\nu=\varepsilon\nu$ for a unique sign $\varepsilon=\varepsilon(f)\in\{\pm1\}$, 
which is independent of the choice of $\nu$. 
We say that $f$ \emph{preserves} (resp.\ \emph{reverses}) \emph{the transverse orientation of} $W^s$ 
if $\varepsilon(f)=1$ (resp.\ $-1$). 
\end{defn}

\begin{remark} \label{rem:coorientation}
(1) Since $W^s$ and $W^u$ are transverse and complementary on $S-{\rm Sing}(f)$, 
the composition $TW^u\inj T(S-{\rm Sing}(f))\surj N(W^s)$ is a canonical, 
$f$-equivariant isomorphism of line bundles. 
Hence a transverse orientation of $W^s$ is canonically the same as an orientation of the leaves of $W^u$; in particular, $W^s$ is transversely orientable if and only if $W^u$ is orientable, 
and $f$ preserves the transverse orientation of $W^s$ if and only if $f$ preserves the orientation of $W^u$. Note that this identification uses neither an orientation of $S$ nor the invariant measures.
This matches the hypothesis ``with oriented unstable manifolds'' in \cite[Theorem 3.3]{Rykken1999Michigan} below.

(2) The local model of a $k$-pronged singularity or puncture admits a transverse orientation on its punctured neighborhood if and only if $k$ is even. 
Hence, if $W^s$ is transversely orientable, then every inner singularity and every puncture of $f$ is even-pronged; in particular, there are no $1$-pronged punctures.
\end{remark}

We shall use the following homological criterion of Rykken.

\begin{lemma}[Rykken {\cite[Theorems 1.6, 1.7, 3.3]{Rykken1999Michigan}}] \label{lem:Rykken} 
Let $\ol{f}:\ol{S}\to\ol{S}$ be a pseudo-Anosov homeomorphism on 
a closed orientable surface whose stable foliation is transversely orientable, and 
let $\lambda=\lambda(\ol{f})$ and $\varepsilon=\varepsilon(\ol{f})$ in the notation of \Cref{def:transverse-orientation}. 
Then $\varepsilon\lambda$ is a simple eigenvalue of $\ol{f}_*: H_1(\ol{S};\R)\to H_1(\ol{S};\R)$, and every eigenvalue of $\ol{f}_*$ of absolute value $\lambda$ coincides with $\varepsilon\lambda$.
\end{lemma}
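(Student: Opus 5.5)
The plan is to pass to the orientation double cover, where the transverse orientation becomes a $1$-form, and to read the eigenvalue from the action of $\ol{f}$ on that form's cohomology class. Since $W^s$ is transversely orientable and $\ol{S}$ is orientable, $W^u$ is also transversely orientable. Indeed, by \Cref{rem:coorientation} (1), a transverse orientation of $W^s$ is the same as an orientation of the leaves of $W^u$, and the surface orientation then converts this into a transverse orientation of $W^u$. Together with the transverse measures, these orientations produce two closed $1$-forms on $\ol{S}$. Concretely, let $\mu^s$ be the transverse measure of $W^s$ and $\nu$ the chosen transverse orientation. Integrating $\mu^s$ with sign given by $\nu$ along transverse arcs defines a closed $1$-form $\omega^s$ on $\ol{S}-{\rm Sing}$. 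It is locally $\pm d({\rm Im}\sqrt{z^{k-2}}\,\cdot)$-type near a $k$-prong with $k$ even, hence it extends as a closed (singular) form and defines a class $[\omega^s]\in H^1(\ol{S};\R)$. The same construction gives $[\omega^u]$.

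The first step is to compute the action. From $f_*\mu^s=\lambda^{-1}\mu^s$ and $f_*\nu=\varepsilon\nu$ we get $\ol{f}^*[\omega^s]=\varepsilon\lambda^{\pm1}[\omega^s]$; with the appropriate convention, the unstable class satisfies $\ol{f}^*[\omega^u]=\varepsilon\lambda[\omega^u]$. Here the sign on $W^u$ also equals $\varepsilon$, because $f$ preserves the orientation of $\ol{S}$ and the sign on $W^s$ is $\varepsilon$. These classes are nonzero, since $\int \omega^s\wedge\omega^u$ equals the area of the flat structure, which is positive. Dualizing via Poincaré duality, $\varepsilon\lambda$ is therefore an eigenvalue of $\ol{f}_*$ on $H_1(\ol{S};\R)$.

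The second step is to bound the spectral radius. The spectral radius of $\ol{f}_*$ is at most $\lambda$. This holds because, for any closed curve $\gamma$, the length of $\ol{f}^n(\gamma)$ in the flat metric grows like $C\lambda^n$, and homology classes are bounded by lengths through integration against fixed harmonic forms.

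The third step, which I expect to be the main obstacle, is simplicity and the exclusion of other eigenvalues of modulus $\lambda$. My approach is to show that for every class $c\in H_1(\ol{S};\R)$,
\[
\lambda^{-n}\,(\varepsilon)^{n}\,\ol{f}^n_*c \longrightarrow \langle[\omega^s],c\rangle\cdot u
\]
for a fixed class $u$ dual to $[\omega^u]$. To prove this, I would represent $c$ by a cycle, decompose it into unstable and stable segments, and use that stable pieces contract. Unstable pieces, after renormalizing by $\lambda^n$, equidistribute along $W^u$ by unique ergodicity of the foliation (minimality plus the unique transverse measure), and their signed mass is $\int_c\omega^s$. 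The limit therefore lies in a $1$-dimensional space, which forces $\varepsilon\lambda$ to be simple, with no Jordan block. It also rules out any other eigenvalue $\mu$ with $|\mu|=\lambda$: an eigenvector for such a $\mu$, with $\mu\neq\varepsilon\lambda$, would make $(\varepsilon\lambda)^{-n}\ol{f}^n_*c$ oscillate rather than converge. The delicate points are making the equidistribution precise, via Birkhoff sums for the interval exchange/Markov partition, and controlling the error from stable segments uniformly.

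Alternatively, I can argue with a Markov partition and Perron--Frobenius. Because the foliations are oriented, the transition matrix induces $\ol{f}_*$ through a map of chain complexes that respects signs ($\varepsilon$ times a nonnegative primitive matrix). Perron--Frobenius then gives that the dominant eigenvalue is simple and is the only one of maximal modulus. The remaining task is to check that the induced homology eigenvector is nonzero, which is exactly the nonvanishing of $[\omega^u]$ established in the first step.
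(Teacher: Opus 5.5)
Your main route is correct, but it is genuinely different from the paper's.

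\textbf{The paper's route.} The paper treats the torus separately. For hyperbolic $\ol{S}$ it takes a Fathi--Shub Markov partition with primitive Markov matrix $A$ and applies Perron--Frobenius to $A$. It then cites Rykken's Theorem 3.3: the eigenvalues of $\ol{f}_*$ on $H_1(\ol{S};\R)$ coincide, with multiplicity, with those of $\varepsilon A$, up to zeros and roots of unity. Since $\lambda>1$, both assertions follow at once.

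\textbf{Your route.} You build the eigenvector from the flat structure. You then get simplicity and the spectral gap from the convergence $(\varepsilon\lambda)^{-n}\ol{f}^n_*\to P$, where $P(c)=\langle[\omega^s],c\rangle\,u$ and $u$ is the Schwartzman asymptotic cycle of the oriented foliation $W^u$ (which is $\pm$ the Poincar\'e dual of $[\omega^u]$). This argument is sound. Set $T=(\varepsilon\lambda)^{-1}\ol{f}_*$. Convergence of $T^n$ forces every eigenvalue of modulus $\geq 1$ to equal $1$ and to be semisimple, and $\lim T^n$ is then the projection onto $\ker(T-1)$ along the other generalized eigenspaces. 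So ${\rm rank}\,P=1$ gives exactly the lemma.

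\textbf{Comparison.} Your argument also subsumes your Step~2, since boundedness of $T^n$ already bounds the spectral radius. It treats the torus uniformly as well. Its cost is the uniform equidistribution of long unstable segments (unique ergodicity of $W^u$ plus bounded closing errors). That is the real analytic content, and the paper avoids it by black-boxing Rykken's comparison theorem. Its gain is an explicit identification of the eigenvector: it is the signed-transverse-measure class that the paper itself uses later, in the proof of \Cref{thm:Rykken-punctured}~(1).

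\textbf{Step 1 bookkeeping.} This step is cleaner via transposition. The relation $\ol{f}^*[\omega^s]=\varepsilon\lambda[\omega^s]$ holds directly, and $\ol{f}^*$ on $H^1$ is the transpose of $\ol{f}_*$. Poincar\'e duality inverts eigenvalues ($\ol{f}_*\circ{\rm PD}={\rm PD}\circ(\ol{f}^*)^{-1}$ for orientation-preserving $\ol{f}$). So the route through $[\omega^u]$ needs $\ol{f}^*[\omega^u]=\varepsilon\lambda^{-1}[\omega^u]$ together with orientation preservation, both of which you leave implicit.

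\textbf{The Markov alternative.} Your alternative Markov-partition sketch is incomplete as stated. A chain-level intertwining of $\varepsilon A$ with $\ol{f}_*$ controls only the eigenvalues on its image. Nonvanishing of $[\omega^u]$ therefore does not exclude further eigenvalues of modulus $\lambda$, or extra multiplicity of $\varepsilon\lambda$, elsewhere in $H_1(\ol{S};\R)$. That exclusion is precisely the content of Rykken's Theorem 3.3, so this route reduces to the paper's citation rather than to a Perron--Frobenius argument plus a nonvanishing check.
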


\begin{proof} 
If $\ol{S}$ is the torus $S_{1,0}$, then $\ol{f}$ is an Anosov map induced by a hyperbolic matrix $A\in \mathrm{SL}_2\Z$, and the assertions hold, since $\ol{f}_*$ is represented by $A$ itself, whose eigenvalues are $\varepsilon\lambda$ and $\varepsilon\lambda^{-1}$. So assume that $\ol{S}$ is hyperbolic. 

By \Cref{rem:coorientation} (1), the unstable foliation $W^u(\ol{f})$ is orientable, and $\ol{f}$ preserves {\rm(}resp.\ reverses{\rm)} its orientation if and only if $\varepsilon=1$ {\rm(}resp.\ $\varepsilon=-1${\rm)}. 
A Markov partition for $\ol{f}$ exists by \cite[Theorem 1.6]{Rykken1999Michigan} (due to Fathi--Shub), and its Markov matrix $A$ is a primitive non-negative integer matrix, 
so that $\lambda$ is a simple eigenvalue of $A$ exceeding the absolute values of all the other eigenvalues \cite[Theorem 1.7]{Rykken1999Michigan}. 
By \cite[Theorem 3.3]{Rykken1999Michigan}, the eigenvalues of $\ol{f}_*$ coincide with those of $\varepsilon A$ including multiplicity, 
with the possible exception of some zeros and roots of unity. 
Since $|\varepsilon\lambda|=\lambda>1$, which is neither zero nor a root of unity, the assertions follow.
\end{proof}

Now we extend the criterion to punctured surfaces and strengthen it to a characterization of the transverse orientability. 
The closed case of the following argument in (1) is due to Liu \cite[proof of Theorem 11.1]{YiLiu2023Duke}. 

\begin{theorem} \label{thm:Rykken-punctured}
Let $f:  S\to S$ be a pseudo-Anosov map on an orientable surface of finite type without boundary, with dilatation $\lambda=\lambda(f)$.

{\rm (1)} The stable foliation $W^s(f)$ is transversely orientable if and only if 
$f_*:  H_1(S;\R)\to H_1(S;\R)$ admits an eigenvalue of absolute value $\lambda$.

{\rm (2)} Suppose that these equivalent conditions hold and put $\varepsilon=\varepsilon(f)$. Then $\varepsilon\lambda$ is a simple eigenvalue of $f_*$, 
and every eigenvalue of $f_*$ of absolute value $\lambda$ coincides with $\varepsilon\lambda$. 
In particular, $f$ preserves {\rm(}resp.\ reverses{\rm)} the transverse orientation of $W^s(f)$ if and only if
$\lambda$ {\rm(}resp.\ $-\lambda${\rm)} is an eigenvalue of $f_*$. 
\end{theorem}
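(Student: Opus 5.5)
Our plan is to reduce everything to the closed case treated in \Cref{lem:Rykken}, by filling the punctures and, when needed, passing to a double cover. Let $\ol{S}$ be the closed surface obtained by filling the punctures of $S$. The inclusion $S\inj\ol{S}$ induces an $f$-equivariant surjection $H_1(S;\R)\surj H_1(\ol{S};\R)$. Its kernel is spanned by the peripheral loops, on which $f_*$ acts by permuting them up to orientation; orientation is preserved since $f$ is a homeomorphism. So the eigenvalues of $f_*$ on the kernel are roots of unity, and the eigenvalues of $f_*$ of modulus $\lambda>1$, with multiplicity, are exactly those of $\ol{f}_*$ on $H_1(\ol{S};\R)$. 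Here $\ol{f}$ denotes the extension of $f$ to $\ol{S}$.

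\emph{Direction ($\Rightarrow$) of (1), and (2).} Assume $W^s(f)$ is transversely orientable. By \Cref{rem:coorientation} (2), every puncture is even-pronged, so no puncture is $1$-pronged. Hence $\ol{f}$ is a pseudo-Anosov map on $\ol{S}$ with the same dilatation; a $2$-pronged puncture becomes a regular point. Its stable foliation is still transversely orientable: the singular set of $\ol{f}$ consists of the inner singularities together with the $\geq 4$-pronged punctures, and filling a $2$-pronged puncture extends the orientation of the line field continuously, because the orientation on a punctured neighborhood is the one coming from the local model. The sign $\varepsilon(\ol{f})$ equals $\varepsilon(f)$. If $\ol{S}$ is a sphere, it carries no pseudo-Anosov map with an orientable foliation, since transverse orientability forces all prongs to be even and the Euler--Poincar\'e formula then gives $\chi\le 0$. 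Otherwise \Cref{lem:Rykken} applies, and by the previous paragraph it transfers to $f_*$. This yields (2) and the forward direction of (1).

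\emph{Direction ($\Leftarrow$) of (1).} Suppose $W^s(f)$ is not transversely orientable. Take the orientation double cover $p:\wt{S}\to S$ of the line field $N(W^s)$. More precisely, we use the double cover of $S-{\rm Sing}(f)$, which extends over the inner singularities as a branched cover branched at the odd-pronged ones. To stay within unbranched punctured surfaces, we first delete the odd-pronged inner singularities, making them punctures. This does not change the dilatation, and it only enlarges $H_1$ by peripheral classes with root-of-unity eigenvalues, so the hypothesis is unaffected. Choose a lift $\wt{f}$ of $f$, using that $f$ preserves the line field. Then $W^s(\wt{f})$ is transversely orientable, so by the forward direction $\wt{f}_*$ has the simple eigenvalue $\pm\lambda$. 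The deck involution $\sigma$ reverses the transverse orientation and commutes with $\wt{f}$. Hence $\sigma$ acts on the one-dimensional $\pm\lambda$-eigenspace by $\pm1$.

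The key step, and the expected main obstacle, is to show that $\sigma$ acts by $-1$ there. Given this, the eigenvector lies in the $(-1)$-eigenspace of $\sigma$. Since $H_1(S;\R)\cong H_1(\wt{S};\R)^{\sigma}$ (transfer over $\R$), and $f_*$ corresponds to $\wt{f}_*$ on this invariant part, $f_*$ has no eigenvalue of modulus $\lambda$, as desired.

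To prove the key step, we would construct the eigenvector explicitly. Pairing $H_1$ with the transverse measure of $W^s$, signed by the transverse orientation, gives a linear functional $\mu$ on $H_1(\wt{S};\R)$. It satisfies $\mu\circ\wt{f}_*=\pm\lambda^{-1}\mu$, so it is a dual eigenvector, dual to the $\pm\lambda$ eigenvalue via the intersection pairing argument in Rykken. Since $\sigma$ preserves the measure and reverses the orientation, $\mu\circ\sigma_*=-\mu$. Because the eigenvalue is simple, the $\pm\lambda$-eigenvector is detected by the corresponding left eigenvector (the signed unstable measure), which is likewise $\sigma$-anti-invariant. We would verify this nonvanishing and anti-invariance carefully, including across the filled punctures of $\wt{S}$.
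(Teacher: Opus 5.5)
Your proposal is correct and is essentially the paper's proof. Part (2) and the forward direction of (1) come from filling the punctures and applying \Cref{lem:Rykken} modulo the peripheral subspace; there the puncture loops are permuted only up to sign if $f$ reverses orientation, which is harmless. The converse passes to the orientation double cover, where the deck involution $\sigma$ negates the signed transverse-measure class spanning the unique eigenline of modulus $\lambda$. The paper works directly in cohomology: the signed stable measure is itself an eigenvector of $\wt{f}^*$ with eigenvalue $\varepsilon(\wt{f})\lambda$, and $p^*$ embeds $H^1(S;\R)$ into the $\sigma^*$-invariants. Your $\pm\lambda^{-1}$ and the switch to the unstable measure are a harmless convention slip, since both signed measures are $\sigma$-anti-invariant. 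Working in cohomology avoids your left/right eigenvector pairing, and the paper uses the branched cover directly rather than first puncturing the odd-pronged singularities, though both of your devices are valid.
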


\begin{proof}
We first prove (2). If $S$ is closed, then the assertion is \Cref{lem:Rykken}. 
Suppose that $S$ has punctures, and let $\ol{S}$ denote the closed surface obtained by filling in the punctures of $S$, so that $S=\ol{S}-P$ for a finite set $P$. 
Since $f$ permutes the ends of $S$, it extends to a homeomorphism $\ol{f}: \ol{S}\to\ol{S}$ with $\ol{f}(P)=P$. 
By \Cref{rem:coorientation} (2), every puncture of $f$ is even-pronged; a $2$-pronged puncture fills in to a regular point, and a $2k$-pronged puncture with $k\geq 2$ fills in to a $2k$-pronged singularity. 
Hence the invariant foliations of $f$ extend to those of $\ol{f}$, and $\ol{f}$ is a pseudo-Anosov map on the closed orientable surface $\ol{S}$ with $\lambda(\ol{f})=\lambda$. 
A transverse orientation of $W^s(f)$ extends to that of $W^s(\ol{f})$, so $\varepsilon(\ol{f})=\varepsilon(f)$, and \Cref{lem:Rykken} applies to $\ol{f}$. Consider the exact sequence of $\R$-vector spaces
\[
0\To V\To H_1(S;\R)\To H_1(\ol{S};\R)\To 0,
\]
where $V$ is spanned by the classes of loops around the punctures. 
The maps are equivariant with respect to $f_*$ and $\ol{f}_*$, and $f_*$ preserves $V$, permuting the puncture loops up to sign. 
Hence the eigenvalues of $f_*|_V$ are roots of unity, and the multiset of eigenvalues of $f_*$ is the union of those of $f_*|_V$ and those of $\ol{f}_*$. 
Since $\lambda>1$, the eigenvalues of absolute value $\lambda$ of $f_*$ and of $\ol{f}_*$ coincide including multiplicity, and the assertion (2) for $f$ follows from that for $\ol{f}$. 

Next we prove (1). The ``only if'' part follows from (2), since $|\varepsilon\lambda|=\lambda$. For the ``if'' part, we show the contrapositive. Suppose that $W^s(f)$ is not transversely orientable. 
Let $p: \widetilde S\to S$ denote the orientation double cover of $W^s(f)$, 
namely, the double cover of $S-{\rm Sing}(f)$ determined by the orientation homomorphism of the line bundle $N(W^s)$, completed over ${\rm Sing}(f)$; 
it is branched exactly over the odd-pronged singularities and is connected by the assumption. 
The invariant foliations pull back, the foliation $W^s(\widetilde f):=p^*W^s(f)$ is transversely orientable by construction, 
and $f$ admits a lift $\widetilde f$, which is again a pseudo-Anosov map with $\lambda(\widetilde f)=\lambda$ and commutes with the nontrivial deck transformation $\sigma$. 
Fix a transverse orientation $\widetilde\nu$ of $W^s(\widetilde f)$; then $\sigma$ reverses $\widetilde\nu$ by the construction. 
The transverse measure of $W^s(\widetilde f)$, signed by $\widetilde\nu$, defines a nonzero class
$\widetilde\omega\in H^1(\widetilde S;\R)$ satisfying 
$\widetilde f^*\widetilde\omega =\varepsilon(\widetilde f)\lambda\,\widetilde\omega$ and
$\sigma^*\widetilde\omega=-\widetilde\omega$ (cf.\ \cite[Chapter 14]{FarbMargalit2012book}). 
By the assertion (2) applied to $\widetilde f$, the line $\R\widetilde\omega$ is the unique eigenspace of $\widetilde f^*$ for the eigenvalues of absolute value $\lambda$. 
Since $p^*:  H^1(S;\R)\to H^1(\widetilde S;\R)$ is injective,
$f$-equivariant, and has the image in the $(+1)$-eigenspace of $\sigma^*$, the map $f^*$, and hence $f_*$, admits no eigenvalue of absolute value $\lambda$.
\end{proof}

Let $M_f=S\times[0,1]/\bigl((x,1)\!\sim\!(f(x),0)\bigr)$ denote the mapping torus of $f$ with the suspension flow $\Psi$, and let ${\rm sing}(\Psi)$ denote the union of the singular orbits of $\Psi$, namely the suspension of ${\rm Sing}(f)$. 
The transverse orientability of a $2$-dimensional foliation on a $3$-manifold is defined analogously to \Cref{def:transverse-orientation} via its normal line bundle. 
The following elementary lemma translates the transverse orientability of the suspension of the stable foliation into a condition on the monodromy map; it will be combined with \Cref{lem:eo-to} in \Cref{sec:taut}. 

\begin{lemma} \label{lem:suspension} 
Let $\Lambda^s$ denote the suspension of $W^s(f)|_{S-{\rm Sing}(f)}$, which is a $2$-dimensional foliation on $M_f-{\rm sing}(\Psi)$. 
Then $\Lambda^s$ is transversely orientable if and only if $W^s(f)$ is transversely orientable and $f$ preserves its transverse orientation.
\end{lemma}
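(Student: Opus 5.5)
The plan is to compare the normal line bundle of $\Lambda^s$ with the pullback of $N(W^s)$ under the fibration. Put $S^\circ=S-{\rm Sing}(f)$, so that $M_f-{\rm sing}(\Psi)$ is the mapping torus $M_f^\circ$ of $f|_{S^\circ}$. The flow direction is tangent to $\Lambda^s$. Hence, on $S^\circ\times[0,1]$, the normal bundle $N(\Lambda^s)=T M_f^\circ/T\Lambda^s$ is canonically isomorphic to ${\rm pr}^\ast N(W^s)$: the inclusion $TS^\circ\inj TM_f^\circ$ induces $N(W^s)\to N(\Lambda^s)$. This map is an isomorphism fiberwise, because $T\Lambda^s$ is spanned by $TW^s$ and $\partial_t$. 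The gluing $(x,1)\sim(f(x),0)$ identifies the fiber at $(x,1)$ with the fiber at $(f(x),0)$ via $df$, i.e.\ via the bundle map $f_\ast:N(W^s)\to N(W^s)$. So $N(\Lambda^s)$ is the mapping torus of the bundle automorphism of $N(W^s)$ covering $f$.

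``If'' direction: suppose $\nu$ is a transverse orientation with $f_\ast\nu=\nu$. Pull $\nu$ back to $S^\circ\times[0,1]$ along ${\rm pr}$. It is compatible with the gluing precisely because $f_\ast\nu=\nu$, so it descends to an orientation of $N(\Lambda^s)$.

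``Only if'' direction: an orientation of $N(\Lambda^s)$ restricts along the fiber $S^\circ\times\{0\}$, via the isomorphism above, to a transverse orientation $\nu$ of $W^s$. Restricting instead to $S^\circ\times\{t\}$ and letting $t$ run from $0$ to $1$ gives continuous orientations. Continuity on the connected set $S^\circ\times[0,1]$ shows that the orientation on $S^\circ\times\{1\}$ is ${\rm pr}^\ast\nu$. The gluing then forces $f_\ast\nu=\nu$, i.e.\ $\varepsilon(f)=1$. Equivalently, evaluating the orientation character of $N(\Lambda^s)$ on $\pi_1(M_f^\circ)=\pi_1(S^\circ)\rtimes\langle t\rangle$ gives the orientation character of $N(W^s)$ on $\pi_1(S^\circ)$ and the value $\varepsilon(f)$ on $t$. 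The bundle is orientable iff this character is trivial.

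The main point needing care is making the isomorphism $N(W^s)\cong N(\Lambda^s)|_{S^\circ}$ precise. One must check that the suspension flow is tangent to $\Lambda^s$ and transverse to the fibers, so that $TS^\circ$ surjects onto $N(\Lambda^s)$ with kernel $TW^s$. Here connectedness of $S^\circ$ is used, as in \Cref{def:transverse-orientation}, so that $\varepsilon(f)$ is well defined. The rest is formal.
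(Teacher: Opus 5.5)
Your proposal is correct and follows essentially the same route as the paper: identify the restriction of $N(\Lambda^s)$ to each fiber with $N(W^s)$, use continuity in $t$ to get a constant family of transverse orientations, and note that the gluing $(x,1)\sim(f(x),0)$ forces $f_\ast\nu=\nu$; conversely, you extend an $f$-invariant $\nu$ constantly in $t$. Your extra care in building the bundle isomorphism (the flow direction is tangent to $\Lambda^s$ and transverse to the fibers) and your orientation-character reformulation are sound, but the paper does not need them.
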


\begin{proof}
The restriction of $\Lambda^s$ to each fiber $(S-{\rm Sing}(f))\times\{t\}$ is a copy of $W^s(f)|_{S-{\rm Sing}(f)}$, 
and the normal bundle of $\Lambda^s$ restricts on each fiber to the normal bundle $N(W^s)$ in $S$. 
Hence a transverse orientation of $\Lambda^s$ restricts to a family $(\nu_t)_{t\in[0,1]}$ of transverse orientations of $W^s(f)$, 
which is constant in $t$ by continuity and satisfies $f_*\nu_1=\nu_0$ by the gluing $(x,1)\sim(f(x),0)$. 
Thus, if $\Lambda^s$ is transversely orientable, then $W^s(f)$ admits a transverse orientation $\nu$ with $f_*\nu=\nu$. 
Conversely, such $\nu$ defines a transverse orientation of $\Lambda^s$ by the constant extension in $t$.
Finally, if $W^s(f)$ is transversely orientable, then by \Cref{def:transverse-orientation} an $f$-invariant transverse orientation exists if and only if $\varepsilon(f)=1$, namely, $f$ preserves the transverse orientation.
\end{proof}

\subsection{Profinite arguments} 
\label{ss.profinite-transverse} 
We first recall the correspondence of fibration data due to Liu.

\begin{lemma} \label{lem:Liu-correspondence}
Let the setting be as in {\rm \Cref{thm:transverse}}, and let
$S_\varphi$ and $S_\psi$ denote the fibers corresponding to $\varphi$ and
$\psi$. Then the following hold.

{\rm (1)} The fibers $S_\varphi$ and $S_\psi$ are homeomorphic.

{\rm (2)} $\lambda(f_\varphi)=\lambda(f_\psi)$.
\end{lemma}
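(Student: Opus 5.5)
This lemma is essentially a repackaging of results from Liu's work, and I would derive both parts from the profinite invariance of the fibered-face data together with the regularity of $\Phi$ (\Cref{thm:regularity}).

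\emph{Part (2).} Here $\Phi$ is regular, so $\Phi$ induces $\Phi_\star:H_M\congto H_N$, and $\psi$ is the class with $\wh{\psi}\circ\Phi=\wh{\varphi}$ under this identification. Liu \cite{YiLiu2023Invent}*{Theorem 5.1, Corollary 5.3} shows that the corresponding fibered classes have pseudo-Anosov monodromies with equal dilatations. If one wants an intrinsic route, I would use the twisted Lefschetz zeta functions. These are read off from finite quotients, so they determine the exponential growth rate of periodic orbit counts, which is $\log\lambda$. Alternatively, one can take the Teichm\"uller polynomial of a suitable virtual fiber and evaluate its Mahler-measure type growth using \Cref{cor:Mahler}. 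For this part I would simply cite Liu, since that is the content of his theorem.

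\emph{Part (1).} A fiber $S_\varphi$ is determined up to homeomorphism by two numbers: its Euler characteristic, which is $-\|\varphi\|_T$ for the Thurston norm, and its number of punctures. Liu's theorem gives profinite invariance of the Thurston norm on corresponding classes, so $\chi(S_\varphi)=\chi(S_\psi)$. For the punctures, I would argue as in the proof of \Cref{cor:prong-correspondence}: the number of punctures on a cusp $T$ equals $[\Z:\varphi(\pi_1(T))]$. Peripheral regularity (\Cref{thm:regularity}) matches cusps and the images $\varphi(\pi_1(T))=\psi(\pi_1(T'))$. The number of cusps agrees, so the total number of punctures agrees. Orientability holds throughout, so equal $\chi$ and equal puncture count give homeomorphic fibers.

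The main obstacle is the norm equality, which needs care when it is passed through the identification $\Phi_\star$. For this I would rely directly on Liu's correspondence of fibered cones, which preserves the Thurston norm.
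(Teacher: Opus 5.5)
Your proposal follows the paper's proof: you take $\chi(S_\varphi)=\chi(S_\psi)$ and $\lambda(f_\varphi)=\lambda(f_\psi)$ from Liu, match puncture counts cusp by cusp through $[\Z:\varphi(\pi_1(T))]$ and peripheral regularity exactly as in \Cref{cor:prong-correspondence}, and deduce (1) from the classification of orientable surfaces. Two corrections: the paper takes the dilatation equality for cusped fibers from the proof of \cite[Lemma 9.5]{YiLiu2023Invent}, which reduces to the closed case \cite[Corollary 8.2]{YiLiu2023Invent}, whereas it cites \cite{YiLiu2023Invent}*{Theorem 5.1, Corollary 5.3} only for the fibered-face correspondence; and your Mahler-measure aside should be dropped, since the Mahler measure of a specialization is the product of all roots outside the unit circle, not the largest root $\lambda$.
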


\begin{proof}
The equality $\chi(S_\varphi)=\chi(S_\psi)$ and the assertion (2) are established in the proof of \cite[Lemma 9.5]{YiLiu2023Invent}; the latter is based on the closed case \cite[Corollary 8.2]{YiLiu2023Invent}. 
By \Cref{cor:prong-correspondence}, the fibers $S_\varphi$ and $S_\psi$ have the same number of punctures. 
Since the fibers are connected orientable surfaces with the same Euler characteristic and the same number of punctures, the assertion (1) follows.
\end{proof}

Note that the correspondence of fibered classes in the sense above is well-defined: by Liu's theorem \cite[Theorem 5.1, Corollary 5.3]{YiLiu2023Invent} and the regularity (\Cref{thm:regularity}), the induced isomorphism
$\Phi^\star : H^1(N;\Z) \congto H^1(M;\Z)$ maps fibered classes to fibered classes, and $\varphi = \Phi^\star(\psi)$ holds if and only if $\wh{\varphi} = \wh{\psi} \circ \Phi$. 

We now prove that the preservation of the transverse orientation is a profinite property of fibered classes (\Cref{thm:transverse}). 

\begin{proof}[Proof of {\rm \Cref{thm:transverse}}]
Write $\pi = \pi_1(M)$ and $\varpi = \pi_1(N)$, and regard $\varphi: \pi\twoheadrightarrow\Z$ and $\psi:\varpi\twoheadrightarrow\Z$. 
Their kernels $\Gamma_M={\rm Ker}\,\varphi$ and $\Gamma_N={\rm Ker}\,\psi$ are finitely generated free groups, 
being the fundamental groups of the punctured fiber surfaces $S_\varphi$ and $S_\psi$. 
Since $\wh{\varphi}=\wh{\psi}\circ\Phi$, the isomorphism $\Phi$ restricts to an isomorphism
$\Phi|_{\wh{\Gamma}_M}: \wh{\Gamma}_M\congto\wh{\Gamma}_N$ 
of the closures of $\Gamma_M$ and $\Gamma_N$, which coincide with their profinite completions; 
indeed, every finite-index characteristic subgroup of $\Gamma_M$ is normal in $\pi$ with the quotient virtually $\Z$, 
hence residually finite, so the profinite topology of $\pi$ induces the full profinite topology on $\Gamma_M$. Thus we obtain the following commutative diagram with exact rows:
\[
\begin{tikzcd}
1 \ar[r] & \wh{\Gamma}_M \ar[r] \ar[d, "\Phi|_{\wh{\Gamma}_M}"'] &
\wh{\pi} \ar[r, "\wh{\varphi}"] \ar[d, "\Phi"] & \wh{\Z} \ar[r] \ar[d, equal]
& 1 \\
1 \ar[r] & \wh{\Gamma}_N \ar[r] & \wh{\varpi} \ar[r, "\wh{\psi}"'] &
\wh{\Z} \ar[r] & 1.
\end{tikzcd}
\]

Fix elements $t_M\in\pi$ and $t_N\in\varpi$ with $\varphi(t_M)=1$ and $\psi(t_N)=1$. 
By the mapping torus structures $\pi=\Gamma_M\rtimes\langle t_M\rangle$ and $\varpi=\Gamma_N\rtimes\langle t_N\rangle$, 
the conjugations $c_{t_M}:\Gamma_M\to\Gamma_M$ and $c_{t_N}:\Gamma_N\to\Gamma_N$ represent the outer classes of the monodromy maps $f_\varphi$ and $f_\psi$,
so that the induced automorphisms on $H_1(\Gamma_M;\Z)=H_1(S_\varphi;\Z)$ and $H_1(\Gamma_N;\Z)=H_1(S_\psi;\Z)$ coincide with $(f_\varphi)_*$ and $(f_\psi)_*$, respectively.


By the commutativity of the diagram, $\Phi(t_M)\,t_N^{-1}\in\wh{\Gamma}_N$, so the conjugations by $\Phi(t_M)$ and by $t_N$ differ by an inner automorphism of $\wh{\Gamma}_N$ and hence induce the same automorphism on $H_1(\wh{\Gamma}_N;\wh{\Z})\cong H_1(\Gamma_N;\Z)\otimes_{\Z}\wh{\Z}$.
Since $\Phi|_{\wh{\Gamma}_M}$ intertwines the conjugations, namely, $\Phi\circ c_{t_M}=c_{\Phi(t_M)}\circ\Phi$ holds on $\wh{\Gamma}_M$, the induced isomorphism $H_1(\Gamma_M;\Z)\otimes_{\Z}\wh{\Z}\congto H_1(\Gamma_N;\Z)\otimes_{\Z}\wh{\Z}$ conjugates $(f_\varphi)_*$ to $(f_\psi)_*$. 
Therefore, fixing bases of the free $\Z$-modules $H_1(\Gamma_M;\Z)$ and  $H_1(\Gamma_N;\Z)$ of the same rank $n$ and regarding them as bases of the corresponding $\wh{\Z}$-modules, 
we see that the matrices $(f_\varphi)_*,(f_\psi)_*\in{\rm GL}_n\Z$ are conjugate in ${\rm GL}_n\wh{\Z}$ via $\Phi|_{\wh{\Gamma}_M}$. 
In particular, they have the same characteristic polynomial in $\wh{\Z}[x]$, and hence in $\Z[x]$ by the injectivity of $\Z\inj \wh{\Z}$. 

Now put $\lambda=\lambda(f_\varphi)$, which coincides with $\lambda(f_\psi)$ by \Cref{lem:Liu-correspondence}~(2). 
Since the characteristic polynomials of $(f_\varphi)_*$ and $(f_\psi)_*$ coincide, $(f_\varphi)_*$ admits an
eigenvalue of absolute value $\lambda$ if and only if $(f_\psi)_*$ does.
Hence, by \Cref{thm:Rykken-punctured}~(1), the stable foliation $W^s(f_\varphi)$ is transversely orientable if and only if $W^s(f_\psi)$ is transversely orientable, proving the first assertion. 
In this case, by \Cref{thm:Rykken-punctured}~(2), $\varepsilon(f_\varphi)\lambda$ is an eigenvalue of $(f_\varphi)_*$, and hence of $(f_\psi)_*$; since every eigenvalue of $(f_\psi)_*$ of absolute value $\lambda$ coincides with $\varepsilon(f_\psi)\lambda$, we conclude $\varepsilon(f_\varphi)=\varepsilon(f_\psi)$, 
namely, $f_\varphi$ preserves the transverse orientation of $W^s(f_\varphi)$ if and only if $f_\psi$ preserves that of $W^s(f_\psi)$.
\end{proof}

\section{The taut polynomial is a profinite invariant} \label{sec:taut} 
In this section, we prove \Cref{thm:tautpoly} and \Cref{thm:filled}: 
the taut polynomial of the layered veering triangulation associated to a fibered face is a profinite invariant whenever the monodromy maps are fully-punctured (\Cref{ss.fully-punctured}), 
and so are the Teichm\"{u}ller polynomials of suitable Dehn fillings (\Cref{ss.filled}). 
We conclude with remarks on possible generalizations (\Cref{ss.tautpoly-Remarks}).

\subsection{Fully-punctured cases} \label{ss.fully-punctured} 
Throughout this subsection, let the setting be as in \Cref{thm:tautpoly}.
Let $\Psi_{\mathcal{F}}$ denote the suspension pseudo-Anosov flow corresponding to $\mathcal{F}$ and $\mathcal{V}_{\mathcal{F}}$ the associated layered veering triangulation. 
Since the fibrations in $\mathcal{F}$ are fully-punctured, the flow $\Psi_{\mathcal{F}}$ has no singular orbits, so $\mathcal{V}_{\mathcal{F}}$ is a veering triangulation of $M$ itself, and the stable lamination $\Lambda^s_{\mathcal{F}}$ of $\Psi_{\mathcal{F}}$ in the sense of \Cref{lem:eo-to} is the suspension of the stable foliation $W^s(f_{\varphi})$ in the sense of \Cref{lem:suspension}, for every primitive fibered class $\varphi\in\R_+\mathcal{F}$. 
The same applies to $\mathcal{G}$ by \Cref{thm:fully-punctured}. 
We first prove the assertion (1), then 
establish the compatibility of the edge-orientation homomorphisms (\Cref{lem:eo-compatible}), 
and deduce the assertion (2). 

\begin{proof}[Proof of {\rm \Cref{thm:tautpoly} (1)}]
Fix a primitive fibered class $\varphi\in\R_+\mathcal{F}$ and let $\psi\in\R_+\mathcal{G}$ denote the corresponding fibered class via $\Phi$.
By \Cref{lem:eo-to} and \Cref{lem:suspension}, the following are equivalent:
\begin{enumerate}
\item[(i)] $\mathcal{V}_{\mathcal{F}}$ is edge-orientable;
\item[(ii)] $\Lambda^s_{\mathcal{F}}$ is transversely orientable;
\item[(iii)] $W^s(f_{\varphi})$ is transversely orientable and $f_{\varphi}$ preserves its transverse orientation.
\end{enumerate}
The same holds for $\mathcal{G}$ and $\psi$. 
By \Cref{thm:transverse}, the conditions (iii) for $\varphi$ and for $\psi$ are equivalent. This completes the proof.
\end{proof}

\begin{lemma} \label{lem:eo-compatible} 
Let the setting be as in {\rm \Cref{thm:tautpoly}}, and let $\omega_{\mathcal{F}}:\pi_1(M)\to\Z^{\times}$ and $\omega_{\mathcal{G}}:\pi_1(N)\to\Z^{\times}$ denote the edge-orientation homomorphisms of $\mathcal{V}_{\mathcal{F}}$ and $\mathcal{V}_{\mathcal{G}}$, respectively. 
Then $\omega_{\mathcal{F}}$ and $\omega_{\mathcal{G}}$ are compatible with $\Phi$; namely, the composition map
\[
\upsilon: \pi_1(N)\overset{\iota_N}{\inj}\wh{\pi}_1(N)
\overset{\Phi^{-1}}{\To}\wh{\pi}_1(M)
\overset{\wh{\omega}_{\mathcal{F}}}{\To}\Z^{\times}
\]
coincides with $\omega_{\mathcal{G}}$.
\end{lemma}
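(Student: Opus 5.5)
Both $\omega_{\mathcal{F}}$ and $\omega_{\mathcal{G}}$ are homomorphisms to $\Z^\times\cong\Z/2\Z$, so each is determined by its kernel, which has index $1$ or $2$. The plan is to show that $\upsilon$ is a homomorphism $\pi_1(N)\to\Z^\times$ with ${\rm Ker}\,\upsilon={\rm Ker}\,\omega_{\mathcal{G}}$.

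First, $\upsilon$ is a homomorphism, and $\wh{\omega}_{\mathcal{F}}\circ\Phi^{-1}:\wh{\pi}_1(N)\to\Z^\times$ is continuous. If it is surjective, \Cref{lem:NS} gives that $\upsilon$ is surjective. Its kernel is the preimage $\pi_1(N)\cap\Phi(\overline{{\rm Ker}\,\omega_{\mathcal{F}}})$. So ${\rm Ker}\,\upsilon$ is the subgroup $\Gamma_N$ of index at most $2$ corresponding via $\Phi$ to $\Gamma_M={\rm Ker}\,\omega_{\mathcal{F}}$. Moreover $\Phi$ restricts to an isomorphism $\wh{\Gamma}_M\congto\wh{\Gamma}_N$, since the closures of finite-index subgroups are their profinite completions. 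If $\omega_{\mathcal{F}}$ is trivial, then $\mathcal{V}_{\mathcal{F}}$ is edge-orientable, hence so is $\mathcal{V}_{\mathcal{G}}$ by \Cref{thm:tautpoly}~(1). Then both $\omega_{\mathcal{G}}$ and $\upsilon$ are trivial and we are done. Assume from now on that $\omega_{\mathcal{F}}$ is nontrivial, so $\omega_{\mathcal{G}}$ is nontrivial by (1) as well, and $[\pi_1(N):\Gamma_N]=2$.

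The key step is to lift the three-way correspondence to the double covers $M^{\rm or}\to M$ (with group $\Gamma_M$) and $N'\to N$ (with group $\Gamma_N$). These covers are again fibered cusped hyperbolic $3$-manifolds with $\wh{\pi}_1(M^{\rm or})\cong\wh{\pi}_1(N')$ via $\Phi$. The pullback of the fibered class $\varphi$ is a fibered class of $M^{\rm or}$, and its monodromy is a lift of a power of $f_\varphi$, so it is still fully-punctured. Its fibered face $\mathcal{F}'$ carries the lifted flow and the lifted veering triangulation $\mathcal{V}^{\rm or}$. Likewise the pullback of $\psi$ lies in a fibered face $\mathcal{G}'$ of $N'$ with veering triangulation the lift $\mathcal{V}_{\mathcal{G}}^{q}$, where $q:\pi_1(N)\surj\pi_1(N)/\Gamma_N$. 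I must check that these pulled-back classes correspond under the restricted isomorphism; this holds because $\wh{\varphi}=\wh{\psi}\circ\Phi$ restricts to the closures. By \Cref{lem:eo-cover}, $\mathcal{V}^{\rm or}$ is edge-orientable. Applying \Cref{thm:tautpoly}~(1) to the pair $(M^{\rm or},N')$ (this step uses \Cref{thm:transverse} for the covers, whose hypotheses are satisfied) shows that $\mathcal{V}_{\mathcal{G}}^q$ is edge-orientable. Then \Cref{lem:eo-universality} implies that $\omega_{\mathcal{G}}$ factors through $q$, i.e.\ $\Gamma_N\subset{\rm Ker}\,\omega_{\mathcal{G}}$. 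Both subgroups have index $2$, so they are equal, and hence $\upsilon=\omega_{\mathcal{G}}$.

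I expect the main obstacle to be the lifting step: verifying that the veering triangulation attached to the lifted fibered face of the cover is precisely the lift of $\mathcal{V}_{\mathcal{F}}$, resp.\ $\mathcal{V}_{\mathcal{G}}$, and that the fibered classes correspond. The first point should follow from the canonicity of the Agol--Gu\'eritaud construction from the flow, together with the fact that the lifted flow is the suspension flow of the lifted fibration. The second follows from compatibility of $\Phi$ with $\wh{\varphi},\wh{\psi}$ on the closures. Surjectivity of $\upsilon$ in the nontrivial case is automatic, since $\Gamma_N$ has index $2$.
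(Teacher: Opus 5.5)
Your proposal is correct and takes essentially the same route as the paper. You dispose of the edge-orientable case via \Cref{thm:tautpoly}~(1), then pass to the double covers corresponding to ${\rm Ker}\,\omega_{\mathcal{F}}$ and ${\rm Ker}\,\upsilon$, apply \Cref{lem:eo-cover} and \Cref{thm:tautpoly}~(1) to the covers, and conclude by the universality in \Cref{lem:eo-universality}. The only difference is cosmetic: you finish by comparing two index-$2$ subgroups, whereas the paper observes that a nontrivial endomorphism of $\Z^\times$ is the identity.
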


\begin{proof}
If $\mathcal{V}_{\mathcal{F}}$ is edge-orientable, then 
so is $\mathcal{V}_{\mathcal{G}}$ by the assertion (1), 
and $\omega_{\mathcal{F}}$, $\omega_{\mathcal{G}}$, and $\upsilon$ are all trivial; 
hence $\upsilon=\omega_{\mathcal{G}}$ holds. 

Suppose instead that $\mathcal{V}_{\mathcal{F}}$ is not edge-orientable, 
so that neither is $\mathcal{V}_{\mathcal{G}}$ by the assertion (1), and
$\omega_{\mathcal{F}}$ and $\omega_{\mathcal{G}}$ are surjective. 
Then $\wh{\omega}_{\mathcal{F}}$ and $\wh{\upsilon}=\wh{\omega}_{\mathcal{F}}\circ\Phi^{-1}$ are surjective as well, 
and \Cref{lem:NS} ensures that $\upsilon:\pi_1(N)\to\Z^{\times}$ is surjective.

Let $p_M: M^{\rm or}\to M$ and $p_N: N^{\upsilon}\to N$ denote the double covers corresponding to ${\rm Ker}\,\omega_{\mathcal{F}}$ and ${\rm Ker}\,\upsilon$, respectively. 
We verify that the pair $(M^{\rm or},N^{\upsilon})$ satisfies the hypotheses of \Cref{thm:tautpoly}. Both are cusped hyperbolic $3$-manifolds, being finite covers of $M$ and $N$. 
Since the closures of their fundamental groups in the profinite completions are 
${\rm Ker}\,\wh{\omega}_{\mathcal{F}}$ and ${\rm Ker}\,\wh{\upsilon}=\Phi({\rm Ker}\,\wh{\omega}_{\mathcal{F}})$, 
the isomorphism $\Phi$ restricts to an isomorphism $\Phi^{\rm or}:\wh{\pi}_1(M^{\rm or})\congto\wh{\pi}_1(N^{\upsilon})$. 
The pullback $\varphi^{\rm or}=\varphi\circ(p_M)_*$ of a fibered class $\varphi\in\R_+\mathcal{F}$ is again a fibered class, whose fibration is the lift of that of $\varphi$; 
let $\mathcal{F}^{\rm or}$ denote its fibered face, and define $\mathcal{G}^{\upsilon}$ for $N^{\upsilon}$ similarly. 
By the universality of the profinite completion, $\wh{\varphi}^{\rm or}=\wh{\psi}^{\upsilon}\circ\Phi^{\rm or}$ holds for corresponding classes $\varphi$ and $\psi$, 
so $\mathcal{F}^{\rm or}$ and $\mathcal{G}^{\upsilon}$ are a corresponding pair of fibered faces via $\Phi^{\rm or}$. 
The invariant foliations of the lifted monodromy maps are the lifts of those of the original ones, 
so the fibrations in $\mathcal{F}^{\rm or}$ and $\mathcal{G}^{\upsilon}$ are again fully-punctured.

The suspension flow corresponding to $\mathcal{F}^{\rm or}$ is the lift of $\Psi_{\mathcal{F}}$. 
Since the Agol--Gu\'{e}ritaud construction is performed equivariantly on the orbit space of the lifted flow on the universal cover, 
the associated veering triangulation is the lift $\mathcal{V}^{\rm or}_{\mathcal{F}}$ of $\mathcal{V}_{\mathcal{F}}$ to $M^{\rm or}$ (cf.\ \cite[Section 4]{LandryMinskyTaylor2023ETDS}), 
and similarly for $\mathcal{G}^{\upsilon}$. 
Now, since $p_M$ is the edge-orientation double cover, \Cref{lem:eo-cover} ensures that
$\mathcal{V}^{\rm or}_{\mathcal{F}}$ is edge-orientable. 
Applying \Cref{thm:tautpoly} (1) to the pair $(M^{\rm or},N^{\upsilon})$ with the corresponding fibered faces $\mathcal{F}^{\rm or}$ and $\mathcal{G}^{\upsilon}$, 
we see that $\mathcal{V}^{\upsilon}_{\mathcal{G}}$ is also edge-orientable.

Therefore, \Cref{lem:eo-universality} (the universality of the edge-orientation homomorphism) 
applied to $q=\upsilon$ ensures that $\omega_{\mathcal{G}}$ factors through $\upsilon:\pi_1(N)\surj\Z^{\times}$. 
Since every endomorphism of $\Z^{\times}$ is either trivial or the identity and $\omega_{\mathcal{G}}$ is nontrivial, we conclude $\omega_{\mathcal{G}}=\upsilon$. 
\end{proof}

\begin{proof}[Proof of {\rm \Cref{thm:tautpoly} (2)}]
By \Cref{lem:eo-compatible}, we have $\omega_{\mathcal{G}}=\wh{\omega}_{\mathcal{F}}\circ\Phi^{-1}\circ\iota_N$.
The two continuous homomorphisms $\wh{\omega}_{\mathcal{G}}$ and $\wh{\omega}_{\mathcal{F}}\circ\Phi^{-1}$ on $\wh{\pi}_1(N)$ agree on the dense subset $\iota_N(\pi_1(N))$, so they coincide. 
Hence $\wh{\omega}_{\mathcal{F}}=\wh{\omega}_{\mathcal{G}}\circ\Phi$. 
Hence \Cref{thm:twisted} applies to the pairs $(\omega_{\mathcal{F}},\alpha)$ and $(\omega_{\mathcal{G}},\beta)$, 
where $\alpha$ and $\beta$ denote the maximal free abelian quotient maps, and yields
$\Delta^{\alpha}_{\omega_{\mathcal{F}}}\doteq \Delta^{\beta}_{\omega_{\mathcal{G}}}$ 
via the induced isomorphism $\Phi_{\star}: H_M\congto H_N$, 
up to multiplication by units of $\Z[H_M]$. 
Since $\mathcal{V}_{\mathcal{F}}$ and $\mathcal{V}_{\mathcal{G}}$ are veering triangulations of $M$ and $N$ themselves, 
\Cref{thm:Parlak} together with \Cref{lem:tautTeich} gives 
$\Theta_{\mathcal{F}}\doteq\Delta^{\alpha}_{\omega_{\mathcal{F}}}$ in $\Z[H_M]$ and
$\Theta_{\mathcal{G}}\doteq\Delta^{\beta}_{\omega_{\mathcal{G}}}$ in $\Z[H_N]$. 
Combining these, we conclude that $\Theta_{\mathcal{F}}$ and $\Theta_{\mathcal{G}}$ coincide via $\Phi_{\star}$, up to multiplication by units of $\Z[H_M]$.
\end{proof}

\subsection{Filled cases} \label{ss.filled} 
We first analyze, in the classical setting, how ``the two-step fillings'' $M\to M'^{\circ} \to M'$ on a manifold $M$ affect the singular orbits and the fibered faces. 

\begin{lemma} \label{lem:filling-analysis}
Let $M$ be a fibered cusped hyperbolic $3$-manifold with a fully-punctured fibered face $\mathcal{F}$. 
Let $M'$ be a hyperbolic $3$-manifold obtained by Dehn filling of $M$, 
admitting a fibration that restricts to a fibration in $\R_+\mathcal{F}$, and let $\mathcal{F}'$ denote its fibered face. Then the following hold.

{\rm (1)} The punctures of the fibers lying on each filled cusp $T$ share a common prong number $k_T\geq 2$, 
and ${\rm sing}(\mathcal{F}')$ consists precisely of the cores of the filling solid tori at the cusps with $k_T\geq 3$. 

{\rm (2)} The exterior $M'^{\circ}=M'-{\rm sing}(\mathcal{F}')$ is the result of the Dehn filling of $M$ along the sub-collection of the given slopes at the cusps with $k_T=2$. 
It is a fibered cusped hyperbolic $3$-manifold, its fibration restricts to the given fibration of $M$, and its fibered face $\mathcal{F}'^{\circ}$ is fully-punctured. 

{\rm (3)} The layered veering triangulation of $M'^{\circ}$ associated to $\mathcal{F}'$ coincides with $\mathcal{V}_{\mathcal{F}'^{\circ}}$.
\end{lemma}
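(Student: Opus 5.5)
The plan is to argue directly from the structure of the suspension flow, cusp by cusp. Fix a fibered class $\varphi\in\R_+\mathcal{F}$ whose fibration extends to the fibration of $M'$, with fiber $S_\varphi$ and fully-punctured monodromy $f_\varphi$.

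\emph{Step 1: a common prong number on each cusp.} At each cusp $T$ the suspension flow $\Psi_{\mathcal{F}}$ restricts to a linear flow with closed orbits. As in the proof of \Cref{cor:prong-correspondence}, the punctures of $S_\varphi$ lying on $T$ form a single $f_\varphi$-orbit, so they share a prong number $k_T$. Since $M'$ carries a fibration restricting to the given one, the filling slope at $T$ meets the fiber boundary so that the flow extends across the solid torus. I would argue that the fiber of $M'$ is obtained by capping each puncture on $T$ with a disk, and that $f_\varphi$ extends to the filled fiber $\ol{S}$. The extension must be pseudo-Anosov because $M'$ is hyperbolic (Thurston). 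A $1$-pronged puncture would fill to a $1$-prong singularity, which cannot occur in the interior of a closed pseudo-Anosov surface. This forces $k_T\ge 2$.

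\emph{Step 2: identifying ${\rm sing}(\mathcal{F}')$.} The filled points are regular when $k_T=2$ and $k_T$-prong singularities when $k_T\ge3$. All other points of $\ol{S}$ are regular, since $f_\varphi$ is fully-punctured. By Fried's uniqueness (\Cref{ss.ff}), the suspension of the extended map is the flow of $\mathcal{F}'$. Its singular orbits are therefore exactly the suspensions of the $k_T\ge3$ filled points, namely the cores of the corresponding filling solid tori. This proves (1).

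\emph{Step 3: the exterior $M'^\circ$.} Removing these cores from $M'$ undoes exactly the fillings at the cusps with $k_T\ge3$, so $M'^\circ$ is $M$ filled only at the cusps with $k_T=2$. Removing the orbits leaves the restricted fibration, whose monodromy is $f_\varphi$ extended over the $2$-pronged punctures. This monodromy is pseudo-Anosov on a surface of negative Euler characteristic. Indeed, the surface is $S_\varphi$ with some punctures filled, and $\chi$ stays negative because $M'^\circ$ still has cusps carrying the remaining punctures, or else we are in the closed case already handled. Hence $M'^\circ$ is a fibered cusped hyperbolic manifold by Thurston's theorem. Its monodromy has no interior singularities, so the fibered face $\mathcal{F}'^\circ$ is fully-punctured by \Cref{ss.ff}. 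The main obstacle is checking that the fiber of $M'^\circ$ is genuinely the restricted one; this is the point where I would invoke the hypothesis that the fibration of $M'$ restricts to one in $\R_+\mathcal{F}$.

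\emph{Step 4: the triangulations agree.} The layered veering triangulation associated with $\mathcal{F}'$ is built by the Agol--Gu\'eritaud construction from the flow on $M'-{\rm sing}$. That flow is exactly the flow of $\mathcal{F}'^\circ$ on $M'^\circ$, which has no singular orbits. Since the construction depends only on the flow (\Cref{ss.tri}), the two triangulations coincide. This proves (3).
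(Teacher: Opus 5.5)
Your four steps follow the paper's proof almost line by line. Steps 2--4 are fine \emph{once} $k_T\geq 2$ is known. The gap is the derivation of $k_T\geq 2$ in Step 1, and the paper's proof makes the same move (``by the uniqueness of the invariant foliations, they extend those of $f_\varphi$'').

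If $k_T=1$, the extension $\overline{f}$ of $f_\varphi$ to the filled fiber $\overline{S}$ preserves the filled-in foliations, which have a $1$-prong at each filled point. So $\overline{f}$ itself is not pseudo-Anosov. The suspension flow does extend over the solid tori, but with $1$-pronged core orbits. Hyperbolicity of $M'$ only tells you that the mapping class $[\overline{f}]\in{\rm Mod}(\overline{S})$ has \emph{some} pseudo-Anosov representative $g$. Nothing forces $g$ to preserve the filled points compatibly; equivalently, the cores need not be isotopic to closed orbits of the pseudo-Anosov flow of $M'$. Uniqueness of invariant foliations only compares genuine pseudo-Anosov structures, so you get no contradiction.

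This case can occur with every hypothesis of the statement satisfied, as the following construction shows.
\begin{itemize}
\item Let $g$ be pseudo-Anosov on a closed genus-$2$ surface $\overline{S}$. Let $\gamma\subset M_g$ be a closed curve meeting each fiber once transversely, at a point $x$, but not homotopic to any closed orbit of the suspension flow. Such curves exist: degree-one classes correspond to $g_*$-twisted conjugacy classes in $\pi_1(\overline{S})$, of which there are infinitely many (property $R_\infty$ of surface groups), whereas $g$ has only finitely many fixed points.
\item $M_g-\gamma$ fibers with fiber $\overline{S}-\{x\}$ and monodromy $h$ in the Birman fiber over $[g]$. Hence $h$ is pseudo-Anosov, since a reduction system for $h$ would descend to a $g$-invariant multicurve.
\item The puncture of $h$ is $1$-pronged. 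If it had $\geq 2$ prongs, filling in $x$ would give a genuine pseudo-Anosov map isotopic to $g$ and fixing $x$, and uniqueness would make $\gamma$ isotopic to a closed orbit.
\item Drilling the singular orbits of the suspension of $h$ gives a fully-punctured $M$. Filling all its cusps along the fiber slopes returns the hyperbolic $M'=M_g$, with a fibration restricting to one in $\R_+\mathcal{F}$.
\item Yet the $\gamma$-cusp has $k_T=1$, and ${\rm sing}(\mathcal{F}')$ consists of the singular orbits of $g$ rather than of the cores.
\end{itemize}

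So (1)--(3) need an extra hypothesis, such as ``no filled cusp carries $1$-pronged punctures'' (equivalently, the cores are closed orbits of $\Psi_{\mathcal{F}'}$). Under that hypothesis your argument goes through: $\overline{f}$ is then itself pseudo-Anosov with the filled-in foliations, and uniqueness identifies its suspension with $\Psi_{\mathcal{F}'}$.

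A minor point in Step 3: the presence of remaining cusps does not give $\chi<0$. Instead, use $\chi(S_{\varphi'})<0$ (since $M'$ is hyperbolic) and the fact that removing points lowers $\chi$. When $M'$ is closed, note that ${\rm Sing}(f_{\varphi'})\neq\emptyset$ by the Euler--Poincar\'e formula, so $M'^{\circ}$ is cusped.
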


\begin{proof}
Since the fibration of $M'$ restricts to that of $M$, the fiber $S_{\varphi'}$ is obtained from $S_{\varphi}$ by filling in the punctures lying on the filled cusps, and the punctures on each filled cusp $T$ form a single orbit of the monodromy map, sharing a common prong number $k_T$. 
Since $M'$ is hyperbolic, the monodromy map $f_{\varphi'}$ is pseudo-Anosov, and by the uniqueness of the invariant foliations, they extend those of $f_{\varphi}$. Hence the filled points on each filled cusp $T$ are $k_T$-pronged: 
the case $k_T=1$ does not occur, since a pseudo-Anosov map admits no $1$-pronged inner singularity; the filled points are regular points if $k_T=2$, and inner singularities if $k_T\geq 3$. Since $\mathcal{F}$ is fully-punctured, the singular orbits of $\Psi_{\mathcal{F}'}$ are precisely the cores of the filling solid tori at the cusps with $k_T\geq 3$. 
This proves (1).

By (1), the exterior $M'^{\circ}$ is the asserted Dehn filling of $M$, and it is the mapping torus of the fully-punctured pseudo-Anosov map obtained by restricting $f_{\varphi'}$ to the exterior of its inner singularities (cf.\ \Cref{ss.pAmap}). 
Since $M'$ is a fibered hyperbolic $3$-manifold, its fiber $S_{\varphi'}$ has a negative Euler characteristic, and hence so does $S_{\varphi'^{\circ}}=S_{\varphi'}-{\rm Sing}(f_{\varphi'})$. 
Hence $M'^{\circ}$ is a fibered hyperbolic $3$-manifold whose fibration restricts to that of $M$, and its fibered face $\mathcal{F}'^{\circ}$ is fully-punctured. 
Moreover, $M'^{\circ}$ is cusped: if $M'$ is cusped, then so is $M'^{\circ}$, while if $M'$ is closed, then the closed fiber $S_{\varphi'}$ has negative Euler characteristic, so that ${\rm Sing}(f_{\varphi'})\neq\emptyset$. This proves (2).

Finally, the restriction of $\Psi_{\mathcal{F}'}$ to $M'^{\circ}$ is the suspension flow $\Psi_{\mathcal{F}'^{\circ}}$. 
Since the Agol--Gu\'{e}ritaud construction depends only on the flow with the singular orbits removed, the associated layered veering triangulations coincide. This proves (3).
\end{proof}

\begin{proof}[Proof of {\rm \Cref{thm:filled}}]
We apply \Cref{lem:filling-analysis} to $(M',\mathcal{F}')$ and $(N',\mathcal{G}')$. 
By \Cref{cor:prong-correspondence}, a filled cusp of $M$ satisfies $k_T=2$ if and only if the corresponding filled cusp of $N$ does. 
Hence $M'^{\circ}$ and $N'^{\circ}$ are the results of a corresponding pair of Dehn fillings on $M$ and $N$. Applying \cite{Xu2024fillings-arXiv}*{Theorem A}, which allows empty slopes, and observing that the isomorphism there is the descent of the given $\Phi$ to the quotients of the profinite completions (see the proof of \cite{Xu2024fillings-arXiv}*{Theorem A}), 
we obtain isomorphisms $\Phi'^{\circ}: \wh{\pi}_1(M'^{\circ})\congto\wh{\pi}_1(N'^{\circ})$ and $\Phi': \wh{\pi}_1(M')\congto\wh{\pi}_1(N')$, 
compatible with the quotient maps induced by the fillings.

The pair $(M'^{\circ},N'^{\circ})$ with $(\mathcal{F}'^{\circ}, \mathcal{G}'^{\circ})$ satisfies the hypotheses of \Cref{thm:tautpoly}: 
both are fibered cusped hyperbolic $3$-manifolds with fully-punctured fibered faces by \Cref{lem:filling-analysis} (2), and 
the fibered class of $M'^{\circ}$ restricting to $\varphi\in \R_+\mathcal{F}$ corresponds via $\Phi'^{\circ}$ to that restricting to the corresponding class $\psi\in \R_+\mathcal{G}$ by the compatibility with the quotient maps, 
so $\mathcal{F}'^{\circ}$ and $\mathcal{G}'^{\circ}$ are a corresponding pair of fibered faces. 
Therefore, \Cref{thm:tautpoly} yields that $\Theta_{\mathcal{V}_{\mathcal{F}'^{\circ}}}$ and $\Theta_{\mathcal{V}_{\mathcal{G}'^{\circ}}}$ coincide via $\Phi'^{\circ}_{\star}: H_{M'^{\circ}}\congto H_{N'^{\circ}}$, up to multiplication by units.

Next, applying \Cref{lem:tautTeich} to $(M',\mathcal{F}')$ and $(N',\mathcal{G}')$ together with \Cref{lem:filling-analysis} (3), we obtain
\[
\Theta_{\mathcal{F}'}\doteq
\iota_{M'^{\circ}*}(\Theta_{\mathcal{V}_{\mathcal{F}'^{\circ}}}),\qquad
\Theta_{\mathcal{G}'}\doteq
\iota_{N'^{\circ}*}(\Theta_{\mathcal{V}_{\mathcal{G}'^{\circ}}}),
\]
where $\iota_{M'^{\circ}*}:\Z[H_{M'^{\circ}}]\surj\Z[H_{M'}]$ denotes the surjective ring homomorphism induced by the inclusion $M'^{\circ}\inj M'$, and similarly for $N$.

Finally, by the regularity (\Cref{thm:regularity}, which covers the closed case as well), the isomorphism $\Phi'$ induces an isomorphism $\Phi'_{\star}: H_{M'}\congto H_{N'}$, and the diagram
\[
\begin{tikzcd}
\Z[H_{M'^{\circ}}] \ar[r, "\Phi'^{\circ}_{\star}", "\cong"']
\ar[d, two heads, "\iota_{M'^{\circ}*}"'] &
\Z[H_{N'^{\circ}}] \ar[d, two heads, "\iota_{N'^{\circ}*}"] \\
\Z[H_{M'}] \ar[r, "\Phi'_{\star}"', "\cong"] & \Z[H_{N'}]
\end{tikzcd}
\]
commutes. Indeed, the two compositions $H_{M'^{\circ}}\to H_{N'}$ are homomorphisms of finitely generated free abelian groups whose profinite completions coincide, 
since $\Phi'^{\circ}$ and $\Phi'$ are compatible with the quotient maps of the fillings; 
by the injectivity of $\Z\inj\wh{\Z}$, they coincide. 
Applying the vertical maps to the coincidence above, 
we conclude that $\Theta_{\mathcal{F}'}$ and $\Theta_{\mathcal{G}'}$ coincide via $\Phi'_{\star}$, 
up to multiplication by units of $\Z[H_{M'}]$.
\end{proof}

\subsection{Remarks} \label{ss.tautpoly-Remarks}

\begin{remark} \label{rem:without-fp}
In \Cref{thm:filled}, we partially removed the fully-punctured assumption of \Cref{thm:tautpoly}; 
it is natural to ask whether this assumption can be removed entirely, that is, 
if we can start directly from $\wh{\pi}_1(M')\cong\wh{\pi}_1(N')$ to obtain the coincidence of the Teichm\"{u}ller polynomials of $\mathcal{F}'$ and $\mathcal{G}'$. 
If we could realize the drilling along all the singular orbits of a pseudo-Anosov flow purely group-theoretically (cf.\,\cite{GHMOSW2024-arXiv}) and the process persists under the profinite completion, 
which is the direction opposite to \cite{Xu2024fillings-arXiv}*{Theorem A}, then this question would have an affirmative answer. 
\end{remark}

\begin{remark}
Although the taut polynomial is defined for any veering triangulation, 
\Cref{thm:tautpoly} cannot be directly extended to the non-layered case. 
A non-fibered face of the Thurston norm ball may be dynamically represented by 
several distinct pseudo-Anosov flows without perfect fits, which correspond 
to distinct veering triangulations and hence a whole set of taut polynomials 
rather than a single one. This situation is studied in detail in \cite{Parlak_mutation}. 
In addition to this multiplicity issue, a fundamental difficulty lies in the following question.
\end{remark}

\begin{question} \label{q:without-pf} 
Is having a pseudo-Anosov flow without perfect fits a profinite property?
\end{question}

\section{Application: Small normalized dilatations} \label{sec:App} 
In this section, we illustrate a strategy for finding cusped fibered hyperbolic 3-manifolds that are profinitely rigid by using the taut polynomials, the veering census, and Tsang's estimate. 

In the context of the small dilatation problem, 
Tsang \cite{TsangCC2025-arXiv}*{Theorem 1.4} proved that the set 
$\mca{D}=\{\lambda(f)^{-\chi(S)}\mid f\act S\}$ 
of normalized dilatations of fully-punctured pseudo-Anosov maps is the union of 
\begin{center}
$\{\frac{3+\sqrt{5}}{2},\frac{4+\sqrt{12}}{2}, (\text{Lehmer's number})^9, \frac{5+\sqrt{21}}{2}, |LT_{1,2}|^3, \frac{6+\sqrt{32}}{2}\}$
\end{center}
and a dense subset of $[\mu^4, \infty)$, where $\mu=\frac{1+\sqrt{5}}{2}$ denotes the golden ratio. 
To prove this, he sharpened previous estimates of Agol--Tsang \cite{AgolTsang2024AGT}, as stated below, 
and combined them with the lower bound $\mu^4$ of Hironaka--Tsang \cite{HironakaTsang2025GGD} for the maps whose punctures form at least two orbits.  

\begin{theorem}[Tsang {\cite{TsangCC2025-arXiv}*{Theorems 1.7, 1.8}}] \label{thm:Tsang}
Let $f: S\to S$ be a fully-punctured pseudo-Anosov map with dilatation $\lambda$. 
Then the layered veering triangulation associated to the fibration of the mapping torus $M_f$ consists of at most $\frac{1}{2}(\lambda^{-\chi(S)})^{2}$ tetrahedra. 

If in addition $S$ is once-punctured with $\lambda(f)^{-\chi(S)}\leq \mu^4 \approx 6.854$, then 
that of $M_f$ consists of at most $16$ $(< \frac{1}{2}(\mu^4)^2 \approx 23.49)$ tetrahedra. 
\end{theorem}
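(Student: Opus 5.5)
The statement is quoted from Tsang \cite{TsangCC2025-arXiv}*{Theorems 1.7, 1.8}, and in the paper it is used as a black box. The honest plan is therefore to invoke it directly. Below I sketch how I would reconstruct the argument, along the lines of Agol--Tsang \cite{AgolTsang2024AGT}, in case one wants to check that it applies verbatim in our setting. Since $f$ is fully-punctured, \Cref{ss.ff} gives that the layered veering triangulation $\mathcal{V}$ lives on $M_f$ itself, with no singular orbits removed. Because $\chi(M_f)=0$, the number of tetrahedra of $\mathcal{V}$ equals its number of edges; call it $n$. So it suffices to bound the number of edges.

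\textbf{First step: a linear-algebra certificate for $\lambda$.} I would encode the layering through the flow graph of $\mathcal{V}$: its vertices are the $n$ edges of $\mathcal{V}$, with arrows from the bottom diagonal of each tetrahedron to its top diagonal and side edges. Restricting to the sub-sequence of diagonal exchanges lying over one fundamental domain of the fibration gives the periodic splitting sequence of the invariant train track. This sequence has length $n$. The product of the corresponding elementary transition matrices is a non-negative integer matrix $A$ whose Perron--Frobenius eigenvalue is $\lambda$.

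\textbf{Second step: bound $n$ by the Euler characteristic and the dilatation.} The train track on $S$ has $O(|\chi(S)|)$ branches; for a fully-punctured map with trivalent switches this is exactly $-3\chi(S)$. Each diagonal exchange increases weights by a definite amount along the sequence. Comparing the growth of $\|A^k\|$ with $\lambda^k$ then yields an inequality of the form $n\le C\cdot(\lambda^{-\chi(S)})^{2}$. I would obtain this by a pigeonhole argument: every branch must be split at least once in each window of the sequence of length comparable to $|\chi(S)|$, so the trace of $A$, or of a bounded power of it, is at least exponential in $n/|\chi(S)|$. Optimizing the counting, as Tsang does via ``fans'' of tetrahedra around each edge, sharpens the constant to $C=\frac{1}{2}$.

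\textbf{Third step: the once-punctured refinement.} Here I would add the extra constraint that all punctures form a single orbit. This forces the flow graph to be strongly connected with a specific boundary pattern, and it improves the exponent-type estimate from the previous step. Under the hypothesis $\lambda^{-\chi(S)}\le\mu^4$, the resulting bound falls below $17$. If needed, the remaining borderline cases would be handled by direct enumeration in the veering census \cite{VeeringCensus}.

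\textbf{Main obstacle.} The hard part is obtaining the sharp constants $\frac{1}{2}$ and $16$ rather than some non-explicit $C$. This is exactly Tsang's contribution, and I would not attempt to reprove it: I would cite \cite{TsangCC2025-arXiv}, noting only that his ``fully-punctured'' hypothesis permits $2$-pronged punctures, consistent with our convention in \Cref{ss.pAmap}.
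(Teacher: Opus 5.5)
Your operative plan, citing \cite{TsangCC2025-arXiv} and using the result as a black box, is what the paper does. However, your claim that it applies verbatim misses the one point the paper actually has to check. Tsang's Theorems 1.7 and 1.8 are stated as existence results: $M_f$ admits \emph{a} veering triangulation within the bound. The statement here concerns \emph{the} layered veering triangulation associated to the given fibration. These are not interchangeable, since a fully-punctured mapping torus can carry several veering triangulations (e.g.\ one for each fully-punctured fibered face). The downstream uses in \Cref{cor:small_dilatation}, \Cref{strategy} and the proof of \Cref{cor:rigidity} need the triangulation attached to the corresponding fibration of $N$ (same fiber, same dilatation) to lie in the census. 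The paper closes this gap by observing that Tsang's proofs bound the number of tetrahedra of the layered veering triangulation itself. Your check of the $2$-pronged-puncture convention is fine, but it is not this check.

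You should also discard your reconstruction sketch rather than keep it as a fallback, because its key step is false. Take once-punctured torus bundles with monodromy $R^aL$ for large $a$. The layered veering triangulation is the monodromy triangulation with $n=a+1$ tetrahedra. The pivot edge of the $a$ consecutive flips is never flipped during that stretch, so the dual branch goes unsplit in a window of length $a\gg|\chi(S)|=1$. Moreover $\lambda^{-\chi(S)}=\lambda\approx a+2$, so ${\rm tr}\,A\le 3|\chi(S)|\lambda\approx 3(a+2)$ grows only linearly in $n$, not exponentially. Any bound your trace argument could produce has the form $n\lesssim|\chi(S)|\log(|\chi(S)|\lambda)$, and these examples violate it. They also show that the true bound must be at least linear in the normalized dilatation, so no optimization of constants turns your estimate into $\frac12(\lambda^{-\chi(S)})^2$. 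Finally, the census fallback in your third step is not available. The census only reaches $16$ tetrahedra, so it cannot exclude the range $17$ to $23$ left open by the first bound; the $16$-tetrahedra bound is exactly what makes the census usable in \Cref{strategy}.
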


Although the statements in \cite{TsangCC2025-arXiv} assert the existence of veering triangulations within the bounds, the proofs bound the number of tetrahedra of the layered veering triangulation itself; see also the abstract of \cite{TsangCC2025-arXiv}.  

By virtue of our \Cref{thm:fully-punctured}, we derive the following. 

\begin{cor} \label{cor:small_dilatation} 
Let $M$ be the mapping torus of a fully-punctured pseudo-Anosov map $f: S\to S$ with dilatation $\lambda$, where $S$ is a punctured surface with $\chi(S)<0$, and let $N$ be a $3$-manifold with $\wh{\pi}_1(N)\cong\wh{\pi}_1(M)$. 
Then $N$ is the mapping torus of a fully-punctured pseudo-Anosov map on a surface homeomorphic to $S$, with the same dilatation $\lambda$. 
In particular, the layered veering triangulations of $M$ and $N$ associated to these fibrations consist of at most $\frac{1}{2}(\lambda^{-\chi(S)})^{2}$ tetrahedra; 
if moreover $M$ has only one cusp and $\lambda^{-\chi(S)}\leq\mu^{4}$, then they consist of at most
$16$ tetrahedra. 
\end{cor}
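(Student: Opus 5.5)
The plan is to transfer each ingredient of the statement across the profinite isomorphism $\Phi:\wh{\pi}_1(M)\congto\wh{\pi}_1(N)$, and then apply Tsang's bounds on both sides. First we show that $N$ is a fibered cusped hyperbolic $3$-manifold. Since $\chi(S)<0$ and $f$ is pseudo-Anosov, $M$ is a finite-volume cusped hyperbolic $3$-manifold by Thurston's theorem (\Cref{ss.ff}). The profinite detection of hyperbolicity and of the number of cusps recalled in \Cref{ss.profiniterigidity} then shows that $N$ is also a cusped finite-volume hyperbolic $3$-manifold with the same number of cusps.

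Next let $\varphi\in H^1(M;\Z)$ be the fibered class of the given fibration. By Liu's correspondence of fibered classes, together with the regularity in \Cref{thm:regularity}, there is a fibered class $\psi=(\Phi^\star)^{-1}(\varphi)\in H^1(N;\Z)$ with $\wh{\varphi}=\wh{\psi}\circ\Phi$. In particular, $N$ is the mapping torus of the pseudo-Anosov monodromy $f_\psi$ on the fiber $S_\psi$. \Cref{lem:Liu-correspondence} gives that $S_\psi\cong S_\varphi=S$ and that $\lambda(f_\psi)=\lambda(f_\varphi)=\lambda$. \Cref{thm:fully-punctured} then shows that $f_\psi$ is fully-punctured, since $f_\varphi=f$ is.

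For the bounds, apply \Cref{thm:Tsang} to $f$ and to $f_\psi$ separately. Both maps have the same normalized dilatation $\lambda^{-\chi(S)}$, so the first bound $\frac12(\lambda^{-\chi(S)})^2$ holds for both layered veering triangulations. For the refined bound, the number of punctures of the fiber has to be controlled on the $N$ side. If $M$ has one cusp, then all punctures of $S$ lie on that single cusp. We do not actually need $S$ to be once-punctured from the cusp count, because $S_\psi\cong S$ already guarantees that the puncture counts agree. So if $S$ is once-punctured, which is what Tsang's hypothesis requires, then so is $S_\psi$, and the bound of $16$ tetrahedra applies to both triangulations.

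The main point to check is exactly this reading of the hypothesis: whether ``$M$ has only one cusp'' is meant to force $S$ to be once-punctured. It does not force this automatically, since several punctures forming one orbit of $f$ can lie on a single cusp. I would handle it as follows. When all punctures form a single orbit, Tsang's argument for the once-orbit case (via Hironaka--Tsang's bound $\mu^4$ for two or more orbits) should be what is meant. Whatever reading is intended, \Cref{cor:prong-correspondence} guarantees that the number of punctures and the orbit structure on corresponding cusps agree. So whichever hypothesis of \Cref{thm:Tsang} holds for $f$ also holds for $f_\psi$, and the $16$-tetrahedron bound transfers.
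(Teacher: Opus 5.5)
Your proposal is correct and follows the paper's proof step for step: it uses the profinite detection of hyperbolicity (and of the cusp number) for $N$, takes the corresponding fibered class $\psi$, transfers the fiber, the dilatation and the fully-punctured property via \Cref{lem:Liu-correspondence} and \Cref{thm:fully-punctured}, and then applies \Cref{thm:Tsang} on both sides. For the last clause the paper only notes that $N$ is also one-cusped, whereas you transfer once-puncturedness via $S_\psi\cong S$; the mismatch you flag between ``one cusp'' in the statement and ``once-punctured'' in \Cref{thm:Tsang} as quoted is real, since several punctures in a single orbit can lie on one cusp, but it is a looseness in the statement that the paper's proof does not address either, and it is harmless for \Cref{cor:rigidity}, where the fibers are once-punctured.
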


\begin{proof}
By the profinite detection of the geometrization (see \Cref{ss.profiniterigidity}), $N$ is a cusped hyperbolic $3$-manifold. 
Take the fibered class $\psi$ of $N$ corresponding to that of $M$ (see \Cref{ss.profinite-transverse}). 
By \Cref{lem:Liu-correspondence}~(1),~(2) and \Cref{thm:fully-punctured}, the monodromy map $f_{\psi}$ is fully-punctured, its fiber is homeomorphic to $S$, and $\lambda(f_{\psi})=\lambda$; 
in particular, the normalized dilatations of $f$ and $f_{\psi}$ coincide. 
The bounds now follow from \Cref{thm:Tsang}, where for the last assertion note that $N$ has only one
cusp as well (see \Cref{ss.profiniterigidity}).  
\end{proof}

Theorems \ref{thm:fully-punctured}, \ref{thm:tautpoly}, and \Cref{cor:small_dilatation} provide a practical strategy for finding profinitely rigid examples, explained as follows: 

\begin{strategy} \label{strategy} 
Let $M=M_f$ be as in \Cref{cor:small_dilatation}. In order to show that $\pi_1(M)$ is profinitely rigid among $3$-manifold groups, by \Cref{cor:small_dilatation}, it suffices to enumerate the members of the census \cite{VeeringCensus} with at most $\frac{1}{2}(\lambda^{-\chi(S)})^{2}$ tetrahedra that arise as the layered veering triangulations of fully-punctured mapping tori over a surface homeomorphic to $S$ with the same dilatation, and to distinguish $M$ within this finite list, e.g.,\ by the taut polynomials (\Cref{thm:tautpoly}). 
Moreover, if $M$ has only one cusp and $\lambda^{-\chi(S)}\leq\mu^{4}$, then the bound improves to $16$ tetrahedra, so that the current census \cite{VeeringCensus} suffices for the enumeration. 
\end{strategy} 

\begin{proof}[Proof of {\rm \Cref{cor:rigidity}}] 
Let $M$ be one of the three mapping tori listed in  \Cref{cor:rigidity}, and let $N$ be a $3$-manifold with $\wh{\pi}_1(N)\cong\wh{\pi}_1(M)$.  
Since the fibers are once-punctured with $\lambda^{-\chi}\leq\mu^{4}$, 
\Cref{cor:small_dilatation} ensures that $N$ is the mapping torus of a fully-punctured pseudo-Anosov map on the same once-punctured surface with the same dilatation, 
whose layered veering triangulation consists of at most $16$ tetrahedra and hence belongs to the veering census \cite{VeeringCensus}. 
The census \cite{VeeringCensus} lists a total of $87047$ veering triangulations with at most $16$ tetrahedra, among which $51766$ are layered \cite{TsangCC2025-arXiv}.  
By \cite{TsangCC2025-arXiv}*{Theorems 1.4, 1.7, 1.8, Tables 1, 2}, the census contains precisely $17$ members that arise as the layered veering triangulations of the mapping tori in
\[
\{M_f \mid f\curvearrowright S_{g,1},\ g\geq 1,\ 
\lambda(f)^{-\chi(S_{g,1})}\leq\mu^{4}\},
\]
where $f$ runs over fully-punctured orientation-preserving pseudo-Anosov
maps; 
among them, exactly the three items in \Cref{cor:rigidity} have $g>1$. 
The item (1) is the unique member with the fiber $S_{5,1}$, and the items (2) and (3) are the two members with the fiber $S_{2,1}$, which share the normalized dilatation $|LT_{1,2}|^{3}$ while their taut polynomials
$a^{4}+a^{3}-a^{2}+a+1$ and $a^{4}-a^{3}-a^{2}-a+1$ do not coincide up to the ambiguity in \Cref{thm:tautpoly}~(2). 
Hence, in each case, the layered veering triangulation of $N$ coincides with the indicated one, and $N$ is homeomorphic to $M$. 
Therefore, the fundamental groups of the three mapping tori are profinitely rigid among $3$-manifold groups. 
\end{proof} 

\begin{remark}
For fibered $3$-manifolds, the almost profinite rigidity of Liu relies on the finiteness of pseudo-Anosov maps with bounded dilatation on surfaces of a given Euler characteristic \cite[Theorem 9.2]{YiLiu2023Invent}; the dilatation and the Euler characteristic are profinitely determined, 
while distinct pseudo-Anosov maps may share them. 
The items (2) and (3) of \Cref{cor:rigidity} exhibit such a pair, which the taut polynomials distinguish. 
\end{remark}

\begin{remark} \label{rem.eg} 
Item (1) in \Cref{cor:rigidity} is the mapping torus of a map on $S_{5,1}$, 
which turns out to be the exterior of the pretzel knot $P(-2,3,7)$ in $S^3$. 
Hence, it is an example of a three-tangle Montesinos knot, 
whose profinite rigidities were proved by the first author and Xu \cite{CheethamWestXu2026-arXiv-twobridgelinks} in parallel with the present work, in a different way from ours. 
Its taut polynomial is the image of the Lehmer polynomial under $a\mapsto -a$. 

Items (2) and (3) are the mapping tori of maps on $S_{2,1}$ lifted from a minimum-dilatation 5-braid. 
The constant $LT_{1,2}$ is the maximal real root of $x^4-x^3-x^2-x+1$ and is known as one of the Lanneau--Thiffeault numbers. 

See also \cite[around Tables 1,2]{TsangCC2025-arXiv} and \cite{Segerman:veering_census_with_data} for more information. 
\end{remark}

\begin{remark} 
\label{rem:census-S11}  
The mapping tori of the other $14$ members in the proof of \Cref{cor:rigidity} are those over the once-punctured torus $S_{1,1}$. 
Their fundamental groups are profinitely rigid among $3$-manifold groups as well; 
this is verified by \Cref{strategy} combined with \Cref{lem:Liu-correspondence}~(2) and the numbers of homomorphisms onto small cyclic groups, independently of \cite{BridsonReidWilton2017}. 
We will discuss a general framework of effective detection for $S_{1,1}$-bundles elsewhere \cite{Ueki-F2byZ-arXiv}.
\end{remark}


\begin{remark} \label{rem:k>2cusped} 
If we consider all the surfaces $S_{g,n}$, then the values $\lambda^{-\chi}$ accumulate at $\mu^4$. 
For each fixed $n$, however, they need not accumulate at $\mu^4$. 
Currently, the census covers the cases up to 16 tetrahedra. 
If the census grows or the estimation becomes sharper, then a similar strategy would work for more examples. 
\end{remark}

\begin{question} \label{q.number}
Is the number of tetrahedra in the layered veering triangulation 
a profinite invariant of a fibered face? 
\end{question}

\begin{remark} \label{rem:Xu-q}
In light of \cite{Xu2026-fullrigidity-arXiv} (see \Cref{rem:Xu-fullrigidity}), \Cref{q.number} and the question in
\Cref{rem:without-fp} have affirmative answers a posteriori, and so does
\Cref{q:without-pf} among hyperbolic $3$-manifolds. Direct and effective
approaches to these questions remain of interest.
\end{remark}

\bibliographystyle{amsalpha}
\bibliography{CMUY_tautpoly_v2.arXiv.bbl}

\begin{bibdiv}
\begin{biblist}

\bib{AschenbrennerFriedlWilton2015}{book}{
      author={Aschenbrenner, Matthias},
      author={Friedl, Stefan},
      author={Wilton, Henry},
       title={3-manifold groups},
      series={EMS Series of Lectures in Mathematics},
   publisher={European Mathematical Society (EMS), Z\"urich},
        date={2015},
        ISBN={978-3-03719-154-5},
         url={https://doi.org/10.4171/154},
      review={\MR{3444187}},
}

\bib{Agol2011Contemp}{incollection}{
      author={Agol, Ian},
       title={Ideal triangulations of pseudo-{A}nosov mapping tori},
        date={2011},
   booktitle={Topology and geometry in dimension three},
      series={Contemp. Math.},
      volume={560},
   publisher={Amer. Math. Soc., Providence, RI},
       pages={1\ndash 17},
         url={https://doi.org/10.1090/conm/560/11087},
      review={\MR{2866919}},
}

\bib{AtiyahMacdonald}{book}{
      author={Atiyah, M.~F.},
      author={Macdonald, I.~G.},
       title={Introduction to commutative algebra},
   publisher={Addison-Wesley Publishing Co., Reading, Mass.-London-Don Mills,
  Ont.},
        date={1969},
      review={\MR{0242802}},
}

\bib{AgolTsang2024AGT}{article}{
      author={Agol, Ian},
      author={Tsang, Chi~Cheuk},
       title={Dynamics of veering triangulations: infinitesimal components of
  their flow graphs and applications},
        date={2024},
        ISSN={1472-2747,1472-2739},
     journal={Algebr. Geom. Topol.},
      volume={24},
      number={6},
       pages={3401\ndash 3453},
         url={https://doi.org/10.2140/agt.2024.24.3401},
      review={\MR{4812222}},
}

\bib{BenardConway2020Fourier}{article}{
      author={Benard, Leo},
      author={Conway, Anthony},
       title={A multivariable {C}asson-{L}in type invariant},
        date={2020},
        ISSN={0373-0956,1777-5310},
     journal={Ann. Inst. Fourier (Grenoble)},
      volume={70},
      number={3},
       pages={1029\ndash 1084},
         url={https://doi.org/10.5802/aif.3330},
      review={\MR{4117056}},
}

\bib{BergeronSengunVenkatesh2016}{article}{
      author={Bergeron, Nicolas},
      author={\c{S}eng\"un, Mehmet~Haluk},
      author={Venkatesh, Akshay},
       title={Torsion homology growth and cycle complexity of arithmetic
  manifolds},
        date={2016},
        ISSN={0012-7094,1547-7398},
     journal={Duke Math. J.},
      volume={165},
      number={9},
       pages={1629\ndash 1693},
         url={https://doi.org/10.1215/00127094-3450429},
      review={\MR{3513571}},
}

\bib{BesserDeninger1999}{article}{
      author={Besser, Amnon},
      author={Deninger, Christopher},
       title={{$p$}-adic {M}ahler measures},
        date={1999},
        ISSN={0075-4102},
     journal={J. Reine Angew. Math.},
      volume={517},
       pages={19\ndash 50},
         url={http://dx.doi.org/10.1515/crll.1999.093},
      review={\MR{1728549 (2001d:11070)}},
}

\bib{BoileauFriedl2020AMS}{incollection}{
      author={Boileau, Michel},
      author={Friedl, Stefan},
       title={The profinite completion of 3-manifold groups, fiberedness and
  the {T}hurston norm},
        date={2020},
   booktitle={What's next?---the mathematical legacy of {W}illiam {P}.
  {T}hurston},
      series={Ann. of Math. Stud.},
      volume={205},
   publisher={Princeton Univ. Press, Princeton, NJ},
       pages={21\ndash 44},
         url={https://doi.org/10.2307/j.ctvthhdvv.5},
      review={\MR{4205634}},
}

\bib{BJZR2023.problems}{unpublished}{
      author={Bridson, Martin},
      author={Jaikin-Zapirain, Andrei},
      author={Reid, Alan~W.},
       title={A list of problems suggested by participants of {W}orkshop on
  {P}rofinite {R}igidity},
        date={2023},
         url={https://www.icmat.es/RT/2023/GGTLDGT/week_3_problem_list.pdf},
  note={\url{https://www.icmat.es/RT/2023/GGTLDGT/week_3_problem_list.pdf}},
}

\bib{Bourbaki1961AlgCommChap1-2}{book}{
      author={Bourbaki, N.},
       title={\'el\'ements de math\'ematique. {F}ascicule {XXVII}. {A}lg\`ebre
  commutative. {C}hapitre 1: {M}odules plats. {C}hapitre 2: {L}ocalisation},
      series={Actualit\'es Scientifiques et Industrielles [Current Scientific
  and Industrial Topics]},
   publisher={Hermann, Paris},
        date={1961},
      volume={No. 1290},
      review={\MR{217051}},
}

\bib{Boyd1981JNT}{article}{
      author={Boyd, David~W.},
       title={Kronecker's theorem and {L}ehmer's problem for polynomials in
  several variables},
        date={1981},
        ISSN={0022-314X,1096-1658},
     journal={J. Number Theory},
      volume={13},
      number={1},
       pages={116\ndash 121},
         url={https://doi.org/10.1016/0022-314X(81)90033-0},
      review={\MR{602452}},
}

\bib{BridsonReid2020AMS}{incollection}{
      author={Bridson, Martin~R.},
      author={Reid, Alan~W.},
       title={Profinite rigidity, fibering, and the figure-eight knot},
        date={2020},
   booktitle={What's next?---the mathematical legacy of {W}illiam {P}.
  {T}hurston},
      series={Ann. of Math. Stud.},
      volume={205},
   publisher={Princeton Univ. Press, Princeton, NJ},
       pages={45\ndash 64},
         url={https://doi.org/10.2307/j.ctvthhdvv.6},
      review={\MR{4205635}},
}

\bib{Brown1982GTM}{book}{
      author={Brown, Kenneth~S.},
       title={Cohomology of groups},
      series={Graduate Texts in Mathematics},
   publisher={Springer-Verlag, New York-Berlin},
        date={1982},
      volume={87},
        ISBN={0-387-90688-6},
      review={\MR{672956}},
}

\bib{BridsonReidWilton2017}{article}{
      author={Bridson, Martin~R.},
      author={Reid, Alan~W.},
      author={Wilton, Henry},
       title={Profinite rigidity and surface bundles over the circle},
        date={2017},
        ISSN={0024-6093},
     journal={Bull. Lond. Math. Soc.},
      volume={49},
      number={5},
       pages={831\ndash 841},
         url={https://doi.org/10.1112/blms.12076},
      review={\MR{3742450}},
}

\bib{BenardTangeTranUeki-Whitehead}{unpublished}{
      author={B\'enard, L\'eo},
      author={Tange, Ryoto},
      author={Tran, Anh~T.},
      author={Ueki, Jun},
       title={Multiplicity of non-acyclic {$\rm {SL}_2$}-representations and
  ${L}$-functions of the odd-twisted {W}hitehead links},
        date={2026},
        note={preprint ver.2. arXiv:2303.15941},
}

\bib{BergeronVenkatesh2013}{article}{
      author={Bergeron, Nicolas},
      author={Venkatesh, Akshay},
       title={The asymptotic growth of torsion homology for arithmetic groups},
        date={2013},
        ISSN={1474-7480},
     journal={J. Inst. Math. Jussieu},
      volume={12},
      number={2},
       pages={391\ndash 447},
         url={http://dx.doi.org/10.1017/S1474748012000667},
      review={\MR{3028790}},
}

\bib{CartanEilenbergbook}{book}{
      author={Cartan, Henri},
      author={Eilenberg, Samuel},
       title={Homological algebra},
      series={Princeton Landmarks in Mathematics},
   publisher={Princeton University Press, Princeton, NJ},
        date={1999},
        ISBN={0-691-04991-2},
        note={With an appendix by David A. Buchsbaum, Reprint of the 1956
  original},
      review={\MR{1731415}},
}

\bib{Cimasoni2004BSMM}{article}{
      author={Cimasoni, David},
       title={Studying the multivariable {A}lexander polynomial by means of
  {S}eifert surfaces},
        date={2004},
        ISSN={1405-213X},
     journal={Bol. Soc. Mat. Mexicana (3)},
      volume={10},
       pages={107\ndash 115},
      review={\MR{2199342}},
}

\bib{CheethamWestXu2026-arXiv-twobridgelinks}{unpublished}{
      author={{Cheetham-West}, Tamunonye},
      author={Xu, Xiaoyu},
       title={Profinite rigidity for two-bridge links and 3-tangle montesinos
  links},
        date={2026},
        note={preprint. arXiv:2603.20822},
}

\bib{CheethamWestYao2025-arXiv-Apoly}{unpublished}{
      author={{Cheetham-West}, Tamunonye},
      author={Yao, Youheng},
       title={Finite covers and strict boundary slopes of cusped hyperbolic
  3-manifolds},
        date={2025},
        note={preprint. arXiv:2506.12289},
}

\bib{DixonFormanekPolandRibes1982JPAA}{article}{
      author={Dixon, John~D.},
      author={Formanek, Edward~W.},
      author={Poland, John~C.},
      author={Ribes, Luis},
       title={Profinite completions and isomorphic finite quotients},
        date={1982},
        ISSN={0022-4049,1873-1376},
     journal={J. Pure Appl. Algebra},
      volume={23},
      number={3},
       pages={227\ndash 231},
         url={https://doi.org/10.1016/0022-4049(82)90098-6},
      review={\MR{644274}},
}

\bib{EisenbudGTM}{book}{
      author={Eisenbud, David},
       title={Commutative algebra},
      series={Graduate Texts in Mathematics},
   publisher={Springer-Verlag, New York},
        date={1995},
      volume={150},
        ISBN={0-387-94268-8; 0-387-94269-6},
         url={https://doi.org/10.1007/978-1-4612-5350-1},
        note={With a view toward algebraic geometry},
      review={\MR{1322960}},
}

\bib{Fenley2012GT}{article}{
      author={Fenley, S\'ergio},
       title={Ideal boundaries of pseudo-{A}nosov flows and uniform convergence
  groups with connections and applications to large scale geometry},
        date={2012},
        ISSN={1465-3060,1364-0380},
     journal={Geom. Topol.},
      volume={16},
      number={1},
       pages={1\ndash 110},
         url={https://doi.org/10.2140/gt.2012.16.1},
      review={\MR{2872578}},
}

\bib{FLP1979Asterisque66-67}{book}{
      editor={Fathi, Albert},
      editor={Laudenbach, Fran\c{c}ois},
      editor={Po\'{e}naru, Valentin},
       title={Travaux de {T}hurston sur les surfaces},
      series={Ast\'{e}risque},
   publisher={Soci\'{e}t\'{e} Math\'{e}matique de France, Paris},
        date={1979},
      volume={66--67},
        note={S\'{e}minaire Orsay, With an English summary},
      review={\MR{568308}},
}

\bib{FarbMargalit2012book}{book}{
      author={Farb, Benson},
      author={Margalit, Dan},
       title={A primer on mapping class groups},
      series={Princeton Mathematical Series},
   publisher={Princeton University Press, Princeton, NJ},
        date={2012},
      volume={49},
        ISBN={978-0-691-14794-9},
      review={\MR{2850125}},
}

\bib{Fried1979Ast}{incollection}{
      author={Fried, David},
       title={Fibrations over {$S^1$} with pseudo-anosov monodromy},
    language={fr},
        date={1979},
   booktitle={Travaux de thurston sur les surfaces - s\'eminaire orsay},
      series={Ast\'erisque},
   publisher={Soci\'et\'e math\'ematique de France},
       pages={251\ndash 266},
         url={www.numdam.org/item/AST_1979__66-67__251_0/},
}

\bib{Fried1988}{incollection}{
      author={Fried, David},
       title={Cyclic resultants of reciprocal polynomials},
        date={1988},
   booktitle={Holomorphic dynamics ({M}exico, 1986)},
      series={Lecture Notes in Math.},
      volume={1345},
   publisher={Springer, Berlin},
       pages={124\ndash 128},
         url={http://dx.doi.org/10.1007/BFb0081399},
      review={\MR{980956}},
}

\bib{FrankelSchleimerSegerman2019-arXiv-v5}{unpublished}{
      author={Frankel, Steven},
      author={Schleimer, Saul},
      author={Segerman, Henry},
       title={From veering triangulations to link space and back again},
        date={2025},
        note={preprint. arXiv:1911.00006},
}

\bib{FriedlVidussi2011survey}{incollection}{
      author={Friedl, Stefan},
      author={Vidussi, Stefano},
       title={A survey of twisted {A}lexander polynomials},
        date={2011},
   booktitle={The mathematics of knots},
      series={Contrib. Math. Comput. Sci.},
      volume={1},
   publisher={Springer, Heidelberg},
       pages={45\ndash 94},
         url={https://doi.org/10.1007/978-3-642-15637-3_3},
      review={\MR{2777847}},
}

\bib{GHMOSW2024-arXiv}{unpublished}{
      author={Groves, Daniel},
      author={Ha\"issinsky, Peter},
      author={Manning, Jason~F.},
      author={Osajda, Damian},
      author={Sisto, Alessandro},
      author={Walsh, Genevieve~S.},
       title={Drilling hyperbolic groups},
        date={2026},
        note={preprint. arXiv:2406.14667},
}

\bib{GJZPZ2014}{article}{
      author={Grunewald, Fritz},
      author={Jaikin-Zapirain, Andrei},
      author={Pinto, Aline G.~S.},
      author={Zalesskii, Pavel~A.},
       title={Normal subgroups of profinite groups of non-negative deficiency},
        date={2014},
        ISSN={0022-4049,1873-1376},
     journal={J. Pure Appl. Algebra},
      volume={218},
      number={5},
       pages={804\ndash 828},
         url={https://doi.org/10.1016/j.jpaa.2013.10.003},
      review={\MR{3149636}},
}

\bib{Greenberg1973AJM}{article}{
      author={Greenberg, Ralph},
       title={The {I}wasawa invariants of {${\bf \Gamma }$}-extensions of a
  fixed number field},
        date={1973},
        ISSN={0002-9327,1080-6377},
     journal={Amer. J. Math.},
      volume={95},
       pages={204\ndash 214},
         url={https://doi.org/10.2307/2373652},
      review={\MR{332712}},
}

\bib{VeeringCensus}{misc}{
      author={Giannopoulos, Andreas},
      author={Schleimer, Saul},
      author={Segerman, Henry},
       title={A census of veering structures},
        date={2026},
        note={\url{https://github.com/henryseg/Veering}},
}

\bib{Hanany2026regularity-arXiv}{unpublished}{
      author={Hanany, Liam},
       title={Regularity of profinite isomorphisms of hyperbolic 3-manifolds},
        date={2026},
        note={preprint. arXiv:2607.04530},
}

\bib{Hempel1987}{incollection}{
      author={Hempel, John},
       title={Residual finiteness for {$3$}-manifolds},
        date={1987},
   booktitle={Combinatorial group theory and topology ({A}lta, {U}tah, 1984)},
      series={Ann. of Math. Stud.},
      volume={111},
   publisher={Princeton Univ. Press, Princeton, NJ},
       pages={379\ndash 396},
      review={\MR{895623}},
}

\bib{Hillar2005}{article}{
      author={Hillar, Christopher~J.},
       title={Cyclic resultants},
        date={2005},
        ISSN={0747-7171},
     journal={J. Symbolic Comput.},
      volume={39},
      number={6},
       pages={653\ndash 669},
         url={http://dx.doi.org/10.1016/j.jsc.2005.01.001},
      review={\MR{2167674}},
}

\bib{HughesKielak2025RMI}{article}{
      author={Hughes, Sam},
      author={Kielak, Dawid},
       title={Profinite rigidity of fibring},
        date={2025},
        ISSN={0213-2230,2235-0616},
     journal={Rev. Mat. Iberoam.},
      volume={41},
      number={4},
       pages={1305\ndash 1336},
         url={https://doi.org/10.4171/rmi/1524},
      review={\MR{4912921}},
}

\bib{HarperSaveliev2010PJM}{article}{
      author={Harper, Eric},
      author={Saveliev, Nikolai},
       title={A {C}asson-{L}in type invariant for links},
        date={2010},
        ISSN={0030-8730,1945-5844},
     journal={Pacific J. Math.},
      volume={248},
      number={1},
       pages={139\ndash 154},
         url={https://doi.org/10.2140/pjm.2010.248.139},
      review={\MR{2734168}},
}

\bib{HironakaTsang2025GGD}{article}{
      author={Hironaka, Eriko},
      author={Tsang, Chi~Cheuk},
       title={Standardly embedded train tracks and pseudo-{A}nosov maps with
  minimum expansion factor},
        date={2025},
        ISSN={1661-7207,1661-7215},
     journal={Groups Geom. Dyn.},
      volume={19},
      number={4},
       pages={1263\ndash 1317},
         url={https://doi.org/10.4171/ggd/805},
      review={\MR{4957786}},
}

\bib{HamiltonWiltonZalesskii2013JLMS}{article}{
      author={Hamilton, Emily},
      author={Wilton, Henry},
      author={Zalesskii, Pavel~A.},
       title={Separability of double cosets and conjugacy classes in 3-manifold
  groups},
        date={2013},
        ISSN={0024-6107,1469-7750},
     journal={J. Lond. Math. Soc. (2)},
      volume={87},
      number={1},
       pages={269\ndash 288},
         url={https://doi.org/10.1112/jlms/jds040},
      review={\MR{3022716}},
}

\bib{JiangGuo1993Pacific}{article}{
      author={Jiang, Bo~Ju},
      author={Guo, Jian~Han},
       title={Fixed points of surface diffeomorphisms},
        date={1993},
        ISSN={0030-8730,1945-5844},
     journal={Pacific J. Math.},
      volume={160},
      number={1},
       pages={67\ndash 89},
         url={http://projecteuclid.org/euclid.pjm/1102624565},
      review={\MR{1227504}},
}

\bib{JaikinZapirain2020GT}{article}{
      author={Jaikin-Zapirain, Andrei},
       title={Recognition of being fibered for compact 3-manifolds},
        date={2020},
        ISSN={1465-3060},
     journal={Geom. Topol.},
      volume={24},
      number={1},
       pages={409\ndash 420},
         url={https://doi.org/10.2140/gt.2020.24.409},
      review={\MR{4080486}},
}

\bib{MKim2020}{inproceedings}{
      author={Kim, Minhyong},
       title={Arithmetic {C}hern-{S}imons theory {I}},
        date={2020},
   booktitle={Galois covers, {G}rothendieck-{T}eichm\"{u}ller {T}heory and
  {D}essins d'{E}nfants, interactions between geometry, topology, number theory
  and algebra, {L}eicester, {UK}, {J}une 2018},
      editor={Neumann, Frank},
      editor={Schroll, Sibylle},
      series={Springer Proceedings in Mathematics \& Statistics},
      volume={330},
   publisher={Springer, Cham},
         url={https://doi.org/10.1007/978-3-030-51795-3},
        note={(cf. arXiv:1510.05818)},
      review={\MR{4166922}},
}

\bib{Kitayama2022survey}{incollection}{
      author={Kitayama, Takahiro},
       title={A survey of the {T}hurston norm},
        date={2022},
   booktitle={In the tradition of {T}hurston {II}. {G}eometry and groups},
   publisher={Springer, Cham},
       pages={149\ndash 199},
         url={https://doi.org/10.1007/978-3-030-97560-9_5},
      review={\MR{4472050}},
}

\bib{Le2018AIF}{article}{
      author={L\^e, Thang T.~Q.},
       title={Growth of homology torsion in finite coverings and hyperbolic
  volume},
        date={2018},
        ISSN={0373-0956,1777-5310},
     journal={Ann. Inst. Fourier (Grenoble)},
      volume={68},
      number={2},
       pages={611\ndash 645},
         url={https://doi.org/10.5802/aif.3173},
      review={\MR{3803114}},
}

\bib{Lackenby2000GT}{article}{
      author={Lackenby, Marc},
       title={Taut ideal triangulations of 3-manifolds},
        date={2000},
        ISSN={1465-3060,1364-0380},
     journal={Geom. Topol.},
      volume={4},
       pages={369\ndash 395},
         url={https://doi.org/10.2140/gt.2000.4.369},
      review={\MR{1790190}},
}

\bib{Lawton1983JNT}{article}{
      author={Lawton, Wayne~M.},
       title={A problem of {B}oyd concerning geometric means of polynomials},
        date={1983},
        ISSN={0022-314X,1096-1658},
     journal={J. Number Theory},
      volume={16},
      number={3},
       pages={356\ndash 362},
         url={https://doi.org/10.1016/0022-314X(83)90063-X},
      review={\MR{707608}},
}

\bib{Le2014CMH}{article}{
      author={Le, Thang},
       title={Homology torsion growth and {M}ahler measure},
        date={2014},
        ISSN={0010-2571,1420-8946},
     journal={Comment. Math. Helv.},
      volume={89},
      number={3},
       pages={719\ndash 757},
         url={https://doi.org/10.4171/CMH/332},
      review={\MR{3260847}},
}

\bib{YiLiu2020JAMS}{article}{
      author={Liu, Yi},
       title={Virtual homological spectral radii for automorphisms of
  surfaces},
        date={2020},
        ISSN={0894-0347,1088-6834},
     journal={J. Amer. Math. Soc.},
      volume={33},
      number={4},
       pages={1167\ndash 1227},
         url={https://doi.org/10.1090/jams/949},
      review={\MR{4155222}},
}

\bib{YiLiu2022ICM}{inproceedings}{
      author={Liu, Yi},
       title={Surface automorphisms and finite covers},
        date={2022},
   booktitle={Proceedings of the {I}nternational {C}ongress of {M}athematicians
  ---{S}eoul 2022},
   publisher={EMS Press},
       pages={2792\ndash 2805},
}

\bib{YiLiu2023Invent}{article}{
      author={Liu, Yi},
       title={Finite-volume hyperbolic 3-manifolds are almost determined by
  their finite quotient groups},
        date={2023},
        ISSN={0020-9910,1432-1297},
     journal={Invent. Math.},
      volume={231},
      number={2},
       pages={741\ndash 804},
         url={https://doi.org/10.1007/s00222-022-01155-4},
      review={\MR{4542705}},
}

\bib{YiLiu2023Duke}{article}{
      author={Liu, Yi},
       title={Mapping classes are almost determined by their finite quotient
  actions},
        date={2023},
        ISSN={0012-7094,1547-7398},
     journal={Duke Math. J.},
      volume={172},
      number={3},
       pages={569\ndash 631},
         url={https://doi.org/10.1215/00127094-2022-0047},
      review={\MR{4548418}},
}

\bib{YiLiu2025Peking}{article}{
      author={Liu, Yi},
       title={Finite quotients, arithmetic invariants, and hyperbolic volume},
        date={2025},
        ISSN={2096-6075,2524-7182},
     journal={Peking Math. J.},
      volume={8},
      number={1},
       pages={143\ndash 189},
         url={https://doi.org/10.1007/s42543-023-00077-1},
      review={\MR{4870949}},
}

\bib{LandryMinskyTaylor2023ETDS}{article}{
      author={Landry, Michael~P.},
      author={Minsky, Yair~N.},
      author={Taylor, Samuel~J.},
       title={Flows, growth rates, and the veering polynomial},
        date={2023},
        ISSN={0143-3857,1469-4417},
     journal={Ergodic Theory Dynam. Systems},
      volume={43},
      number={9},
       pages={3026\ndash 3107},
         url={https://doi.org/10.1017/etds.2022.63},
      review={\MR{4624489}},
}

\bib{LandryMinskyTaylor2024JEMS}{article}{
      author={Landry, Michael~P.},
      author={Minsky, Yair~N.},
      author={Taylor, Samuel~J.},
       title={A polynomial invariant for veering triangulations},
        date={2024},
        ISSN={1435-9855,1435-9863},
     journal={J. Eur. Math. Soc. (JEMS)},
      volume={26},
      number={2},
       pages={731\ndash 788},
         url={https://doi.org/10.4171/jems/1368},
      review={\MR{4705661}},
}

\bib{Marden1974}{article}{
      author={Marden, Albert},
       title={The geometry of finitely generated kleinian groups},
        date={1974},
        ISSN={0003-486X},
     journal={Ann. of Math. (2)},
      volume={99},
       pages={383\ndash 462},
         url={https://doi.org/10.2307/1971059},
      review={\MR{349992}},
}

\bib{McMullen2000ASENS}{article}{
      author={McMullen, Curtis~T.},
       title={Polynomial invariants for fibered 3-manifolds and {T}eichm\"uller
  geodesics for foliations},
        date={2000},
        ISSN={0012-9593},
     journal={Ann. Sci. \'Ecole Norm. Sup. (4)},
      volume={33},
      number={4},
       pages={519\ndash 560},
         url={https://doi.org/10.1016/S0012-9593(00)00121-X},
      review={\MR{1832823}},
}

\bib{McMullen2013CMH}{article}{
      author={McMullen, Curtis~T.},
       title={Entropy on {R}iemann surfaces and the {J}acobians of finite
  covers},
        date={2013},
        ISSN={0010-2571,1420-8946},
     journal={Comment. Math. Helv.},
      volume={88},
      number={4},
       pages={953\ndash 964},
         url={https://doi.org/10.4171/CMH/308},
      review={\MR{3134416}},
}

\bib{Milnor1957Isotopy}{incollection}{
      author={Milnor, John},
       title={Isotopy of links},
        date={1957},
   booktitle={Algebraic geometry and topology. a symposium in honor of s.
  lefschetz},
   publisher={Princeton University Press, Princeton, N.J.},
       pages={280\ndash 306},
      review={\MR{0092150}},
}

\bib{Mostow1973AnnMS}{book}{
      author={Mostow, G.~D.},
       title={Strong rigidity of locally symmetric spaces},
      series={Annals of Mathematics Studies},
   publisher={Princeton University Press, Princeton, NJ; University of Tokyo
  Press, Tokyo},
        date={1973},
      volume={No. 78},
      review={\MR{385004}},
}

\bib{Murasugi1966TransAMS}{article}{
      author={Murasugi, Kunio},
       title={On {M}ilnor's invariant for links},
        date={1966},
        ISSN={0002-9947,1088-6850},
     journal={Trans. Amer. Math. Soc.},
      volume={124},
       pages={94\ndash 110},
         url={https://doi.org/10.2307/1994437},
      review={\MR{198453}},
}

\bib{Neukirch}{book}{
      author={Neukirch, J{\"u}rgen},
       title={Algebraic number theory},
      series={Grundlehren der Mathematischen Wissenschaften [Fundamental
  Principles of Mathematical Sciences]},
   publisher={Springer-Verlag, Berlin},
        date={1999},
      volume={322},
        ISBN={3-540-65399-6},
         url={http://dx.doi.org/10.1007/978-3-662-03983-0},
        note={Translated from the 1992 German original and with a note by
  Norbert Schappacher, With a foreword by G. Harder},
      review={\MR{1697859 (2000m:11104)}},
}

\bib{NikolovSegal2007AnnMath1}{article}{
      author={Nikolov, Nikolay},
      author={Segal, Dan},
       title={On finitely generated profinite groups. {I}. {S}trong
  completeness and uniform bounds},
        date={2007},
        ISSN={0003-486X,1939-8980},
     journal={Ann. of Math. (2)},
      volume={165},
      number={1},
       pages={171\ndash 238},
         url={https://doi.org/10.4007/annals.2007.165.171},
      review={\MR{2276769}},
}

\bib{Parlak2023JT}{article}{
      author={Parlak, Anna},
       title={The taut polynomial and the {A}lexander polynomial},
        date={2023},
        ISSN={1753-8416,1753-8424},
     journal={J. Topol.},
      volume={16},
      number={2},
       pages={720\ndash 756},
         url={https://doi.org/10.1112/topo.12302},
      review={\MR{4637975}},
}

\bib{Parlak_mutation}{article}{
      author={Parlak, Anna},
       title={Mutations and faces of the {T}hurston norm ball dynamically
  represented by multiple distinct flows},
        date={2025},
        ISSN={1465-3060,1364-0380},
     journal={Geom. Topol.},
      volume={29},
      number={4},
       pages={2105\ndash 2173},
         url={https://doi.org/10.2140/gt.2025.29.2105},
      review={\MR{4929474}},
}

\bib{Prasad1973Invent}{article}{
      author={Prasad, Gopal},
       title={Strong rigidity of {${\bf Q}$}-rank {$1$} lattices},
        date={1973},
        ISSN={0020-9910,1432-1297},
     journal={Invent. Math.},
      volume={21},
       pages={255\ndash 286},
         url={https://doi.org/10.1007/BF01418789},
      review={\MR{385005}},
}

\bib{Reid-ICM2018}{inproceedings}{
      author={Reid, Alan~W.},
       title={Profinite rigidity},
        date={2018},
   booktitle={Proceedings of the {I}nternational {C}ongress of {M}athematicians
  ---{R}io de {J}aneiro 2018. {V}ol. {II}. {I}nvited lectures},
   publisher={World Sci. Publ., Hackensack, NJ},
       pages={1193\ndash 1216},
      review={\MR{3966805}},
}

\bib{Rykken1999Michigan}{article}{
      author={Rykken, E.},
       title={Expanding factors for pseudo-{A}nosov homeomorphisms},
        date={1999},
        ISSN={0026-2285,1945-2365},
     journal={Michigan Math. J.},
      volume={46},
      number={2},
       pages={281\ndash 296},
         url={https://doi.org/10.1307/mmj/1030132411},
      review={\MR{1704217}},
}

\bib{RibesZalesskii2010}{book}{
      author={Ribes, Luis},
      author={Zalesskii, Pavel},
       title={Profinite groups},
     edition={Second},
      series={Ergebnisse der Mathematik und ihrer Grenzgebiete. 3. Folge. A
  Series of Modern Surveys in Mathematics [Results in Mathematics and Related
  Areas. 3rd Series. A Series of Modern Surveys in Mathematics]},
   publisher={Springer-Verlag, Berlin},
        date={2010},
      volume={40},
        ISBN={978-3-642-01641-7},
         url={http://dx.doi.org/10.1007/978-3-642-01642-4},
      review={\MR{2599132}},
}

\bib{Sakuma-RIMS2011}{incollection}{
      author={Sakuma, Makoto},
       title={Problem session},
        date={2012February},
   booktitle={Gometric and analytic approaches to representations of a group
  and representation spaces, 2011},
      series={RIMS K\^{o}ky\^{u}roku},
   publisher={Res. Inst. Math. Sci. (RIMS), Kyoto},
       pages={110\ndash 112},
  url={https://www.kurims.kyoto-u.ac.jp/~kyodo/kokyuroku/contents/pdf/1777-15.pdf},
}

\bib{Segerman:veering_census_with_data}{misc}{
      author={Segerman, Henry},
       title={\texttt{veering\_census\_with\_data.txt}},
        date={2026},
  url={https://math.okstate.edu/people/segerman/veering/veering_census_with_data.txt},
  note={\url{https://math.okstate.edu/people/segerman/veering/veering_census_with_data.txt}},
}

\bib{Serre-CohGal}{book}{
      author={Serre, Jean-Pierre},
       title={Cohomologie galoisienne},
     edition={Fifth},
      series={Lecture Notes in Mathematics},
   publisher={Springer-Verlag, Berlin},
        date={1994},
      volume={5},
        ISBN={3-540-58002-6},
      review={\MR{1324577 (96b:12010)}},
}

\bib{Smyth1981}{article}{
      author={Smyth, C.~J.},
       title={On measures of polynomials in several variables},
        date={1981},
        ISSN={0004-9727},
     journal={Bull. Austral. Math. Soc.},
      volume={23},
      number={1},
       pages={49\ndash 63},
         url={http://dx.doi.org/10.1017/S0004972700006894},
      review={\MR{615132}},
}

\bib{SilverWilliams2002Topology}{article}{
      author={Silver, Daniel~S.},
      author={Williams, Susan~G.},
       title={Mahler measure, links and homology growth},
        date={2002},
        ISSN={0040-9383},
     journal={Topology},
      volume={41},
      number={5},
       pages={979\ndash 991},
         url={https://doi.org/10.1016/S0040-9383(01)00014-3},
      review={\MR{1923995}},
}

\bib{WPThurston1986HS2}{unpublished}{
      author={Thurston, William~P.},
       title={Hyperbolic structures on 3-manifolds, {II}: {S}urface groups and
  3-manifolds wichi fiber over the circle},
        date={1986},
        note={arXiv:math/9801019},
}

\bib{Thurston1986norm}{article}{
      author={Thurston, William~P.},
       title={A norm for the homology of {$3$}-manifolds},
        date={1986},
        ISSN={0065-9266,1947-6221},
     journal={Mem. Amer. Math. Soc.},
      volume={59},
      number={339},
       pages={i\ndash vi and 99\ndash 130},
      review={\MR{823443}},
}

\bib{Traldi1984}{article}{
      author={Traldi, Lorenzo},
       title={Milnor's invariants and the completions of link modules},
        date={1984},
        ISSN={0002-9947},
     journal={Trans. Amer. Math. Soc.},
      volume={284},
      number={1},
       pages={401\ndash 424},
         url={http://dx.doi.org/10.2307/1999294},
      review={\MR{742432}},
}

\bib{TsangCC2025-arXiv}{misc}{
      author={Tsang, Chi~Cheuk},
       title={On the set of normalized dilatations of fully-punctured
  pseudo-{Anosov} maps},
         how={Preprint, {arXiv}:2306.10245 [math.{GT}] (2025)},
        date={2026},
         url={https://arxiv.org/abs/2306.10245},
        note={to appear in Ann. Inst. Fourier},
}

\bib{TatenoUeki2025JLMS}{article}{
      author={Tateno, Sohei},
      author={Ueki, Jun},
       title={The {I}wasawa invariants of {$\Bbb Z_p^d$}-covers of links},
        date={2025},
        ISSN={0024-6107,1469-7750},
     journal={J. Lond. Math. Soc. (2)},
      volume={111},
      number={6},
       pages={Paper No. e70183, 70},
         url={https://doi.org/10.1112/jlms.70183},
      review={\MR{4920377}},
}

\bib{Ueki5}{article}{
      author={Ueki, Jun},
       title={The profinite completions of knot groups determine the
  {A}lexander polynomials},
        date={2018},
        ISSN={1472-2747},
     journal={Algebr. Geom. Topol.},
      volume={18},
      number={5},
       pages={3013\ndash 3030},
         url={https://doi.org/10.2140/agt.2018.18.3013},
      review={\MR{3848406}},
}

\bib{Ueki6}{article}{
      author={Ueki, Jun},
       title={Profinite rigidity for twisted {A}lexander polynomials},
        date={2021},
        ISSN={0075-4102},
     journal={J. Reine Angew. Math.},
      volume={771},
       pages={171\ndash 192},
         url={https://doi.org/10.1515/crelle-2020-0014},
      review={\MR{4234095}},
}

\bib{Ueki-F2byZ-arXiv}{unpublished}{
      author={Ueki, Jun},
       title={{E}ffective detection of ${F}_2$-by-$\mathbb{Z}$ groups},
        date={2026},
        note={preprint. arXiv:},
}

\bib{Wilkes2018NZ}{article}{
      author={Wilkes, Gareth},
       title={Profinite completions, cohomology and {JSJ} decompositions of
  compact 3-manifolds},
        date={2018},
        ISSN={1171-6096},
     journal={New Zealand J. Math.},
      volume={48},
       pages={101\ndash 113},
      review={\MR{3884906}},
}

\bib{Wilkes2018JA}{article}{
      author={Wilkes, Gareth},
       title={Profinite rigidity of graph manifolds and {JSJ} decompositions of
  3-manifolds},
        date={2018},
        ISSN={0021-8693},
     journal={J. Algebra},
      volume={502},
       pages={538\ndash 587},
         url={https://doi.org/10.1016/j.jalgebra.2017.12.039},
      review={\MR{3774902}},
}

\bib{WiltonZalesskii2017GT}{article}{
      author={Wilton, Henry},
      author={Zalesskii, Pavel},
       title={Distinguishing geometries using finite quotients},
        date={2017},
        ISSN={1465-3060},
     journal={Geom. Topol.},
      volume={21},
      number={1},
       pages={345\ndash 384},
         url={https://doi.org/10.2140/gt.2017.21.345},
      review={\MR{3608716}},
}

\bib{WiltonZalesskii2017JLMS}{article}{
      author={Wilton, Henry},
      author={Zalesskii, Pavel},
       title={Pro-{$p$} subgroups of profinite completions of 3-manifold
  groups},
        date={2017},
        ISSN={0024-6107,1469-7750},
     journal={J. Lond. Math. Soc. (2)},
      volume={96},
      number={2},
       pages={293\ndash 308},
         url={https://doi.org/10.1112/jlms.12067},
      review={\MR{3708951}},
}

\bib{WiltonZalesskii2019CM}{article}{
      author={Wilton, Henry},
      author={Zalesskii, Pavel},
       title={Profinite detection of 3-manifold decompositions},
        date={2019},
        ISSN={0010-437X},
     journal={Compos. Math.},
      volume={155},
      number={2},
       pages={246\ndash 259},
         url={https://doi.org/10.1112/s0010437x1800787x},
      review={\MR{3896566}},
}

\bib{Xu2024fillings-arXiv}{unpublished}{
      author={Xu, Xiaoyu},
       title={Profinite rigidity witnessed by {D}ehn fillings of cusped
  hyperbolic 3-manifolds},
        date={2024},
        note={preprint. arXiv:2412.05229},
}

\bib{Xu2025regularity-arXiv}{unpublished}{
      author={Xu, Xiaoyu},
       title={On regularity of profinite isomorphisms between cusped hyperbolic
  3-manifolds and the {$A$}-polynomial},
        date={2025},
         url={https://arxiv.org/abs/2506.21105},
        note={preprint. arXiv:2506.21105},
}

\bib{Xu2025AdvMath}{article}{
      author={Xu, Xiaoyu},
       title={Profinite almost rigidity in 3-manifolds},
        date={2025},
        ISSN={0001-8708,1090-2082},
     journal={Adv. Math.},
      volume={480},
       pages={Paper No. 110505, 79},
         url={https://doi.org/10.1016/j.aim.2025.110505},
      review={\MR{4953004}},
}

\bib{Xu2026-fullrigidity-arXiv}{unpublished}{
      author={Xu, Xiaoyu},
       title={Profinite rigidity in lattices of ${{\rm PSL}}(2,\mathbb{C})$},
        date={2026},
        note={preprint. arXiv:2608.07350},
}

\end{biblist}
\end{bibdiv}

\end{document}